\newcommand{\McC}{\raise.5ex\hbox{c}}
\newcommand{\K}{\mathcal{K}}
\newcommand{\D}{\mathbb{D}}
\newcommand{\T}{\mathbb{T}}
\newcommand{\C}{\mathbb{C}}
\newcommand{\R}{\mathbb{R}}
\newcommand{\p}{\mathfrak{p}}
\newtheorem{theorem}{Theorem}[section]
\newcommand{\CO}{\mathcal{K}}
\newtheorem{lemma}[theorem]{Lemma}
\newtheorem{conjecture}[theorem]{Conjecture}
\newtheorem*{theorem*}{Theorem}
\newtheorem*{conjecture*}{Conjecture}
\newtheorem{corollary}[theorem]{Corollary}
\newtheorem*{corollary*}{Corollary}
\newtheorem{proposition}[theorem]{Proposition}
\newtheorem*{proposition*}{Proposition}
\def\bb{\begin{color}{blue}}
\def\bg{\begin{color}{green}}
\def\br{\begin{color}{red}}
\def\eg{\end{color}}
\def\er{\end{color}}
\def\eb{\end{color}}
\theoremstyle{remark}
\newtheorem{remark}[theorem]{Remark}
\newtheorem{definition}[theorem]{Definition}
\author[Bickel]{Kelly Bickel$^\dagger$}
\address{Department of Mathematics, Bucknell University, 360 Olin Science Building, Lewisburg, PA 17837, USA.}
\email{kelly.bickel@bucknell.edu}
\thanks{$\dagger$ Research supported in part by National Science Foundation
DMS grant \#1448846.}
\author[Pascoe]{James Eldred Pascoe$^\ddagger$}
\address{Department of Mathematics, Washington University in St. Louis, 1 Brookings Drive, Campus Box 1146, St. Louis, MO 63130, USA.}
\email{pascoej@wustl.edu}
\thanks{$\ddagger$ Research supported by National Science Foundation Mathematical Science Postdoctoral Research Fellowship DMS 1606260.}
\author[Sola]{Alan Sola}
\address{Department of Mathematics, Stockholm University, Kr\"aftriket 6, 106 91 Stockholm, Sweden.}
\email{sola@math.su.se}
\keywords{Rational inner functions, singularities, level curves.}
 \subjclass[2010]{14H45, 14M99, 32A20, 32A40}
\begin{document}
\title[Level curves of RIFs]{Level curve portraits of rational inner functions}
\date{\today}


\begin{abstract}
We analyze the behavior of rational inner functions on the unit bidisk near singularities 
on the distinguished boundary $\mathbb{T}^2$ using level sets. 
We show that the unimodular level sets of a rational inner function $\phi$ can be parametrized with 
analytic curves and connect the behavior of these analytic curves to that of the zero set of $\phi$. We apply these results to 
obtain a detailed description of the fine numerical stability of $\phi$: for instance, we show that 
$\frac{\partial \phi}{\partial z_1}$ and $\frac{\partial \phi}{\partial z_2}$ always possess the 
same $L^{\mathfrak{p}}$-integrability on $\mathbb{T}^2$, and we obtain combinatorial relations between intersection multiplicities at singularities and vanishing orders for branches  of level sets.  We also present several new methods of constructing rational inner functions that allow us to prescribe properties of their zero sets, unimodular level sets, and singularities.

\end{abstract}
\maketitle
\tableofcontents

\section{Introduction and overview}\label{sec:intro}
 \subsection{Introduction}
A rational function of a complex variable can be described in terms of its zeros and poles, and the behavior of the function near these points is in principle easy to capture in terms of their integer orders. The exact location and nature of the zeros and poles of a one-variable rational function are decisive in many applications: for instance, critical points and poles determine much of the dynamical properties of a rational function in iteration theory, and the zeros and poles of rational functions in one variable govern the stability of associated systems in control theory. This latter fact, that the qualitative nature of a system is determined by the location of zeros of polynomials defining an associated rational function, leads to the important notion of a \emph{stable polynomial}, one that has all roots outside the unit disk (or the left half-plane, depending on context). 

When studying a rational function of several variables in a mathematical or engineering context, one is again led to consider points where numerator and denominator vanish, but now a new and subtle phenomenon manifests itself: simultaneous vanishing at a point does not necessarily lead to algebraic cancellation. Nevertheless, it may still happen that the rational function retains some smoothness and boundedness properties near a common zero of numerator and denominator, and this then leads to a rich geometric structure at this point.

This paper is devoted to a detailed study of singularities of a certain important class of rational functions in two variables. We work on the \emph{unit bidisk} 
\[\D^2=\{(z_1,z_2)\in \C^2\colon |z_1|<1, |z_2|<1\}\]
and are interested in zeros and singularities on the distinguished boundary of the bidisk, which we identify with the two-torus $\T^2=\T \times \T$, the Cartesian product of two copies of the unit circle $\T=\{z\in \C\colon |z|=1\}$. The distinguished boundary $\T^2$ supports the maximum modulus principle for the bidisk, and is determining for most of the function-theoretic questions we will address in this paper.
A \emph{rational inner function} (RIF) on the bidisk is a rational function $\phi\colon \D^2\to \C$ that is analytic and bounded in $\D^2$ and has $|\phi(\zeta)|=1$ for almost every $\zeta \in \T^2$. Examples of such functions are
\[-\frac{3z_1z_2-z_1-z_2}{3-z_1-z_2} \quad  \textrm{and}\quad -\frac{2z_1z_2-z_1-z_2}{2-z_1-z_2};\]
the first one is smooth on the closed bidisk $\overline{\D^2}$, but the second example exhibits what is known as a ``non-essential singularity of the second kind" at $(1,1)$: the function has a non-tangential limit at $(1,1)$ but the vanishing polynomials $2z_1z_2-z_1-z_2$ and $2-z_1-z_2$ do not share a common factor. 

The numerators and denominators in these examples can be obtained from each other by reflection in the unit circle. In fact, W. Rudin and E.L. Stout showed, \cite{RudStout65} and \cite[Chapter 5]{Rud69}, that all RIFs on the bidisk are of the form
\[\phi(z_1,z_2)=e^{i\alpha}z_1^Mz_2^N\frac{\tilde{p}(z_1,z_2)}{p(z_1,z_2)}\]
where $\alpha$ is a real number, $M$ and $N$ are non-negative integers, $p$ is a \emph{semi-stable polynomial},  and the polynomial
\[\tilde{p}(z_1,z_2)=z_1^mz_2^n\overline{p\left(\frac{1}{\overline{z}_1},\frac{1}{\overline{z}_2}\right)}\]
is the \emph{reflection} of $p$. The pair of integers $(m,n) \in \mathbb{N}^2$ is referred to as the \emph{bidegree} of $p$ and is given by the largest powers of $z_1$ and $z_2$ that appear in $p$. A polynomial $p\in \C[z_1,z_2]$ is said to be semi-stable if it has no zeros in $\D^2$; it is \emph{stable} (or strictly stable) if it is non-vanishing on the closed bidisk. For simplicity, we usually consider rational inner functions of the form $\phi=\frac{\tilde{p}}{p}$ in this paper; monomial factors do not materially affect our conclusions.

The study of rational inner functions and semi-stable polynomials has a rich tradition in complex analysis \cite{KV79,McD87, AMY12, Kne10}, operator theory \cite{AglMcC, bk13, BLPreprint, Grinshpetal17,  bg17}, algebraic geometry \cite{AMS06, AMS08}, and systems theory and engineering \cite{Kum02, BSV05, GW06, Wag11}. We refer the reader to references provided in these papers for further work on these topics.
Recently, Knese \cite{Kne15} initiated the study of $L^2(\T^2)$-integrability of rational functions of the form $q/p$, where $p$ is assumed semi-stable but not necessarily strictly stable. In \cite{BPS17}, the authors derived a concrete relationship between the numerical stability of a rational inner function $\phi$, as measured by the $L^\p(\T^2)$-integrability of $\frac{\partial \phi}{\partial z_1}$ and $\frac{\partial \phi}{\partial z_2}$, and ``fine semi-stability" of its zero set, captured by \emph{contact orders} at a singularity. These measure how fast the zero set of $\tilde{p}$ approaches $\T^2$ in relation to how the fast the zero set approaches the singularity,  if one variable is restricted to $\T$. Informally, contact order can be defined for $\phi=\frac{\tilde{p}}{p}$ as follows. Setting
\[\mathcal{Z}_{\tilde{p}}=\{z\in \C^2\colon \tilde{p}(z)=0\},\]
we define the facial varieties
\[\mathcal{Z}_{\tilde{p}}^1=\mathcal{Z}_{\tilde{p}}\cap \left(\overline{\D}\times \T\right)  \quad \textrm{and}\quad \mathcal{Z}_{\tilde{p}}^2=\mathcal{Z}_{\tilde{p}}\cap \left(\T\times \overline{\D}\right).\]
The $z_i$-{\it contact order of $\phi$} is given by the largest number $K_i$ such that there exists a
sequence $\{w_k\} \subseteq \mathcal{Z}_{\tilde{p}}^i$ converging  to a singular point $\tau \in \T^2$ of $\phi$ and a positive constant $C$ such that
\[ \mathrm{dist}\big (w_k, \T^2 \big )\leq C \ \mathrm{dist} \big(w_k, \tau \big)^{K_i} \qquad \forall k \in \mathbb{N}.\]
(The precise definition of contact order is given in \cite[Section 2]{BPS17} and Section \ref{sec:prelim} below.)

In this paper, we study the numerical stability of an RIF and the geometry of its zero set via level curves of the RIF restricted to the two-torus. This approach allows us to ``visualize" the geometry of singularities of an RIF on $\T^2$ in a concrete and appealing way. More precisely, one of our main goals is to show how to divine ``fine semi-stability", that is, compute contact orders and related quantities by examining 
unimodular level curves 
\[\mathcal{C}_{\lambda}=\{\zeta \in \T^2\colon \phi(\zeta)=\lambda\}, \quad \textrm{for}\quad \lambda \in \T,
\]
and how they come together at singularities of $\phi$ on the two-torus. We show that such level curves are in fact smooth, in the sense that they can be parametrized by analytic functions. From this fact we are able to derive many properties of $\phi$ at its singularities, including for instance that its first partials enjoy the same $L^\p(\T^2)$-integrability properties.
Using smoothness of level curves together with certain embedding constructions, we are further able to apply our results concerning rational inner functions to draw conclusions about how special varieties in $\C^2$ intersect the two-torus. 

\subsection{Overview}
We proceed with an overview of the results contained in this paper: in what follows, these results are stated in a non-technical way, with references to precise versions in the body of the paper.
Several of our theorems, which are valid for general rational inner functions, can be illustrated by examining the simple rational inner function
\begin{equation}
\phi(z_1,z_2)=-\frac{2z_1z_2-z_1-z_2}{2-z_1-z_2}.
\label{faveform}
\end{equation}

\begin{figure}[h!]
    \subfigure[A family of level curves (black), with value curve (red).]
      {\includegraphics[width=0.44 \textwidth]{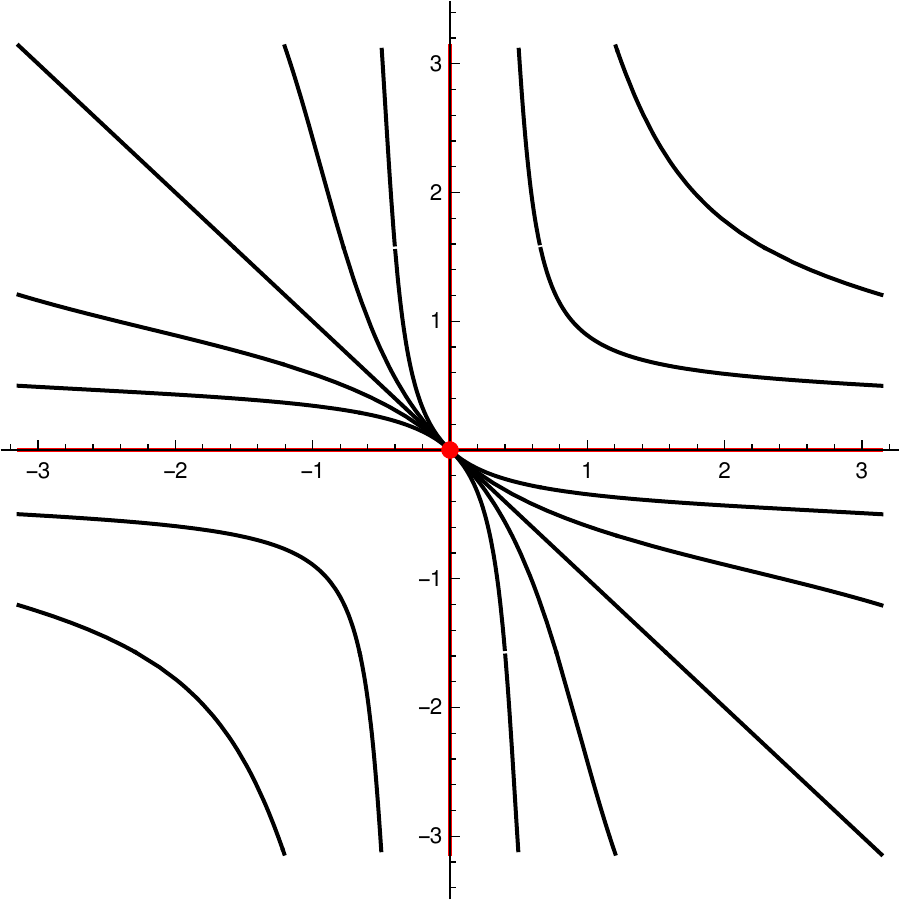}}
    \hfill
    \subfigure[Level curve $\mathcal{C}^*_1{(1,1)}$, corresponding to the nontangential value $\phi(1,1)=1$.]
      {\includegraphics[width=0.44 \textwidth]{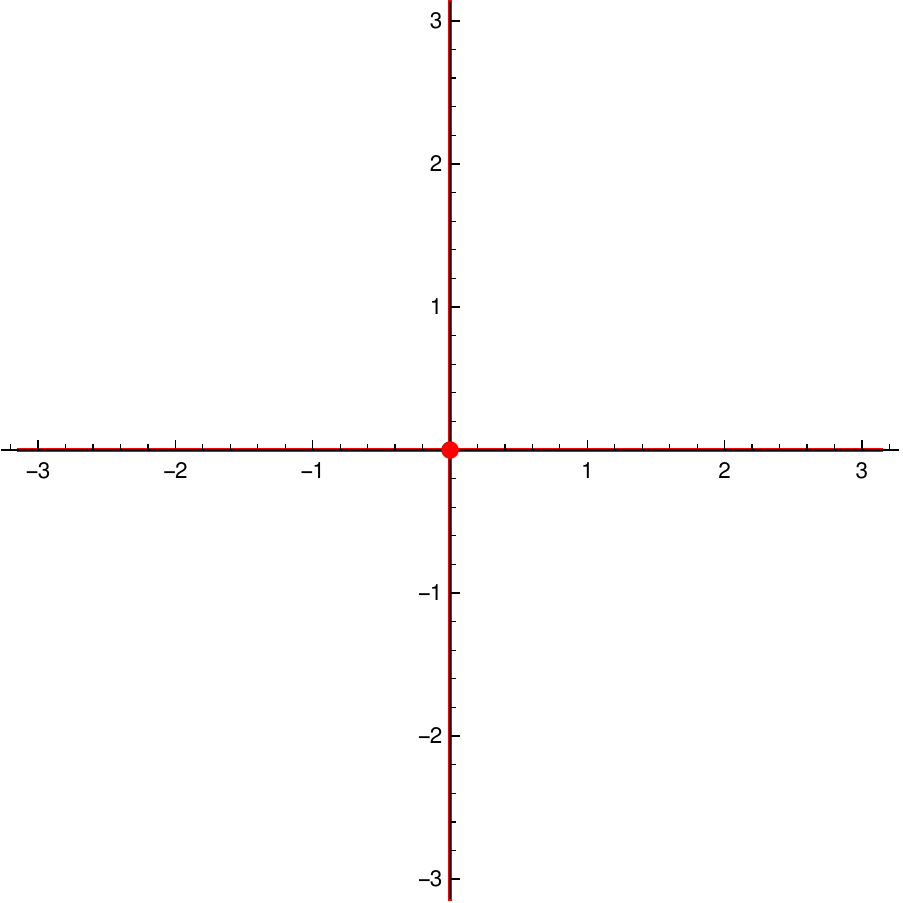}}
  \caption{\textsl{Level curves for $\phi(z_1,z_2)=-(2z_1z_2-z_1-z_2)/(2-z_1-z_2)$}.}
  \label{faveplots}
\end{figure}

The RIF $\phi$ has a single singularity at $(1,1)$ and a computation shows that $\phi(1,1)=1$, in the sense of non-tangential limits. By solving $-(2z_1z_2-z_1-z_2)=2-z_1-z_2$ we find that the level curve of $\phi$ corresponding to this value coincides with the union of coordinate axes, 
\[\mathcal{C}^*_{1}(1,1)=\{(e^{it_1}, 1)\}\cup\{(1,e^{it_2})\},\]
and thus consists of smooth components with a transversal intersection. For $\lambda\in \T\setminus\{1\}$, the associated level curves of $\phi$ are smooth and are described by 
\[\mathcal{C}_{\lambda}=\{(z_1,\psi^{\lambda}(z_1))\colon z_1\in \T\}\quad \quad \textrm{where}\quad\psi^{\lambda}(z_1)=\lambda\frac{1-\frac{1-\overline{\lambda}}{2}z_1}{z_1-\frac{1-\lambda}{2}},\]
the reciprocal of a M\"obius transformation of the disk. A plot of level curves, including the value curve, is provided in Figure \ref{faveplots}. Here, and throughout, we identify the two-torus with $(-\pi,\pi]\times (-\pi, \pi]$ for computational purposes. Thus the point $(1,1)\in \T^2$ corresponds to $(0,0)$ in our plots.

We now observe that all level curves pass through the singularity at $(1,1)$ in the second and fourth quadrants, and any pair of level curves with the exception of $\mathcal{C}^*_1(1,1)$ touch to order $2$ at the origin: that is, for any pair $\lambda, \mu \in \T\setminus \{1\}$,
\[|\psi^{\lambda}(z_1)-\psi^{\mu}(z_1)|\asymp |1-z_1|^2 \quad \textrm{as} \quad z_1 \to 1.\]
 The first fact illustrates what was called a Horn Lemma in \cite{BPS17}: level curves of an RIF are highly constrained in the way they pass through singularities. A precise formulation is given in Lemma \ref{lem:horn}. We use the Horn Lemma to prove one of the main results of our paper, namely that smoothness of unimodular level curves holds for any RIF. 
\begin{theorem*}[\ref{thm:smooth}]
The components of each unimodular level curve $\mathcal{C}_{\lambda}$ of a rational inner function $\phi$ can be parametrized by analytic functions.
\end{theorem*}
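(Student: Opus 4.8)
The plan is to identify $\mathcal{C}_\lambda$ with the trace on $\T^2$ of an algebraic curve and then run an implicit-function-theorem argument, with the Horn Lemma supplying the control needed at the singular points. First I would reduce to $\phi=\tilde p/p$ with $p$ semi-stable of bidegree $(m,n)$, $\gcd(p,\tilde p)=1$, and $\phi$ non-constant, the case $\phi\equiv\lambda$ being trivial. Set $q_\lambda:=\tilde p-\lambda p$. On $\T^2$ the pointwise identity $\tilde p(z)=z_1^m z_2^n\overline{p(z)}$ gives $|\phi|\equiv 1$ off the zero set of $p$, so that $\mathcal{C}_\lambda=\{z\in\T^2:q_\lambda(z)=0\}$. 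Two facts about $q_\lambda$ will be used: the maximum modulus principle gives $|\tilde p|<|p|$ throughout $\D^2$, so $|q_\lambda|\geq|p|-|\tilde p|>0$ there and $q_\lambda$ is itself semi-stable; and a direct computation gives the self-reflection identity $\widetilde{q_\lambda}=-\overline{\lambda}\,q_\lambda$ (after the routine verification that $q_\lambda$ has full bidegree $(m,n)$, which fails for only finitely many $\lambda$ and is otherwise treated by factoring out a monomial and using a shifted reflection formula). Finally, the singular set $\mathcal{S}$ of $\phi$, where $p$ and $\tilde p$ vanish simultaneously on $\T^2$, is finite and is contained in every $\mathcal{C}_\lambda$.

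Next I would handle the non-singular points. Fix $\zeta=(\zeta_1,\zeta_2)\in\mathcal{C}_\lambda\setminus\mathcal{S}$. Semi-stability of $p$ together with Hurwitz's theorem and $\gcd(p,\tilde p)=1$ forces $p(\zeta_1,\cdot)$ to be zero-free on $\D$, so the slice $z_2\mapsto\phi(\zeta_1,z_2)$ is rational, holomorphic and bounded by $1$ on $\D$, and unimodular at all but finitely many points of $\T$ by the torus identity; hence it is a finite Blaschke product or a unimodular constant. If it is a non-constant Blaschke product, the classical fact that such a product has non-vanishing derivative on $\T$ yields $\partial_{z_2}\phi(\zeta)\neq 0$, and the holomorphic implicit function theorem presents $\{q_\lambda=0\}$ near $\zeta$ as the graph of an analytic function $z_2=g(z_1)$, whose restriction to the arc of $\T$ about $\zeta_1$ (on which automatically $|g|\equiv 1$) is the desired analytic parametrization near $\zeta$. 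If instead $\phi(\zeta_1,\cdot)\equiv\lambda$, then $(z_1-\zeta_1)\mid q_\lambda$ and the coordinate circle $\{\zeta_1\}\times\T\subseteq\mathcal{C}_\lambda$ is an obviously analytic component; one factors it off and repeats the argument with $z_1$ and $z_2$ interchanged. Thus near every point of $\mathcal{C}_\lambda\setminus\mathcal{S}$ the level curve is a finite union of analytic arcs, and $\mathcal{C}_\lambda\setminus\mathcal{S}$ is a closed real-analytic $1$-manifold.

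The crux is the behaviour at a singularity $\tau=(\tau_1,\tau_2)\in\mathcal{S}$, and this is the step I expect to be the main obstacle. By the Horn Lemma (Lemma \ref{lem:horn}), each branch of $\mathcal{C}_\lambda$ that enters $\tau$ is confined to a horn-shaped region that avoids the vertical and horizontal directions $z_1=\tau_1$, $z_2=\tau_2$; hence, after swapping coordinates if necessary, such a branch is the graph of a continuous function $z_2=f(z_1)$ over a half-arc of $\T$ with endpoint $\tau_1$, with $f(\tau_1)=\tau_2$ and $f$ analytic away from $\tau_1$ by the previous paragraph. To upgrade $f$ to an analytic function at $\tau_1$ itself, I would combine the Horn Lemma with the algebraic structure of $\{q_\lambda=0\}$. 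Each branch of this algebraic curve through $\tau$ has a convergent Puiseux parametrization $z_1=\tau_1+t^d$, $z_2=\tau_2+\sum_{k\geq k_0}c_k t^k$ with $d\geq 1$ and $c_{k_0}\neq 0$; the self-reflection identity makes this branch invariant under the anti-holomorphic involution $(z_1,z_2)\mapsto(1/\overline{z_1},1/\overline{z_2})$, which fixes $\T^2$, while the Horn Lemma forces its real trace to be a genuine graph clinging to $\T^2$ and avoiding the coordinate directions. Matching the real constraint $|z_2|=1$ on $|z_1|=1$ against the Puiseux data then forces $d=1$ and determines the leading order $k_0$, so the branch is an honest analytic (indeed holomorphic) graph across $\tau_1$; this matching is exactly where the advertised combinatorial relations between intersection multiplicities and vanishing orders arise.

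It remains to glue. Since $\T^2$ is compact and $\mathcal{S}$ is finite, each connected component of $\mathcal{C}_\lambda$ is covered by finitely many of the analytic arcs produced above, which patch along their overlaps into a single analytic parametrization — an analytic circle disjoint from $\mathcal{S}$, or an analytic arc (or loop) whose closure meets $\mathcal{S}$. This establishes Theorem \ref{thm:smooth}, the one genuinely delicate ingredient being the passage, via the Horn Lemma, from the a priori merely Puiseux description of $\{q_\lambda=0\}$ at a singularity to an analytic one.
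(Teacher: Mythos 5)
Your outline gets the easy half right (reduction to the algebraic curve $\{\tilde p=\lambda p\}$, the Blaschke-slice/implicit-function argument at nonsingular points of $\T^2$, factoring off coordinate circles), but the step you yourself flag as the crux — upgrading the Puiseux description of a branch through a singularity to an analytic graph — is exactly where the proposal stops being a proof. "Matching the real constraint $|z_2|=1$ on $|z_1|=1$ against the Puiseux data then forces $d=1$" is asserted, not demonstrated, and it is the entire content of the theorem; nothing in your sketch explains why a branch with $d\ge 2$, or with non-real Puiseux coefficients after the half-plane change of variables, is actually impossible. Moreover, the two tools you lean on are problematic. First, the Horn Lemma as stated in this paper (Lemma \ref{lem:horn}) is formulated for the parametrizing functions $\Psi^{\mu}_{\ell}$ in \eqref{eqn:LL3}, which are produced by Theorem \ref{thm:smooth} itself, so invoking it here is circular unless you prove a set-level horn estimate independently. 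Second, the Horn Lemma only holds for $\mu\ne\lambda_0$ (in the normalization (A1), $\mu\ne 1$): for the value curve the branches may run along or tangent to the coordinate directions, so your claim that every branch entering $\tau$ is a graph confined to a horn avoiding those directions simply fails for that $\lambda$, and your singularity analysis does not cover it. There is also a smaller unproved claim: invariance of an individual branch (rather than the whole curve $\{q_\lambda=0\}$) under $(z_1,z_2)\mapsto(1/\overline{z_1},1/\overline{z_2})$ needs the prior observation that the branch contains a one-dimensional arc of $\T^2$.

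For comparison, the paper's mechanism is both simpler and needs neither the horn geometry nor the self-reflection identity. One sets $p_\mu=\tilde p-\mu p$ and observes that, since $|\phi|<1$ on $\D^2$ and by reflection, $p_\mu$ has no zeros on $\D^2\cup\E^2$. Transferring to the bi-upper half-plane via $\beta$, the polynomial $q_\mu$ then has no zeros in $\Pi^2\cup(-\Pi)^2$ near $(0,0)$ (for all $\mu$ except the single one with $q_\mu(w_1,0)\equiv 0$, which contributes the vertical line $\{z_2=1\}$). Lemma \ref{lem:smooth1} then shows that this avoidance alone forces every Puiseux branch $w_1=\widehat{\Psi}\bigl(w_2^{1/N}\bigr)$ to have all coefficients real and supported on multiples of $N$: if some coefficient had $\Im\bigl(a_k\zeta^k\bigr)\ne 0$, then for small real $w_2$ the branch would leave $\R$, and a small perturbation of $w_2$ into the upper (or lower) half-plane would produce a zero of $q_\mu$ in $\Pi^2$ (or $(-\Pi)^2$), a contradiction. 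Hence the fractional powers collapse and each branch is an honest convergent power series $w_1=\Psi_\ell(w_2)$, valid for every $\mu\in\T$ including the value $\lambda_0$. If you want to salvage your route, the work you must supply is precisely an argument of this kind at the singular point; the involution and the horn region do not by themselves rule out $d\ge 2$ or complex coefficients.
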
 
The fact that we may have to resolve a level curve into components is illustrated above by the splitting of $\mathcal{C}^*_{1}(1,1)$ into horizontal and vertical axes. 
 
ln \cite{BPS17}, the contact orders of the rational inner function \eqref{faveform} with respect to both variables were computed, and were both found to be equal to $2$. In this paper, we show that contact order for a general RIF can be computed from unimodular level curves. 
\begin{theorem*}[\ref{thm:CO}]
The $z_1$-contact order of $\phi$ at a singularity $\tau \in \T^2$ is determined by the maximal order of vanishing of
$\psi^{\lambda}_i-\psi^{\mu}_j$ at $\tau$, where $z_1=\psi^{\mu}_i(z_2)$ ($i=1, \ldots, m$) and $z_1=\psi^{\nu}_j(z_2)$ ($j=1, \ldots, n$) are parametrizations of the branches of unimodular level curves of $\phi$ for two generic values of $\mu,\nu\in \T$.
\end{theorem*}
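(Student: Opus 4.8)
The plan is to reduce the two-variable statement to a one-variable analysis of the finite Blaschke products obtained by freezing $z_2$, and then to feed the resulting estimates into the Puiseux description of $\mathcal{Z}_{\tilde{p}}$ near $\tau$ that underlies the definition of contact order. Write $\tau=(\tau_1,\tau_2)$, and for $\eta\in\T$ near $\tau_2$ put $B_\eta(z_1):=\phi(z_1,\eta)$; this is a finite Blaschke product in $z_1$ whose zeros in $\D$ are precisely the zeros of $\tilde{p}(\cdot,\eta)$ in $\D$. The branches $z_1=\psi^{\mu}_i(z_2)$ of $\mathcal{C}_\mu$ are, for $z_2=\eta\in\T$, exactly the solutions of $B_\eta(z_1)=\mu$; by Theorem \ref{thm:smooth} each extends to an analytic function of $z_2$ near $\tau_2$, and by the Horn Lemma (Lemma \ref{lem:horn}) the branches meeting $\tau$ are, for generic $\mu$, analytic graphs over the $z_2$-axis with $\psi^{\mu}_i(\tau_2)=\tau_1$. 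Restricting attention to such branches, the maximal order of vanishing of $\psi^{\mu}_i-\psi^{\nu}_j$ at $\tau_2$ is finite because $\mathcal{C}_\mu$ and $\mathcal{C}_\nu$ share no component; it is this quantity we must show equals the $z_1$-contact order $K_1$. Put $\rho(\eta):=\min\{\,1-|w|^2:\ |w|<1,\ \tilde{p}(w,\eta)=0,\ |w-\tau_1|<\varepsilon\,\}$ for a small fixed $\varepsilon>0$.

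For the inequality ``maximal vanishing order $\le K_1$'' I would use only the crude bound $\|B_\eta'\|_{L^\infty(\T)}\lesssim 1/\rho(\eta)$, which follows from the identity $|B_\eta'(\zeta)|=\sum_k(1-|a_k|^2)/|\zeta-a_k|^2$ on $\T$ together with $|\zeta-a_k|\ge 1-|a_k|$. If $\psi^{\mu}_i-\psi^{\nu}_j$ vanishes to order $\ell\ge 1$ at $\tau_2$, then $p^{\mu}:=\psi^{\mu}_i(\eta)$ and $p^{\nu}:=\psi^{\nu}_j(\eta)$ are preimages under $B_\eta$ of $\mu$ and $\nu$ with $|p^{\mu}-p^{\nu}|\asymp|\eta-\tau_2|^\ell$ near $\tau_1$, so integrating $B_\eta'$ along the short arc joining them gives $|\mu-\nu|\lesssim\rho(\eta)^{-1}|\eta-\tau_2|^\ell$, i.e. $\rho(\eta)\lesssim|\eta-\tau_2|^\ell$. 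Choosing a zero $w(\eta)$ realizing $\rho(\eta)$ and setting $w_k:=(w(\eta_k),\eta_k)\in\mathcal{Z}_{\tilde p}^1$ for an arbitrary sequence $\eta_k\to\tau_2$, we obtain $\mathrm{dist}(w_k,\T^2)=1-|w(\eta_k)|\le\rho(\eta_k)\lesssim|\eta_k-\tau_2|^\ell\le\mathrm{dist}(w_k,\tau)^\ell$, so $K_1\ge\ell$ by definition of the contact order. Hence every admissible $\ell$ satisfies $\ell\le K_1$.

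The reverse inequality is the substantive part. Here I would invoke the Puiseux/Newton-polygon description of $\mathcal{Z}_{\tilde p}$ near $\tau$ recalled in Section \ref{sec:prelim} (see also \cite{BPS17}) to conclude that $\rho(\eta)\asymp|\eta-\tau_2|^{K_1}$ as $\eta\to\tau_2$ along the sequence realizing the contact order, and that the minimizing zeros satisfy $|w(\eta)-\tau_1|=O(|\eta-\tau_2|)$, so $\mathrm{dist}((w(\eta),\eta),\tau)\asymp|\eta-\tau_2|$. Near the radial projection $\zeta_0(\eta):=w(\eta)/|w(\eta)|\in\T$, which tends to $\tau_1$, one has $|B_\eta'|\asymp\rho(\eta)^{-1}$ and $B_\eta$ wraps $\T$ on the $\rho(\eta)$-scale, so for generic $\mu,\nu$ there are preimages $p^{\mu}$ of $\mu$ and $p^{\nu}$ of $\nu$ within $C\rho(\eta)$ of $\zeta_0(\eta)$; since moreover $|\mu-\nu|\lesssim\rho(\eta)^{-1}|p^{\mu}-p^{\nu}|$, these satisfy $|p^{\mu}-p^{\nu}|\asymp\rho(\eta)$. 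Passing to a subsequence so that $p^{\mu}=\psi^{\mu}_i(\eta_k)$ and $p^{\nu}=\psi^{\nu}_j(\eta_k)$ for fixed $i,j$, we get branches through $\tau$ with $|\psi^{\mu}_i(\eta_k)-\psi^{\nu}_j(\eta_k)|\asymp\rho(\eta_k)\asymp|\eta_k-\tau_2|^{K_1}$; since $\psi^{\mu}_i-\psi^{\nu}_j$ is analytic and not identically zero, this forces its order of vanishing at $\tau_2$ to equal $K_1$. Combined with the previous step, the maximal order of vanishing equals $K_1$.

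The main obstacle is making the ``wrapping'' step precise. The estimate $\|B_\eta'\|_\infty\lesssim\rho(\eta)^{-1}$ alone does not locate the preimages: one must control \emph{where} the preimages of $\mu$ and of $\nu$ fall relative to the cluster of zeros of $\tilde{p}(\cdot,\eta)$ approaching $\tau_1$, so as to exhibit a pair of branches whose separation is comparable to $\rho(\eta)$ rather than to $|\eta-\tau_2|^{s}$ with $s<K_1$. This is exactly where the Horn Lemma (confining the branches to a horn at $\tau$) and the fine structure of $\mathcal{Z}_{\tilde p}$ near $\tau$ from \cite{BPS17} must do the real work. One also has to record the finitely many genericity conditions on $\mu$ and $\nu$ — excluding $-\tau_1$, the nontangential value $\phi(\tau)$, the values of $\phi$ at any other singularity with second coordinate $\tau_2$, and the finitely many ``coincidence'' values at which some branch difference vanishes to order exceeding $K_1$.
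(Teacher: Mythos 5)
Your first half (every order of vanishing of $\psi^{\mu}_i-\psi^{\nu}_j$ is at most the contact order, i.e.\ the content of the paper's Theorem \ref{thm:CO2}) is essentially correct and takes a genuinely different, more elementary route than the paper: the paper proves this direction by combining the local integrability criterion (Theorem \ref{thm:integral}) with an Euler--Lagrange variational lower bound for $\iint |\partial\phi/\partial z_1|^{\p}$ over the region trapped between the two branches, whereas you get it directly from the mean value inequality and $|B_\eta'(\zeta)|=\sum_k (1-|a_k|^2)/|\zeta-a_k|^2$. Two repairs are needed, though. The global bound $\|B_\eta'\|_{L^\infty(\T)}\lesssim \rho(\eta)^{-1}$ is false in general: if $\phi$ has another singularity with the same second coordinate $\tau_2$, zeros of $B_\eta$ clustering at that other point dominate the sup. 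You only need the bound on the short arc joining $\psi^{\mu}_i(\eta)$ and $\psi^{\nu}_j(\eta)$, which lies near $\tau_1$, where zeros with $|a_k-\tau_1|\ge\varepsilon$ contribute $O(1)$; state it that way. Second, to pass from your zero sequence to the paper's definition of $\K^1_{(1,1)}$ (a maximum over branch exponents, Lemma \ref{lem:LCO} and Definition \ref{def:CO1}), observe that for $\varepsilon$ small the minimizing zeros lie on the finitely many branches $z_1=\psi^0_m(z_2)$ through $\tau$, pass to one branch along a subsequence, and compare with $1-|\psi^0_m(\eta)|\approx|\eta-\tau_2|^{\K^1_m}$. (Your side claim $|w(\eta)-\tau_1|=O(|\eta-\tau_2|)$ is not generally true, since Puiseux branches are only H\"older of exponent $1/N$, but it is never actually used.)

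The existence direction ($\K^1_{(1,1)}\le$ maximal order of vanishing, the paper's Theorem \ref{thm:CO1}) has a genuine gap, and it is exactly the step you flag: ``$B_\eta$ wraps $\T$ on the $\rho(\eta)$-scale, so for generic $\mu,\nu$ there are preimages within $C\rho(\eta)$ of $\zeta_0(\eta)$'' does not follow from $|B_\eta'|\asymp\rho(\eta)^{-1}$. The one zero you control has degree one, so its total movement over any arc is strictly less than $2\pi$, and the remaining Blaschke factors may contribute only $O(\rho(\eta))$ over an arc of length $C\rho(\eta)$; hence the image of that arc is an arc which can omit a short subarc whose position varies with $\eta$, and nothing yet guarantees that a \emph{fixed} pair $(\mu,\nu)$ is hit for a sequence $\eta_k\to\tau_2$, nor does one obtain any control of the exceptional set of values. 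This is precisely what the paper's movement-measure machinery supplies: Lemma \ref{lem:move}(C) gives arcs $A_{\epsilon,\alpha}$ of length $\le c_\epsilon(1-|\alpha|)$ whose image has winding length $>2\pi-\epsilon$, and the proof of Theorem \ref{thm:CO1} then runs a compactness/subsequence argument on the covered arcs to extract a fixed arc $B_\epsilon$ of length $2\pi-2\epsilon$ of values hit for all large $n$, followed by a limiting argument in $\epsilon$ showing at most one value $\mu_0$ can be exceptional. Once that localization is in place, the rest of your second half (the lower bound $|p^{\mu}-p^{\nu}|\gtrsim\rho(\eta)$, passing to fixed branch indices, and concluding the vanishing order is exactly $K_1$) is fine. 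Finally, your proposed genericity exclusion of ``coincidence values where some branch difference vanishes to order exceeding $K_1$'' is unnecessary and circular: your first half shows, for \emph{every} pair, that no such values exist.
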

The precise meaning of ``generic" in this context will be discussed later in the paper.

The rational inner function \eqref{faveform} is symmetric in $z_1$ and $z_2$, and hence its $z_1$- and $z_2$-contact orders have to be equal. 
Using the fact that contact order is witnessed by unimodular level curves, we are able to prove that this, perhaps somewhat surprisingly, is true for any RIF.
\begin{theorem*}[\ref{thm:ECO}]
The $z_1$- and $z_2$-contact orders of a rational inner function are equal at each singularity.
\end{theorem*}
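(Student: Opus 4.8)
The plan is to deduce this from the level--curve description of contact order (Theorem \ref{thm:CO}), applied once to compute the $z_1$-contact order and once, via the symmetry $z_1\leftrightarrow z_2$, to compute the $z_2$-contact order, and then to observe that the quantity Theorem \ref{thm:CO} actually computes --- the order of contact of two branches of generic unimodular level curves at the singularity --- does not depend on which coordinate is used to graph those branches.

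In detail, I would fix a singularity $\tau=(\tau_1,\tau_2)\in\T^2$ and choose $\mu,\nu\in\T$ generic enough for both invocations of Theorem \ref{thm:CO} (the admissible set of values is cofinite, so this is harmless; in particular $\mu,\nu\neq\phi(\tau)$). By Theorem \ref{thm:smooth}, near $\tau$ the level curves $\mathcal{C}_\mu,\mathcal{C}_\nu$ split into finitely many analytic branches through $\tau$. The first invocation presents each branch of $\mathcal{C}_\mu$ as a graph $z_1=\psi^\mu_i(z_2)$ (and each branch of $\mathcal{C}_\nu$ as $z_1=\psi^\nu_j(z_2)$) and identifies the $z_1$-contact order $K_1$ with $\max_{i,j}\mathrm{ord}_{\tau_2}(\psi^\mu_i-\psi^\nu_j)$. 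Applying Theorem \ref{thm:CO} to the rational inner function $(z_1,z_2)\mapsto\phi(z_2,z_1)$ at $(\tau_2,\tau_1)$ presents the same branches instead as graphs $z_2=\chi^\mu_i(z_1)$, $z_2=\chi^\nu_j(z_1)$, and identifies the $z_2$-contact order $K_2$ with $\max_{i,j}\mathrm{ord}_{\tau_1}(\chi^\mu_i-\chi^\nu_j)$. Crucially, the branches of $\mathcal{C}_\mu$ at $\tau$ (resp.\ of $\mathcal{C}_\nu$) form one fixed finite family, so the two maxima run over the same pairs $(i,j)$.

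It then remains to check, for a single pair of branches $B\subset\mathcal{C}_\mu$ and $B'\subset\mathcal{C}_\nu$, that $\mathrm{ord}_{\tau_2}(\psi^\mu_i-\psi^\nu_j)=\mathrm{ord}_{\tau_1}(\chi^\mu_i-\chi^\nu_j)$. Since each of $B,B'$ is \emph{simultaneously} an analytic graph over $z_1$ and over $z_2$, the corresponding graphing functions are mutually inverse near $\tau$ and have nonzero derivative there --- i.e.\ each branch has a tangent line at $\tau$ of finite nonzero slope, consistent with Lemma \ref{lem:horn}. A short computation with the inverse function theorem --- writing $\psi^\mu_i-\psi^\nu_j=a(z_2-\tau_2)^k+\cdots$ with $a\neq0$, composing with the inverse graph maps, and using that their derivatives at $\tau$ do not vanish and that $z_1-\tau_1\asymp z_2-\tau_2$ along the curve --- shows the difference of the inverse graph maps vanishes to the same order $k$ at $\tau_1$. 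Applying this to every pair of branches, the two maxima agree, so $K_1=K_2$ at $\tau$; running the argument at each of the finitely many singularities of $\phi$ completes the proof.

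The step I expect to be the crux is the transversality input: one must know that for generic $\mu$ no branch of $\mathcal{C}_\mu$ at $\tau$ is tangent to a coordinate axis, equivalently that the level curve is genuinely a bidirectional analytic graph near $\tau$, so that Theorem \ref{thm:CO} may legitimately be run in both variables at once. This should follow from the geometry encoded in the Horn Lemma (Lemma \ref{lem:horn}) together with the exclusion of the exceptional value $\phi(\tau)$ --- at which a level curve may degenerate, e.g.\ to the union of the coordinate axes, as happens for \eqref{faveform}. Once that is in hand, the remaining work is routine bookkeeping with orders of vanishing under analytic changes of variable.
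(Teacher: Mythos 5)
Your proposal is correct and takes essentially the same route as the paper: the paper's proof of Theorem \ref{thm:ECO} likewise combines Theorem \ref{thm:CO} with the nonzero linear term guaranteed by Lemma \ref{lem:hornlinearterm} and then inverts the branch parametrizations (via the Lagrange inversion formula rather than your inverse-function-theorem estimate) to transfer order of contact between the two variables. The only cosmetic difference is that the paper works with a single witnessing pair of branches and proves the two inequalities $\K^1_{(1,1)}\le \K^2_{(1,1)}$ and $\K^2_{(1,1)}\le \K^1_{(1,1)}$ symmetrically, rather than checking pairwise preservation of vanishing orders and equating maxima.
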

This means that we can speak of {\it the} contact order $\K_{\tau}$ of an RIF at a singularity $\tau \in \T^2$. The \emph{global contact order} $K$ of $\phi$ is the maximum of $\K_{\tau}$ over all singularities $\tau \in  \T^2$ of $\phi$. This, together with work in \cite{BPS17}, then implies that the first partials $\frac{\partial \phi}{\partial z_1}$ and $\frac{\partial \phi}{\partial z_2}$ of a rational inner function have the same $L^\p$-integrability properties.
\begin{theorem*}[\ref{thm:integral} and \ref{cor:integral}] For a rational inner function $\phi$ and for $1\leq \p< \infty$, we have
\[\frac{\partial \phi}{\partial z_1} \in L^\p(\T^2)\iff K<\frac{1}{\p-1}\iff  \frac{\partial \phi}{\partial z_2}\in L^\p(\T^2).\]
\end{theorem*}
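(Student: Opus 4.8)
The plan is to deduce the statement from two ingredients: the $L^{\p}$-integrability criterion of \cite{BPS17}, and the equality of contact orders established earlier in this paper (Theorem~\ref{thm:ECO}). First I would recall the criterion from \cite{BPS17} in the form needed here: for a rational inner function $\phi$ and each $j \in \{1,2\}$, the partial $\frac{\partial \phi}{\partial z_j}$ lies in $L^{\p}(\T^2)$, for $1 \le \p < \infty$, if and only if a certain global contact order $K_j$ of $\phi$ satisfies $\p < 1 + \tfrac{1}{K_j}$, equivalently $K_j < \tfrac{1}{\p-1}$ (the condition at $\p=1$ being vacuous). Here $K_j = \max_\tau K_j(\tau)$ is the maximum over the singularities $\tau \in \T^2$ of $\phi$ of the relevant contact order $K_j(\tau)$ at $\tau$; this maximum is finite because $p$ and $\tilde p$ are polynomials, so their common zeros on $\T^2$ are finite in number. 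If $\phi$ has no singularities at all, then it extends smoothly across $\T^2$, both partials are bounded, and all assertions below hold trivially (with $K$ interpreted as $0$); so I would assume from the outset that $\phi$ has at least one singularity.

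The heart of the matter is then just the identification of these global contact orders with each other. By Theorem~\ref{thm:ECO}, at every singularity $\tau \in \T^2$ one has $K_1(\tau) = K_2(\tau) =: \K_\tau$. Taking the maximum over the finitely many singularities yields $K_1 = \max_\tau K_1(\tau) = \max_\tau K_2(\tau) = K_2$, and this common value is precisely the global contact order $K$ of $\phi$ defined just above the statement. (If one preferred not to invoke Theorem~\ref{thm:ECO} as a black box, the same equality can be recovered by computing each $K_j(\tau)$ through Theorem~\ref{thm:CO}, as the maximal order of vanishing at $\tau$ of a difference of branch parametrizations of two generic unimodular level curves of $\phi$; that quantity is described symmetrically in the two coordinates once one uses the analytic parametrizations furnished by Theorem~\ref{thm:smooth}.)

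Substituting $K_1 = K_2 = K$ into the \cite{BPS17} criterion then gives, for $1 \le \p < \infty$,
\[
\frac{\partial \phi}{\partial z_1} \in L^{\p}(\T^2) \iff K < \tfrac{1}{\p-1} \iff \frac{\partial \phi}{\partial z_2} \in L^{\p}(\T^2),
\]
which is exactly the assertion of Theorems~\ref{thm:integral} and~\ref{cor:integral}; in particular $\frac{\partial \phi}{\partial z_1}$ and $\frac{\partial \phi}{\partial z_2}$ possess identical $L^{\p}$-integrability. The main obstacle is not in this final assembly, which is bookkeeping, but lies upstream in Theorem~\ref{thm:ECO}: \emph{a priori} the $z_1$- and $z_2$-contact orders record the rates at which two genuinely different objects --- the facial varieties $\mathcal{Z}^1_{\tilde p} \subseteq \overline{\D}\times\T$ and $\mathcal{Z}^2_{\tilde p} \subseteq \T\times\overline{\D}$ --- approach $\T^2$ near a singularity, and there is no transparent reason the two rates should coincide. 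The step requiring real care is therefore the passage, supplied by Theorems~\ref{thm:smooth} and~\ref{thm:CO}, from the one-sided, variety-based definition of contact order to a description witnessed by unimodular level curves that no longer distinguishes $z_1$ from $z_2$; a minor secondary point is to make sure the \cite{BPS17} criterion is used in its sharp open form $K < \tfrac{1}{\p-1}$, including the borderline case $\p = 1 + \tfrac1K$, rather than a weaker one-sided estimate.
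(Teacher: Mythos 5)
Your proposal is correct and follows essentially the same route as the paper: it cites the global integrability criterion of \cite{BPS17} (restated in Section \ref{sec:prelim}, with Theorem \ref{thm:integral} as its local analogue) and combines it with the equality of $z_1$- and $z_2$-contact orders from Theorem \ref{thm:ECO}, which is exactly how Corollary \ref{cor:integral} is deduced in the paper. Your additional remarks on the finiteness of the singular set, the singularity-free case, and the sharpness of the inequality $K<\tfrac{1}{\p-1}$ are harmless bookkeeping consistent with the paper's argument.
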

In fact, we also establish a local $L^\p$-integrability version of this result.
\subsection{Refined results for complicated singularities}
The full strength of some of our results are best illustrated by considering more complicated examples of RIFs. In fact, a secondary objective of our work is to provide examples of RIFs $\phi=\tilde{p}/p$ that allow for detailed analysis while going beyond the $\deg p=(n,1)$ case, which is frequently easier to handle \cite{BLPreprint, Pas, BPS17}. 

Consider the bidegree $(2,1)$ rational inner function
\begin{equation}
\phi(z_1,z_2)=-\frac{4z_1^2z_2-z_1^2-3z_1z_2-z_1+z_2}{4-3z_1-z_2-z_1z_2+z_1^2}
\label{AMYfunction}
\end{equation}
which appears in \cite{AMY12} as an example of a function having a $C$-point at its singularity at $(1,1)$; this entails $\phi$ having higher-order non-tangential regularity. We have $\phi(1,1)=1$, and in \cite[Section 4]{BPS17}, 
it was shown that $\phi$ has contact orders equal to $4$ at its singularity.

\begin{figure}[h!]
    \subfigure[A family of level curves (black), with value curve (red).]
      {\includegraphics[width=0.4 \textwidth]{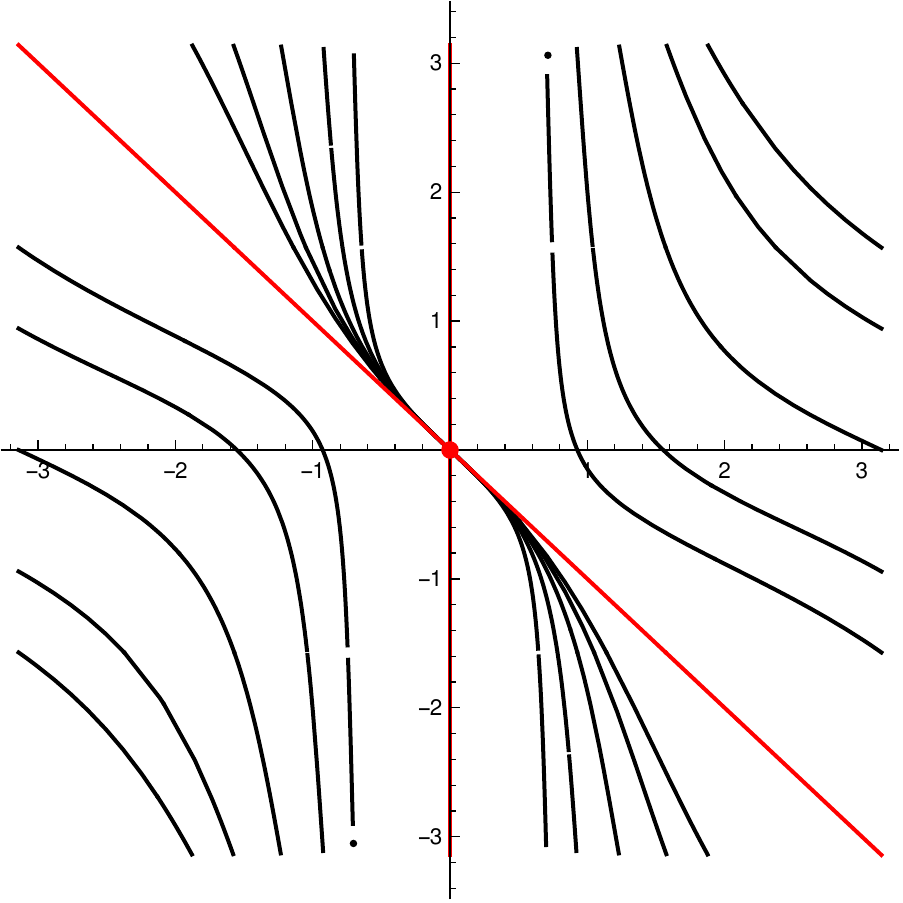}}
    \hfill
    \subfigure[Value curve consisting of vertical axis and anti-diagonal.]
      {\includegraphics[width=0.4 \textwidth]{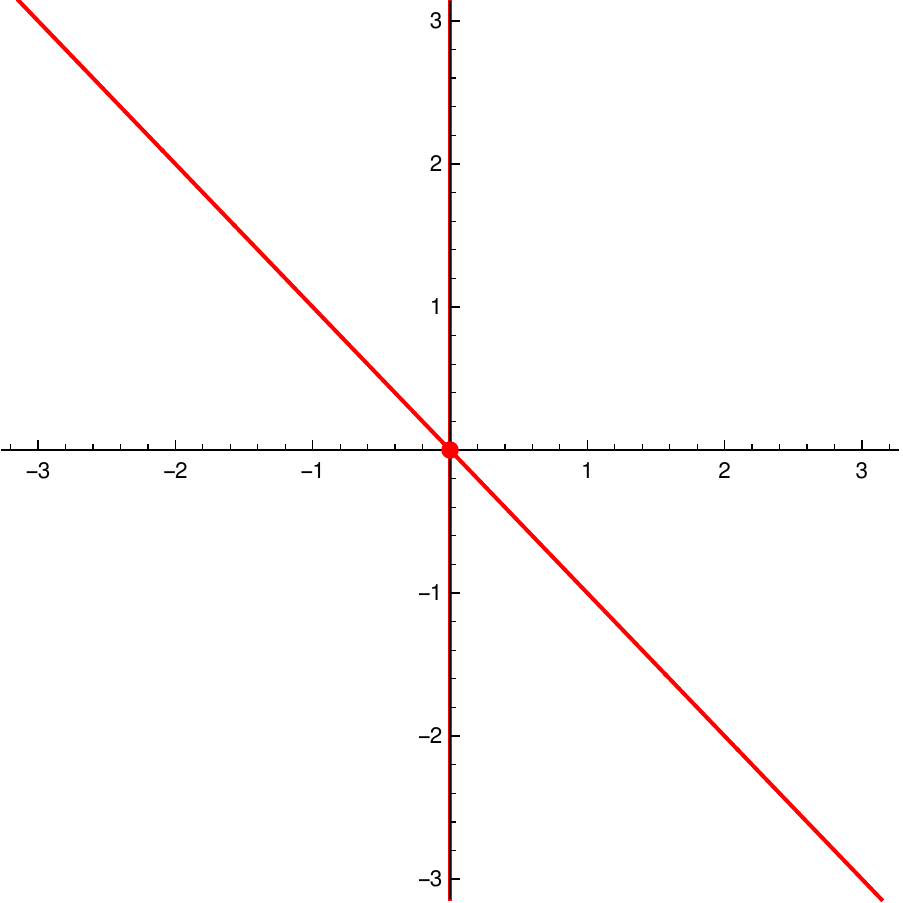}}
  \caption{\textsl{Level curves for the RIF in \eqref{AMYfunction}, exhibiting contact order equal to $4$}.}
  \label{AMYplots}
\end{figure}

These facts can again be seen by examining level sets. Setting $\tilde{p}=p$ yields the equation
\[4(z_1z_2-1)(z_1-1)=0\]
and thus, the level curve associated with the non-tangential value, which we call the value curve, is 
\[\mathcal{C}_1^*(1,1)=\{(1,e^{it_2}))\}\cup \{(e^{it_1},e^{-it_1})\}\]
again a union of smooth curves. By solving $\tilde{p}=\lambda p$ for $\lambda \in \T$ for $z_1$, we obtain a parametrization of level lines by
\[z_2=\psi^{\lambda}(z_1)=\frac{4\lambda-3\lambda z_1+\lambda z_1^2+z_1^2+z_1}{4z_1^2+\lambda z_1-3z_1+\lambda+1}, \quad z_1\in \T.\]

These smooth level curves are shown in Figure \ref{AMYplots} and one can again check by hand that generic level curves meet to order $4$, as guaranteed by Theorem \ref{thm:CO}.
Note that the slanted cross also appears as the value curve for the rational inner function 
\[\varphi(z_1,z_2)=-\frac{2z_1^2z_2-z_1-1}{2-z_1z_2-z_1^2z_2},\]
which was studied in \cite[Section 12]{BPS17}. There, it was computed that this $\varphi$ has contact order $\CO_{(1,1)}=2$ and
hence, a level curve alone does not determine contact order of an RIF: we need at least two level curves. In fact, Theorem \ref{thm:CO} allows for one omitted value $\mu_0 \in \T$, and we call the level curve corresponding to this value the \emph{exceptional level curve}.  As we have seen in our examples, the value curve associated to the non-tangential value at a singularity, exhibits some special features: frequently, the value curve coincides with the exceptional curve, but this is not always the case, as we show by example in Section \ref{sec:zoo}. Level curves that are neither value curves nor exceptional curves will be called \emph{generic}.

The two examples we have discussed so far have the special property that there is only one branch of $\mathcal{Z}_{\tilde{p}}$ coming in to the singularity. In general, however,
several branches of the zero set may come together, and these branches may individually exhibit different contact orders. Similarly, level curves may consist of several components.
In Section \ref{sec:COfine}, we analyze relations between branches of the zero set $\mathcal{Z}_{\tilde{p}}$ and branches of unimodular level curves. 
\begin{theorem*}[\ref{prop:bijection1}]
For a generic $\lambda \in  \T$, suppose $\mathcal{C}_{\lambda}$ is parametrized by finitely many functions $z_1=\psi^{\lambda}_1, \ldots, z_1=\psi^{\lambda}_L$ and $\mathcal{Z}_{\tilde{p}}$ has $L_0$ branches coming into a singularity on $\T^2$. Then $L\geq L_0$. Given two generic $\lambda, \mu \in \T$, and possibly after reordering, the contact order of a branch of $\mathcal{Z}_{\tilde{p}}$ is at most the order of contact between two matching level curves $z_1=\psi_i^{\lambda}(z_2)$ and $z_1=\psi_i^{\mu}(z_2)$.
\end{theorem*}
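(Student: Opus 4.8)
\emph{Proof sketch.} I would prove both assertions by reducing to a one-variable Blaschke product picture on slices $z_2 = w$, $w \in \T$. Fix a singularity, which we may take to be $\tau = (1,1)$, and a generic $\lambda \in \T$. By the Horn Lemma (Lemma~\ref{lem:horn}) together with Theorem~\ref{thm:smooth}, we may assume that near $\tau$ the components of $\mathcal{C}_\lambda$ passing through $\tau$ are graphs $z_1 = \psi^\lambda_i(z_2)$ over an arc of $\T$ around $z_2 = 1$, with $\psi^\lambda_i(1) = 1$; and (after possibly passing to a Puiseux parameter) that the branches of $\mathcal{Z}_{\tilde p}$ at $\tau$ are given by $z_1 = \beta_\ell(z_2)$, $\ell = 1, \dots, L_0$, with $1 - |\beta_\ell(w)| \asymp |1-w|^{k_\ell}$ as $w \to 1$ along $\T$, where $k_\ell$ records the contact order of the $\ell$-th branch (up to the standard normalization). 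For $w \in \T$ outside a finite set, $z_1 \mapsto \phi(z_1, w) =: B_w$ is a finite Blaschke product of degree $m = \deg_{z_1}p$ whose zeros are the roots of $\tilde p(\cdot, w)$ and whose poles are their reflections $1/\overline{\alpha}$ across $\T$; its $\lambda$-level set $\{B_w = \lambda\}$ consists of $m$ distinct points of $\T$. As $w \to 1$, the zeros of $B_w$ tending to $1$ are exactly the $\beta_\ell(w)$, and the corresponding pole satisfies $|1/\overline{\beta_\ell(w)} - \beta_\ell(w)| \asymp |1-w|^{k_\ell}$ as well.

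The inequality $L \geq L_0$ is the easy half. Since the derivative of $\arg B_w$ along $\T$ is a sum of Poisson kernels, a harmonic-measure computation shows that if $d$ zeros of $B_w$ cluster within $\rho$ of a point $\sigma \in \T$ while the remaining zeros stay a fixed distance away, then $B_w$ makes essentially $d$ turns over a suitable arc around $\sigma$ of length $O(\rho)$, and hence $\{B_w = \lambda\}$ has exactly $d$ points within $O(\rho)$ of $\sigma$ for generic $\lambda$. Applied with $\sigma = 1$ and $w \to 1$, this identifies the number of points of $\mathcal{C}_\lambda$ on the slice $z_2 = w$ tending to $1$ with the number of roots of $\tilde p(\cdot, w)$ tending to $1$, which is at least $L_0$; as each such slice point is $(\psi^\lambda_i(w), w)$ for a distinct parametrizing function with $\psi^\lambda_i(1) = 1$, we get $L \geq L_0$.

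For the contact-order estimate I would first establish a refined form of this one-variable fact: inside such a cluster one can match the $d$ level points to the $d$ zeros so that a level point matched to a zero $\alpha$ lies within a fixed multiple of $1 - |\alpha|$ of $\alpha/|\alpha|$, provided $\lambda$ avoids a small exceptional arc. The mechanism is again positivity of the Poisson kernel, now organized into nested sub-clusters by the scales $1 - |\alpha_j|$: a zero at depth $\delta$, together with the zeros and poles at comparable or smaller depths near it, contributes, over an arc of length a large fixed multiple of $\delta$ around $\alpha/|\alpha|$, an argument increment exceeding $2\pi - \varepsilon$, so $B_w$ nearly covers $\T$ there and any level point in that arc is pinned to within $O(\delta)$ of $\alpha$. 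Carried out on the slices $z_2 = w$ as $w \to 1$, and noting that the exceptional arcs stay well away from a generic $\lambda$ (this uses control of $\phi$ near $\tau$ along extremely tangential directions and is the precise content of ``generic''), this produces for each generic $\lambda$ a matching $\beta_\ell(w) \leftrightarrow \psi^\lambda_{\sigma_\lambda(\ell)}(w)$ with $|\psi^\lambda_{\sigma_\lambda(\ell)}(w) - \beta_\ell(w)| \lesssim |1-w|^{k_\ell}$. Given two generic $\lambda, \mu$, relabel the parametrizations so that $\sigma_\lambda(\ell) = \sigma_\mu(\ell) = \ell$; the triangle inequality then gives $|\psi^\lambda_\ell(w) - \psi^\mu_\ell(w)| \lesssim |1-w|^{k_\ell}$, i.e.\ the matched level curves touch to order at least $k_\ell$, which is the claim.

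The main obstacle is this refined one-variable estimate. The crude bound -- a level point landing within $O(\max_j(1-|\alpha_j|))$ of the whole cluster -- is immediate, but pinning a level point to the scale of an individual deep zero in the presence of shallower nearby zeros, and at the same time checking that the exceptional set of $\lambda$ is genuinely negligible and does not drift as the slice degenerates, requires the nested bookkeeping above together with uniform control of $\phi$ along tangential approach regions to $\tau$. A secondary point to record is that the slicewise matchings $\sigma_\lambda$ are independent of $w$ near $w = 1$, so that they match analytic branches and not merely slice points; this follows from analyticity once the matchings are set up.
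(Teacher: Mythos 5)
Your outline follows essentially the same route as the paper's proof of Theorem \ref{prop:bijection1}: slice in $z_2$, view $\phi(\cdot,\zeta)$ as a finite Blaschke product whose zeros near $1$ are the branch values $\psi^0_\ell(\zeta)$, use the fact that each Blaschke factor with zero $\alpha$ produces an argument increment exceeding $2\pi-\epsilon$ over an arc of length $O(1-|\alpha|)$ centered at $\alpha/|\alpha|$ (this is exactly Lemma \ref{lem:move}), conclude that for $\lambda$ off small exceptional arcs there are level points pinned at the scale $1-|\psi^0_\ell(\zeta)|\approx|1-\zeta|^{\K^1_\ell}$ of each zero branch, and finally upgrade the resulting sequence estimate to an order-of-contact statement using analyticity of the level-curve branches. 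So the strategy is right, and the final triangle-inequality and relabeling steps are fine (the paper makes the relabeling slice-independent by passing to subsequences, which is the routine fix you allude to).

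What you leave as the ``main obstacle'' is precisely the content of the paper's argument, and it is worth recording how it is closed, because your sketch of the refined one-variable estimate as stated only yields, for each deep zero, \emph{some} level point in an arc of length $O(\delta)$ around it; when several zero branches have comparable scales and coalesce, this does not by itself produce \emph{distinct} level points for distinct zeros, which is what the injective matching (and hence both $L\ge L_0$ and the reordering claim) requires. The paper's bookkeeping is: sort the arcs $A^n_{\ell,\epsilon}=A_{\epsilon,\psi^0_\ell(\zeta_n)}$ by length, form the partial unions $\cup_{i\le\ell}A^n_{i,\epsilon}$, take the connected component $\mathcal{D}^n_{\ell,\epsilon}$ containing $A^n_{\ell,\epsilon}$ (so $|\mathcal{D}^n_{\ell,\epsilon}|\le \ell\,|A^n_{\ell,\epsilon}|\lesssim 1-|\psi^0_\ell(\zeta_n)|$), and note that if it contains $N_{\ell,\epsilon}$ of the arcs then the image winds at least $N_{\ell,\epsilon}(2\pi-\epsilon)$ times, so $\lambda$ has $N_{\ell,\epsilon}$ occurrences there; an inductive selection then extracts a fresh preimage at each stage, giving distinct indices $j_1,\dots,j_{L_0}$. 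Second, your worry that controlling the exceptional set of $\lambda$ needs ``uniform control of $\phi$ along extremely tangential directions'' is unnecessary: in the paper the exceptional values are handled purely combinatorially, by intersecting the arcs $B_{\ell,\epsilon}$ (each obtained from $\mathbb{T}$ by deleting at most $L_0$ intervals of length $O(\epsilon)$) and a pigeonhole argument with $L_0+1$ candidate pairs, which yields the ``almost every pair $(\lambda,\mu)$'' form of the statement with no analytic input about tangential approach. With those two pieces supplied, your argument matches the paper's proof.
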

We conjecture that the converse statement is also true, so that we have a genuine bijection between contact order of individual branches of $\mathcal{Z}_{\tilde{p}}$ and components of level curves.

To illustrate this bijection,  we consider the bidegree $(4,2)$ polynomial
\begin{multline}
p(z_1,z_2)=-32 + 38 z_2 - 10 z_2^2 + 34 z_1 - 32 z_1z_2 + 2 z_1z_2^2 - 30 z_1^2 + 36 z_1^2 z_2\\- 
 2 z_1^2z_2^2 + 10 z_1^3 - 8 z_1^3z_2 - 6 z_1^3z_2^2 - 6 z_1^4 + 14 z_1^4z_2 - 
 8z_1^4z_2^2
 \label{MBMp}
\end{multline}
and its reflection
\begin{multline}
\tilde{p}(z_1,z_2)=-8 + 14 z_2 - 6 z_2^2 - 6 z_1 - 8 z_1z_2 + 10 z_1z_2^2 - 2 z_1^2 + 36 z_1^2z_2 - 
 30 z_1^2 z_2^2 \\+ 2 z_1^3 - 32 z_1^3z_2 + 34 z_1^3z_2^2 - 10 z_1^4 + 38 z_1^4z_2 - 
 32 z_1^4z_2^2
 \label{MBMptilde}
\end{multline}
and set $\phi=\tilde{p}/p$. This example can be obtained using a construction devised by the second author in \cite{Pas}; we provide a more detailed overview of this method in Section \ref{sec:zoo}.

The rational inner function $\phi$ has two singularities, at $(1,1)$ and $(-1,1)$ respectively. Taking radial limits reveals that  
$\phi(1,1)=1$ and $\phi(-1,1)=-1$. A computation using computer algebra shows that the associated intersection multiplicities (see Section \ref{sec:prelim} for a definition) are $N_{(1,1)}(p,\tilde{p})=14$ and $N_{(-1,1)}(p,\tilde{p})=2$, so that
\[16=N(p,\tilde{p})=N_{\T^2}(p,\tilde{p})=14+2,\]
and hence $p$ and $\tilde{p}$ have no further common zeros in $\mathbb{C}_{\infty}\times \mathbb{C}_{\infty}$ by B\'ezout's theorem. 

\begin{figure}[h!]
    \subfigure[Moduli of roots of $\tilde{p}(z)=0$ as functions of $z_2=e^{it_2}\in \T$.]
      {\includegraphics[width=0.4 \textwidth]{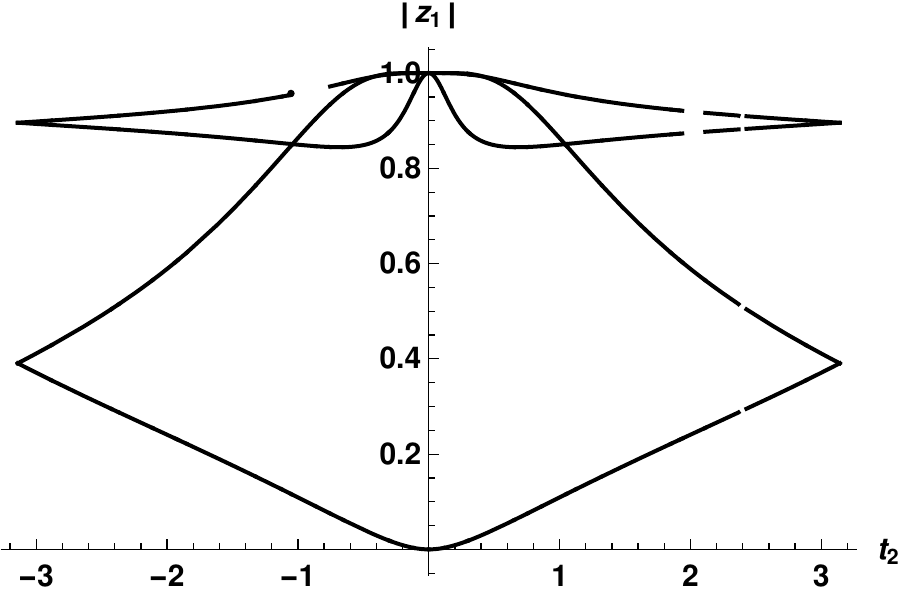}}
    \hfill
    \subfigure[Moduli of roots of $\tilde{p}(z)=0$ as functions of $z_1=e^{it_1}\in \T$.]
      {\includegraphics[width=0.4 \textwidth]{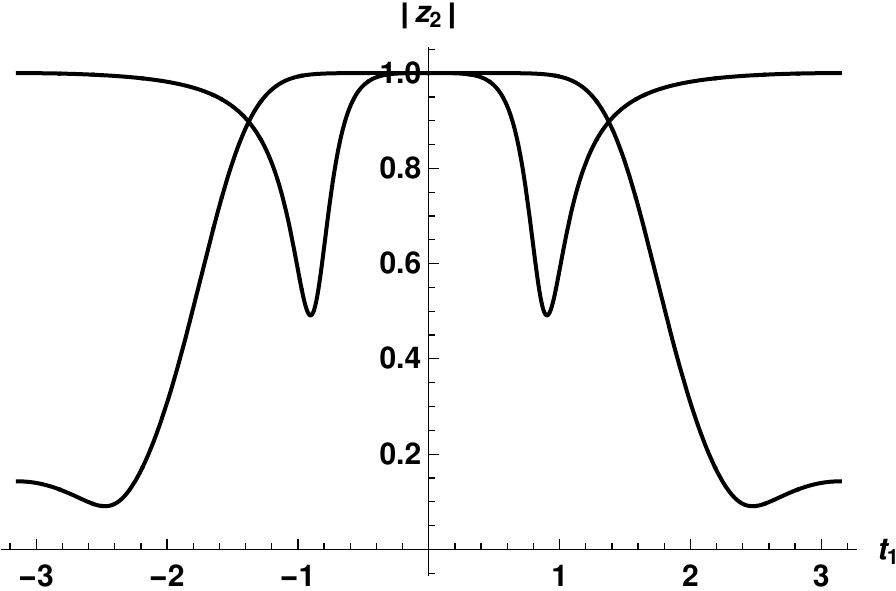}}
  \caption{\textsl{Solutions to $\tilde{p}(z_1,z_2)=0$ on the unit circle for $\tilde{p}$ in \eqref{MBMptilde} }.}
  \label{multiBMplotsroots}
\end{figure}
At the level of zero sets, a single branch of $\mathcal{Z}_{\tilde{p}}$ comes in to $(-1,1)$ with contact order $2$. At $(1,1)$, on the other hand, two branches of $\mathcal{Z}_{\tilde{p}}$ meet: one branch makes contact with the torus to order $4$, while the other has contact order $8$.  This can be seen by solving $\tilde{p}(z_1,z_2)=0$ for $z_1$ and $z_2$, respectively, and displaying the moduli of the resulting roots as functions on the unit circle: the rate at which these quantities approach $1$ is how contact order was originally defined in \cite{BPS17}. There are four branches on the left in Figure \ref{multiBMplotsroots}: one of these does not meet the torus. One of them has $1\mapsto -1$ and corresponds to the point $(-1,1)$ where contact order is $2$. The remaining two functions correspond to the branches meeting at $(1,1)$, one reaching $1$ with order $4$ and the other one with order $8$. On the right, there are two branches: one function takes on modulus $1$ once only, to order $8$, and the other takes on modulus $1$ twice, with order $4$ and $2$ respectively. Since global contact order is defined as a maximum over branches, we have overall contact order $8$ at $(1,1)$. 

The same arrangement is visible in Figure \ref{multiBMplots}, illustrating a bijection that exists between branches of zero sets and level curves of $\phi$,  now consisting of multiple components. Level curves trapped in the left-most horn at $(1,1)$ have order of contact equal to $4$, while level curves contained in the horn bounded by the vertical axis have order of contact $8$. We thus again obtain global contact order by maximizing over orders of contact. 

\begin{figure}[h!]
    \subfigure[Level curves.]
      {\includegraphics[width=0.4 \textwidth]{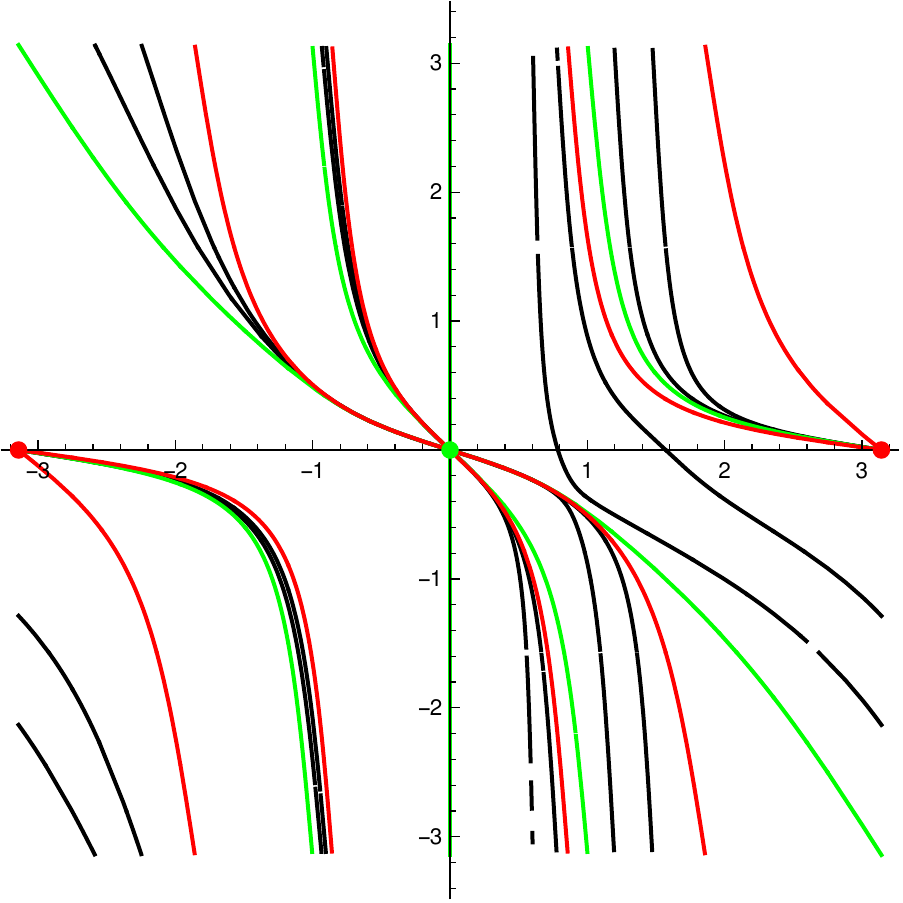}}
    \hfill
    \subfigure[Value curves $\mathcal{C}^*_1(1,1)$ (green) and $\mathcal{C}^*_{-1}(-1,1)$ (red).]
      {\includegraphics[width=0.4 \textwidth]{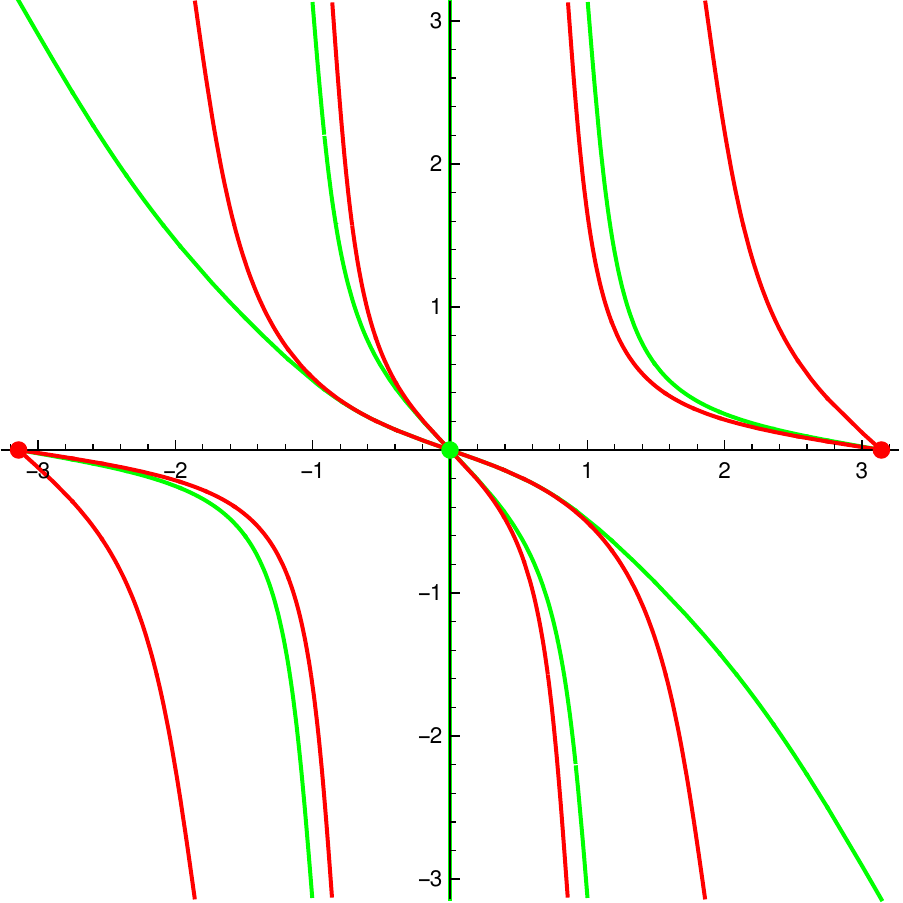}}
  \caption{\textsl{Level curves for $\phi=\tilde{p}/p$ constructed from \eqref{MBMp} and \eqref{MBMptilde}, an RIF with two singularities and multiple zero set branches}.}
  \label{multiBMplots}
\end{figure}

As is to be expected, intersection multiplicity and contact order at a singularity are related, even if they are in general different, as the example above shows. For instance, we prove the following result.
\begin{proposition*}[\ref{prop:COvsIM}]
The intersection multiplicity of $\mathcal{Z}_p$ and $\mathcal{Z}_{\tilde{p}}$ at a singularity $\tau \in \T^2$ of $\phi=\tilde{p}/p$ is bounded by the sum over pairwise minima of contact orders of branches of $\mathcal{Z}_{\tilde{p}}$ coming together at $\tau$.
\end{proposition*}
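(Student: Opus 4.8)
The plan is to localize the intersection multiplicity at $\tau$ to a sum over pairs of branches and then play the zero set of $\tilde p$ against the zero set of $p$ using the reflection symmetry together with semi-stability.

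First I would use the bi-additivity of the local intersection number. Factoring the germs of $p$ and $\tilde p$ at $\tau$ into irreducibles --- one factor for each branch $A_1,\dots,A_{L_0}$ of $\mathcal Z_p$ and $B_1,\dots,B_{L_0}$ of $\mathcal Z_{\tilde p}$ through $\tau$ (finite in number since $p,\tilde p$ are coprime, hence $\mathcal Z_p\cap\mathcal Z_{\tilde p}$ is finite by B\'ezout; equal in number by the reflection symmetry below) --- gives
\[
N_\tau(p,\tilde p)=\sum_{i,j}N_\tau(A_i,B_j),
\]
with $N_\tau(A_i,B_j)$ the intersection number of the two branches at $\tau$. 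The reflection $\rho(z_1,z_2)=(1/\bar z_1,1/\bar z_2)$ is an anti-biholomorphism near $\tau$, fixes every point of $\T^2$, carries $\mathcal Z_{\tilde p}$ onto $\mathcal Z_p$, and distorts $\mathrm{dist}(\cdot,\T^2)$ and $\mathrm{dist}(\cdot,\tau)$ by bounded factors; hence it matches the branches through $\tau$ bijectively, $A_i:=\rho(B_i)$, and preserves each branch's order of contact with $\T^2$. In particular $\mathcal Z_p$ has exactly $L_0$ branches at $\tau$, with the same contact orders $\K_1,\dots,\K_{L_0}$ as the branches of $\mathcal Z_{\tilde p}$.

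The heart of the matter is the pairwise bound
\[
N_\tau(A_i,B_j)\le\min\{\K_i,\K_j\},
\]
which summed over $i,j$ yields the proposition. To prove it I pass to the coordinates $z_k=\tau_ke^{ix_k}$ centered at $\tau$, in which $\T^2$ becomes the real locus and $\D^2$ a product of half-planes. Using the contact-order analysis of \cite{BPS17} and the per-branch equality of $z_1$- and $z_2$-contact orders (a local form of Theorem \ref{thm:ECO}), each branch $B_j$ of $\mathcal Z_{\tilde p}$ through $\tau$ is a graph $x_1=\gamma_j(x_2)$ (possibly in fractional powers of $x_2$) whose expansion has real coefficients in all orders below $\K_j$ and whose order-$\K_j$ coefficient has nonzero imaginary part; moreover that imaginary part is positive, because $\tilde p$, the reflection of the semi-stable $p$, is non-vanishing on $(\C\setminus\overline{\D})^2$, so $\gamma_j$ carries the lower half-plane into the closed upper half-plane. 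The same reasoning applied to the semi-stable polynomial $p$ itself shows each branch $A_i$ of $\mathcal Z_p$ through $\tau$ is a graph $x_1=\alpha_i(x_2)$ with real coefficients below order $\K_i$ and order-$\K_i$ coefficient of \emph{negative} imaginary part. Since $N_\tau(A_i,B_j)$ is governed by the order of vanishing of $\alpha_i(x_2)-\gamma_j(x_2)$ at $x_2=0$, a term-by-term comparison concludes: when $\K_i\ne\K_j$, the coefficient of the smaller of the two orders is real in one expansion but not in the other; when $\K_i=\K_j$, the coefficients at that order have imaginary parts of opposite sign. In every case $\alpha_i-\gamma_j$ is nonzero at order $\min\{\K_i,\K_j\}$, giving the bound.

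I expect the main obstacle to be this third step: establishing the local normal form for the branches in full generality and with the sign as stated. This means treating genuinely singular (Puiseux) branches and branches tangent to a coordinate direction, and --- the delicate point --- showing the leading imaginary part really is sign-definite rather than merely nonzero, so that the $\K_i=\K_j$ case goes through. Here the Horn Lemma (Lemma \ref{lem:horn}), the analytic parametrization of level curves (Theorem \ref{thm:smooth}) and the branch correspondence of Theorem \ref{prop:bijection1} should supply exactly what is needed: together they confine every branch of $\mathcal Z_{\tilde p}$, and every branch of $\mathcal Z_p$, to a horn lying on one definite side of $\T^2$, with the two families on opposite sides. It should be emphasized that this opposite-sides phenomenon --- i.e., the use of $\rho$ --- is genuinely essential: two branches of $\mathcal Z_{\tilde p}$ by themselves can osculate to arbitrarily high order, so only the interaction of $\mathcal Z_{\tilde p}$ with $\mathcal Z_p$ forces the contact to be no larger than the smaller contact order.
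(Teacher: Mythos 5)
Your proposal is correct and follows essentially the same route as the paper: after moving to half-plane coordinates, the paper likewise decomposes $N_\tau(p,\tilde p)$ into pairwise intersection numbers of Puiseux branches (the branches of $\mathcal{Z}_p$ appearing precisely as the complex conjugates $\bar\Psi^0_J$ of the branches of $\mathcal{Z}_{\tilde p}$, which is your reflection $\rho$), and bounds each pair by $\min\{\K_i,\K_j\}$ via the normal form with real coefficients below the contact order and a leading deviation of sign-definite imaginary part. The one step you flag as delicate---the sign-definiteness---is supplied not by the Horn Lemma \ref{lem:horn}, Theorem \ref{thm:smooth}, or Theorem \ref{prop:bijection1} (which concern level curves, not zero-set branches) but by the normal form already recorded in Lemma \ref{lem:LCO} via Knese's Lemma 18.3 and Appendix C of \cite{Kne15}, so your argument closes as written.
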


In terms of applications, our results have ramifications for codistinguished varieties. These varieties meet the closed bidisk along an infinite set in $\T^2$ and they arise as zero sets of polynomials $r$ with $r=\lambda \tilde{r}$ for a constant $\lambda \in \T$; Knese calls such polynomials {\it essentially $\T^2$-symmetric} \cite{Kne10TAMS}. Codistinguished varieties and their distinguished relatives appear in connection with Riemann surfaces \cite{RudTAMS69}, multivariable operator theory and determinantal representations \cite{AM05, Kne10TAMS, PS14}, interpolation \cite{JKS12}, as well as cyclicity problems for shift operators \cite{BKKLSS15}. Note that the value curves of the examples above can be seen to arise as $\mathcal{Z}_r\cap \T^2$ for a codistinguished variety $\mathcal{Z}_r$. We observe in Lemma \ref{thm:LS} (as has Knese \cite{Kne10TAMS}) that any curve in $\T^2$ of this form can be embedded as a level curve of an RIF, and since all such curves are smooth, we then obtain 
\begin{corollary*}[\ref{cor:codist}]
For any codistinguished variety $\mathcal{Z}_{r}$, the set $\mathcal{Z}_r\cap \T^2$ consists of smooth components.
 \end{corollary*}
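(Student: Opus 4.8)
The plan is to deduce smoothness of $\mathcal{Z}_r \cap \T^2$ from Theorem \ref{thm:smooth} via the embedding result Lemma \ref{thm:LS}. First I would recall the hypothesis: $r$ is essentially $\T^2$-symmetric, meaning $r = \lambda \tilde{r}$ for some $\lambda \in \T$, so that $\mathcal{Z}_r = \mathcal{Z}_{\tilde{r}}$. We may assume $r$ is not identically zero and, after removing any monomial factors $z_1^j z_2^k$ (whose zero sets meet $\T^2$ trivially), that $r$ has no zeros on $\T^2$ coming from such factors; the coordinate axes $\{z_1 = 0\}$ and $\{z_2=0\}$ do not intersect $\T^2$, so this reduction costs nothing. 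It then suffices to show that near each point $\tau \in \mathcal{Z}_r \cap \T^2$, the set $\mathcal{Z}_r \cap \T^2$ is locally a finite union of images of analytic arcs.

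Next I would invoke Lemma \ref{thm:LS}: any curve of the form $\mathcal{Z}_r \cap \T^2$ with $\mathcal{Z}_r$ codistinguished arises as the value curve, or more generally as \emph{a} unimodular level curve, of a rational inner function $\phi = \tilde{q}/q$ constructed from $r$. Concretely, one builds $\phi$ so that $\{\zeta \in \T^2 : \phi(\zeta) = \lambda_0\} = \mathcal{Z}_r \cap \T^2$ for an appropriate $\lambda_0 \in \T$; this is the content of the embedding and is the step where the essential $\T^2$-symmetry of $r$ is used (it guarantees that $r$, suitably normalized, has unimodular values forced along $\T^2$ and hence can serve as $\tilde{q} - \lambda_0 q$ up to the usual Rudin--Stout bookkeeping). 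Once this identification is in place, Theorem \ref{thm:smooth} applies directly to $\phi$: the components of the level curve $\mathcal{C}_{\lambda_0}$ of $\phi$ can each be parametrized by analytic functions. Transporting this back through the identification $\mathcal{C}_{\lambda_0} = \mathcal{Z}_r \cap \T^2$ yields that $\mathcal{Z}_r \cap \T^2$ consists of components admitting analytic parametrizations, i.e.\ smooth components, which is the assertion.

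The one point requiring care — and the main (though modest) obstacle — is the precise matching in Lemma \ref{thm:LS}: one must ensure that the polynomial $r$ genuinely produces a \emph{bona fide} rational inner function rather than a degenerate object, and that the level curve of the resulting $\phi$ is exactly $\mathcal{Z}_r \cap \T^2$ and not a proper subset or a set with extra spurious pieces (for instance arising from common factors of the constructed numerator and denominator, or from monomial normalization). This is handled by the standard structure theory of RIFs quoted in the introduction together with the explicit construction in Lemma \ref{thm:LS}; the essential $\T^2$-symmetry $r = \lambda \tilde{r}$ is exactly what is needed to run it. Modulo that bookkeeping, the corollary is immediate from Theorem \ref{thm:smooth}. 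I would close by noting that, since Theorem \ref{thm:smooth} also describes how components come together at singularities of $\phi$, one in fact obtains more: the curve $\mathcal{Z}_r \cap \T^2$ is a finite union of analytic arcs meeting only at finitely many points (the singularities of $\phi$), though only the smoothness of components is asserted in the statement.
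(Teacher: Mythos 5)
Your proposal is correct and follows the paper's own route: it invokes the embedding result (Theorem/Lemma \ref{thm:LS}), which realizes $\mathcal{Z}_r\cap\T^2$ as a unimodular level curve $\mathcal{C}_\lambda$ of the RIF constructed from $r$, and then applies Theorem \ref{thm:smooth} to conclude that the components admit analytic parametrizations. The extra bookkeeping you flag (monomial factors, degeneracy, exact matching of the level set) is already absorbed into the hypotheses of the codistinguished-variety definition and the explicit construction in Lemma \ref{thm:LS}, so the argument is essentially identical to the paper's.
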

In the same section we also present a characterization of when two zero sets $\mathcal{Z}_r$ and $\mathcal{Z}_q$ can be embedded as two different level curves of the same RIF.

\subsection{Structure of the paper}
We begin Section \ref{sec:prelim} by stating some preliminary results and collecting background material including Puiseux series expansions, intersection multiplicities, the definition of contact order, and the Horn Lemma, which describes approach regions for unimodular level curves of an RIF near singularities. Then, we prove that unimodular level curves of  rational inner functions are made up of smooth components. Section \ref{sec:CO} is dedicated to proving that facial contact order at a singularity can be read off by examining order of touching of generic unimodular level lines, a quantity we call \emph{order of contact}. This requires a careful analysis of Blaschke products arising from fixing one variable and viewing an RIF as a one-variable inner function in $\mathbb{D}$, together with a variational argument. In Section \ref{sec:COeq}, we prove that $z_1$- and $z_2$-contact orders of an RIF $\phi=\tilde{p}/p$ at a singular point are always equal, and we relate contact order to intersection multiplicity of $\mathcal{Z}_{p}$ and $\mathcal{Z}_{\tilde{p}}$ at a singularity.
Section \ref{sec:COfine} is devoted to a finer analysis of contact orders and order of contact. We exhibit a sophisticated generic mapping between branches of the zero set of the numerator of an RIF and the components of level curves of the associated RIF. In Section \ref{sec:construct} we present several different methods of constructing RIFs that allow us to prescribe properties of their zero sets, level lines, and singularities. Further examples that require more technical analysis or constructions from Section \ref{sec:construct}, or are related to finer points of our proofs, are discussed in Section \ref{sec:zoo}.

\section{Level sets near singularities} \label{sec:prelim}
\subsection{Preliminaries}
Let $\phi$ be an RIF on $\D^2$. As was mentioned in the Introduction, by \cite[Theorem $5.2.5$]{Rud69},
\[\phi(z_1, z_2)=\eta z_1^Mz_2^N\frac{\tilde{p}(z_1,z_2)}{p(z_1,z_2)},\]
where  $p$ is a polynomial of bidegree $(m,n)$ with no zeros in the bidisk, $M$ and $N$ are non-negative integers, $\tilde{p}(z_1, z_2):=z_1^mz_2^n\overline{p\big(\frac{1}{\bar{z}_1}, \frac{1}{\bar{z}_2}\big)}$
is the reflection of $p$, and $\eta$ is a unimodular constant. Without loss of generality, we can take $p$ to be atoral, so $p$ has at most finitely many zeros on $\mathbb{T}^2$, see \cite{AMS06}. As shown in \cite{Kne15}, $p$ also has no zeros on $(\D \times \T) \cup (\T \times \D).$  
As $\phi$ only has singularities at the zeros of $p$, it can have at most finitely many singularities on $\overline{\D^2}$ and these must all occur on $\mathbb{T}^2$. A monomial term will have little impact on the behavior of $\phi$ near a singular point and so, henceforth we will usually assume $\phi = \frac{\tilde{p}}{p}$ except in situations where the full characterization of RIFs is needed. 

Assume $\phi$ has a singularity at $\tau=(\tau_1, \tau_2) \in\mathbb{T}^2$.  We will study the local behavior of $\phi$ near such a singularity via two main objects:
\begin{itemize}
\item[1.] \textbf{The Zero Set of $\phi$.}  As $|\phi(z)|\le 1$ on $\mathbb{D}^2$, it follows that $p(\tau_1,\tau_2) = 0 = \tilde{p}(\tau_1,\tau_2).$ Thus, 
\[ \mathcal{Z}_{\tilde{p}}:= \{ (z_1, z_2) \in \mathbb{C}^2: \tilde{p}(z_1,z_2)=0\}\]
must have components passing through $\tau.$ In the first half of this preliminary section, we will parametrize such components of $\mathcal{Z}_{\tilde{p}}$ and precisely characterize the ways in which they can approach $\tau.$ \\

\item[2.] \textbf{The Unimodular Level Curves of $\phi$.}
For each $\lambda \in \mathbb{T}$, define 
\[ \mathcal{L}_{\lambda}(\phi) : = \left \{ (z_1, z_2) \in\mathbb{C}^2: \tilde{p}(z_1,z_2) = \lambda p(z_1,z_2) \right \}.\]
Then one can show (see Lemma \ref{lem:singularity}) that the \emph{level curve} $\mathcal{C}_{\lambda}:= \mathcal{L}_{\lambda}(\phi) \cap \mathbb{T}^2$ contains $\tau$ in its closure.  In the second half of this preliminary section, we obtain nice parametrizations of unimodular level curves and study how they pass through $\tau$.  

There is a special level curve associated with a singularity $\tau\in \T^2$ of $\phi$. Lemma $2.3$ in \cite{BPS17} gives a specific $\lambda_0 \in \mathbb{T}$ so that whenever $(z_n) \subseteq \mathbb{D}^2$ approaches $(\tau_1, \tau_2)$ nontangentially, $\phi(z_n)$ approaches $\lambda_0$: this number $\lambda_0$ will be referred to as the \emph{non-tangential value} of $\phi$ at the singularity $\tau$. We will call the level set $\mathcal{C}^*_{\lambda_0}(\tau)$ the \emph{value curve} of $\phi$ at $\tau=(\tau_1, \tau_2)$. 
\end{itemize}

In what follows, we will study the \emph{local} behavior of $\phi$ near a given singularity. Thus, without loss of generality, we will often make the following assumption:
\begin{center} (A1) \ \ Let $\phi = \frac{\tilde{p}}{p}$ be an RIF on $\mathbb{D}^2$ with a singularity at $(1,1)$ and associated $\lambda_0=1.$ 
\end{center}
It should be noted that if $\phi$ has multiple singularities 
on the two-torus, then each singularity has its own associated value curve. Away from its own singularity, a value curve usually exhibits the same features as any other level curve.
We shall frequently denote the value curve by $\mathcal{C}^*_{\lambda_0}$ when there is a unique singularity, or when it is clear from the context which singularity we are considering.
\subsection{Local Zero Set Behavior} 

\subsubsection{Parametrization.} 

As in \cite{BPS17}, we use Puiseux series to give local descriptions of $\mathcal{Z}_{\tilde{p}}$. To do this rigorously, we will need to transfer the problem to the upper half plane $\Pi$ via the following conformal map and its inverse:
\begin{equation}
 \beta \colon \Pi \rightarrow \mathbb{D}, \ \beta(w) := \frac{1+iw}{1-iw} \ \ \ \text{ and } \  \ \ \beta^{-1} \colon  \mathbb{D} \rightarrow \Pi, \ \beta^{-1}(z) := i \left[ \frac{1-z}{1+z} \right]. 
\label{eqn:beta}
\end{equation}
Then we can prove: 

\begin{theorem} \label{thm:zero} Assume $\phi$ satisfies (A1). Then there is an open set $\mathcal{V} \subseteq \mathbb{C}^2$ containing $(1,1)$ and positive integers $L_0, M_{1}, \dots, M_{L_0}$ such that the components of $\mathcal{Z}_{\tilde{p}} \cap \mathcal{V}$ can be described by the formulas
\begin{equation} \label{eqn:zero1}  z_1 = \psi_1^0(z_2), \ \dots, \ z_1 = \psi_{L_0}^0(z_2),\end{equation}
where the $\psi_{\ell}^0$ are obtained from convergent power series and have discontinuities only when $z_2=\beta(w_2)$ for  $w_2\in (-\infty, 0]$. 
\end{theorem}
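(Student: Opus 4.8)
The plan is to transport the problem to the half-plane model of the bidisk, where the singularity sits at the origin, apply the classical Newton--Puiseux theorem there, and transport the resulting solutions back to $\D^2$ while tracking exactly where branch cuts are introduced. First I would pass to coordinates $w_i = \beta^{-1}(z_i)$ with $\beta$ as in \eqref{eqn:beta}, so that $(1,1)$ corresponds to $(0,0) \in \Pi^2$ and $\D^2$ to $\Pi^2$, and introduce the polynomial
\[
Q(w_1,w_2) := (1-iw_1)^m(1-iw_2)^n\,\tilde{p}\big(\beta(w_1),\beta(w_2)\big),
\]
of bidegree at most $(m,n)$, with $Q(0,0)=0$. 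The crucial preliminary step is to verify that $Q(w_1,0)\not\equiv 0$: otherwise $z_2-1$ divides $\tilde p$, so $\tilde p$ vanishes on the infinite set $\T\times\{1\}\subseteq\T^2$, contradicting atorality of $p$ — recall $p$ and $\tilde p$ have the same zeros on $\T^2$. Let $d\ge1$ be the order of vanishing of $w_1\mapsto Q(w_1,0)$ at $0$. We may also shrink a polydisk neighbourhood of $(0,0)$ so that it meets only those branches of $\{Q=0\}$ through $(0,0)$, since the finitely many irreducible branches of $\mathcal Z_{\tilde p}$ not containing $(1,1)$ remain a positive distance away from it.

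Next I would invoke the Weierstrass preparation theorem in $w_1$ to factor $Q = U\cdot W$ on such a polydisk, with $U$ nonvanishing and $W(w_1,w_2) = w_1^d + a_{d-1}(w_2)w_1^{d-1} + \cdots + a_0(w_2)$ a Weierstrass polynomial satisfying $a_j(0)=0$. Since $U$ is a unit, $\{Q=0\} = \{W=0\}$ locally, so it suffices to parametrize $\{W=0\}$. Newton--Puiseux then decomposes the $d$ roots of $W(\cdot,w_2)$ over a punctured disk into $L_0$ conjugacy cycles of lengths $M_1,\dots,M_{L_0}$ with $\sum_\ell M_\ell = d$, the $\ell$-th cycle given by $w_1 = g_\ell(t)$, $t^{M_\ell}=w_2$, for a convergent power series $g_\ell$; because all $a_j(0)=0$, every root tends to $0$ as $w_2\to0$, so $g_\ell(0)=0$.

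Finally I would fix the principal branch of the $M_\ell$-th root, which is holomorphic on $\C\setminus(-\infty,0]$ and in particular on $\Pi$ — the region where $w_2 = \beta^{-1}(z_2)$ lies when $z_2\in\D$ — and define
\[
\psi_\ell^0(z_2) := \beta\!\left(g_\ell\!\big((\beta^{-1}(z_2))^{1/M_\ell}\big)\right).
\]
For $z_2$ near $1$ this is a composition of holomorphic maps provided $\beta^{-1}(z_2)\notin(-\infty,0]$ and $|\beta^{-1}(z_2)|$ is small enough that $g_\ell$ converges and stays near $0$, away from the pole $w_1=-i$ of $\beta$. Since $\beta^{-1}(z_2)\in(-\infty,0]$ exactly when $z_2 \in \beta\big((-\infty,0]\big)$, each $\psi_\ell^0$ is analytic off that arc, and after shrinking $\mathcal V$ (to exclude, e.g., the pole $z_2=-1$ of $\beta^{-1}$) this is the only possible discontinuity set, as claimed. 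Unwinding the substitutions, a point $(z_1,z_2)\in\mathcal V$ with $z_2$ off the arc lies in $\mathcal Z_{\tilde p}$ iff $z_1 = \psi_\ell^0(z_2)$ for some $\ell$; the arc has measure zero and the branches extend across it by continuity, and the $M_\ell$ values of the root recover the $M_\ell$ sheets of the $\ell$-th component, so the $\psi_\ell^0$ describe the components of $\mathcal Z_{\tilde p}\cap\mathcal V$.

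I expect the main obstacle to lie in this last step: checking that the \emph{only} obstruction to analyticity of each $\psi_\ell^0$ is the prescribed arc $\beta((-\infty,0])$ — ruling out spurious singularities from $\beta^{-1}$ or from $g_\ell$ hitting a pole of $\beta$, which requires a judicious choice of $\mathcal V$ — and in confirming that the graphs of $\psi_1^0,\dots,\psi_{L_0}^0$ genuinely exhaust $\mathcal Z_{\tilde p}$ near $(1,1)$ rather than some proper subvariety. The Newton--Puiseux and Weierstrass inputs themselves are classical and would be cited rather than reproved.
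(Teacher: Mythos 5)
Your route is the paper's own: transfer to $\Pi^2$ via $\beta$, check that the transferred polynomial $Q$ does not vanish identically on the $w_1$-axis, run Weierstrass preparation plus Newton--Puiseux at $(0,0)$, and pull the series back through $\beta$, with the only discontinuities those forced by the choice of branch of $w_2^{1/M_\ell}$, i.e.\ on the arc $\beta\big((-\infty,0]\big)$. Your preliminary check is also the right one: the condition needed to prepare in $w_1$ is exactly $Q(w_1,0)\not\equiv 0$ (this is the hypothesis of Lemma \ref{lem:smooth1}), and your atorality argument for it is a legitimate substitute for the paper's appeal to $p$ and $\tilde p$ having no common factor.

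The one genuine problem is the branch bookkeeping at the end --- precisely the point you flagged as the likely obstacle. Having fixed the principal branch of the $M_\ell$-th root, the single-valued function $\psi_\ell^0(z_2)=\beta\big(g_\ell((\beta^{-1}(z_2))^{1/M_\ell})\big)$ parametrizes only \emph{one} of the $M_\ell$ sheets of the $\ell$-th Puiseux cycle: already for $W(w_1,w_2)=w_1^2-w_2$ the sheet $w_1=-w_2^{1/2}$ is not of the form $w_1=g(w_2^{1/2})$ with the principal root, so your asserted equivalence ``$(z_1,z_2)\in\mathcal Z_{\tilde p}\cap\mathcal V$ iff $z_1=\psi_\ell^0(z_2)$ for some $\ell$'' fails whenever some cycle length exceeds $1$. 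Your closing remark that ``the $M_\ell$ values of the root recover the $M_\ell$ sheets'' is the correct idea, but it contradicts having fixed a single branch, and with one function per cycle the graphs do not exhaust $\mathcal Z_{\tilde p}\cap\mathcal V$. The paper's resolution, as in \eqref{eqn:zero2}, is to list one single-valued formula for \emph{each determination} of the root: a cycle of length $N_k$ contributes $N_k$ functions, each still discontinuous only on $\beta\big((-\infty,0]\big)$, and $L_0=N_1+\cdots+N_L$ is the total number of determinations rather than the number of cycles, with $M_\ell$ merely recording which ramification index the $\ell$-th formula came from (this labeling is used later, e.g.\ in Lemma \ref{lem:LCO}). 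With that adjustment --- and your shrinking of $\mathcal V$ so that the change of variables and the convergence of the $g_\ell$ are valid --- your argument coincides with the paper's proof.
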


\begin{proof} Let $\phi$ satisfy (A1) with $\deg p = (m,n)$ and define the polynomial 
\begin{equation} \label{eqn:q} q(w_1, w_2) := (1-iw_1)^m(1-iw_2)^n \tilde{p}\left (\beta(w_1), \beta(w_2)\right).\end{equation}
Then $q(0,0)=\tilde{p}(1,1) =0$. Moreover, as $\tilde{p}$ and $p$ possess no common factors, 
it follows that $q(0, w_2)$ is not identically $0$. 

Then Remark $3.4$ in \cite{BPS17} gives an open set $\mathcal{U} \subseteq \mathbb{C}^2$ containing $(0,0)$ where $\mathcal{Z}_{q}$ can be parameterized using Puiseux series. Specifically, all $(w_1, w_2) \in \mathcal{Z}_{q} \cap \mathcal{U}$ are given by the curves
\begin{equation} \label{eqn:qzero} w_1= \Psi^0_1\left(w_2^{\frac{1}{N_1}} \right), \dots,  w_1= \Psi_{L}^0 \left(w_2^{\frac{1}{N_L}}\right), \end{equation}
where $\Psi^0_1, \dots, \Psi^0_{L}$ are power series that converge in a neighborhood of $0$ each having $\Psi^0_{\ell}(0)=0$, the $L, N_1, \dots, N_{L}$ are positive integers, 
and for $w_2 \ne 0$, each term $w_2^{\frac{1}{N_{\ell}}}$ assumes $N_{\ell}$ separate values. Moreover, for $w_2$ sufficiently small, each $\big(  \Psi^0_{\ell}\big(w_2^{1/N_{\ell}} \big), w_2\big) \in \mathcal{Z}_q.$ 

Now set each $z_j = \beta(w_j)$. Then on an open set $\tilde{\mathcal{V}}\supseteq \overline{\mathbb{D}^2}$, we have $\tilde{p}(z_1, z_2) = 0$ if and only if $q(w_1, w_2)=0$. Define $\mathcal{V} = \beta(\mathcal{U})\cap  \tilde{\mathcal{V}}.$ Then $\mathcal{V} \subseteq \mathbb{C}^2$ is an open set containing $(1,1)$ and all $(z_1, z_2)$ in $\mathcal{Z}_{\tilde{p}} \cap \mathcal{V}$ are of the form
\begin{equation} \label{eqn:zero2} z_1= \beta\left( \Psi^0_1\left( \beta^{-1}(z_2)^{\frac{1}{N_1}} \right)\right), \dots,  z_1= \beta\left( \Psi_{L}^0\left( \beta^{-1}(z_2)^{\frac{1}{N_{L}}} \right)\right).\end{equation}
By fixing the standard branches of each  $\beta^{-1}(z_2)^{\frac{1}{N_{\ell}}}$ with discontinuities on $(-\infty, 0]$, we can alternately write $\mathcal{Z}_{\tilde{p}} \cap \mathcal{V}$ using $L_0 := N_1 + \dots + N_{L}$ formulas,
\[ z_1 = \psi_1^0(z_2),  \ \dots, \ z_1 = \psi_{L_0}^0(z_2).\]
For each $1 \le \ell \le L_0$, set $M_{\ell} = N_k$ where $\psi_{\ell}^0(z_2)=  \beta\big( \Psi^0_k\big( \beta^{-1}(z_2)^{\frac{1}{N_k}} \big) \big).$ Then each $\psi_{\ell}^0$  only has discontinuities when $z_2 =\beta(w_2)$ with $w_2\in (-\infty, 0]$. 
\end{proof}
\begin{remark} It is worth pointing out that the discontinuity mentioned in Theorem \ref{thm:zero} is somewhat artificial. It is a consequence of the fact that later we will need separate formulas for each piece or curve of $\mathcal{Z}_{\tilde{p}}.$ If instead, we studied the components of $\mathcal{Z}_{\tilde{p}}$ using the formulas in \eqref{eqn:zero2}, everything would appear continuous.

Note also that the branches of $\mathcal{Z}_{\tilde{p}}$ can only intersect a finite number of times near $z_2=1$ as the $\psi^0_j$ in \eqref{eqn:zero1} are algebraic functions.
\end{remark}

\subsubsection{Intersection Multiplicity.} \label{subsec:IM}

If $\phi$ has a singularity at $(1,1)$, then both $p$ and $\tilde{p}$ must vanish at $(1,1)$, so $(1,1)$ is an intersection point of $\mathcal{Z}_p$ and $\mathcal{Z}_{\tilde{p}}$.  The ``amount'' of intersection at a common zero $\tau$ of two polynomials $p$ and $q$ is called the \emph{intersection multiplicity} and is denoted $N_{\tau}(p, q).$ 

In this situation, $N_{(1,1)}(p, \tilde{p})$ can be computed  using the Puiseux series representations of $\mathcal{Z}_{q}$, as detailed in \cite[Appendix C]{Kne15}, where $q$ is the polynomial from \eqref{eqn:q}. In particular, transfer to $\Pi^2$ and factor $q=\alpha q_1 \cdots q_L$, where $\alpha$ is a unit and each $q_\ell$ is an irreducible Weierstrass polynomial in $w_1$ of degree $N_{\ell}$. Then define $\bar{q}(w_1, w_2) := \overline{q(\bar{w}_1, \bar{w}_2)}$, so  $\bar{q}=\bar{\alpha} \bar{q}_1 \cdots \bar{q}_L$ is a Weierstrass factorization of $\bar{q}$. Then the intersection multiplicity is:
\[ N_{(1,1)}(p, \tilde{p}) = N_{(0,0)}(q, \bar{q}) = \sum_{j=1}^L \sum_{k=1}^L N_{(0,0)} (q_j, \bar{q}_k),\]
where each $N_{(0,0)} (q_j, \bar{q}_k)$ is the order of  vanishing of the resultant
\[ \prod_{i=1}^{N_j} \prod_{\ell=1}^{N_k} \left(   \Psi^0_j\left ( \zeta^i t^{\frac{1}{N_j}}\right) -  \bar{\Psi}_k^0\left( \eta^\ell  t^{\frac{1}{N_k}} \right) \right),\]
where $\Psi^0_j$ and $\Psi^0_k$ are from \eqref{eqn:qzero} and $\zeta$ and $\eta$ are primitive $N_j^{th}$ and $N_k^{th}$ roots of unity respectively.  The arguments in \cite{Kne15} also show that 
 $N_{(1,1)}(p, \tilde{p})$ is even.  Moreover if $\deg p = (m,n)$, then  {B\'ezout's theorem} implies
\[N(p, \tilde{p}):=\sum_{\tau \in \mathcal{Z}_p\cap \mathcal{Z}_{\tilde{p}}}N_{\tau}(p,  \tilde{p})= 2mn,\]
and so in particular, the sum of the intersection multiplicities of common zeros of $p$ and $\tilde{p}$ on $\T^2$ is at most $2mn.$ See \cite{FulBook, Coxetal} for background and methods for computing intersection multiplicity. 

\subsubsection{Local Contact Order}

To see how $\mathcal{Z}_{\tilde{p}}$ approaches $(1,1)$, we require the following lemma:

\begin{lemma} \label{lem:LCO} Assume $\phi$ satisfies (A1) and has branches of $\mathcal{Z}_{\tilde{p}}$ given by \eqref{eqn:zero1}. Then for each branch $z_1  = \psi_{\ell}^0(z_2)$, there is an  
even number $\K^1_{\ell}$ so that 
\begin{equation} \label{eqn:CO} 1- |\psi_\ell^0(\zeta_2)| \approx |1 - \zeta_2|^{\K^1_\ell},\end{equation}
for all $\zeta_2 \in \mathbb{T}$ sufficiently close to $1$.  The number $\K^1_\ell$ is called the  \emph{$z_1$-contact order of the branch $z_1 = \psi_\ell^0(z_2)$}. Furthermore if,  $z_1 = \psi_{\ell}^0(z_2)$ and $z_1= \psi_{j}^0(z_2)$ are different branches of $\mathcal{Z}_{\tilde{p}}$ corresponding to the same $\Psi^0_{k}$ from \eqref{eqn:zero2}, then $\K^1_\ell = \K^1_j.$ 
\end{lemma}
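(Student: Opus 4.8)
The plan is to transfer everything to the upper half-plane $\Pi$ via the maps $\beta,\beta^{-1}$ of \eqref{eqn:beta}, reduce the claim to the rate of vanishing of the imaginary part of a Puiseux branch along the real rays, and then use semi-stability of $p$ to force the resulting exponent to be an even integer. First I would record the dictionary: for $z=\beta(w)$ with $w$ near $0$ one has $1-|z|^2=\tfrac{4\,\mathrm{Im}\,w}{|1-iw|^2}$ and $|1-z|=\tfrac{2|w|}{|1-iw|}$, so $1-|z|\asymp\mathrm{Im}\,w$ and $|1-z|\asymp|w|$, and since $\beta^{-1}(\mathbb T)=\mathbb R$ a point $\zeta_2\in\mathbb T$ near $1$ becomes $w_2=\beta^{-1}(\zeta_2)\in\mathbb R$. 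By Theorem~\ref{thm:zero}, $\psi^0_\ell(z_2)=\beta\big(\Psi^0_k(\beta^{-1}(z_2)^{1/N_k})\big)$ for the series $\Psi^0_k$ from which $\psi^0_\ell$ arises (so $M_\ell=N_k$); writing $w_1=\Psi^0_k(w_2^{1/N_k})$, it then suffices to prove $\mathrm{Im}\,w_1\asymp|w_2|^{\mathcal K}$ as $\mathbb R\ni w_2\to 0$, for an even positive integer $\mathcal K$ that does not depend on the choice of $N_k$-th root.

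The key preliminary step is that every branch stays strictly inside $\mathbb D$ in the first coordinate, i.e.\ $\mathrm{Im}\,w_1>0$ for all small $w_2\in\mathbb R\setminus\{0\}$ and all choices of root. Indeed, if $\tilde p(z_1,\zeta_2)=0$ with $|\zeta_2|=1$ and $|z_1|\ge 1$, then from $\tilde p(z_1,\zeta_2)=z_1^m\zeta_2^n\overline{p(1/\bar z_1,\zeta_2)}$ we get either $|z_1|=1$, in which case $p(z_1,\zeta_2)=0$ as well (since $|\tilde p|=|p|$ on $\mathbb T^2$) and $(z_1,\zeta_2)$ is a singularity of $\phi$ — impossible for $\zeta_2\ne1$ on a small enough neighborhood, as $\phi$ has finitely many singularities — or $|z_1|>1$, in which case $p(1/\bar z_1,\zeta_2)=0$ with $(1/\bar z_1,\zeta_2)\in\mathbb D\times\mathbb T$, contradicting that $p$ has no zeros there \cite{Kne15}. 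Hence $|\psi^0_\ell(\zeta_2)|<1$ for $\zeta_2\in\mathbb T\setminus\{1\}$ near $1$; in particular $\psi^0_\ell\not\equiv1$.

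The heart of the matter is then an argument on roots of unity. Writing $\Psi^0_k(\sigma)=\sum_{j\ge1}c_j\sigma^j$, the condition $w_2=\sigma^{N_k}\in\mathbb R$ forces $\sigma$ onto one of the $2N_k$ rays $\arg\sigma=\pi\kappa/N_k$, $\kappa=0,\dots,2N_k-1$, and on ray $\kappa$ one has $w_2=(-1)^\kappa r^{N_k}$ ($r=|\sigma|$) and
\[
\mathrm{Im}\,w_1=\sum_{j\ge1}\mathrm{Im}\big(c_j\,e^{ij\pi\kappa/N_k}\big)\,r^{j},
\]
a power series in $r$. Let $J_\kappa$ be its order; by the previous step it is finite and its leading coefficient is positive, so $\mathrm{Im}\,w_1\asymp|w_2|^{J_\kappa/N_k}$ on ray $\kappa$. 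Put $J^\ast=\min_\kappa J_\kappa$, realized at some $\kappa^\ast$. For every $\kappa$ we must have $\mathrm{Im}(c_{J^\ast}e^{iJ^\ast\pi\kappa/N_k})\ge 0$ (else $\mathrm{Im}\,w_1<0$ for small $r$ on ray $\kappa$), with strict inequality at $\kappa^\ast$; but as $\kappa$ varies the exponentials $e^{iJ^\ast\pi\kappa/N_k}$ run over the $d$-th roots of unity, where $d=2N_k/\gcd(J^\ast,2N_k)$, and for $d\ge2$ these sum to $0$, so all the nonnegative quantities $\mathrm{Im}(c_{J^\ast}e^{iJ^\ast\pi\kappa/N_k})$ would vanish — contradicting strict positivity at $\kappa^\ast$. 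Thus $d=1$, i.e.\ $2N_k\mid J^\ast$, so $e^{iJ^\ast\pi\kappa/N_k}=1$ for all $\kappa$, hence $J_\kappa=J^\ast$ on every ray and $\mathrm{Im}\,w_1\asymp|w_2|^{J^\ast/N_k}$ uniformly. Setting $\mathcal K:=J^\ast/N_k$ — an even positive integer, since $2N_k\mid J^\ast$ — and translating back through the dictionary gives $1-|\psi^0_\ell(\zeta_2)|\asymp|1-\zeta_2|^{\mathcal K}$ for $\zeta_2\in\mathbb T$ near $1$; and since $\mathcal K$ was computed from $\Psi^0_k$ alone, branches arising from a common $\Psi^0_k$ all share it, which is the final assertion with $\mathcal K^1_\ell=\mathcal K$.

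The main obstacle I anticipate is not any single computation but getting the previous two steps to interlock correctly: it is precisely the fact that $\mathrm{Im}\,w_1>0$ on \emph{all} $2N_k$ rays (both parities of $\kappa$, i.e.\ for $\zeta_2$ on either side of the singularity along $\mathbb T$) that powers the sum-of-roots-of-unity argument, and this positivity on every ray is exactly what semi-stability of $p$ buys us in the second step. The remaining work is bookkeeping — matching the rays $\kappa$ to the single-valued branches $\psi^0_\ell$ of Theorem~\ref{thm:zero} (whose ``discontinuities'' are artifacts of the slit) and verifying the $\asymp$ estimates are uniform as $w_1\to0$.
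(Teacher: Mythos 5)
Your proposal is correct, and it proves the lemma by a genuinely more self-contained route than the paper. The paper's proof, after the same transfer to $\Pi^2$ and the Puiseux parametrization, is essentially by citation: it invokes the injectivity statement of \cite[Theorem 3.3]{BPS17} and then \cite[Lemma 18.3]{Kne15} to get the structural normal form $\Psi^0_k(t)=\sum_{i=1}^{2M-1}b_it^{iN_k}+b_{2M}t^{2MN_k}+\cdots$ with $b_1,\dots,b_{2M-1}\in\R$ and $\Im(b_{2M})>0$, from which $\K^1_\ell=2M$ follows as in \cite[Theorem 3.5]{BPS17}. You instead reprove the part of that structure you actually need: your Step 1 (strict containment $\Im w_1>0$ on every ray, deduced from $|\tilde p|=|p|$ on $\T^2$, the finiteness of singularities, and the fact from \cite{Kne15} that $p$ does not vanish on $\D\times\T$) is exactly the semi-stability input, and your roots-of-unity averaging argument --- nonnegativity of $\Im\bigl(c_{J^*}e^{iJ^*\pi\kappa/N_k}\bigr)$ on all $2N_k$ rays plus $\sum_{\kappa=0}^{2N_k-1}\omega^\kappa=0$ for $\omega\neq1$ --- forces $2N_k\mid J^*$, giving evenness, ray-independence, and hence both the two-sided asymptotic \eqref{eqn:CO} and the equality of $\K^1_\ell$ across branches coming from the same $\Psi^0_k$. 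The trade-off: your argument is elementary and avoids the external lemmas, but it only yields the order of vanishing of $\Im\,\Psi^0_k$ along real slices, not the finer coefficient information (reality of all coefficients below order $2MN_k$ and positivity of $\Im b_{2M}$), which the paper gets for free from Knese's lemma and reuses later, e.g.\ in the proof of Proposition \ref{prop:COvsIM}; so the citation route buys downstream mileage, while yours buys transparency and self-containedness for this lemma.
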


\begin{proof} Assume $\phi$ satisfies (A1) and let $z_1 = \psi_\ell^0(z_2)$ be a branch of $\mathcal{Z}_{\tilde{p}}$ from Theorem \ref{thm:zero}. We can find $\K_\ell^1$ as in \eqref{eqn:CO}
using the proof of Theorem $3.5$ in \cite{BPS17}.  The basic idea is to switch to $\Pi^2$ and define $q$ as in \eqref{eqn:q}.  Then near $(0,0)$,  $\mathcal{Z}_{q}$ is described by the power series formulas in \eqref{eqn:qzero}. Let $\Psi^0_k$ denote the power series that gives rise to the specific branch $z_1 = \psi_\ell^0(z_2)$ via \eqref{eqn:zero2} and a choice of branch. Then, as $\Psi^0_{k}$ is a convergent power series around $0$ with $\Psi^0_k(0)=0$, we can write 
\[ \Psi^0_{k}(t) = \sum_{i=1}^{\infty} a_{ik} t^i,\]
for $t$ in a neighborhood $E \subseteq \mathbb{C}$ of $0$. By \cite[Theorem 3.3]{BPS17}, $t\mapsto (\Psi^0_k(t), t^{N_k})$ is injective into $\C^2\setminus \C^2_-$ near $(0,0)$. Then Lemma $18.3$ in \cite{Kne15} implies that there is an $M >0$ and constants $b_1, \dots, b_{2M-1} \in \mathbb{R}$ and $b_{2M} \in \mathbb{C}$ with $\Im(b_{2M}) >0$ so that 
\[ \Psi^0_{k}(t) = \sum_{i=1}^{2M-1} b_{i} t^{iN_{k}} + b_{2M} t^{2MN_{k}} + \sum_{i=2MN_{\ell}+1}^{\infty} a_{ik} t^{i}.\]
Then, following the arguments in the proof of \cite[Theorem $3.5$]{BPS17}, one can show that $\K^1_\ell =2M$.  This implies that $\K^1_\ell$ is even. Furthermore, this argument only depends on $
\Psi_{k}^0$. Thus, it shows that if   $z_1 = \psi_{\ell}^0(z_2)$ and $z_1= \psi_{j}^0(z_2)$ are  branches of $\mathcal{Z}_{\tilde{p}}$ corresponding to the same $\Psi^0_{k}$ (but different branches of 
$(\beta(z_2)^{-1})^{\frac{1}{N_{k}}})$, then their $z_1$-contact orders are equal.
\end{proof} 

\begin{remark} In Theorem \ref{thm:zero}, we could have instead described $\mathcal{Z}_{\tilde{p}}$ by writing $z_2$ in terms of $z_1$ like:
\[z_2 = \hat{\psi}_1^0(z_1), \ \dots, \ z_2 = \hat{\psi}_{J_0}^0(z_1). \] 
Then the \emph{$z_2$-contact order of each branch $z_2 = \hat{\psi}_j^0(z_1)$} is an even number $\K^2_{j}$ so that 
\[ 1- |\hat{\psi}_j^0(\zeta_1)| \approx |1 - \zeta_1|^{\K^2_j},\]
for all $\zeta_1 \in \mathbb{T}$ sufficiently close to $1$.  
\end {remark}

In \cite{BPS17}, we studied a global notion of $z_1$-contact order and used it to characterize the integrability of RIF derivatives. This global quantity can be recovered from the local quantities defined in Lemma \ref{lem:LCO}.  
\begin{definition} \label{def:CO1}  Let $\phi = \frac{\tilde{p}}{p}$ be a rational inner function on $\mathbb{D}^2$ with singularities $(\tau_1^1, \tau_2^1), \dots, (\tau_1^J, \tau_2^J)$  on $\mathbb{T}^2$. For each $1 \le j \le J$, one can apply  Lemma \ref{lem:LCO} to $\phi(\tau^j_1z_1, \tau^j_2z_2)$ to compute the contact 
order of the branches of $\mathcal{Z}_{\tilde{p}}$ near each $(\tau^j_1, \tau^j_2)$.  Then for $1\le j \le J$, let $\K^1_{(\tau^j_1, \tau^j_2)}$ be the maximum $z_1$-contact order of the branches of $\mathcal{Z}_{\tilde{p}}$ near $(\tau_1^j, \tau_2^j)$.  Then $\K^1_{(\tau^j_1, \tau^j_2)}$ is called \emph{the $z_1$-contact order of $\phi$ at $(\tau^j_1,\tau_2^j)$} and the \emph{global $z_1$-contact order of $\phi$} is given by
\[K_1 := \max \left\{\K^1_{(\tau^j_1, \tau^j_2)}: 1 \le j \le J \right\}.\]
\end{definition}
The quantity $K_1$ agrees with the definition in \cite{BPS17}. We also define analogous $z_2$-contact orders.

In \cite[Theorem 4.1]{BPS17}, we used global contact order to characterize integrability of derivatives of RIFs as follows:

\begin{theorem}  Let $\phi = \frac{\tilde{p}}{p}$ be an RIF on $\mathbb{D}^2.$ Then for $1 \le \p < \infty$, $\frac{\partial \phi}{\partial z_i} \in H^\p(\mathbb{D}^2)$ if and only if the $z_i$-contact order of $\phi$ satisfies $K_i < \frac{1}{\p-1}.$  
\end{theorem}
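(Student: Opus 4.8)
The plan is to pass to the distinguished boundary $\T^2$ and exploit the fact that an RIF, sliced in one variable, becomes a one-variable finite Blaschke product whose boundary derivative is an explicit sum of Poisson kernels; the resulting two-sided $L^\p$-estimate on slices then feeds directly into Lemma \ref{lem:LCO}. I only discuss $\frac{\partial\phi}{\partial z_1}$, since the $z_2$-case is identical.

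\emph{Reduction to $\T^2$ and localization.} First I would observe that $\frac{\partial\phi}{\partial z_1}=\frac{(\partial_{z_1}\tilde p)\,p-\tilde p\,(\partial_{z_1}p)}{p^2}$ is a rational function whose denominator $p^2$ does not vanish on $\D^2$ and vanishes only at finitely many points of $\T^2$; consequently it has radial boundary values a.e.\ on $\T^2$, and $\frac{\partial\phi}{\partial z_1}\in H^\p(\D^2)$ if and only if this boundary function lies in $L^\p(\T^2)$. This is a Smirnov-type statement for rational functions with semi-stable denominators, which I would record separately as a lemma. Since $\phi$ has only finitely many singularities $\tau^1,\dots,\tau^J$ and $\frac{\partial\phi}{\partial z_1}$ is bounded on any region of $\T^2$ bounded away from all of them, $L^\p(\T^2)$-membership is equivalent to local $L^\p$-integrability in a neighborhood of each $\tau^j$; applying an automorphism of $\T^2$, I may assume (A1), i.e.\ $\tau=(1,1)$.

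\emph{Slice estimate and conclusion.} For a.e.\ fixed $\zeta_2\in\T$, the map $z_1\mapsto\phi(z_1,\zeta_2)$ is a finite Blaschke product of degree at most $m=\deg_{z_1}p$, whose zeros in $\D$ are exactly the values $\psi^0_\ell(\zeta_2)$ of the branches of $\mathcal Z_{\tilde p}$ from Theorem \ref{thm:zero} (together with finitely many zeros confined to a compact subset of $\D$). Differentiating the Blaschke factors on $\T$ yields
\[
\Big|\tfrac{\partial\phi}{\partial z_1}(\zeta_1,\zeta_2)\Big|=\sum_{k}\frac{1-|a_k(\zeta_2)|^2}{|\zeta_1-a_k(\zeta_2)|^2},
\]
a sum of unnormalized Poisson kernels at the zeros $a_k(\zeta_2)$. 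Using $\int_\T\big(\tfrac{1-|a|^2}{|\zeta-a|^2}\big)^\p d\zeta\asymp(1-|a|)^{1-\p}$ for $\p>1$, the triangle inequality together with the bound $\le m$ on the number of terms for the upper bound, and pointwise positivity (keeping only the term of the zero closest to $\T$) for the matching lower bound, I obtain
\[
\int_{\T}\Big|\tfrac{\partial\phi}{\partial z_1}(\zeta_1,\zeta_2)\Big|^{\p}d\zeta_1\ \asymp_{m,\p}\ \max_k\big(1-|a_k(\zeta_2)|\big)^{1-\p}\qquad(\p>1),
\]
while for $\p=1$ the left-hand side equals $2\pi\cdot\#\{\text{zeros}\}\le2\pi m$. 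Plugging in Lemma \ref{lem:LCO}, $1-|\psi^0_\ell(\zeta_2)|\asymp|1-\zeta_2|^{\K^1_\ell}$, and noting that zeros not approaching $\T$ contribute a bounded quantity, the slice integral is $\asymp|1-\zeta_2|^{(1-\p)\K^1_{(1,1)}}$ for $\zeta_2$ near $1$, where $\K^1_{(1,1)}=\max_\ell\K^1_\ell$ is the $z_1$-contact order at $\tau$ (Definition \ref{def:CO1}). Integrating in $\zeta_2$ over a neighborhood of $1$, the integral $\int|1-\zeta_2|^{(1-\p)\K^1_{(1,1)}}d\zeta_2$ is finite if and only if $(1-\p)\K^1_{(1,1)}>-1$, i.e.\ $\K^1_{(1,1)}<\frac{1}{\p-1}$. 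Carrying this out at every singularity and taking the maximum gives $\frac{\partial\phi}{\partial z_1}\in H^\p(\D^2)\iff K_1<\frac{1}{\p-1}$, with the $\p=1$ case reflecting that $K_1<\infty$ always holds.

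\emph{Main obstacle.} The genuinely delicate inputs are, first, the equivalence $H^\p(\D^2)\leftrightarrow L^\p(\T^2)$: on the polydisk the Smirnov class is subtle, and one must use the specific structure of $p^2$ (no zeros in $\D^2$, log-integrable boundary modulus) to place the relevant rational function in a Smirnov-type class so that boundary $L^\p$-control lifts to $H^\p$-control. Second, one needs the slice estimate with constants uniform in $\zeta_2$ and then to interface it cleanly with Lemma \ref{lem:LCO} as several branches coalesce near $\tau$; here the facts from that lemma that contact orders are even and that branches sharing a Puiseux expansion share a contact order are what make the bookkeeping go through. By contrast, the Blaschke/Poisson-kernel computation itself is elementary.
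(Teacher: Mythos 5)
Your proposal is correct and is essentially the proof this theorem rests on: the paper does not reprove the statement but quotes it from \cite{BPS17} (Theorem 4.1), whose argument is exactly your slicing scheme --- the one-variable Blaschke-derivative identity expressing $|\partial\phi/\partial z_1|$ on $\T^2$ as a sum of Poisson-type kernels at the slice zeros (restated here as Lemma \ref{lem:blaschke}), the $L^{\p}$-asymptotics $\int_\T P_a^{\p}\asymp(1-|a|)^{1-\p}$, the contact-order asymptotics of Lemma \ref{lem:LCO} for the zeros approaching $\T$, and a Knese-type lemma upgrading boundary $L^{\p}$-control to $H^{\p}$-membership for rational functions with denominator nonvanishing on $\D^2$. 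The one ingredient you leave as a stated lemma (the $H^{\p}(\D^2)\leftrightarrow L^{\p}(\T^2)$ equivalence) is the same external input from \cite{Kne15} used there, so nothing essential is missing.
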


A modification of the arguments in \cite{BPS17} connects local derivative integrability with local contact order to yield the following:

\begin{theorem} \label{thm:integral} Let $\phi$ satisfy (A1). Then there is an open set $E_0 \subseteq \mathbb{T}^2$ containing $(1,1)$ so that for $1 \le \p < \infty$, and for all open $E \subseteq E_0$ containing $(1,1)$, the integral
\[ \iint_{E} \left | \tfrac{\partial \phi}{\partial z_i}(\zeta_1, \zeta_2) \right|^{\mathfrak{p}} |d\zeta_1| |d\zeta_2| < \infty  \]
 if and only if $\K^i_{(1,1)}< \frac{1}{\p-1}.$  
\end{theorem}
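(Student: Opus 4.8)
The plan is to localize the $L^{\mathfrak p}$-integrability question to a neighborhood of the singularity $(1,1)$ and then import the machinery from \cite[Section 4]{BPS17}, which proved the corresponding global statement $\frac{\partial \phi}{\partial z_i}\in H^{\mathfrak p}(\mathbb D^2)\iff K_i<\frac1{\mathfrak p-1}$. First I would observe that $\phi$ has only finitely many singularities on $\mathbb T^2$ and that away from them $\phi$ extends real-analytically to a neighborhood in $\mathbb T^2$, so $\frac{\partial\phi}{\partial z_i}$ is bounded on $\mathbb T^2$ minus any union of small open neighborhoods of the singularities; hence global integrability over $\mathbb T^2$ is equivalent to integrability over a small neighborhood $E$ of each singularity separately. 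This reduces the theorem to a purely local statement at $(1,1)$ under (A1). Then I would choose $E_0$ small enough that $(1,1)$ is the only singularity of $\phi$ in $\overline{E_0}$ and that the branch parametrizations of Theorem~\ref{thm:zero} and the contact-order estimate \eqref{eqn:CO} of Lemma~\ref{lem:LCO} hold throughout.

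The technical core is to recall the explicit formula/estimate for $\frac{\partial\phi}{\partial z_1}$ on $\mathbb T^2$ used in \cite{BPS17}. Writing $\phi=\tilde p/p$ and using $|\tilde p|=|p|$ on $\mathbb T^2$, one has on $\mathbb T^2$ (away from zeros of $p$)
\[
\left|\frac{\partial\phi}{\partial z_1}(\zeta_1,\zeta_2)\right|
=\left|\frac{\tilde p_{z_1}p-\tilde p\, p_{z_1}}{p^2}(\zeta_1,\zeta_2)\right|
\asymp \frac{|\tilde p_{z_1}p-\tilde p\,p_{z_1}|(\zeta_1,\zeta_2)}{|p(\zeta_1,\zeta_2)|^2},
\]
and the key point from \cite{BPS17} is that, near $(1,1)$, $|p(\zeta_1,\zeta_2)|$ on $\mathbb T^2$ is comparable to the product of distances from $\zeta$ to the branches of $\mathcal Z_p$ (equivalently of $\mathcal Z_{\tilde p}$), and the numerator is comparable to a lower-order such product; after restricting to $\zeta_1\in\mathbb T$ near $1$ and parametrizing the branch $z_1=\psi_\ell^0(z_2)$ (or its $z_2$-analogue), the size of $\frac{\partial\phi}{\partial z_1}$ along the relevant ``horn'' directions is governed precisely by the contact orders $\K^1_\ell$ via \eqref{eqn:CO}. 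I would then carry out the Fubini integration $\iint_E|\frac{\partial\phi}{\partial z_1}|^{\mathfrak p}\,|d\zeta_1||d\zeta_2|$ by integrating first in $\zeta_2$ (or $\zeta_1$) near $1$: the one-variable integral converges iff $\mathfrak p\,(\K^1_{(1,1)}-1)<1$, i.e.\ $\K^1_{(1,1)}<\frac1{\mathfrak p-1}$, exactly as in the global argument, and the outer integral over the remaining variable is harmless. Both the ``if'' and ``only if'' directions come out of this two-sided comparison, just as in \cite[Theorem 4.1]{BPS17}, with the only change being that all estimates are run on a fixed small neighborhood rather than all of $\mathbb T^2$.

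The main obstacle I anticipate is not conceptual but bookkeeping: making precise the claim that the local integral over \emph{any} open $E\subseteq E_0$ containing $(1,1)$ has the same convergence behavior, independent of the shape of $E$. This requires sandwiching an arbitrary such $E$ between two ``standard'' neighborhoods (say a small polydisk-in-$\mathbb T^2$ and a slightly larger one) on which the explicit horn-based estimates of \cite{BPS17} apply, and checking that the contribution of $E\setminus(\text{small nbhd})$ is finite because $\frac{\partial\phi}{\partial z_i}$ is bounded there. A secondary point to handle carefully is the passage between the $(z_1,z_2)$ picture on $\mathbb T^2$ and the half-plane Puiseux picture of Theorem~\ref{thm:zero}/Lemma~\ref{lem:LCO}: one must confirm that the conformal change of variables $\beta$ distorts both the integrand and the measure $|d\zeta_1||d\zeta_2|$ only by bounded factors near $(1,1)$, so that the exponent threshold $\frac1{\mathfrak p-1}$ is preserved. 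Once these localization and change-of-variable estimates are in place, the equivalence with $\K^i_{(1,1)}<\frac1{\mathfrak p-1}$ follows by the same one-variable power-integral computation as in the global theorem.
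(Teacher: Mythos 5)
Your proposal follows essentially the same route as the paper: the paper's proof of Theorem \ref{thm:integral} is simply the observation that the arguments of \cite[Section 4]{BPS17} go through verbatim with the domain of integration restricted to a neighborhood of $(1,1)$, and your plan---choose $E_0$ containing only the singularity $(1,1)$, note that $\frac{\partial\phi}{\partial z_i}$ is bounded on $\overline{E_0}$ away from $(1,1)$, and rerun the horn/contact-order estimates of \cite{BPS17} locally so that the one-variable power integral gives the threshold $\K^i_{(1,1)}<\frac{1}{\p-1}$ for any open $E\subseteq E_0$ containing $(1,1)$---is exactly that localization, carried out with reasonable care about the change of variables and the shape of $E$.
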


\begin{proof} As the proof is basically the same as that in \cite{BPS17}, with a restricted set of integration, we omit the details.
\end{proof}

\subsection{Unimodular Level Sets}

Let $\phi = \frac{\tilde{p}}{p}$ be an RIF on $\mathbb{D}^2$. Recall that for each $\lambda \in \mathbb{T}$, 
\[ \mathcal{L}_{\lambda}(\phi) : = \left \{ (z_1, z_2) \in\mathbb{C}^2: \tilde{p}(z_1,z_2) = \lambda p(z_1,z_2) \right \}\]
and $\mathcal{C}_{\lambda} : =  \mathcal{L}_{\lambda}(\phi) \cap \mathbb{T}^2$.
The connection between the singularities of $\phi$ and its unimodular level sets comes from a Hartogs principle via the Edge-of-the-Wedge Theorem in \cite{Pas17}. Specifically, the following is an immediate corollary of  \cite[Corollary 1.7]{Pas17}:

\begin{lemma} \label{lem:singularity} Assume $\phi = \frac{\tilde{p}}{p}$ is an RIF on $\mathbb{D}^2$ with a singularity at $(\tau_1, \tau_2) \in \mathbb{T}^2$. Then for each $\lambda \in \mathbb{T}$, the set
$\mathcal{C}_{\lambda}$ contains $(\tau_1, \tau_2)$ in its closure.
\end{lemma}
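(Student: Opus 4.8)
The plan is to realize the level curve concretely as the torus part of an algebraic curve and then exploit the reflection symmetry built into that curve. Write $r := \tilde{p} - \lambda p$, so that $\mathcal{L}_{\lambda}(\phi) = \mathcal{Z}_r$ and $\mathcal{C}_{\lambda} = \mathcal{Z}_r \cap \T^2$. Since $\phi$ is singular at $\tau$ we have $p(\tau) = \tilde{p}(\tau) = 0$, hence $r(\tau) = 0$; the content of the lemma is really that $\tau$ is not an isolated point of $\mathcal{Z}_r \cap \T^2$, equivalently (as $\mathcal{Z}_p \cap \T^2$ is finite) that points $\zeta \in \T^2$ with $p(\zeta)\neq 0$ and $\phi(\zeta) = \lambda$ accumulate at $\tau$. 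Note $r \not\equiv 0$, since $\tilde{p} = \lambda p$ would force $\phi \equiv \lambda$, and $\gcd(r,p) = 1$, since a common factor of $r$ and $p$ would also divide $\tilde{p}$. Moreover $r$ is essentially $\T^2$-symmetric: a direct computation with the reflection gives $\tilde{r} = -\bar{\lambda}\, r$, so $\mathcal{Z}_r$ is invariant under $(z_1,z_2)\mapsto(1/\bar{z}_1, 1/\bar{z}_2)$, which fixes $\T^2$ pointwise.

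The key step is to control the $z_1$-roots of the one-variable slices $r(\cdot,\zeta_2)$ as $\zeta_2 \to \tau_2$ along $\T$. Let $F \subset \T$ be the finite set of $\zeta_2$ with $(z_2-\zeta_2)\mid r$, i.e.\ $r(\cdot,\zeta_2)\equiv 0$. If $\tau_2 \in F$, then $\C\times\{\tau_2\}\subseteq \mathcal{Z}_r$, so $\T\times\{\tau_2\}\subseteq\mathcal{C}_\lambda$ and $\tau$ is plainly non-isolated in $\mathcal{C}_\lambda$; so assume $\tau_2\notin F$. For $\zeta_2 \in \T\setminus F$, the slice $z_1\mapsto \phi(z_1,\zeta_2) = \tilde{p}(z_1,\zeta_2)/p(z_1,\zeta_2)$ is holomorphic on $\D$ (since $p$ has no zeros on $\D\times\T$ by \cite{Kne15}) and bounded by $1$ there, and it is nonconstant, since otherwise it would equal the unimodular constant $\lambda$ and $(z_2-\zeta_2)$ would divide $r$. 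By the maximum principle it then has no $\lambda$-points in $\D$, i.e.\ $r(\cdot,\zeta_2)$ has no root in $\D$; by the reflection symmetry of $\mathcal{Z}_r$ it has no root in $\{|z_1|>1\}$ either, so every root of $r(\cdot,\zeta_2)$ lies on $\T$. Finally, $r(\cdot,\tau_2)\not\equiv 0$ has $\tau_1$ as a root of finite multiplicity, so the argument principle (Hurwitz's theorem) produces, for every $\zeta_2$ close enough to $\tau_2$, a root of $r(\cdot,\zeta_2)$ arbitrarily near $\tau_1$; taking such $\zeta_2 \in \T\setminus F$ gives points $(\zeta_1,\zeta_2)\in\mathcal{Z}_r\cap\T^2$ with $\zeta_1\in\T$ converging to $\tau$. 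Discarding the finitely many of these that are singular points of $\phi$, we get $\tau \in \overline{\mathcal{C}_\lambda}$.

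I expect the main subtlety to be exactly the phenomenon that makes $\tau$ a singularity in the first place: the $z_1$-degree of the Blaschke product $\phi(\cdot,\zeta_2)$ can drop as $\zeta_2 \to \tau_2$, so one cannot simply track a full system of roots by continuity and must localize the count near $\tau_1$ via the argument principle; one also needs the reflection symmetry of $\mathcal{Z}_r$ to upgrade ``no roots in $\D$'' to ``all roots on $\T$,'' and a little care to handle the finitely many degenerate slices in $F$. An alternative, and the route the present paper takes, is to quote the Hartogs/edge-of-the-wedge statement \cite[Corollary 1.7]{Pas17}, which packages precisely this ``a genuine singularity cannot omit a unimodular value nearby'' mechanism, from which the lemma is immediate; the present sketch is essentially an unpacking of why that applies here. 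Note that the finer fact that the components of $\mathcal{C}_\lambda$ are analytic (Theorem~\ref{thm:smooth}) is not needed for this statement and is established separately.
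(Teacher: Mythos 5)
Your proposal is essentially correct, but it takes a genuinely different route from the paper: the paper gives no direct argument at all, observing only that the lemma is an immediate consequence of the wedge-of-the-edge/Hartogs principle of \cite[Corollary 1.7]{Pas17}, a general boundary result for multivariable Pick functions. You replace that black box with a self-contained, elementary argument specific to RIFs: the level set is the zero set of $r=\tilde{p}-\lambda p$, which is essentially $\T^2$-symmetric ($\tilde{r}=-\bar{\lambda}r$ when reflecting at the bidegree of $p$); for $\zeta_2\in\T$ outside the finite bad set $F$ the slice $r(\cdot,\zeta_2)$ has no zeros in $\D$ (maximum principle, using Knese's fact that $p\neq 0$ on $\D\times\T$) and hence, by the reflection symmetry, all its zeros lie on $\T$; Rouch\'e/Hurwitz applied to $r(\cdot,\zeta_2)\to r(\cdot,\tau_2)$ then forces zeros near $\tau_1$ for every $\zeta_2\in\T$ close to $\tau_2$. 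What this buys is an explicit localization of level-curve points near the singularity without invoking \cite{Pas17}; what the paper's route buys is a one-line proof valid for general Pick-function singularities. You are also right that the nontrivial content is the accumulation of points where $\phi$ is defined and equals $\lambda$ (with the literal definition $\mathcal{C}_\lambda=\mathcal{L}_\lambda(\phi)\cap\T^2$ the point $\tau$ itself already lies in $\mathcal{C}_\lambda$), and your argument delivers exactly that stronger statement.

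One repair is needed: your justification that the slice $\phi(\cdot,\zeta_2)$ is nonconstant for $\zeta_2\notin F$ does not hold. A constant slice is forced (by unimodularity of $\phi$ a.e.\ on $\T\times\{\zeta_2\}$) to be a unimodular constant $c$, but nothing makes $c=\lambda$; for instance the slice of the paper's example \eqref{faveform} at $z_2=1$ is identically $1$, and for $\lambda\neq 1$ that $\zeta_2$ does not lie in your $F$ even though the slice is constant. The fix is one line: if the slice is a constant $c\neq\lambda$, then $r(\cdot,\zeta_2)=(c-\lambda)\,p(\cdot,\zeta_2)$ has no zeros in $\D$ anyway; if $c=\lambda$, then $(z_2-\zeta_2)\mid r$, i.e.\ $\zeta_2\in F$, which you excluded. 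With that case added, the conclusion ``all zeros of $r(\cdot,\zeta_2)$ lie on $\T$'' stands, and the remainder of your argument (the argument-principle step near $\tau_1$ and discarding the finitely many zeros of $p$ on $\T^2$) is sound.
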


In what follows, we examine the way that components of a given $\mathcal{C}_{\lambda}$ approach the singular point $(1,1).$

\subsubsection{Smoothness.} Near the singular point $(1,1)$, each level set $\mathcal{L}_{\lambda}(\phi)$ is comprised of a union of smooth curves. The precise result is:

\begin{theorem} \label{thm:smooth} Let $\phi$ satisfy (A1) and fix $\mu \in \mathbb{T}$. Then there is a positive integer $L_{\mu}$, power series $\psi^{\mu}_1, \dots, \psi^{\mu}_{L_\mu}$ that converge in a neighborhood of $1$, and an open set $\mathcal{V} \subseteq \mathbb{C}^2$ of $(1,1)$
such that the components of $\mathcal{L}_{\mu}(\phi) \cap \mathcal{V}$ consists of sets described by the formulas
\begin{equation} \label{eqn:LL} z_1 = \psi^{\mu}_{1}(z_2), \dots, z_1 =  \psi^{\mu}_{L_\mu}(z_2),
\end{equation}
where $\psi^{\mu}_j(1)=1$ for $j=1, \ldots,  L_{\mu}$ and, for at most one value of $\mu$, possibly a straight line $\{z_2=1\}$.
 \end{theorem}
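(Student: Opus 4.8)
The plan is to realize the level set as the zero locus of the single polynomial $\tilde p-\mu p$, re-run the Puiseux analysis of Theorem~\ref{thm:zero} on it, and then upgrade its conclusion from ``Puiseux branch'' to ``genuine power series'' by means of the Horn Lemma. First I would record that $\mathcal{L}_{\mu}(\phi)=\mathcal{Z}_{\tilde p-\mu p}$ and that $(\tilde p-\mu p)(1,1)=0$, since both $p$ and $\tilde p$ vanish at the singularity. Transferring to $\Pi^{2}$ exactly as in the proof of Theorem~\ref{thm:zero}, set $q_{\mu}(w_1,w_2):=(1-iw_1)^{m}(1-iw_2)^{n}(\tilde p-\mu p)\big(\beta(w_1),\beta(w_2)\big)$, so that $q_{\mu}(0,0)=0$, and factor out the largest powers of $w_1$ and $w_2$: $q_{\mu}=w_1^{a}w_2^{b}r_{\mu}$ with $w_1\nmid r_{\mu}$ and $w_2\nmid r_{\mu}$. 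The factor $w_1^{a}$ records the component $\{z_1=1\}$, which is the graph of the constant series $\psi\equiv 1$; the factor $w_2^{b}$ records the component $\{z_2=1\}$, the candidate straight line. Applying the Puiseux parametrization (Remark~3.4 of \cite{BPS17}) to $r_{\mu}$ near $(0,0)$ and pushing back through $\beta$ then describes the remaining components of $\mathcal{L}_{\mu}(\phi)\cap\mathcal{V}$, for $\mathcal{V}$ a small neighborhood of $(1,1)$, as graphs $z_1=\psi^{\mu}_j(z_2)$ built from convergent power series; each passes through $(1,1)$ because the germ of $\mathcal{Z}_{\tilde p-\mu p}$ at $(1,1)$ consists only of branches through that point and $\beta(0)=1$, whence $\psi^{\mu}_j(1)=1$.

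The ``at most one $\mu$'' clause is then straightforward: $\{z_2=1\}$ is a component of $\mathcal{L}_{\mu}(\phi)$ precisely when $(z_2-1)\mid(\tilde p-\mu p)$, equivalently $w_2\mid q_{\mu}$ since $(1-iw_2)^{n}$ is a unit at $w_2=0$. If this held for distinct $\mu,\nu\in\T$, then subtracting would give $(z_2-1)\mid(\mu-\nu)p$, hence $(z_2-1)\mid p$ and $p(0,1)=0$; but $(0,1)\in\D\times\T$, on which $p$ is zero-free by \cite{Kne15}. So there is at most one exceptional value of $\mu$.

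The substantive step---and the place I expect essentially all of the work---is to upgrade each $\psi^{\mu}_j$ from a Puiseux branch (a priori only built from a convergent power series in a fractional power of $\beta^{-1}(z_2)$, with a branch cut, as in Theorem~\ref{thm:zero}) to an honest convergent power series in $z_2$; equivalently, to show that every non-line branch of $\mathcal{Z}_{\tilde p-\mu p}$ at $(1,1)$ has branching integer $1$. Here unimodularity of $\mu$ is essential, and it enters through three facts. First, $\widetilde{\tilde p-\mu p}=-\bar\mu(\tilde p-\mu p)$, so $\mathcal{Z}_{\tilde p-\mu p}$ is invariant under the reflection $\sigma(z_1,z_2)=(1/\bar z_1,1/\bar z_2)$, whose fixed set is $\T^{2}$. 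Second, for $\zeta\in\T$ the slices $z_1\mapsto\phi(z_1,\zeta)$ and $z_2\mapsto\phi(\zeta,z_2)$ are finite Blaschke products (no interior poles, unimodular on $\T$), so $|\tilde p|<|p|$ on $\D^{2}$ and on $(\D\times\T)\cup(\T\times\D)$ away from the coordinate-line slices; with the first fact this means that, apart from the line components $\{z_1=1\}$ and $\{z_2=1\}$, the set $\mathcal{Z}_{\tilde p-\mu p}$ near $(1,1)$ lies in $\T^{2}$ together with the two ``opposite'' regions $\{|z_1|<1<|z_2|\}$ and $\{|z_1|>1>|z_2|\}$. Third, the Horn Lemma (Lemma~\ref{lem:horn}) confines $\mathcal{C}_{\mu}$ near $(1,1)$ to a narrow horn around the value curve, in particular forcing $|z_1-1|\asymp|z_2-1|$ along $\mathcal{C}_{\mu}$. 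Feeding the second and third facts into the Puiseux expansion of a branch pins its leading exponent at exactly $1$; stripping the remaining fractional exponents I expect to require a bootstrapping argument in the spirit of Lemma~18.3 of \cite{Kne15} (already used for the zero set in Lemma~\ref{lem:LCO}), together with a matching argument across the branch cut exploiting the $\sigma$-symmetry, to conclude that $\psi^{\mu}_j$ is analytic at $1$.

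The main obstacle is precisely this last point: passing from ``the leading Puiseux exponent equals $1$'' to ``the branch is the graph of a genuine convergent power series over $z_2$'', i.e.\ ruling out a cuspidal branch of $\mathcal{Z}_{\tilde p-\mu p}$ at $(1,1)$ whose intersection with $\T^{2}$ still obeys the crude horn estimate. Everything preceding it---the Puiseux transfer, the identification of the line components, the uniqueness of the exceptional $\mu$, and $\psi^{\mu}_j(1)=1$---is routine given Theorem~\ref{thm:zero} and the zero-free results of \cite{Kne15}.
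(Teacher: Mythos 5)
Your setup coincides with the paper's: the reduction to $\mathcal{Z}_{\tilde p-\mu p}$, the transfer to $\Pi^2$ via $\beta$, the identification of a possible line component $\{z_2=1\}$, the uniqueness of the exceptional $\mu$ (your divisibility argument via zero-freeness of $p$ on $\mathbb{D}\times\mathbb{T}$ is fine), and $\psi^{\mu}_j(1)=1$. But the decisive step --- that every non-line Puiseux branch of $\tilde p-\mu p$ at $(1,1)$ is the graph of a genuine convergent power series in $z_2$ --- is exactly the step you leave open: you say you ``expect'' a bootstrapping argument in the spirit of Lemma 18.3 of \cite{Kne15} plus a matching argument across the branch cut, and you flag this as the main obstacle. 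That is a genuine gap, and it sits at the heart of the theorem: pinning the leading Puiseux exponent at $1$ (via a horn estimate) does not exclude fractional exponents further out, and no mechanism is supplied to kill them. Moreover, within this paper the Horn Lemma (Lemma \ref{lem:horn}) is formulated for, and proved using, the analytic parametrizations produced by this very theorem, so invoking it here would be circular unless you first re-establish a horn estimate purely at the level of the set $\mathcal{C}_\mu$ (as in \cite{BPS17}).

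The paper closes this gap with a much shorter argument (Lemma \ref{lem:smooth1}) that uses only the ingredient you already isolated: $p_\mu=\tilde p-\mu p$ has no zeros on $\mathbb{D}^2\cup\mathbb{E}^2$, so after the transfer $\mathcal{Z}_{q_\mu}$ avoids $\Pi^2\cup(-\Pi)^2$ near $(0,0)$. Writing a Puiseux branch as $\widehat{\Psi}(w_2^{1/N})=\sum_k a_k w_2^{k/N}$ and evaluating along real $w_2=t$ near $0$ for each of the $N$ determinations of the root, one sees that if some $a_k\zeta^k$ ($\zeta^N=1$) had nonzero imaginary part, then by continuity a small complex perturbation of $w_2$ off the real axis would place a zero of $q_\mu$ in $\Pi^2$ or $(-\Pi)^2$, a contradiction; hence $\Im(a_k\zeta^k)=0$ for all $k$ and all $N$-th roots of unity $\zeta$, which forces every $a_k$ to be real and to vanish unless $N\mid k$. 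Thus each branch is an honest power series in $w_2$ with real coefficients, all fractional exponents disappear at once, and composing with $\beta$ yields the $\psi^{\mu}_j$ --- no leading-order analysis, no Lemma 18.3 of \cite{Kne15}, and no Horn Lemma are needed (in the paper the horn estimate is derived afterwards, partly from this smoothness). If you replace your third step by this real-coefficient argument, your outline becomes a complete proof essentially identical to the paper's.
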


\begin{remark}
For $\lambda \notin \T$, an RIF level set $\mathcal{L}_{\lambda}(\phi)$ need not be smooth throughout $\C^2$. The rational inner function
\[\phi(z_1,z_2)=\frac{2z_1^2z_2^3-z_1^2-z_2^3}{2-z_1^2-z_2^3}\]
furnishes an example. We note that we have $\tilde{p}(0,0)=\frac{\partial \tilde{p}}{\partial z_1}(0,0)=\frac{\partial \tilde{p}}{\partial z_2}(0,0)=0$. The Puiseux parametrizations centered at $0$ in this case are of the form
\[z_1=f(z_2^{1/2})=\frac{z_2^{3/2}}{(2z_2^3-1)^{1/2}}=i z_2^{3/2}+\mathcal{O}(z_2^{2}),\]
and thus, $(0,0)\in \mathcal{L}_0(\phi)$ is a singular point that is not just a multiple point. 
\end{remark}

As in the previous section, we will use Puiseux series to parametrize the components of $\mathcal{L}_{\mu}(\phi) \cap \mathcal{V}$.  This step is encoded in the following lemma:

\begin{lemma} \label{lem:smooth1}  Let $r$ be a polynomial in $\mathbb{C}[w_1, w_2]$ with $r(0,0)=0$ and $r(w_1,0)$ not identically zero. Assume there is some neighborhood of $\widehat{\mathcal{U}} \subseteq \mathbb{C}^2$ of $(0,0)$ such that  $\mathcal{Z}_r \cap \left(\Pi^2 \cup \left ( -\Pi \right)^2 \right) \cap \widehat{\mathcal{U}} =\emptyset.$ Then there are power series $\Psi_1, \dots, \Psi_L$ that converge in a neighborhood of $0$ and an open set $\mathcal{U} \subseteq  \mathbb{C}^2$ containing $(0,0)$ such that $\mathcal{Z}_r \cap \mathcal{U}$ is described by the formulas 
\begin{equation} \label{eqn:zero3} w_1 = \Psi_{1}(w_2), \dots, w_1  = \Psi_L(w_2).\end{equation}
\end{lemma}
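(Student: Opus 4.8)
The plan is to reduce the statement to the classical Puiseux/Newton–Puiseux theorem applied to the Weierstrass factorization of $r$, and then use the hypothesis $\mathcal{Z}_r \cap (\Pi^2 \cup (-\Pi)^2) \cap \widehat{\mathcal{U}} = \emptyset$ to force each Puiseux branch to have multiplicity one, i.e. to be an honest power series in $w_2$ rather than in a fractional power $w_2^{1/N}$ with $N>1$. First I would factor $r$ near $(0,0)$: since $r(w_1,0)$ is not identically zero, $r$ has finite vanishing order in $w_1$ at $0$, so by the Weierstrass preparation theorem we may write $r = (\text{unit}) \cdot r_1 \cdots r_L$ where each $r_\ell$ is an irreducible Weierstrass polynomial in $w_1$ over the ring of convergent power series in $w_2$, of some degree $N_\ell \geq 1$. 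The Newton–Puiseux theorem then parametrizes $\mathcal{Z}_{r_\ell}$ near $(0,0)$ by a single branch $w_1 = \Psi_\ell\big(w_2^{1/N_\ell}\big)$ with $\Psi_\ell$ a convergent power series vanishing at $0$, and the $N_\ell$ conjugate sheets $w_1 = \Psi_\ell\big(\zeta^k w_2^{1/N_\ell}\big)$, $k = 0, \dots, N_\ell-1$, for $\zeta$ a primitive $N_\ell$-th root of unity, sweep out all of $\mathcal{Z}_{r_\ell}$ near the origin.

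The crux is then to show $N_\ell = 1$ for every $\ell$. Suppose for contradiction some $N_\ell \geq 2$. Consider the map $t \mapsto \big(\Psi_\ell(t), t^{N_\ell}\big)$ for $t$ in a small punctured neighborhood of $0$; its image is (a dense subset of) a branch of $\mathcal{Z}_{r}$ near $(0,0)$. The hypothesis says this image avoids both $\Pi^2$ and $(-\Pi)^2$. Writing $t = \varepsilon e^{i\theta}$ with $\varepsilon$ small and $\theta$ running over $[0, 2\pi)$, the second coordinate $t^{N_\ell} = \varepsilon^{N_\ell} e^{iN_\ell\theta}$ winds around the origin $N_\ell \geq 2$ times, so as $\theta$ varies it spends a definite proportion of the time in the open upper half-plane $\Pi$ and a definite proportion in $-\Pi$. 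During the intervals of $\theta$ for which $t^{N_\ell} \in \Pi$, the first coordinate $\Psi_\ell(t) = a t^j + O(t^{j+1})$ (with $a \neq 0$, $j \geq 1$ the order of $\Psi_\ell$) traces out a curve whose argument is approximately $\arg a + j\theta$; since $\theta$ ranges over an interval of length $2\pi/N_\ell$, the argument $j\theta$ ranges over an interval of length $2\pi j/N_\ell$. The key point is that $\gcd(j, N_\ell) = 1$ (this is the standard minimality property of the Puiseux exponent for an irreducible branch — if a common factor existed the branch would not be irreducible over the Puiseux field), which guarantees that one can choose a sub-interval of $\theta$-values on which simultaneously $t^{N_\ell} \in \Pi$ and $\Psi_\ell(t) \in \Pi$, contradicting the hypothesis; a symmetric argument with $-\Pi$ can also be run. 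Hence $N_\ell = 1$.

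Once every $N_\ell = 1$, each $r_\ell$ has degree $1$ in $w_1$ and $\mathcal{Z}_{r_\ell}$ near $(0,0)$ is exactly the graph $w_1 = \Psi_\ell(w_2)$ of a convergent power series, so $\mathcal{Z}_r \cap \mathcal{U} = \bigcup_{\ell=1}^L \{w_1 = \Psi_\ell(w_2)\}$ on a sufficiently small polydisk $\mathcal{U}$, which is precisely \eqref{eqn:zero3}. I expect the main obstacle to be making the winding/argument argument in the previous paragraph fully rigorous: one must control the lower-order terms of $\Psi_\ell$ uniformly (they are genuinely $O(t^{j+1})$ and do not affect the leading argument for $\varepsilon$ small) and one must handle the degenerate-looking but harmless case where $r_\ell$ itself is a unit times $w_1$ (then $j = N_\ell = 1$ already). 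An alternative, cleaner route to the same conclusion is to invoke the injectivity/half-plane-avoidance machinery already used in the excerpt — essentially \cite[Theorem 3.3]{BPS17} together with Lemma~18.3 of \cite{Kne15}, which is exactly the tool used in the proof of Lemma \ref{lem:LCO} to pin down the structure of such Puiseux branches — and I would likely present the argument that way to keep the paper self-consistent, deducing that a branch avoiding $\Pi^2 \cup (-\Pi)^2$ near the origin must have trivial ramification.
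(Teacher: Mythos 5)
Your skeleton (Weierstrass preparation, Newton--Puiseux parametrization of each irreducible factor, then using avoidance of $\Pi^2\cup(-\Pi)^2$ to kill the ramification) matches the paper's, but the step that kills the ramification has a genuine gap. You assert that for an irreducible branch $w_1=\Psi_\ell\big(w_2^{1/N_\ell}\big)$ with leading term $at^{j}$ one has $\gcd(j,N_\ell)=1$. That is not what irreducibility gives: the standard fact is that the gcd of $N_\ell$ with the set of \emph{all} exponents carrying nonzero coefficients equals $1$, not the gcd with the leading exponent alone. For instance $w_1=w_2+w_2^{3/2}$, i.e.\ $\Psi(t)=t^2+t^3$ with $N_\ell=2$, parametrizes the irreducible Weierstrass polynomial $(w_1-w_2)^2-w_2^3$, and here $j=2=N_\ell$. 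Because of this, a winding argument that only sees the leading term cannot produce the contradiction in general: if the leading term is a real multiple of an integer power of $w_2$ (say $\Psi_\ell(t)=-t^{N_\ell}+\cdots$), then to leading order $w_1\approx -w_2$, which is perfectly compatible with avoiding both $\Pi^2$ and $(-\Pi)^2$; the obstruction, if any, sits at the first exponent \emph{not} divisible by $N_\ell$, which may be buried beneath arbitrarily many larger real terms at multiples of $N_\ell$. These are exactly the dangerous branches the lemma must exclude, and your argument gives no information about them.

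The paper avoids this by arguing coefficient-by-coefficient rather than at leading order: writing $\widehat{\Psi}_\ell(w)=\sum_k a_k w^k$ and restricting to $w_2=t>0$, a small perturbation of $w_2$ into the upper or lower half-plane, combined with avoidance of \emph{both} $\Pi^2$ and $(-\Pi)^2$, forces $\Im\big(a_k\zeta^k\big)=0$ for every $k$ and every $N_\ell$-th root of unity $\zeta$; hence all $a_k$ are real and vanish unless $N_\ell\mid k$, so the branch is an honest analytic graph $w_1=\Psi_\ell(w_2)$. If you want to salvage your approach, run your perturbation argument at the first exponent $k$ for which $a_k\zeta^k\notin\mathbb{R}$ on some sheet, not at the leading exponent. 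Finally, be careful with your proposed ``cleaner route'': \cite[Theorem 3.3]{BPS17} and Lemma 18.3 of \cite{Kne15}, as used in Lemma \ref{lem:LCO}, exploit avoidance of only one of the two regions and explicitly \emph{allow} nontrivial ramification (real coefficients at multiples of $N_\ell$ followed by a coefficient with strictly positive imaginary part), so by themselves they do not yield trivial ramification; any correct proof must genuinely use both $\Pi^2$ and $(-\Pi)^2$, as the paper's does.
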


\begin{proof} Remark $3.4$ in \cite{BPS17} gives positive integers $L, N_1, \dots, N_L \in \mathbb{N}$, power series $\widehat{\Psi}_1, \dots, \widehat{\Psi}_L$ that converge near $0$ and satisfy $\widehat{\Psi}_{\ell}(0)=0$, and an open neighborhood $\mathcal{U} \subseteq \mathbb{C}^2$  of $(0,0)$ such that $\mathcal{Z}_r \cap \mathcal{U}$ is described by the formulas
\[ w_1 = \widehat{\Psi}_{1}\left(w_2^{\frac{1}{N_1}} \right), \dots, w_1  = \widehat{\Psi}_{L}\left(w_2^{\frac{1}{N_L}} \right),\]
where each $w_2^{1/N_{\ell}}$ is multi-valued.
Fix $\ell$ with $ 1\le \ell \le L.$ To simplify notation, define $\widehat{\Psi}:=\widehat{\Psi}_{\ell}$ and $N:= N_{\ell}$. Then, there are $a_k \in \mathbb{C}$ so that for $w \in \mathbb{C}$ near $0$, 
\[  \widehat{\Psi}(w) =\sum_{k=1}^{\infty} a_k w^k. \]
We claim $a_k$ can only be nonzero if $k$ is a multiple of $N$. To see this, fix a branch of $w^{\frac{1}{N}}$ so that if $t >0$, then $t^{\frac{1}{N}} = \zeta_n |t|^{\frac{1}{N}}$, where $\zeta_n$ is a fixed $N^{th}$ root of unity and $|t|^{\frac{1}{N}} >0$. Then for $t>0$ near $0$, we have
\[  \widehat{\Psi} \left (t^{\frac{1}{N}} \right) =\sum_{k=1}^{\infty} a_k \zeta_n^k |t|^{\frac{k}{N}} =\sum_{k=1}^{\infty} \Re\left( a_k \zeta_n^k \right) |t|^{\frac{k}{N}} + \sum_{k=1}^{\infty} \Im \left( a_k \zeta_n^k \right) |t|^{\frac{k}{N}}.  \]
We claim that $ \Im \big( a_k \zeta_n^k \big)=0$ for each $k \in \mathbb{N}.$  By way of contradiction, assume not and let $\tilde{k}$ be the smallest integer with  $ \Im \big( a_{\tilde{k}} \zeta_n^{\tilde{k}} \big) \ne 0$. Then for $t>0$ but near $0$, we have 
\[ \Im \left( \widehat{\Psi}\big (t^{\frac{1}{N}} \big) \right) \approx \Im \left( a_{\tilde{k}} \zeta_n^{\tilde{k}} \right) |t|^{\frac{\tilde{k}}{N}}.\]
By continuity, we can certainly find a $t_0>0$ with $\Im \big( \widehat{\Psi}\big (t_0^{\frac{1}{N}} \big)  \big) \ne 0$. Without loss of generality, assume $\Im \big(\widehat{\Psi}\big (t_0^{\frac{1}{N}} \big) \big)>0.$ As $\widehat{\Psi}(w_2^{\frac{1}{N}})$ is continuous near $t_0$, there must exist a $w_2 \in \mathbb{C}$ near $t_0$ with $\Im (w_2) >0$ and $\Im \big( \widehat{\Psi}\big(w_2^{\frac{1}{N}} \big) \big)>0$. By choosing $t_0$ sufficiently close to $0$, we can conclude that $r$ has a zero in $\Pi^2\cap \widehat{\mathcal{U}}$, a contradiction.

Thus, $ \Im \left( a_k \zeta_n^k \right)=0$ for each $k \in \mathbb{N}$ and for all $N^{th}$ roots of unity $\zeta_1, \dots, \zeta_N.$ This implies that each $a_k \in \mathbb{R}$ and if $a_k \ne 0$, then $k$ must be a multiple of $N$; namely, whenever $a_k \ne 0$, we can write $k =jN$ for some $j \in \mathbb{N}.$ This implies 
\[ \widehat{\Psi}\big(w_2^{\frac{1}{N}} \big)= \sum_{k=1}^{\infty} a_k w_2^{\frac{k}{N}} = \sum_{j=1}^{\infty} a_{jN} w_2^j. \]
Recalling the $\ell$-indices and defining $\Psi_{\ell}(w_2) =  \sum_{j=1}^{\infty} a_{jN_{\ell}} w_2^j$ gives the formulas in \eqref{eqn:zero3} and finishes the proof. \end{proof} 

Lemma \ref{lem:smooth1} has implications about the Weierstrass factorizations of such polynomials:

\begin{lemma}\label{lem:weier}
  Let $r \in \mathbb{C}[w_1,w_2]$ be  as in Lemma \ref{lem:smooth1}. Then each irreducible Weierstrass polynomial in $w_1$ in the Weierstrass factorization of $r$ is linear in $w_1$. \end{lemma}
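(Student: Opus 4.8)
The plan is to deduce Lemma~\ref{lem:weier} directly from Lemma~\ref{lem:smooth1} by matching the geometric description of $\mathcal{Z}_r$ near $(0,0)$ against the root structure encoded in a Weierstrass factorization. First I would invoke the Weierstrass Preparation Theorem: since $r(0,0)=0$ and $r(w_1,0)\not\equiv 0$, after multiplication by a unit we may write $r = u\cdot W$ near $(0,0)$, where $u$ is a unit and $W(w_1,w_2)=\prod_{k=1}^{L'} W_k(w_1,w_2)$ with each $W_k$ an irreducible Weierstrass polynomial in $w_1$, say of degree $d_k$ in $w_1$. The zero set of $r$ near $(0,0)$ equals the zero set of $W$, and the $w_1$-roots of $W_k(\cdot,w_2)$, for $w_2\ne 0$ small, are given by a single Puiseux cycle: $d_k$ conjugate branches of the form $w_1 = \Theta_k(\zeta^j w_2^{1/d_k})$, $j=0,\dots,d_k-1$, where $\Theta_k$ is a convergent power series vanishing at $0$ and $\zeta = e^{2\pi i/d_k}$. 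This is the standard correspondence between irreducible Weierstrass factors and Puiseux branches.

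Next I would compare this against Lemma~\ref{lem:smooth1}, which asserts that $\mathcal{Z}_r\cap\mathcal{U}$ is parametrized by finitely many \emph{single-valued} power series $w_1 = \Psi_1(w_2),\dots,w_1=\Psi_L(w_2)$ (each honest analytic functions of $w_2$, not of a fractional power). The key step is to argue that an irreducible Weierstrass factor $W_k$ of degree $d_k\ge 2$ would force a genuinely multivalued branch and hence contradict single-valuedness. Concretely: for $w_2$ in a punctured neighborhood of $0$ the $d_k$ roots $\Theta_k(\zeta^j w_2^{1/d_k})$ are distinct (irreducibility of $W_k$ as a Puiseux polynomial means the conjugates are genuinely permuted by monodromy), so they cannot all coincide with values of the finitely many single-valued functions $\Psi_i(w_2)$ consistently as $w_2$ loops around $0$ — analytic continuation around a small loop returns each $\Psi_i$ to itself but cyclically permutes the $\Theta_k(\zeta^j w_2^{1/d_k})$ nontrivially. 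More carefully, since $\mathcal{Z}_r$ locally equals the union of the graphs of $\Psi_1,\dots,\Psi_L$, each root of $W_k(\cdot,w_2)$ must for each fixed $w_2$ equal some $\Psi_i(w_2)$; by continuity/connectedness of the punctured disk this assignment is locally constant, so one fixed $\Psi_i$ agrees with the branch $\Theta_k(w_2^{1/d_k})$ on an open set, forcing $\Theta_k(w_2^{1/d_k})$ to be single-valued, which (as its distinct conjugates show) is impossible unless $d_k=1$. Hence every $W_k$ is linear in $w_1$.

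The main obstacle is making the "roots must be among the $\Psi_i$, hence a branch is single-valued" step airtight, since one must rule out the possibility that monodromy shuffles roots \emph{between} the graphs of different $\Psi_i$'s in a way that is individually multivalued but collectively consistent. The clean way around this is: analytic continuation of $W_k(\cdot,w_2)$ around a loop in the punctured $w_2$-disk cyclically permutes its $d_k$ roots in a single $d_k$-cycle (irreducibility), whereas each $\Psi_i$, being a convergent power series in $w_2$ itself, is fixed under such continuation; since the set of $w_1$-roots of $W_k$ at each $w_2$ is a subset of $\{\Psi_1(w_2),\dots,\Psi_L(w_2)\}$, a counting argument on a small loop shows the monodromy permutation of the $W_k$-roots must be trivial, forcing $d_k=1$. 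Alternatively — and perhaps more simply for the write-up — one can shortcut the whole argument: Lemma~\ref{lem:smooth1} in fact shows that after the substitution each Puiseux series $\widehat\Psi_\ell(w_2^{1/N_\ell})$ is actually an honest power series in $w_2$, i.e. $N_\ell = 1$ for every $\ell$; since $N_\ell$ is exactly the degree in $w_1$ of the corresponding irreducible Weierstrass factor, every such factor is linear. I would phrase the proof along this second, shorter route, citing the established identification of $N_\ell$ with the Weierstrass-factor degree (as used in Section~\ref{subsec:IM}).
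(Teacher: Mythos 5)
Your proposal is correct, and the route you say you would actually write up (the ``shortcut'') is essentially the paper's own proof: the paper likewise cites the identification of the degree in $w_1$ of each irreducible Weierstrass factor with the denominator $N_\ell$ of the corresponding Puiseux parametrization, and then observes that Lemma \ref{lem:smooth1} makes each parametrizing series an honest power series in $w_2$, forcing each factor to be linear. Your first, monodromy-based argument is just a more explicit justification of the same identification (it supplies the standard fact, glossed over in both write-ups, that the $N_\ell$ conjugate branches of an irreducible factor of degree $\ge 2$ are genuinely distinct near $w_2=0$), so no substantive difference from the paper.
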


\begin{proof} As discussed in \cite[Remark 3.4]{BPS17}, one can factor $r = \beta r_1 \cdots r_L$, where $\beta$ is a unit and each $r_{\ell}$ is an irreducible Weierstrass polynomial in $w_1$. Then as in the proof of  \cite[Theorem 3.3]{BPS17}, each Puiseux series describing $\mathcal{Z}_r$ originates as a description of the zero set of an $r_{\ell}$ and moreover, the denominator appearing in the fractional power of the Puiseux series gives the degree of $r_{\ell}$ in $w_1$. In the case of Lemma \ref{lem:smooth1}, the zero set components are given by analytic curves  $w_1= \Psi_{\ell}(w_2)$, which implies that each $\deg r_{\ell}=1$ in $w_1$.  So, the polynomials in the Weierstrass factorization of $r$ are all linear in $w_1.$
\end{proof}

An application of Lemma \ref{lem:smooth1} yields Theorem \ref{thm:smooth}:

\begin{proof} Set $p_{\mu}(z):= \tilde{p}(z) -\mu p(z)$. Then describing $\mathcal{L}_{\mu}(\phi)$ near $(1,1)$ is equivalent to describing $\mathcal{Z}_{p_{\mu}}$ near $(1,1)$.  Since $\phi$ is analytic and $|\phi|<1$ on $\mathbb{D}^2$, it is easy to see that $p_{\mu}$ has no zeros on $\mathbb{D}^2 \cup \mathbb{E}^2$, where $\mathbb{E}= \mathbb{C} \setminus \overline{\mathbb{D}}$ is the exterior disk. Assume $\deg p_{\mu} = (m,n)$ and define
\begin{equation} \label{eqn:Q2} q_{\mu}(w) := (1-iw_1)^m(1-iw_2)^n p_{\mu}(\beta(w_1), \beta(w_2)).\end{equation}
Since $\beta(0)=1$, we have $q_{\mu}(0,0) =0$ and since $\tilde{p}$ and $p$ share no common factors, $q_{\mu}(w_1, 0) \not \equiv 0$ for all 
but at most one $\mu\in \T$.

Suppose then that $q_{\mu}(w_1,0)\neq 0$. 
If $\widehat{\mathcal{U}} \subseteq \mathbb{C}^2$ is an open set containing $(0,0)$ that omits $w_1=-i$ and $w_2=-i$, then  $\mathcal{Z}_{q_\mu} \cap \left(\Pi^2 \cup (-\Pi)^2\right) \cap \widehat{\mathcal{U}} = \emptyset$. This means Lemma \ref{lem:smooth1} gives a positive integer $L_{\mu}$, power series $\Psi^{\mu}_1, \dots, \Psi^{\mu}_{L_\mu}$ that converge in a neighborhood of $0$, and an open set $\mathcal{U} \subseteq \widehat{\mathcal{U}}$ of $(0,0)$   such that $\mathcal{Z}_{q_{\mu}} \cap \mathcal{U}$ is described by the formulas 
\begin{equation} \label{eqn:LL2} w_1 = \Psi^\mu_{1}(w_2), \dots, w_1  = \Psi^\mu_{L_\mu}(w_2).\end{equation}
To describe $\mathcal{L}_{\mu}(\phi)$, recall that
\[  \mathcal{Z}_{q_{\mu}} \cap\widehat{\mathcal{U}}  =  \left\{ (w_1,w_2) \in \mathbb{C}^2: p_{\mu}(\beta(w_1), \beta(w_2))=0 \right \} \cap \widehat{\mathcal{U}} . \]
Setting $\mathcal{V} = \beta(\mathcal{U})$ and each $\psi^{\mu}_{\ell} = \beta \circ \Psi^{\mu}_{\ell} \circ \beta^{-1}$, we can switch variables via $z_1  = \beta(w_1)$ and $z_2 = \beta(w_2)$ to describe 
 the components of $\mathcal{L}_{\mu}(\phi) \cap \mathcal{V}$ with
\[ z_1 = \psi^{\mu}_{1}(z_2), \dots, z_1 =  \psi^{\mu}_{L_\mu}(z_2),\]
as needed.

If $q_{\mu}(w_1,0)$ vanishes identically, then $q_{\mu}(w_1,0)$ is divisible by $w_2$, and then tracing back we get a vertical component $\{z_2=1\}$ in $\mathcal{L}_{\mu}(\phi)$.
\end{proof}

\subsubsection{Horn Lemma} 
Assume $\phi$ satisfies (A1) and let $\mu \in \mathbb{T}$. Then Theorem \ref{thm:smooth} says the components of $\mathcal{L}_{\mu}(\phi)$ near $(1,1)$ are smooth curves, given by \eqref{eqn:LL}, and at most one vertical component. If we restrict attention to $\mathbb{T}^2$ and consider $\mathcal{C}_{\mu}:=\mathcal{L}_{\mu}(\phi) \cap \mathbb{T}^2$, these  smooth curves from \eqref{eqn:LL} approach $(1,1)$ within specific geometric regions. 

To simplify the geometry, we again perform our analysis on the upper half plane $\Pi$ and define
\[ \widetilde{\mathcal{C}}_{\mu} := \left \{ (x_1, x_2) \in \mathbb{R}^2: \tilde{p}(\beta(x_1), \beta(x_2)) = \mu p(\beta(x_1), \beta(x_2)) \right\}.\]
Then near $(0,0)$, we have  $\widetilde{\mathcal{C}}_{\mu} = \mathcal{Z}_{q_\mu} \cap \mathbb{R}^2$, where $q_\mu$ is from \eqref{eqn:Q2}. Near $(0,0)$, we also know $ \mathcal{Z}_{q_\mu}$  is described by \eqref{eqn:LL2} and each $\Psi_{\ell}^\mu$ is a convergent power series with real coefficients. Thus near $(0,0)$, $\widetilde{\mathcal{C}}_{\mu}$ is similarly described by the equations
\begin{equation} \label{eqn:LL3} x_1 = \Psi^{\mu}_1(x_2), \dots, \ x_1 = \Psi^{\mu}_{L_\mu}(x_2).\end{equation}

We can slightly modify some ideas from \cite{BPS17} to show that the curves in \eqref{eqn:LL3}
approach $(0,0)$ in a specific way:

\begin{lemma} \textbf{(A Horn Lemma.)} \label{lem:horn} Let $\phi$ satisfy (A1) and fix $\mu \in \mathbb{T}$ with $\mu \ne 1$.  Then for each $\Psi^{\mu}_{\ell}$ in \eqref{eqn:LL3}, there is an $m_{\ell} <0$ and $b_{\ell} >0$ so that 
\begin{equation} \label{eqn:horn} \frac{x_2}{m_{\ell} + b_{\ell}x_2} < \Psi^{\mu}_{\ell}(x_2) < \frac{x_2}{m_{\ell} - b_{\ell}x_2}\end{equation}
for $x_2 \in \mathbb{R}$ sufficiently close to $0$.
\end{lemma}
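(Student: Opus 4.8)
\emph{Proof proposal.} The plan is to determine the leading-order behavior of each branch $z_1 = \Psi^{\mu}_{\ell}(z_2)$ at the origin, show that it is of the form ``$w_1 \approx m_\ell^{-1} w_2$ with $m_\ell < 0$'', and then extract the two-sided estimate \eqref{eqn:horn} from a Taylor expansion of the reciprocal slope $x_2/\Psi^{\mu}_{\ell}(x_2)$.

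First I would recall, from the proof of Theorem \ref{thm:smooth}, that with $q_\mu$ as in \eqref{eqn:Q2} there is a neighborhood $\widehat{\mathcal{U}}$ of $(0,0)$ avoiding $\{w_1 = -i\}\cup\{w_2=-i\}$ on which $\mathcal{Z}_{q_\mu}\cap\bigl(\Pi^2\cup(-\Pi)^2\bigr) = \emptyset$ and on which $\mathcal{Z}_{q_\mu}$ is exactly the union of the graphs $w_1 = \Psi^{\mu}_{\ell}(w_2)$, each $\Psi^{\mu}_{\ell}$ a convergent power series with \emph{real} coefficients and $\Psi^{\mu}_{\ell}(0) = 0$. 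The hypothesis $\mu \ne 1 = \lambda_0$ enters here to guarantee that no branch is the coordinate axis $\{z_1 = 1\}$: the graph $w_1 \equiv 0$ lies in $\mathcal{Z}_{q_\mu}$ only if $p_\mu(1,\cdot) \equiv 0$, i.e. $\phi(1,\cdot) \equiv \mu$, which (relating the slice limit at the singularity to the non-tangential value $\lambda_0$, cf. \cite[Lemma 2.3]{BPS17}) would force $\mu = \lambda_0$. Hence each $\Psi^{\mu}_{\ell}\not\equiv 0$, and we may write $\Psi^{\mu}_{\ell}(w) = c_k w^k + c_{k+1}w^{k+1} + \cdots$ with $k \ge 1$ and $c_k \in \R\setminus\{0\}$.

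The heart of the argument is to show that $k = 1$ and $c_1 < 0$. Suppose not, and put $w_2 = \rho e^{i\theta}$ for small $\rho > 0$; then $w_1 := \Psi^{\mu}_{\ell}(w_2) = c_k\rho^k e^{ik\theta}(1 + O(\rho))$, so $\Im w_1$ has the sign of $c_k\sin(k\theta)$ once $\rho$ is small and $\theta$ stays away from the zeros of $\sin(k\theta)$. If $c_k > 0$, take $\theta = \pi/(2k)$; if $c_k < 0$ and $k \ge 2$, take $\theta = 3\pi/(2k)$. In both cases $\theta \in (0,\pi)$, so $w_2 \in \Pi$, while $c_k\sin(k\theta) > 0$, so $w_1 \in \Pi$ for $\rho$ small; thus $(w_1,w_2) \in \Pi^2\cap\mathcal{Z}_{q_\mu}\cap\widehat{\mathcal{U}}$, contradicting the emptiness above. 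This leaves only $k = 1$ with $(\Psi^{\mu}_{\ell})'(0) = c_1 < 0$, so the branch is not tangent to $\{z_2 = 1\}$ and has negative slope at $0$. This step is the main obstacle: everything reduces to the emptiness of $\mathcal{Z}_{q_\mu}$ in $\Pi^2$ — which is just the fact that $p_\mu = \tilde p - \mu p$ has no zeros in $\D^2$ — converted, via the sector computation, into a constraint on the leading Puiseux data; and the bookkeeping needed to rule out $\Psi^{\mu}_{\ell}\equiv 0$ is precisely where $\mu \ne 1$ is indispensable.

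Finally I would set $m_\ell := 1/c_1 < 0$. Since $\Psi^{\mu}_{\ell}$ has a simple zero at $0$ with real coefficients, $h(w) := w/\Psi^{\mu}_{\ell}(w)$ is real-analytic near $0$ with $h(0) = m_\ell < 0$, hence $h < 0$ on a punctured real neighborhood of $0$; writing $h(x_2) = m_\ell + ax_2 + O(x_2^2)$ and choosing $b_\ell > |a|$ together with $\delta > 0$ small gives $|h(x_2) - m_\ell| < b_\ell|x_2|$ for $0 < |x_2| < \delta$, i.e. $h(x_2)$ lies strictly between $m_\ell - b_\ell|x_2|$ and $m_\ell + b_\ell|x_2|$. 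Now $h(x_2)$, $m_\ell + b_\ell x_2$ and $m_\ell - b_\ell x_2$ all carry the same negative sign, and a short sign check in the two cases $x_2 > 0$ and $x_2 < 0$ shows that, using $\Psi^{\mu}_{\ell}(x_2) = x_2/h(x_2)$, one lands in both cases on
\[\frac{x_2}{m_\ell + b_\ell x_2} < \Psi^{\mu}_{\ell}(x_2) < \frac{x_2}{m_\ell - b_\ell x_2}, \qquad 0 < |x_2| < \delta,\]
which is \eqref{eqn:horn}.
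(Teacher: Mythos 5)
Your route is genuinely different from the paper's. The paper proves Lemma \ref{lem:horn} by transferring the level curve to $\Pi^2$ and invoking the Pick-function machinery of \cite{BPS17}: Proposition 5.5 there places the curve in a ``spoke region'' (this is where $\mu \ne 1$ is used), and Lemma 5.7 of \cite{BPS17} converts the spoke into the horn \eqref{eqn:horn}. You instead argue directly from the Puiseux/Weierstrass parametrization: since $p_\mu=\tilde p-\mu p$ has no zeros on $\D^2\cup\mathbb{E}^2$, the set $\mathcal{Z}_{q_\mu}$ avoids $\Pi^2$ near the origin, and your sector computation ($w_2=\rho e^{i\theta}$ with $\theta=\pi/(2k)$ or $3\pi/(2k)$) correctly forces each real-coefficient branch $\Psi^\mu_\ell$ to have leading term $c_1w_2$ with $c_1<0$; the reciprocal-slope Taylor argument then yields \eqref{eqn:horn}, and the sign bookkeeping for $x_2\gtrless 0$ checks out. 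This is more elementary and self-contained than the paper's proof, it stays entirely within the tools already developed in Section \ref{sec:prelim} (it is the same style of argument as Lemma \ref{lem:smooth1}), and it delivers Lemma \ref{lem:hornlinearterm} (indeed the stronger statement that $(\Psi^\mu_\ell)'(0)<0$) as an immediate byproduct rather than as a consequence of the horn. What it gives up is the quantitative ``spoke'' information from \cite{BPS17} that the paper's proof carries along; but for the statement of Lemma \ref{lem:horn} your leading-coefficient analysis suffices.

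There is, however, one step that is not adequately justified: the exclusion of the degenerate branch $\Psi^\mu_\ell\equiv 0$, i.e.\ of the component $\{z_1=1\}$ of $\mathcal{L}_\mu(\phi)$. You are right that this case must be ruled out (the horn inequality fails for the zero branch, so the lemma is false if such a component occurs for some $\mu\ne 1$), and right that $w_1\mid q_\mu$ forces $\tilde p(1,\cdot)\equiv\mu\,p(1,\cdot)$, hence $\phi(1,z_2)\equiv\mu$ on the face $\{1\}\times\D$. But your conclusion ``hence $\mu=\lambda_0$, cf.\ \cite[Lemma 2.3]{BPS17}'' does not follow from that lemma as it is used in this paper: Lemma 2.3 of \cite{BPS17} controls $\phi$ along sequences in $\D^2$ approaching $(1,1)$ nontangentially, whereas the face $\{1\}\times\D$ lies on the boundary, outside every nontangential approach region. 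A constant value on the face does not transfer to the nontangential value by continuity alone: you would need either a quantitative Julia-type inequality with uniform control of the Julia constants of the slices $\phi(\cdot,z_2)$ as $z_2\to 1$, or a boundary-face limit statement of the form $\lim_{z_2\to1}\phi(1,z_2)=\lambda_0$, which is exactly the kind of fact the paper's proof obtains implicitly from the Pick-function/spoke analysis of \cite{BPS17} and which Theorem \ref{thm:smooth} of this paper does not address (its proof only treats the other degenerate case $\{z_2=1\}$). So the architecture of your proof is sound, but as written the argument that $\mu\ne1$ precludes a $\{z_1=1\}$ component rests on a citation that does not cover it; this gap needs to be closed, either by proving the face-limit statement or by quoting a result of \cite{BPS17} that actually contains it.
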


\begin{proof} Fix $\ell$ and consider the curve $z_1 = \psi^\mu_{\ell}(z_2)$ restricted to $\mathbb{T}^2$ from \eqref{eqn:LL}. Change variables to $\Pi^2$ by defining each $\tilde{x}_j = \alpha(z_j)$, where $\alpha(z) := i \left( \frac{1+z}{1-z}\right)$ is a conformal map from $\mathbb{D}$ to $\Pi$. This gives a new curve
\[ \tilde{x}_1 =\left( \alpha \circ \psi^{\mu}_{\ell} \circ \alpha^{-1}\right) (\tilde{x}_2)\] 
in $\mathbb{R}^2$ that approaches $(\infty, \infty)$. 
As $\mu \ne 1$, the arguments in \cite[Proposition 5.5]{BPS17} can be used to show that this curve approaches $(\infty, \infty)$ within a ``spoke region'' associated to a Pick function $f$ defined using $\phi$.

Change variables again by defining each $x_j = \gamma(\tilde{x}_j)$, where $\gamma(w):= -\frac{1}{w}$ conformally maps $\Pi$ to $\Pi$. This gives the curve of interest:
\[ x_1 = \left(\gamma \circ  \alpha \circ \psi^{\mu}_{\ell} \circ \alpha^{-1} \circ \gamma^{-1} \right) (x_2) = \left( \beta^{-1} \circ \psi^{\mu}_{\ell} \circ \beta \right) (x_2) = \Psi^{\mu}_{\ell} (x_2),\]
as in \eqref{eqn:LL3}.
Then the arguments in \cite[Lemma $5.7$]{BPS17} imply that this curve approaches $(0,0)$ within a ``horn region'' as shown below in Figure \ref{fig:horn}. This means that there are constants $m_{\ell} <0$ and $b_{\ell} >0$ so that \eqref{eqn:horn} holds for $x_2$ near $0$. To avoid a lengthy discussion of Pick functions and spoke regions, we omit further details and refer the reader to \cite{ATDY16, BPS17}.
\end{proof}

\begin{center}
\begin{figure}[!htbp]
  \includegraphics[scale=1.3]{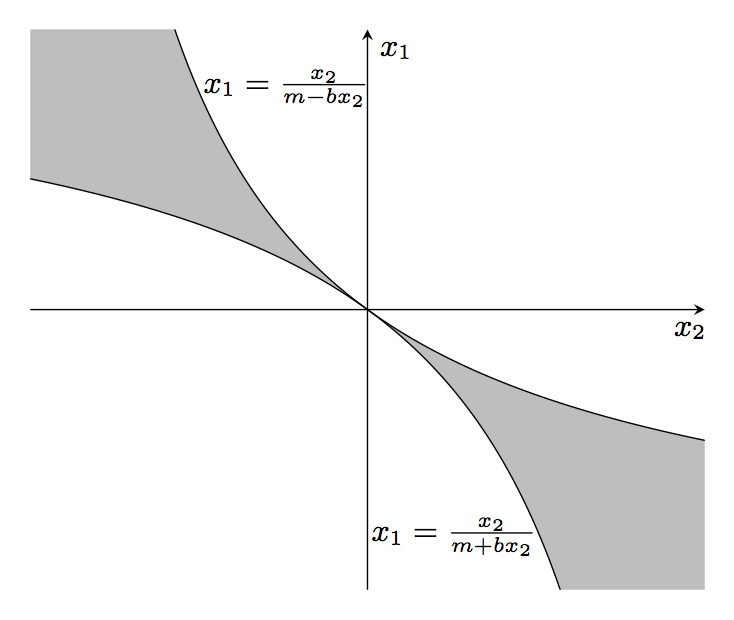}
  \caption{A Horn Region near $(0,0)$}
  \label{fig:horn}
\end{figure}
\end{center}

This has implications about the power series representations for each  function $\psi^{\mu}_{\ell}$ from Theorem \ref{thm:smooth}. Specifically:

\begin{lemma} \label{lem:hornlinearterm} Let $\phi$ satisfy (A1). Then for $\mu\neq 1$, the power series representation of each  $\psi^{\mu}_{\ell}$ from \eqref{eqn:LL} centered at $1$ has a nonzero linear term.\end{lemma}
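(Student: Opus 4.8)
The plan is to extract the nonvanishing of the linear coefficient directly from the Horn estimate \eqref{eqn:horn} in the half-plane picture, and then transport this information back to the disk via the conformal maps $\beta,\beta^{-1}$ from \eqref{eqn:beta}. Recall from the proof of Theorem \ref{thm:smooth} that $\psi^{\mu}_{\ell} = \beta \circ \Psi^{\mu}_{\ell} \circ \beta^{-1}$, where $\Psi^{\mu}_{\ell}$ is the convergent power series with real coefficients and $\Psi^{\mu}_{\ell}(0)=0$ appearing in \eqref{eqn:LL3}. Since $\beta$ and $\beta^{-1}$ are M\"obius transformations with $\beta(0)=1$, $\beta^{-1}(1)=0$, and nonvanishing derivatives ($\beta'(0)=2i$, $(\beta^{-1})'(1)=-i/2$), the chain rule gives $(\psi^{\mu}_{\ell})'(1) = \beta'(0)\,(\Psi^{\mu}_{\ell})'(0)\,(\beta^{-1})'(1) = (\Psi^{\mu}_{\ell})'(0)$. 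Thus it suffices to show $(\Psi^{\mu}_{\ell})'(0)\neq 0$, which is exactly the statement that the power series of $\psi^{\mu}_{\ell}$ at $1$ has a nonzero linear term.

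First I would pin down $(\Psi^{\mu}_{\ell})'(0)$ using Lemma \ref{lem:horn}. Write $\Psi^{\mu}_{\ell}(x_2) = \sum_{k\ge 1} c_k x_2^k$ with $c_k \in \R$, and let $k_0 \ge 1$ be the index of the first nonzero coefficient, so that $\Psi^{\mu}_{\ell}(x_2)/x_2 \sim c_{k_0} x_2^{k_0-1}$ as $x_2 \to 0$. On the other hand, since $m_{\ell}<0$ and $b_{\ell}>0$, both $m_{\ell}+b_{\ell}x_2$ and $m_{\ell}-b_{\ell}x_2$ are negative for $x_2$ near $0$; dividing \eqref{eqn:horn} by $x_2$ (reversing the inequalities when $x_2<0$) shows that in either sign case the ratio $\Psi^{\mu}_{\ell}(x_2)/x_2$ lies between $1/(m_{\ell}+b_{\ell}x_2)$ and $1/(m_{\ell}-b_{\ell}x_2)$, both of which converge to $1/m_{\ell}$ as $x_2 \to 0$. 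Hence $\lim_{x_2\to 0}\Psi^{\mu}_{\ell}(x_2)/x_2 = 1/m_{\ell}$, a finite nonzero number. Comparing with $\Psi^{\mu}_{\ell}(x_2)/x_2 \sim c_{k_0}x_2^{k_0-1}$ forces $k_0 = 1$ and $c_1 = 1/m_{\ell}$, i.e. $(\Psi^{\mu}_{\ell})'(0) = 1/m_{\ell} \neq 0$.

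Finally I would assemble the pieces: substituting $(\Psi^{\mu}_{\ell})'(0)=1/m_{\ell}\neq 0$ into the chain-rule identity above yields $(\psi^{\mu}_{\ell})'(1) = 1/m_{\ell}\neq 0$. The argument runs uniformly over $\ell = 1,\dots,L_{\mu}$, each branch carrying its own constant $m_{\ell}<0$, and the possible exceptional vertical component $\{z_2=1\}$ plays no role once $\mu\neq 1$ is taken outside the single exceptional value of Theorem \ref{thm:smooth}. As a byproduct the computation records the exact slope of each level-curve branch at the singularity, $(\psi^{\mu}_{\ell})'(1)=1/m_{\ell}$, which will be convenient later.

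I do not expect a genuine obstacle here: all of the analytic content is already carried by the Horn Lemma, whose proof (via Pick functions and spoke regions, cited from \cite{BPS17}) we are taking as given. The only points requiring any care are the bookkeeping of signs when dividing the two-sided bound \eqref{eqn:horn} by an $x_2$ of variable sign, and the elementary observation that a convergent real power series vanishing at $0$ whose quotient by $x_2$ has a finite nonzero limit must begin at order exactly one; both are routine.
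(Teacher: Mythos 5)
Your proposal is correct and follows the same route as the paper: write $\psi^{\mu}_{\ell}=\beta\circ\Psi^{\mu}_{\ell}\circ\beta^{-1}$, use the Horn Lemma \ref{lem:horn} to see that $\Psi^{\mu}_{\ell}(x_2)\approx x_2$ near $0$ so that $(\Psi^{\mu}_{\ell})'(0)\neq 0$, and transfer back through $\beta$ to get $(\psi^{\mu}_{\ell})'(1)\neq 0$. Your version merely adds the explicit squeeze giving $(\Psi^{\mu}_{\ell})'(0)=1/m_{\ell}$ and the check that $\beta'(0)(\beta^{-1})'(1)=1$, which are harmless refinements of the paper's argument.
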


 \begin{proof} Fix any  $\psi^{\mu}_{\ell}$ from \eqref{eqn:LL}.  Then $\psi^{\mu}_{\ell}$ satisfies $ \psi^{\mu}_{\ell} = \beta \circ \Psi^{\mu}_{\ell} \circ \beta^{-1}$, for some $ \Psi^{\mu}_{\ell}$ from \eqref{eqn:LL2}. Denote the power series of $ \Psi^{\mu}_{\ell}$ centered at $0$ by:
\[ \Psi^{\mu}_{\ell}(w_2) = \sum_{k=1}^{\infty} a_{k \ell} w_2^k.\]
Then Lemma \ref{lem:horn} immediately implies that for $x_2 \in \mathbb{R}$ near $0$, we have $|\Psi^{\mu}_{\ell}(x_2)| \approx |x_2|$ and so $a_{1\ell} \ne 0$.  Equivalently, $\left( \Psi^{\mu}_{\ell} \right)'(0) \ne 0.$  As $ \psi^{\mu}_{\ell} = \beta \circ \Psi^{\mu}_{\ell} \circ \beta^{-1}$,
it follows that $\left( \psi^{\mu}_{\ell}\right)' (1) \ne 0.$
Thus, the power series representation of  $\psi^{\mu}_{\ell}$ centered at $1$ has a nonzero linear term.
\end{proof}

\subsubsection{Order of Contact}
Let $\phi$ satisfy (A1) and fix $\lambda, \mu \in \mathbb{T}.$ Excepting at most one $\mu$, there are components of $ \mathcal{L}_{\lambda}(\phi)$ and $ \mathcal{L}_{\mu}(\phi)$, given by the analytic curves in \eqref{eqn:LL}, which approach $(1,1)$. To analyze the relationship between the branches of  $ \mathcal{L}_{\lambda}(\phi)$ and $ \mathcal{L}_{\mu}(\phi)$ near $(1,1)$, we define the following:

\begin{definition}\label{def:branchOC} 
Assume $z_1 = \psi_1(z_2)$ and $z_1= \psi_2(z_2)$ are analytic curves defined in a neighborhood of $z_2=d$ with $\psi_1(d) =c= \psi_2(d).$ Then, the {\it order of contact} of $z_1 = \psi_1(z_2)$ and $z_1= \psi_2(z_2)$ at the point $(c, d)$ is the smallest positive integer $K$ with 
\[ 
\left  | \psi_1(z_2) -  \psi_{2}(z_2) \right | \approx  |d-z_2|^{K},\]
for $z_2$ near $d$. Equivalently, by examining the power series representations centered at $d$, one can show that $K$ is the smallest positive integer satisfying the derivative condition:
\[  \psi_{1} ^{(K)}(d) \ne \psi_{2}^{(K)}(d). \]
\end{definition}

In particular, we will study the order of contact at $(1,1)$ between branches $z_1 =  \psi^{\lambda}_{i}(z_2)$ and $z_1 =  \psi^{\mu}_{j}(z_2)$ of $ \mathcal{L}_{\lambda}(\phi) $ and $ \mathcal{L}_{\mu}(\phi) $ respectively. We further define

\begin{definition} Let $\phi$ satisfy (A1) and fix $\lambda, \mu \in \mathbb{T}$ with $\lambda \neq \mu$. Then the \emph{$z_1$-order of contact between $\mathcal{L}_{\lambda}(\phi)$ and $\mathcal{L}_{\mu}(\phi)$}, denoted $\K^{\lambda,\mu}_{(1,1)}$, is the maximum order of contact at $(1,1)$ between any two branches $z_1 =  \psi^{\lambda}_{i}(z_2)$ and $z_1 =  \psi^{\mu}_{j}(z_2)$ of $ \mathcal{L}_{\lambda}(\phi) $ and $ \mathcal{L}_{\mu}(\phi) $ from \eqref{eqn:LL}.
\end{definition}

Finally, we observe that order of contact is invariant under our typical change of variables. Specifically, recall that each $\psi^{\mu}_{\ell}$ from Theorem \ref{thm:smooth} satisfies $ \psi^{\mu}_{\ell} = \beta \circ \Psi^{\mu}_{\ell} \circ \beta^{-1}$, where $w_1 = \Psi^{\mu}_{\ell} (w_2)$ is a component of $\mathcal{Z}_{q_{\mu}}$ from \eqref{eqn:LL2}. Then the order of contact at $(0,0)$ between each $w_1 = \Psi^{\lambda}_{i}(w_2)$ and $w_1 =  \Psi^{\mu}_{j}(w_2)$
must equal the  order of contact at $(1,1)$ between the associated curves $z_1 =  \psi^{\lambda}_{i}(z_2)$ and $z_1 =  \psi^{\mu}_{j}(z_2)$.

\section{Contact order vs order of contact}\label{sec:CO}

In this section, we assume $\phi$ satisfies (A1) and then reconcile our two competing notions of contact order. Specifically, we consider the  the $z_1$-contact order of $\phi$ at $(1,1)$, which measures how the zero set of $\phi$ approaches $(1,1)$ and the $z_1$-order of contact between unimodular level curves of $\phi$, which measures the amount of similarity between unimodular level curves of $\phi$ near $(1,1).$  Here is the precise result:

\begin{theorem} \label{thm:CO} Let $\phi$ satisfy (A1).  Then for every pair $\lambda, \mu \in \mathbb{T}$, excluding 
at most one $\mu_0,$   the $z_1$-contact 
order of $\phi$ at $(1,1)$ equals the $z_1$-order of contact 
between the unimodular level curves  $\mathcal{L}_{\lambda}(\phi)$ and $\mathcal{L}_{\mu}(\phi)$ at $(1, 1)$. \end{theorem}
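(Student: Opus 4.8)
The plan is to freeze the second variable at a point $\zeta_2\in\T$ close to $1$ and work with the one–variable finite Blaschke product $B_{\zeta_2}(z_1):=\phi(z_1,\zeta_2)$, reading off \emph{both} notions of contact from its zeros and its level points near $z_1=1$. For all but finitely many $\zeta_2\in\T$ the slice $p(\cdot,\zeta_2)$ has no zeros in $\overline{\D}$, so $B_{\zeta_2}$ is a genuine finite Blaschke product; its zeros near $z_1=1$ are exactly the points $a_\ell(\zeta_2):=\psi_\ell^0(\zeta_2)$, $\ell=1,\dots,L_0$, coming from the branches of $\mathcal{Z}_{\tilde p}$ in Theorem~\ref{thm:zero} (these lie in $\D$ since $p$ has no zeros on $\D\times\T$), and by Lemma~\ref{lem:LCO} they satisfy $1-|a_\ell(\zeta_2)|\asymp|1-\zeta_2|^{\K^1_\ell}$. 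For $\mu$ different from one exceptional value $\mu_0$ — the unique value $\mu_0$, if any, for which $\mathcal{L}_{\mu_0}(\phi)$ contains the vertical line $\{z_2=1\}$ — the solutions of $B_{\zeta_2}(z_1)=\mu$ near $z_1=1$ (which lie on $\T$ because $B_{\zeta_2}$ is inner) are exactly the values $\psi^\mu_j(\zeta_2)$, $j=1,\dots,L_\mu$, of the level–curve branches of Theorem~\ref{thm:smooth}. Factoring $B_{\zeta_2}=B^{\mathrm{loc}}_{\zeta_2}R_{\zeta_2}$, where $B^{\mathrm{loc}}_{\zeta_2}(z_1)=\prod_{\ell=1}^{L_0}\frac{z_1-a_\ell(\zeta_2)}{1-\overline{a_\ell(\zeta_2)}\,z_1}$ and $R_{\zeta_2}$ is a finite Blaschke product whose zeros stay bounded away from $1$ uniformly as $\zeta_2\to1$, the argument $\arg B_{\zeta_2}(e^{is})$ is strictly increasing in $s$ with
\[ \frac{d}{ds}\arg B_{\zeta_2}(e^{is}) = \sum_{\ell=1}^{L_0}\frac{1-|a_\ell(\zeta_2)|^2}{|e^{is}-a_\ell(\zeta_2)|^2} + \frac{d}{ds}\arg R_{\zeta_2}(e^{is}), \]
the last term being nonnegative and bounded above uniformly for $e^{is}$ near $1$ and $\zeta_2$ near $1$.

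Write $K:=\K^1_{(1,1)}=\max_\ell\K^1_\ell$. For the \emph{universal upper bound} $\K^{\lambda,\mu}_{(1,1)}\le K$ (valid for every $\lambda\neq\mu$): since $|1-\zeta_2|<1$, each summand above is at most $\frac{2}{1-|a_\ell(\zeta_2)|}\lesssim|1-\zeta_2|^{-\K^1_\ell}\le|1-\zeta_2|^{-K}$, so $\bigl|\frac{d}{ds}\arg B_{\zeta_2}(e^{is})\bigr|\lesssim|1-\zeta_2|^{-K}$ uniformly for $e^{is}$ near $1$. Given level points $z_1^\lambda=\psi^\lambda_i(\zeta_2)$ and $z_1^\mu=\psi^\mu_j(\zeta_2)$ near $1$ with $\lambda\neq\mu$, monotonicity forces $\arg B_{\zeta_2}$ to change over the short arc of $\T$ joining them by at least $\eta(\lambda,\mu):=\mathrm{dist}\bigl(0,\,\arg\lambda-\arg\mu+2\pi\mathbb{Z}\bigr)>0$; comparing with the derivative bound gives $|\psi^\lambda_i(\zeta_2)-\psi^\mu_j(\zeta_2)|\gtrsim\eta(\lambda,\mu)\,|1-\zeta_2|^{K}$, so by Definition~\ref{def:branchOC} every pair of branches has order of contact at most $K$.

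For the \emph{generic lower bound} $\K^{\lambda,\mu}_{(1,1)}\ge K$: choose $\ell^*$ with $\K^1_{\ell^*}=K$, so $\delta(\zeta_2):=1-|a_{\ell^*}(\zeta_2)|\asymp|1-\zeta_2|^{K}$, and put $e^{is_0(\zeta_2)}:=a_{\ell^*}(\zeta_2)/|a_{\ell^*}(\zeta_2)|$ with $s_0(\zeta_2)\to0$ (since $a_{\ell^*}(\zeta_2)\to1$). On the arc $I_{\zeta_2}:=\{e^{is}:|s-s_0(\zeta_2)|\le|1-\zeta_2|^{K-1/2}\}$, whose length tends to $0$ but dwarfs $\delta(\zeta_2)$, monotonicity together with the explicit formula for the argument of a single Blaschke factor shows $\arg B_{\zeta_2}$ increases over $I_{\zeta_2}$ by at least $4\arctan\bigl(c\,|1-\zeta_2|^{-1/2}\bigr)$ for some $c>0$, which tends to $2\pi$. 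Hence the range of $\arg B_{\zeta_2}$ on $I_{\zeta_2}$ omits only an arc of length $\to0$, and one shows this gap is centred, as $\zeta_2\to1$, at precisely the exceptional value $\mu_0$. Thus for any $\lambda,\mu\in\T\setminus\{\mu_0\}$ with $\lambda\neq\mu$ and $\zeta_2$ close enough to $1$, both $\lambda$ and $\mu$ are attained by $B_{\zeta_2}$ on $I_{\zeta_2}$, yielding level points $z_1^\lambda(\zeta_2),z_1^\mu(\zeta_2)\in I_{\zeta_2}$ with $|z_1^\lambda(\zeta_2)-z_1^\mu(\zeta_2)|\lesssim|1-\zeta_2|^{K-1/2}=o\bigl(|1-\zeta_2|^{K-1}\bigr)$. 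Since there are finitely many level–curve branches of each value, after passing to a subsequence $\zeta_2\to1$ these points lie on fixed branches $z_1=\psi^\lambda_i(z_2)$ and $z_1=\psi^\mu_j(z_2)$; hence $\psi^\lambda_i-\psi^\mu_j$ vanishes at $z_2=1$ to order at least $K$, so $\K^{\lambda,\mu}_{(1,1)}\ge K$. Combining with the upper bound yields $\K^{\lambda,\mu}_{(1,1)}=K=\K^1_{(1,1)}$ for all $\lambda\neq\mu$ in $\T\setminus\{\mu_0\}$.

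The main obstacle is the generic lower bound — specifically the assertion that the small omitted arc in the range of $\arg B_{\zeta_2}$ over $I_{\zeta_2}$ stabilizes around one value $\mu_0$ as $\zeta_2\to1$ (equivalently, controlling $B_{\zeta_2}$ away from its clustered zeros near $z_1=1$), together with the bookkeeping when several branches of $\mathcal{Z}_{\tilde p}$ share the maximal contact order $K$. This is where a variational analysis of the family $\{B_{\zeta_2}\}$, tracking the motion of the zeros $a_\ell(\zeta_2)$ and of the residual factor $R_{\zeta_2}$ along the lines of \cite{BPS17}, is required; that analysis also pins down the unique exceptional level curve appearing in the statement.
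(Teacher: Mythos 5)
Your upper-bound half (every pair $\lambda\neq\mu$ has $z_1$-order of contact at most $\K^1_{(1,1)}$) is correct and takes a genuinely different route from the paper: the paper's Theorem \ref{thm:CO2} goes through the local integrability criterion of Theorem \ref{thm:integral} combined with an Euler--Lagrange variational estimate between two level branches, whereas you get the inequality directly from Lemma \ref{lem:blaschke} and Lemma \ref{lem:LCO}, bounding $\frac{d}{ds}\arg\phi(e^{is},\zeta_2)\lesssim|1-\zeta_2|^{-\K^1_{(1,1)}}$ (near-$1$ zeros via $\tfrac{2}{1-|a_\ell(\zeta_2)|}$, far zeros contributing $O(1)$) and using monotonicity of the argument to force $|\psi^{\lambda}_i(\zeta_2)-\psi^{\mu}_j(\zeta_2)|\gtrsim|1-\zeta_2|^{\K^1_{(1,1)}}$. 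That is more elementary, needs no exceptional value, and is a nice alternative. Your lower-bound mechanism (the zero branch of maximal contact order forces the slice Blaschke product to sweep out all but an arc of length $o(1)$ of values over a short arc of $\mathbb{T}$) is essentially the paper's Lemma \ref{lem:move} and Theorem \ref{thm:CO1}; your integrality rounding (getting only $O(|1-\zeta_2|^{K-1/2})$ from the wider arc $I_{\zeta_2}$ and then rounding the vanishing order up to $K$) is legitimate, though taking the arc of length $\asymp 1-|a_{\ell^*}(\zeta_2)|$ as in Lemma \ref{lem:move} gives $O(|1-\zeta_2|^{K})$ directly.

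However, there is a genuine gap exactly at the step you flag: you never show that a fixed generic pair $(\lambda,\mu)$ is attained simultaneously on the arcs $I_{\zeta_2}$ along a single sequence $\zeta_2\to 1$; you only know the attained values omit an arc of length $o(1)$ whose location is uncontrolled. Worse, the repair you propose --- that this gap localizes at the value $\mu_0$ whose level set contains the vertical line $\{z_2=1\}$ --- is false in general, not merely unproven. In the example of Section \ref{nonvalueexcept} one computes $\tilde{p}(z_1,1)=z_1\,p(z_1,1)$, so $\phi(z_1,1)=z_1$ and no unimodular level set contains $\{z_2=1\}$; yet $\K^{1,-1}_{(1,1)}=3<4=\K^1_{(1,1)}$, so an exceptional value ($-1$) is genuinely present. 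With your definition of $\mu_0$ (``if any'') your scheme would conclude the equality for \emph{all} pairs there, which is wrong. The paper closes this step without ever locating the gap: for each $\epsilon>0$ the attained values contain an arc $T^n_\epsilon$ of length $2\pi-\epsilon$ (Lemma \ref{lem:move}); passing to a subsequence so the centers of $T^n_\epsilon$ converge produces a fixed arc $B_\epsilon$ of length $2\pi-2\epsilon$ attained for all large $n$, and a pigeonhole over a sequence $\epsilon_m\to 0$ shows that at most one point of $\mathbb{T}$ can fail to lie in a common $B_\epsilon$ with every other point --- which is all the theorem claims. Replacing your localization assertion by this compactness/pigeonhole argument repairs your lower bound; no variational analysis of the family $\{B_{\zeta_2}\}$ or tracking of the zeros and of $R_{\zeta_2}$ is required.
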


\begin{definition} Let $\mu_0$ denote the excluded value from Theorem \ref{thm:CO}. Then the level set $\mathcal{C}^{**}_{\mu_0}$ is called the \emph{exceptional level curve} at $(1,1).$ Level curves that are neither value curves nor exceptional curves are called \emph{generic}. 
\end{definition}
In many cases, we have $\mu_0=\lambda_0$, so that the value curve and the exceptional curve are one and the same. However, this is not always the case. In Example \ref{nonvalueexcept} we use our methods for constructing RIFs with prescribed properties to exhibit an RIF with an exceptional curve that does not coincide with the value curve.

To prove Theorem \ref{thm:CO}, we will require preliminary information about finite Blaschke products and their behavior on arcs $A \subseteq \mathbb{T}.$ 

\subsection{Movements of Blaschke Products}

First, recall  \cite[Lemma $4.2$]{BPS17}:

\begin{lemma} \label{lem:blaschke} Consider a finite Blaschke product $b(z) := \prod_{j=1}^n b_{\alpha_j}(z), \text{ with } b_{\alpha_j}(z) = \frac{z-\alpha_j}{1-\bar{\alpha}_j z}$ for $\alpha_j \in \mathbb{D}.$
Then the modulus of the derivative of $b$ satisfies
\begin{equation} \label{eqn:derivnorm}  |b'(\zeta)| = \frac{ b'(\zeta)}{b(\zeta)} \zeta = \sum_{j=1}^n \left |b'_{\alpha_j}(\zeta) \right | \qquad \text{ for } \zeta \in \mathbb{T}. \end{equation}
\end{lemma}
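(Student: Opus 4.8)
The statement is a classical identity for finite Blaschke products, and I would prove it by reducing to the single-factor case and then adding up. First, write $b = \prod_{j=1}^n b_{\alpha_j}$ and apply the logarithmic derivative: for $\zeta \in \mathbb{T}$,
\[
\frac{b'(\zeta)}{b(\zeta)} = \sum_{j=1}^n \frac{b_{\alpha_j}'(\zeta)}{b_{\alpha_j}(\zeta)}.
\]
Thus it suffices to show that $\zeta\, b'(\zeta)/b(\zeta)$ equals $\sum_j |b_{\alpha_j}'(\zeta)|$ and, in particular, that for each single Blaschke factor $\zeta\, b_{\alpha_j}'(\zeta)/b_{\alpha_j}(\zeta) = |b_{\alpha_j}'(\zeta)| > 0$; summing these over $j$ and using the logarithmic derivative identity then yields both equalities in \eqref{eqn:derivnorm} at once. (Since each summand is positive, the total $\zeta\, b'/b$ is real and positive, so it coincides with its own modulus, which is $|b'(\zeta)|$ because $|b(\zeta)|=|\zeta|=1$.)

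For the single-factor computation, I would directly differentiate $b_{\alpha}(z) = \frac{z-\alpha}{1-\bar\alpha z}$ to get
\[
b_{\alpha}'(z) = \frac{1-|\alpha|^2}{(1-\bar\alpha z)^2},
\]
and then compute
\[
\zeta\,\frac{b_{\alpha}'(\zeta)}{b_{\alpha}(\zeta)} = \zeta\cdot \frac{1-|\alpha|^2}{(1-\bar\alpha\zeta)^2}\cdot\frac{1-\bar\alpha\zeta}{\zeta-\alpha} = \frac{\zeta(1-|\alpha|^2)}{(1-\bar\alpha\zeta)(\zeta-\alpha)}.
\]
On $\mathbb{T}$ we have $\bar\zeta = 1/\zeta$, so $\zeta - \alpha = \zeta(1 - \alpha\bar\zeta) = \zeta\,\overline{(1-\bar\alpha\zeta)}$ (using $\overline{\zeta} $-conjugation carefully), whence $(1-\bar\alpha\zeta)(\zeta-\alpha) = \zeta\,|1-\bar\alpha\zeta|^2$. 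Therefore
\[
\zeta\,\frac{b_{\alpha}'(\zeta)}{b_{\alpha}(\zeta)} = \frac{1-|\alpha|^2}{|1-\bar\alpha\zeta|^2},
\]
which is manifestly real and strictly positive. Finally, $|b_{\alpha}'(\zeta)| = \frac{1-|\alpha|^2}{|1-\bar\alpha\zeta|^2}$ as well, directly from the formula for $b_\alpha'$, so the single-factor identity $\zeta\, b_{\alpha}'(\zeta)/b_{\alpha}(\zeta) = |b_{\alpha}'(\zeta)|$ holds.

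I do not expect any genuine obstacle here; this is a routine verification. The only point requiring a little care is the algebraic manipulation $\zeta - \alpha = \zeta\,\overline{(1 - \bar\alpha \zeta)}$ on $\mathbb{T}$, which should be spelled out so the reader sees that the denominator $(1-\bar\alpha\zeta)(\zeta-\alpha)$ really is $\zeta$ times a positive real number; everything else then assembles by linearity of the logarithmic derivative and positivity of the summands. Since the lemma is quoted from \cite[Lemma~4.2]{BPS17}, one could alternatively just cite that reference, but the self-contained argument above is short enough to include.
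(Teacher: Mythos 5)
Your proof is correct and complete: the single-factor computation $\zeta\, b_{\alpha}'(\zeta)/b_{\alpha}(\zeta) = \frac{1-|\alpha|^2}{|1-\bar\alpha\zeta|^2} = |b_{\alpha}'(\zeta)|$ is right, and summing via the logarithmic derivative and observing positivity gives both equalities in \eqref{eqn:derivnorm}. The paper itself offers no proof here --- it simply recalls the statement from \cite[Lemma 4.2]{BPS17} --- and your self-contained argument is the standard one underlying that reference, so there is nothing to add.
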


Given Lemma \ref{lem:blaschke}, the following definition makes sense:

\begin{definition} Let $b(z) := \prod_{j=1}^n b_{\alpha_j}(z), \text{ with } b_{\alpha_j}(z) = \frac{z-\alpha_j}{1-\bar{\alpha}_j z}$ for $\alpha_j \in \mathbb{D}.$ Then the \emph{movement of $b$} is the measure $\mu_b$ on $\mathbb{T}$ defined by 
\begin{equation} \label{eqn:mub} \mu_b(A) :=\int_A \frac{ b'(\zeta)}{b(\zeta)} \zeta |d \zeta| =   \int_A |b'(\zeta)| |d \zeta| = \sum_{j=1}^n \int_A  \left |b'_{\alpha_j}(\zeta) \right | |d \zeta|,  \ \ \text{ for  measurable } A \subseteq \mathbb{T},\end{equation}
where $|d\zeta|$ denotes Lebesgue measure on $\T$.
\end{definition}

In what follows, we will need two ways to denote the length of arcs in $\mathbb{T}$. First, given a standard arc $A\subseteq \mathbb{T}$, we let $|A|$ denote the length, or Lebesgue measure, of $A$. Similarly given an arc $A$ that winds around $\mathbb{T}$, we let $|A|_{\mathcal{W}}$ denote the length of the curve taking the winding (or multiplicity) into account. For example, if $b$ is a finite Blaschke product with $\deg b =n$, then $b(\mathbb{T})$ is a curve winding around the torus $n$ times, so $| b(\mathbb{T})|_{\mathcal{W}} = 2\pi n.$

The following lemma details the needed properties of $\mu_b$:

\begin{lemma} \label{lem:move} For each finite Blaschke product $b$, define $\mu_b$ as in \eqref{eqn:mub}. Then these measures satisfy the following properties:
\begin{itemize}
\item[A.] If $b_1$ and $b_2$ are finite Blaschke products and if $A \subseteq \mathbb{T}$, then $\mu_{b_1 b_2}(A) = \mu_{b_1}(A) + \mu_{b_2}(A).$
\item[B.] If $A$ is an arc in $\mathbb{T}$, then $\mu_b(A) =| b(A)|_{\mathcal{W}}.$ Specifically, if $\deg b =n$, then $\mu_b(\mathbb{T}) = 2\pi n.$ 
\item[C.] For each $\alpha \in \mathbb{D}$ and $\epsilon >0$, there is an arc $A_{\epsilon, \alpha} \subseteq \mathbb{T}$ centered at $\frac{\alpha}{|\alpha|}$ such that
\begin{itemize}
\item[i.] $\mu_{b_{\alpha}} (A_{\epsilon, \alpha}) > 2\pi -\epsilon$;
\item[ii.] $\left | A_{\epsilon, \alpha}\right| \le c_{\epsilon} (1-|\alpha|)$, where $c_{\epsilon}>0$ is a constant independent of $\alpha.$ 
\end{itemize}
\end{itemize}
\end{lemma}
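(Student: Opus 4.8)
The plan is to verify the three properties essentially by direct computation from the defining formula \eqref{eqn:mub}, with property C being the substantive one. For property A, since $(b_1b_2)' = b_1'b_2 + b_1b_2'$, we get $\frac{(b_1b_2)'}{b_1b_2} = \frac{b_1'}{b_1} + \frac{b_2'}{b_2}$, so multiplying by $\zeta$ and integrating over $A$ gives $\mu_{b_1b_2}(A) = \mu_{b_1}(A) + \mu_{b_2}(A)$ immediately; alternatively this follows from the last expression in \eqref{eqn:mub} since the Blaschke factors of $b_1b_2$ are those of $b_1$ together with those of $b_2$. For property B, parametrize the arc $A = \{e^{it} : t \in [s_0, s_1]\}$ and write $b(e^{it}) = e^{i\theta(t)}$ for a smooth increasing lift $\theta$ (increasing because $\frac{b'(\zeta)}{b(\zeta)}\zeta = |b'(\zeta)| > 0$ on $\T$ by Lemma \ref{lem:blaschke}). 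Then $\theta'(t) = \frac{b'(e^{it})}{b(e^{it})} i e^{it} / i = \frac{b'(\zeta)}{b(\zeta)}\zeta$ evaluated at $\zeta = e^{it}$, so $\mu_b(A) = \int_{s_0}^{s_1} \theta'(t)\, dt = \theta(s_1) - \theta(s_0)$, which is exactly the winding length $|b(A)|_{\mathcal{W}}$. Taking $A = \T$ and using that a degree-$n$ Blaschke product wraps $\T$ around $n$ times gives $\mu_b(\T) = 2\pi n$.

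For property C, I would compute $\mu_{b_\alpha}$ explicitly for a single Blaschke factor $b_\alpha(z) = \frac{z-\alpha}{1-\bar\alpha z}$. A standard computation gives $|b_\alpha'(\zeta)| = \frac{1-|\alpha|^2}{|1-\bar\alpha\zeta|^2}$ for $\zeta \in \T$, which is the Poisson kernel (up to normalization) with pole at $\alpha$, and in particular its total mass is $\mu_{b_\alpha}(\T) = 2\pi$, consistent with property B for $n=1$. The key point is that this density concentrates near $\zeta_0 := \alpha/|\alpha|$ as $|\alpha| \to 1$: writing $\zeta = \zeta_0 e^{i\theta}$ and $r = |\alpha|$, one has $|1-\bar\alpha\zeta|^2 = |1 - re^{i\theta}|^2 = (1-r)^2 + 2r(1-\cos\theta) \asymp (1-r)^2 + \theta^2$ for small $\theta$. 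So on the arc $A_{\epsilon,\alpha}$ of half-length $C_\epsilon(1-r)$ centered at $\zeta_0$, the complementary mass is $\int_{|\theta| > C_\epsilon(1-r)} \frac{1-r^2}{(1-r)^2+\theta^2}\, d\theta \lesssim \frac{1}{C_\epsilon}$ (after substituting $\theta = (1-r)u$), which can be made smaller than $\epsilon$ by choosing $C_\epsilon$ large; this gives (i). Condition (ii) is built into the construction: $|A_{\epsilon,\alpha}| = 2C_\epsilon(1-r) \le c_\epsilon(1-|\alpha|)$ with $c_\epsilon = 2C_\epsilon$ independent of $\alpha$.

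The main obstacle, such as it is, is getting the estimate in (i) genuinely uniform in $\alpha$ — that is, making sure the constant $C_\epsilon$ controlling the arc length can be chosen depending only on $\epsilon$ and not on $|\alpha|$. This is handled cleanly by the scaling substitution $\theta = (1-|\alpha|)u$, which renders the tail integral $\int_{|u|>C_\epsilon} \frac{du}{1+u^2}$ (plus harmless lower-order corrections from the $\asymp$ above that are controlled uniformly for $\alpha$ in a neighborhood of $\T$), a quantity independent of $\alpha$ that tends to $0$ as $C_\epsilon \to \infty$. One should also note the trivial reduction that properties C only concerns single factors $b_\alpha$, so property A is not needed there; property A is recorded for use when summing movements of the various Blaschke factors arising later from slicing $\phi$.
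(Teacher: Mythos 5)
Your proofs of A and B match the paper's in substance (the paper simply cites the additivity of $|b'|$ on $\T$ from Lemma \ref{lem:blaschke} and the argument principle; your explicit lift $\theta(t)$ is the same argument spelled out). For the substantive part C, however, you take a genuinely different route. The paper works with the mass \emph{inside} the arc: with $\alpha=t\ge 0$ it evaluates $\int_{-k(1-t)}^{k(1-t)}\frac{1-t^2}{1-2t\cos\theta+t^2}\,d\theta$ in closed form as $4\tan^{-1}\bigl(\frac{(t+1)\tan(\frac{k}{2}(1-t))}{1-t}\bigr)$ and then uses $\tan x\ge x$ to get the uniform lower bound $4\tan^{-1}(k/2)>2\pi-\epsilon$, treating separately the trivial case $k(1-t)\ge \pi/2$ where one takes $A_{\epsilon,\alpha}=\T$. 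You instead bound the \emph{complementary} mass by comparing the Poisson kernel with the Cauchy kernel and rescaling $\theta=(1-r)u$, getting a tail of size $O(1/C_\epsilon)$ with no closed-form antiderivative needed. Both work; the paper's computation is completely explicit and makes the uniformity in $\alpha$ visible at a glance, while yours is shorter and more conceptual (concentration of the Poisson kernel) but puts all the weight on the uniformity of the comparison $|1-re^{i\theta}|^2\asymp (1-r)^2+\theta^2$.

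One point you should tighten: your hedge that the correction terms are ``controlled uniformly for $\alpha$ in a neighborhood of $\T$'' leaves the case of $\alpha$ far from $\T$ formally unaddressed, even though the lemma is stated for every $\alpha\in\D$ (and note $\alpha/|\alpha|$ is undefined at $\alpha=0$, an issue the statement shares). Two clean fixes: either check that the comparison $(1-r)^2+2r(1-\cos\theta)\asymp (1-r)^2+\theta^2$ holds with absolute constants on $|\theta|\le\pi$ for \emph{all} $r\in[0,1)$ (it does: for $r\ge \tfrac12$ use $1-\cos\theta\ge \tfrac{2}{\pi^2}\theta^2$, and for $r\le\tfrac12$ the term $(1-r)^2\ge\tfrac14$ already dominates), so your scaled tail bound $(1+r)\int_{|u|>C_\epsilon}\frac{du}{1+u^2}\lesssim 1/C_\epsilon$ is uniform in $\alpha$; or simply add the paper's trivial dichotomy, taking $A_{\epsilon,\alpha}=\T$ whenever $C_\epsilon(1-|\alpha|)\ge\pi$, which also disposes of $\alpha=0$. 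With either remark included, your argument is complete.
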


\begin{proof} Property $A$ follows immediately from the fact (implied by Lemma \ref{lem:blaschke}) that $\left| \frac{d}{dz}(b_1 b_2) \right| = |b_1'| + |b_2'|$ on $\mathbb{T}.$ Property $B$ follows from the Argument Principle. 
To prove Property $C$, fix $\epsilon >0$. Choose $k>0$ large enough so that 
\[ 4 \tan^{-1} \left(\frac{k}{2}\right) > 2\pi -\epsilon.\]
Set $c_{\epsilon} = 4k.$ Choose $\alpha \in \mathbb{D}$ and without loss of generality, assume $\alpha =t \geq 0.$ Then we have two cases.
\begin{itemize}
\item[Case 1:] If $k(1-t) \ge \frac{\pi}{2}$, then choose $A_{\epsilon, t} = \mathbb{T}.$ This immediately gives:
\[ \mu_{b_t}(A_{\epsilon, t}) = 2\pi > 2\pi -\epsilon \ \text{ and } \
\left| A_{\epsilon, t}\right| = 2 \pi \le 4 k (1-t) = c_{\epsilon} (1-t),\]
as needed.

\item[Case 2:] If $k(1-t) < \frac{\pi}{2}$, then choose $A_{\epsilon, t}$ to be the arc in $\mathbb{T}$ with points $e^{i\theta}$ corresponding to $\theta \in [-(1-t)k, (1-t)k].$ Then $A_{\epsilon, t}$ is centered at $\frac{t}{|t|} =1$ and  with $c_\epsilon$ as above,
\[ \left| A_{\epsilon, t}\right| = 2 k(1-t) \le c_{\epsilon} (1-t).\]
Similarly, we can compute
\[ \begin{aligned}
\mu_{b_t}(A_{\epsilon,t}) &= \int_{A_{\epsilon, t}} |b'_t(\zeta)| |d\zeta| \\
&= \int_{-k(1-t)}^{k(1-t)} \frac{1-t^2}{1-2t\cos \theta +t^2} d \theta \\
& = 4 \tan^{-1}\left( \frac{ (t+1)\tan(\frac{k}{2}(1-t))}{1-t}\right) \\
&\ge 4 \tan^{-1}\left( \frac{\tan(\frac{k}{2}(1-t))}{1-t}\right)\\
& \ge 4 \tan^{-1} \left(\frac{k}{2}\right) \\
& > 2\pi -\epsilon,
\end{aligned}
\]
where we used the fact that $\tan^{-1}(x)$ is increasing and $\tan x \ge x$ for $0 \le x < \frac{\pi}{2}.$
\end{itemize}
\end{proof}


\subsection{Proof of Theorem \ref{thm:CO}}
To prove Theorem \ref{thm:CO}, we first show that for all $\lambda, \mu \in \mathbb{T}$, excepting one $\mu_0$, there is some pair of branches of  $\mathcal{L}_{\lambda}(\phi)$ and $\mathcal{L}_{\mu}(\phi)$ whose $z_1$-order of contact at $(1,1)$ is at least the  $z_1$-contact  order of $\phi$ at $(1,1)$, denoted $\K^1_{(1,1)}$. Specifically:

\begin{theorem} \label{thm:CO1} Let $\phi$ satisfy (A1). Then, given any pair $\lambda, \mu \in \mathbb{T}$, excluding  at most one $\mu_0,$  there are branches of 
$\mathcal{L}_{\lambda}(\phi)$ and $\mathcal{L}_{\mu}(\phi)$ whose $z_1$-order of contact at $(1, 1)$ is at least $\K^1_{(1,1)}.$ 
\end{theorem}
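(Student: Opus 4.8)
The plan is to use the ``movement'' measures $\mu_b$ from Lemma \ref{lem:move} to track how fast the branches of the zero set and the branches of level curves approach $(1,1)$, transferring everything to $\Pi^2$ via $\beta$ and restricting one variable to $\mathbb{T}$. Fix a generic $\zeta_2 \in \mathbb{T}$ near $1$ and view $z_1 \mapsto \phi(z_1, \zeta_2)$ as a one-variable finite Blaschke product $B_{\zeta_2}(z_1)$ of degree $m$ (this uses $\deg p = (m,n)$ and that $\phi$ restricted to a horizontal slice of $\mathbb{D}^2$ is inner; one must check the degree does not drop for generic $\zeta_2$). The zeros of $B_{\zeta_2}$ in $\mathbb{D}$ are exactly the points $\alpha_\ell(\zeta_2) := \psi_\ell^0(\zeta_2)$ coming from the branches of $\mathcal Z_{\tilde p}$ in \eqref{eqn:zero1} — more precisely, those $\psi_\ell^0(\zeta_2)$ lying in $\mathbb D$; by Lemma \ref{lem:LCO}, $1 - |\alpha_\ell(\zeta_2)| \approx |1-\zeta_2|^{\K^1_\ell}$, and one of these $\ell$ realizes the maximum $\K^1_{(1,1)}$. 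Meanwhile, a point $z_1$ lies on the level curve $\mathcal C_\mu$ over $\zeta_2$ precisely when $B_{\zeta_2}(z_1) = \mu$, i.e. when $z_1 = \psi_j^\mu(\zeta_2)$ for some branch from \eqref{eqn:LL}.

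The key estimate is this: pick the branch $z_1 = \psi_{\ell_0}^0(z_2)$ achieving contact order $\K^1_{(1,1)}$, and let $\alpha = \alpha_{\ell_0}(\zeta_2) \in \mathbb{D}$ be the corresponding zero of $B_{\zeta_2}$. By Lemma \ref{lem:move}(C), there is an arc $A_{\epsilon,\alpha}$ centered at $\alpha/|\alpha|$ with $\mu_{b_\alpha}(A_{\epsilon,\alpha}) > 2\pi - \epsilon$ and $|A_{\epsilon,\alpha}| \le c_\epsilon(1-|\alpha|) \approx |1-\zeta_2|^{\K^1_{(1,1)}}$. By the additivity in Lemma \ref{lem:move}(A), $\mu_{B_{\zeta_2}}(A_{\epsilon,\alpha}) \ge \mu_{b_\alpha}(A_{\epsilon,\alpha}) > 2\pi - \epsilon$, so by Lemma \ref{lem:move}(B) the image $B_{\zeta_2}(A_{\epsilon,\alpha})$ winds nearly all the way around $\mathbb{T}$. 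Hence for all $\mu \in \mathbb{T}$ except those in a small arc of length $\lesssim \epsilon$ around $B_{\zeta_2}(\alpha/|\alpha|)$, there is a solution $z_1 \in A_{\epsilon,\alpha}$ of $B_{\zeta_2}(z_1) = \mu$, and similarly for $\lambda$. Thus for generic $\lambda$ and $\mu$ there are branches $z_1 = \psi_i^\lambda(z_2)$ and $z_1 = \psi_j^\mu(z_2)$ passing through points $z_1^\lambda, z_1^\mu$ of $A_{\epsilon,\alpha}$ over $\zeta_2$; since $|A_{\epsilon,\alpha}| \lesssim |1-\zeta_2|^{\K^1_{(1,1)}}$, we get $|\psi_i^\lambda(\zeta_2) - \psi_j^\mu(\zeta_2)| \lesssim |1-\zeta_2|^{\K^1_{(1,1)}}$ for a sequence of $\zeta_2 \to 1$, and after refining (passing to genuine branches, using that there are finitely many and invoking Definition \ref{def:branchOC}) one deduces the order of contact of this matched pair is at least $\K^1_{(1,1)}$.

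The main obstacle, and where care is needed, is the bookkeeping that turns ``$\exists z_1^\lambda, z_1^\mu \in A_{\epsilon,\alpha}$ with the given values over a sequence $\zeta_2^{(k)} \to 1$'' into a statement about a fixed pair of analytic branches $\psi_i^\lambda, \psi_j^\mu$. Because the arc $A_{\epsilon,\alpha}$ shrinks and moves as $\zeta_2 \to 1$, and the finitely many branches $\psi_j^\mu$ could be matched differently along different subsequences, one must use a pigeonhole/compactness argument over the finite index set, together with the Horn Lemma (Lemma \ref{lem:horn}) to control where the level curves live, to extract a single pair valid on a full neighborhood — or at least on a sequence dense enough to pin down the order of contact via the derivative characterization in Definition \ref{def:branchOC}. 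A secondary technical point is ensuring the ``excluded'' values form at most a single $\mu_0$: the genericity exclusion here depends on $\epsilon$ and on the excluded arc around $B_{\zeta_2}(\alpha/|\alpha|)$, so one must argue (letting $\epsilon \to 0$ and using that $\psi^\mu$ fails to be a nonconstant analytic branch only when $q_\mu(w_1,0) \equiv 0$, which happens for at most one $\mu$ as noted in the proof of Theorem \ref{thm:smooth}) that the genuinely bad value is unique. I would handle the $\lambda$ versus $\mu$ asymmetry by noting the construction is symmetric in the two values and only the single degenerate value $\mu_0$ need be thrown out.
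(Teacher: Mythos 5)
Your core construction is essentially the paper's: fix a zero-set branch realizing $\K^1_{(1,1)}$, use Lemma \ref{lem:move} to produce, over a sequence $\zeta_n \to 1$, arcs $A_{\epsilon,\alpha(\zeta_n)}$ of length $\lesssim 1-|\alpha(\zeta_n)| \approx |1-\zeta_n|^{\K^1_{(1,1)}}$ on which the slice Blaschke product $\phi(\cdot,\zeta_n)$ attains all but an $\epsilon$-small arc of unimodular values, then pigeonhole over the finitely many level-curve branches to fix a single pair $\psi^{\lambda}_i, \psi^{\mu}_j$ with $|\psi^{\lambda}_i(\zeta_n)-\psi^{\mu}_j(\zeta_n)| \lesssim |1-\zeta_n|^{\K^1_{(1,1)}}$ along a subsequence, and conclude via analyticity of the branches. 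That part is sound, and your acknowledged ``bookkeeping obstacle'' is resolved exactly as you suggest in the paper: pass to subsequences over the finite index sets; the Horn Lemma is not needed for this step.

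The genuine gap is the ``excluding at most one $\mu_0$'' clause. The excluded set your argument produces is, for each $\epsilon$ and each $n$, a small arc of values omitted by $\phi(A_{\epsilon,\alpha(\zeta_n)},\zeta_n)$, and this arc moves with both $\zeta_n$ and $\epsilon$; to reach the theorem you must show that these omitted arcs can only accumulate, as $\zeta_n\to 1$ and $\epsilon\to 0$, around a single value. Your proposed justification --- that $\psi^{\mu}$ fails to be a nonconstant analytic branch only when $q_{\mu}(w_1,0)\equiv 0$, which happens for at most one $\mu$ --- addresses a different phenomenon: that degenerate $\mu$ is the one whose level set acquires a vertical component $\{z_2=1\}$ in Theorem \ref{thm:smooth}, and it says nothing about which values are attained inside the shrinking arcs $A_{\epsilon,\alpha(\zeta_n)}$. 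For a non-degenerate $\mu$, $\mathcal{L}_{\mu}(\phi)$ does have analytic branches through $(1,1)$, but they may approach $(1,1)$ far from $A_{\epsilon,\alpha(\zeta_n)}$, in which case your key estimate gives nothing; moreover the exceptional value of Theorem \ref{thm:CO} is in general not the vertical-line value (see Section \ref{nonvalueexcept}). The paper closes this gap with a specific limiting argument: pass to a subsequence so the attained arcs converge to a limit arc $B_{\epsilon}$ of length $2\pi-2\epsilon$; then, if some pair $\mu_0,\lambda_0$ lies in no common $B_{\epsilon}$, choose $\epsilon_m\to 0$ (switching $\mu_0$ and $\lambda_0$ if necessary) so that each $B_{\epsilon_m}$ omits only an interval of length $2\epsilon_m$ around the single value $\mu_0$, whence every pair avoiding $\mu_0$ lies in some $B_{\epsilon_m}$. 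Without an argument of this kind, your proof only yields that for each $\epsilon$ there is some small excluded arc of unknown, $\epsilon$-dependent location, which is strictly weaker than the stated conclusion. (A minor further imprecision: the omitted arc sits near the image of the endpoints of $A_{\epsilon,\alpha}$, not necessarily near $B_{\zeta_2}(\alpha/|\alpha|)$.)
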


\begin{proof} By definition, there is at least one branch of $\mathcal{Z}_{\tilde{p}}$ near $(1,1)$ whose $z_1$-contact order is $\K^1_{(1,1)}.$ Fix 
such a branch and call it $z_1=\psi_0(z_2)$, and fix
any $\zeta \in \mathbb{T}\setminus\{1\}$ near $1$. Then $\phi_{\zeta}(z_1):= \phi(z_1, \zeta)$ is a finite Blaschke product and $\phi(  \psi_0(\zeta), \zeta)=0.$ Thus, the Blaschke product $b_{\psi_0(\zeta)}$ is a factor of $\phi_{\zeta}.$ 

Let $(\zeta_n) \subseteq \mathbb{T}$ be a sequence converging to $1$, with each $\zeta_n\neq 1$. Fix $\epsilon >0$ and for each $n$, let $A_{\epsilon}^n := A_{\epsilon, \psi_0(\zeta_n)}$ denote the arc from Lemma \ref{lem:move}. Define the image set
\[ I_{\epsilon}^n :=\left \{  \phi(\tau, \zeta_n) : \tau \in  A_{\epsilon}^n \right \},\]
where points are counted according to multiplicity.
Then as $\phi_{\zeta_n}$ is continuous on $\mathbb{T}$, we know $ I_{\epsilon}^n$ is an arc winding around $\mathbb{T}$ and by Lemma \ref{lem:move},
\[ \left | I_{\epsilon}^n \right|_{\mathcal{W}} = \mu_{\phi_{\zeta_n}}\left(A_{\epsilon}^n \right)  \ge \mu_{b_{\psi_0(\zeta_n)}}(A_{\epsilon}^n ) > 2\pi -\epsilon,\]
where $\left | \cdot \right|_{\mathcal{W}}$ indicates the length of an arc winding around $\mathbb{T}$.  Then each $I_{\epsilon}^n$ contains an arc, call it  $T^n_{\epsilon}$, composed of distinct points in $\mathbb{T}$ with length $2\pi -\epsilon$.  Let $c_n$ denote the center of $T^n_{\epsilon}$. By passing to a subsequence, we can assume that the sequence $(c_n)$ converges to some $c \in \mathbb{T}.$  Let $B_{\epsilon}$ denote the arc contained in $\mathbb{T}$ with center $c$ and length $\left | B_{\epsilon}\right | = 2\pi - 2\epsilon.$
Then if we choose $N$ sufficiently large, we will have $B_{\epsilon} \subseteq T^n_{\epsilon}$ for $n \ge N.$

Now, fix any $\lambda, \mu \in B_{\epsilon}.$  We claim there are branches from \eqref{eqn:LL} of the level sets 
$\mathcal{L}_{\lambda}(\phi)$ and $\mathcal{L}_{\mu}(\phi)$ whose order of contact at $(1, 1)$ is at least $\K^1_{(1,1)}.$  By Theorem \ref{thm:smooth}, the branches of $ \mathcal{L}_{\mu}(\phi) $ (and similarly of $\mathcal{L}_{\lambda}(\phi))$ near $(1,1)$ are given by smooth curves: 
\[ z_1 =  \psi^{\mu}_{1}(z_2),  \dots, \ z_1= \psi^{\mu}_{L_\mu}(z_2),
\] 
and possibly the straight line $\{z_2=1\}$.
Then for $n$ sufficiently large, $\lambda, \mu \in T^n_{\epsilon}$ and so, there must be points
$\tau_n, \eta_n \in A_\epsilon^n$ with $\phi(\tau_n, \zeta_n) = \lambda$ and $\phi(\eta_n, \zeta_n) = \mu.$ 
Since $\zeta_n\neq 1$, as long as $n$ is large enough, there will also be indices $i_n, j_n$ 
so that $\tau_n = \psi^{\lambda}_{i_n}(\zeta_n)$ and $\eta_n = \psi^{\mu}_{j_n}(\zeta_n)$, so 
$\psi^{\lambda}_{i_n}(\zeta_n),   \psi^{\mu}_{j_n}(\zeta_n) \in A^n_{\epsilon}.$ By passing to a subsequence, we can assume that the points
  $\psi^{\lambda}_{i_n}(\zeta_n)$ and $\psi^{\mu}_{j_n}(\zeta_n)$ all come from the 
  same branches of $\mathcal{L}_{\lambda}(\phi)$ and $\mathcal{L}_{\mu}(\phi)$ respectively, 
  say from $z_1 =  \psi^{\lambda}_{i}(z_2)$ and  $z_1=\psi^{\mu}_{j}(z_2)$. Then Lemma \ref{lem:move} implies
\[
\left |  \psi^{\lambda}_{i}(\zeta_n)- \psi^{\mu}_{j}(\zeta_n) \right | \le \left | A_{\epsilon}^n \right | 
 \le c_{\epsilon} \left( 1 - | \psi_0(\zeta_n) |\right) 
\approx \left | 1 - \zeta_n \right |^{\K^1_{(1,1)}},
\]
for $n$ sufficiently large. Then, the smoothness of $z_1 =  \psi^{\lambda}_{i}(z_2)$ and  $z_1=\psi^{\mu}_{j}(z_2)$ implies that their $z_1$-order of contact at $(1,1)$ is at least $\K^1_{(1,1)}.$

Finally, we claim that for all $\mu, \lambda \in \mathbb{T}$, except for possibly one $\mu_0 \in \mathbb{T}$, there is an $\epsilon>0$ so that $\mu, \lambda \in B_{\epsilon}.$ First
if for every $\mu, \lambda \in \mathbb{T}$, there is a $B_{\epsilon}$ containing $\mu, \lambda$, we are done. So, assume there is some pair 
 $\mu_0, \lambda_0 \in \mathbb{T}$ with no common $B_{\epsilon}.$ We will show that this cannot happen for any other $\mu$.  
By assumption, each $B_{\epsilon}$ must omit a small arc containing $\mu_0$ or a small arc containing $\lambda_0$. By switching $\mu_0$ and $\lambda_0$ if necessary, we can find a sequence $\epsilon_m \rightarrow 0$ such that each $B_{\epsilon_m}$ omits only an interval of length $2\epsilon_m$ containing $\mu_0$. 
Then for every other pair $\mu, \lambda \in \mathbb{T}$ with neither equal to $\mu_0$, there will be some $\epsilon_m>0$ with $\mu, \lambda  \in B_{\epsilon_m}$, as needed. 

Thus, for each $\lambda, \mu \in \mathbb{T}$, except possibly one $\mu_0$, we can apply our earlier arguments and obtain  branches of $\mathcal{L}_{\lambda}(\phi)$ and $\mathcal{L}_{\mu}(\phi)$ whose $z_1$-order of contact at $(1, 1)$ is at least $\K^1_{(1,1)}.$
\end{proof}

Now we show the converse:

\begin{theorem} \label{thm:CO2} Let $\phi$ satisfy (A1). Then, given any pair $\lambda, \mu \in \mathbb{T}$,  the $z_1$-order of contact of $\mathcal{L}_{\lambda}(\phi)$ and $\mathcal{L}_{\mu}(\phi)$ at $(1, 1)$ cannot exceed  $\K^1_{(1,1)}.$ 
\end{theorem}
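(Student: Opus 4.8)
The plan is to combine the smoothness of level curves (Theorem~\ref{thm:smooth}) with the ``movement'' calculus for finite Blaschke products from Lemma~\ref{lem:move}, exactly as in the proof of Theorem~\ref{thm:CO1}, but now to extract an \emph{upper} bound on order of contact. The key structural fact is that for $\zeta\in\mathbb{T}\setminus\{1\}$ the slice $\phi_\zeta:=\phi(\cdot,\zeta)$ is a finite Blaschke product of degree at most $m=\deg_{z_1}p$, and a finite Blaschke product cannot ``turn'' quickly along a short arc of $\mathbb{T}$ unless it has a zero very close to that arc.

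Concretely, first I would fix branches $z_1=\psi^\lambda_i(z_2)$ and $z_1=\psi^\mu_j(z_2)$ of $\mathcal{L}_\lambda(\phi)$ and $\mathcal{L}_\mu(\phi)$ from \eqref{eqn:LL} realizing the maximal order of contact $K:=\K^{\lambda,\mu}_{(1,1)}$ at $(1,1)$ (here $\lambda\neq\mu$). I would then choose a sequence $(\zeta_n)\subseteq\mathbb{T}\setminus\{1\}$ with $\zeta_n\to 1$, avoiding the finitely many points near $1$ at which $\psi^\lambda_i$ or $\psi^\mu_j$ meets $\mathcal{Z}_p$ (so that the branch values are genuine level values of $\phi$, not common zeros of $p,\tilde p$). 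Setting $a_n:=\psi^\lambda_i(\zeta_n)$ and $b_n:=\psi^\mu_j(\zeta_n)$, one checks that $\phi_{\zeta_n}(a_n)=\lambda$ and $\phi_{\zeta_n}(b_n)=\mu$; since these values lie on $\mathbb{T}$ and a finite Blaschke product attains unimodular values only on $\mathbb{T}$, we get $a_n,b_n\in\mathbb{T}$, and by the definition of order of contact the arc $A_n\subseteq\mathbb{T}$ joining $a_n$ to $b_n$ satisfies $|A_n|\approx|a_n-b_n|\approx|1-\zeta_n|^{K}$.

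Next comes the movement argument. The image $\phi_{\zeta_n}(A_n)$ is an arc of $\mathbb{T}$ running from $\lambda$ to $\mu$, so Lemma~\ref{lem:move}(B) gives $\mu_{\phi_{\zeta_n}}(A_n)=|\phi_{\zeta_n}(A_n)|_{\mathcal{W}}\geq\delta$ where $\delta:=\mathrm{dist}_{\mathbb{T}}(\lambda,\mu)>0$ is fixed. Writing $\phi_{\zeta_n}$ as a product of $d_n\le m$ Blaschke factors $b_{\alpha^n_1},\dots,b_{\alpha^n_{d_n}}$, Lemma~\ref{lem:move}(A) and pigeonholing produce a zero $\alpha_n$ of $\phi_{\zeta_n}$ with $\mu_{b_{\alpha_n}}(A_n)\ge\delta/m$. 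Feeding this into the elementary bound $\mu_{b_\alpha}(A)=\int_A|b_\alpha'|\,|d\zeta|\le\tfrac{2|A|}{1-|\alpha|}$ forces
\[ 1-|\alpha_n|\ \le\ \frac{2m}{\delta}\,|A_n|\ \approx\ |1-\zeta_n|^{K},\]
and applying Lemma~\ref{lem:move}(C) with $\epsilon=\delta/(2m)$ shows $A_n$ must meet the arc $A_{\epsilon,\alpha_n}$ centered at $\alpha_n/|\alpha_n|$ of length $O(1-|\alpha_n|)$; as $A_n$ shrinks to $\{1\}$ this gives $\alpha_n\to1$. Hence $(\alpha_n,\zeta_n)\in\mathcal{Z}_{\tilde p}$ with $(\alpha_n,\zeta_n)\to(1,1)$, so for large $n$ we have $\alpha_n=\psi^0_{\ell_n}(\zeta_n)$ for a branch of $\mathcal{Z}_{\tilde p}$ from \eqref{eqn:zero1}; passing to a subsequence, $\ell_n\equiv\ell$. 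Then Lemma~\ref{lem:LCO} gives $|1-\zeta_n|^{\K^1_\ell}\approx 1-|\psi^0_\ell(\zeta_n)|=1-|\alpha_n|\lesssim|1-\zeta_n|^{K}$, and letting $\zeta_n\to1$ forces $\K^1_\ell\ge K$. Therefore $K\le\K^1_\ell\le\K^1_{(1,1)}$, which is the claim.

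I expect the main obstacle to be controlling the \emph{location} (not merely the modulus) of the offending zero $\alpha_n$: one must know that the single Blaschke factor responsible for most of the movement along the shrinking arc $A_n$ has its center $\alpha_n/|\alpha_n|$ near $1$, so that $\alpha_n$ genuinely corresponds to a branch of $\mathcal{Z}_{\tilde p}$ through the singularity $(1,1)$ and not near some other point of $\mathbb{T}$. This is exactly what the sharp concentration estimate of Lemma~\ref{lem:move}(C) supplies, rather than the cruder derivative bound. A secondary, purely bookkeeping point is the choice of the slice sequence $(\zeta_n)$ avoiding the finitely many $\zeta$ where $p(\psi^\lambda_i(\zeta),\zeta)=0$, or where the branch labelling of $\mathcal{L}_\lambda(\phi)$ and $\mathcal{L}_\mu(\phi)$ changes, which is needed to legitimately assert $\phi_{\zeta_n}(a_n)=\lambda$ and $\phi_{\zeta_n}(b_n)=\mu$ and to carry out the final branch identification.
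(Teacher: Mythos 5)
Your argument is correct, but it is a genuinely different proof from the one in the paper. The paper proves Theorem \ref{thm:CO2} by contradiction via integrability: if two branches of $\mathcal{L}_{\lambda}(\phi)$ and $\mathcal{L}_{\mu}(\phi)$ had order of contact $\K>\K^1_{(1,1)}$, then transferring to $\Pi^2$, integrating $|\partial\Phi/\partial x_1|^{\p}$ over the region $\Omega$ trapped between the two curves and using an Euler--Lagrange (variational) lower bound as in \cite[Lemma 5.8, Proposition 5.9]{BPS17} forces divergence for all $\p\ge \frac{1}{\K}+1$, contradicting the local characterization of derivative integrability in Theorem \ref{thm:integral}. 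You instead run the Blaschke ``movement'' machinery of Lemma \ref{lem:move} in reverse: on the short arc $A_n$ between $\psi^{\lambda}_i(\zeta_n)$ and $\psi^{\mu}_j(\zeta_n)$ (of length $\approx|1-\zeta_n|^{K}$) the slice $\phi_{\zeta_n}$ must move by at least $\operatorname{dist}_{\mathbb{T}}(\lambda,\mu)$, so pigeonholing over the at most $m$ factors plus the elementary bound $\mu_{b_\alpha}(A)\le 2|A|/(1-|\alpha|)$ produces a zero $\alpha_n$ of $\tilde{p}(\cdot,\zeta_n)$ with $1-|\alpha_n|\lesssim|1-\zeta_n|^{K}$, and the concentration estimate of Lemma \ref{lem:move}(C) correctly pins $\alpha_n$ near $1$ (this localization is genuinely needed, since under (A1) there may be other singularities on $\mathbb{T}^2$, and you handle it properly); identifying $(\alpha_n,\zeta_n)$ with a branch $\psi^0_\ell$ of $\mathcal{Z}_{\tilde p}$ and invoking Lemma \ref{lem:LCO} then gives $K\le\K^1_\ell\le\K^1_{(1,1)}$. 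What each approach buys: the paper's proof is short given that Theorem \ref{thm:integral} and the variational estimates of \cite{BPS17} are already in place, and it reinforces the contact-order/integrability theme; your proof avoids the $L^{\p}$ machinery entirely, is symmetric with the proof of Theorem \ref{thm:CO1}, requires no exceptional value (consistent with the statement), and actually yields the finer branch-level conclusion that the maximal order of contact is dominated by the contact order of a specific branch of $\mathcal{Z}_{\tilde p}$ through $(1,1)$, in the spirit of Section \ref{sec:COfine}. The only bookkeeping points — choosing $\zeta_n$ to avoid the finitely many slices where the branch values hit $\mathcal{Z}_p$ (or a branch-cut of the $\psi^0_\ell$), and tolerating degree drops or repeated factors in $\phi_{\zeta_n}$ — are handled at the same level of care the paper itself uses in Theorems \ref{thm:CO1} and \ref{prop:bijection1}, so they do not constitute gaps.
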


\begin{proof} By way of contradiction, assume there are branches $z_1 =  \psi^{\lambda}_{i}(z_2)$ and $z_1 =   \psi^{\mu}_{j}(z_2)$ of  $\mathcal{L}_{\lambda}(\phi)$ and $\mathcal{L}_{\mu}(\phi)$ from \eqref{eqn:LL} with order of contact $\K > \K_{(1,1)}^1$.  By Theorem \ref{thm:integral}, there is a neighborhood $E \subseteq \mathbb{T}^2$ of $(1,1)$ such that 
\begin{equation} \label{eqn:integral} \iint_{E} \left | \tfrac{\partial \phi}{\partial z_1}(\zeta_1, \zeta_2) \right|^{\mathfrak{p}} |d\zeta_1| |d\zeta_2| = \infty  \end{equation}
 if and only if $\p \ge \frac{1}{\K^1_{(1,1)}} +1.$

Define $\Phi:= \phi \circ \beta$ and let  $w_1 =  \Psi^{\lambda}_{i}(w_2)$ and $w_1 =   \Psi^{\mu}_{j}(w_2)$ denote the corresponding smooth level curves of $\Phi$ near $(0,0),$ as given in \eqref{eqn:LL2}. Then they also have order of contact $\K$ at $(0,0)$. For $x_2$ sufficiently small and positive, say $0<x_2<a$, we know that $ \Psi^{\lambda}_{i}(x_2)- \Psi^{\mu}_{j}(x_2)$ does not change sign. Then without loss of generality, we can assume  $\Psi^{\lambda}_{i}(x_2) <\Psi^{\mu}_{j}(x_2)$ on $[0,a].$ Define 
\[ \Omega := \left \{ (x_1, x_2): x_2 \in [0,a] \text{ and } x_1 \in [  \Psi^{\lambda}_{i}(x_2), \Psi^{\mu}_{j}(x_2)] \right \}.\]
If we choose $a$ sufficiently small, then arguments identical to those in the proof of \cite[Lemma 5.8]{BPS17} imply that if 
\[  \iint_{\Omega} \left | \tfrac{ \partial \Phi}{\partial x_1} (x_1,x_2) \right|^\p  dx_1  dx_2 = \infty, \ \text{ then }  \ \iint_{E} \left | \tfrac{\partial \phi}{\partial z_1} (\zeta_1, \zeta_2) \right|^\p |d \zeta_1| |d\zeta_2| = \infty,\]
for $0 < \p <\infty$. Now we use variational arguments analogous to those in the proof of \cite[Proposition 5.9]{BPS17}. Specifically, fix $x_2 \in [0,a].$ Then the Euler-Lagrange equations can be used to show
\[
 \int_{ \Psi^{\lambda}_{i}(x_2)}^{ \Psi^{\mu}_{j}(x_2)}  \left | \tfrac{ \partial \Phi}{\partial x_1} (x_1,x_2) \right|^\p  dx_1 \ge \frac{\left | \Phi( \Psi^{\mu}_{j}(x_2), x_2) - \Phi(  \Psi^{\lambda}_{i}(x_2), x_2)\right|^{\p}}{ |  \Psi^{\lambda}_{j}(x_2)-  \Psi^{\mu}_{i}(x_2)|^{\p-1}}
\approx |x_2|^{\K(1-\p)}.
\]
From this, we have 
\[ \iint_{\Omega} \left | \tfrac{ \partial \Phi}{\partial x_1} (x_1,x_2) \right|^\p  dx_1  dx_2  \gtrsim \int_0^a  |x_2|^{\K(1-\p)} dx_2 = \infty\]
if $\K(1-\p) \le -1$ or equivalently $\p \ge \frac{1}{\K} +1.$  But, this implies 
$\eqref{eqn:integral} = \infty$ for $\p \ge  \frac{1}{\K} +1,$ which is a strictly larger class of $\p$ than those satisfying $\p \ge \frac{1}{\K^{1}_{(1,1)}} +1$, a contradiction.
\end{proof}

\section{Equal contact orders}\label{sec:COeq}

Throughout \cite{BPS17} and in Section \ref{sec:prelim} of this paper, we discussed both the $z_1$- and $z_2$-contact orders of an RIF at a singularity. Perhaps surprisingly, the results of Section \ref{sec:CO} show that these two quantities are equal. 

\begin{theorem} \label{thm:ECO}Assume $\phi$ satisfies (A1). Then $\K^1_{(1,1)}  = \K^2_{(1,1)}.$
 \end{theorem}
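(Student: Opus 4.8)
The plan is to derive Theorem \ref{thm:ECO} from Theorem \ref{thm:CO} together with one elementary fact: the order of contact of two analytic arcs that are \emph{transverse to both coordinate axes} is unchanged when one interchanges the roles of $z_1$ and $z_2$. Theorem \ref{thm:CO} identifies $\K^1_{(1,1)}$ with the $z_1$-order of contact between two generic unimodular level curves, and its mirror image identifies $\K^2_{(1,1)}$ with their $z_2$-order of contact; if the two orders of contact agree for a suitable generic pair, we are done. The genericity is essential precisely because it forces transversality via Lemma \ref{lem:hornlinearterm}.

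\textbf{Setup and bookkeeping.} First I would invoke Theorem \ref{thm:CO} in both its $z_1$- and $z_2$-forms: hypothesis (A1) and all of the constructions in Sections \ref{sec:prelim}--\ref{sec:CO} are symmetric under relabeling $z_1 \leftrightarrow z_2$, so there are values $\mu_0^{(1)},\mu_0^{(2)}\in\mathbb{T}$ such that for every pair $\lambda,\mu\in\mathbb{T}\setminus\{\mu_0^{(1)},\mu_0^{(2)},1\}$ one has $\K^1_{(1,1)}$ equal to the $z_1$-order of contact between $\mathcal{L}_\lambda(\phi)$ and $\mathcal{L}_\mu(\phi)$ at $(1,1)$, and $\K^2_{(1,1)}$ equal to the corresponding $z_2$-order of contact. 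I would then fix one such pair $\lambda\neq\mu$, chosen additionally to avoid the (at most two) values of $\mu$ for which the straight line $\{z_2=1\}$ or $\{z_1=1\}$ occurs as a component in Theorem \ref{thm:smooth}. Since only finitely many values of $\mathbb{T}$ are excluded, such a pair exists.

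\textbf{Core step.} For this $\mu$, Theorem \ref{thm:smooth} writes the components of $\mathcal{L}_\mu(\phi)$ near $(1,1)$ as graphs $z_1=\psi^\mu_\ell(z_2)$ with $\psi^\mu_\ell(1)=1$, and Lemma \ref{lem:hornlinearterm} gives $(\psi^\mu_\ell)'(1)\neq 0$; hence each branch has finite nonzero tangent slope at $(1,1)$ and is simultaneously a graph $z_2=\hat\psi^\mu_\ell(z_1):=(\psi^\mu_\ell)^{-1}(z_1)$ with $(\hat\psi^\mu_\ell)'(1)\neq 0$. By uniqueness of the local decomposition into irreducible branches, the functions $\hat\psi^\mu_\ell$ are, up to relabeling, exactly those produced by the mirror image of Theorem \ref{thm:smooth}, and likewise for $\lambda$; moreover distinct branches of $\mathcal{L}_\lambda(\phi)$ and $\mathcal{L}_\mu(\phi)$ are genuinely distinct germs since $\lambda\neq\mu$. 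Then I would prove the key lemma: if $z_1=f(z_2)$ and $z_1=g(z_2)$ are analytic near $z_2=1$ with $f(1)=g(1)=1$ and $f'(1),g'(1)\neq 0$, then their order of contact at $(1,1)$ in the $z_1$-parametrization equals their order of contact in the $z_2=f^{-1},\,z_2=g^{-1}$ parametrization. The proof is a short estimate: with $w_1=f^{-1}(z_1)$ and $w_2=g^{-1}(z_1)$ one has $g(w_1)-g(w_2)=-(f-g)(w_1)$; factoring $g(w_1)-g(w_2)=(w_1-w_2)\,h(w_1,w_2)$ with $h(1,1)=g'(1)\neq 0$, and using $|f^{-1}(z_1)-1|\asymp|z_1-1|$, one gets $|f^{-1}(z_1)-g^{-1}(z_1)|=|w_1-w_2|\asymp|(f-g)(w_1)|\asymp|w_1-1|^{K}\asymp|z_1-1|^{K}$, where $K$ is the $z_1$-order of contact. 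Applying this branch by branch and taking maxima shows the $z_1$- and $z_2$-orders of contact between $\mathcal{L}_\lambda(\phi)$ and $\mathcal{L}_\mu(\phi)$ coincide, and with both forms of Theorem \ref{thm:CO} this gives $\K^1_{(1,1)}=\K^2_{(1,1)}$.

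\textbf{Main obstacle.} The analytic content is light; the only real care needed is bookkeeping. One must ensure the chosen generic $\mu$ simultaneously avoids the exceptional curves of the $z_1$- and $z_2$-versions of Theorem \ref{thm:CO}, the two possible straight-line components, and the value $\mu=1$, and one must match the $z_1$- and $z_2$-branch decompositions of a generic level curve under inversion. The heart of the matter — invariance of order of contact under the biholomorphism $(z_1,z_2)\mapsto(z_2,z_1)$ — only works because Lemma \ref{lem:hornlinearterm} certifies that every relevant branch is transverse to both coordinate axes; at the value curve or the exceptional curve a branch may be tangent to an axis and the argument would fail, which is exactly why the genericity restriction inherited from Theorem \ref{thm:CO} cannot be dropped.
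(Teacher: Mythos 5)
Your proposal is correct and follows essentially the same route as the paper: both identify $\K^1_{(1,1)}$ and $\K^2_{(1,1)}$ with orders of contact of level-curve branches via Theorem \ref{thm:CO}, use Lemma \ref{lem:hornlinearterm} to guarantee the branches have nonvanishing linear term so they can be inverted, and conclude by noting that order of contact is preserved under this inversion. The only cosmetic differences are that the paper carries out the inversion step via the Lagrange inversion formula and needs only the converse half of Theorem \ref{thm:CO} in the $z_2$-variable (so it avoids your simultaneous-genericity bookkeeping), whereas you use a direct factorization estimate and track the excluded values for both variables at once.
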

 
\begin{proof} We first show $\K^2_{(1,1)}  \le \K^1_{(1,1)}.$ By Theorem \ref{thm:CO}, there are $\lambda, \mu \in \mathbb{T}$ and branches $z_1 =  \psi^{\lambda}_{i}(z_2)$ and  $z_1=\psi^{\mu}_{j}(z_2)$ of  the level sets 
$\mathcal{L}_{\lambda}(\phi)$ and $\mathcal{L}_{\mu}(\phi)$ so that 
\[ \left | \psi^{\lambda}_{i}(z_2) -  \psi^{\mu}_{j}(z_2) \right | \approx  |z_2-1|^{\K^1_{(1,1)}},\]
for $z_2$ near $1$. By Theorem \ref{thm:smooth},  $\psi^{\lambda}_{i}$  and  $\psi^{\mu}_{i}$ have power series expansions at $1$ as follows:
\[ \psi^{\lambda}_{i}(z_2) = \sum_{k=1}^{\infty} a_k (z_2-1)^k \ \ \text{ and }  \ \ \psi^{\mu}_{j}(z_2) = \sum_{k=1}^{\infty} b_k (z_2-1)^k.\]
By Lemma \ref{lem:hornlinearterm}, we have $a_1, b_1 \ne 0.$ Then by the Lagrange inversion formula, we can write 
\[ z_2 = \left(  \psi^{\lambda}_{i} \right)^{-1} (z_1) = \sum_{k=1}^{\infty} g_k(1) \frac{(z_1-1)^k}{k!} \quad \text{ where } g_k(1) = \lim_{w \rightarrow 1} \left( \frac{d^{k-1}}{dw^{k-1}}\left( \frac{w-1}{\psi^{\lambda}_i(w) - \psi^{\lambda}_i(1)} \right)^k \right),\]
as a convergent power series around $z_1=1.$ A similar formula holds for $z_2 =  \left(  \psi^{\mu}_{j} \right)^{-1} (z_1),$  and so we obtain two alternate representations of these branches of $\mathcal{L}_{\lambda}(\phi)$ and $\mathcal{L}_{\mu}(\phi)$. Moreover,  the Lagrange inversion formula implies that $z_2 = \left(  \psi^{\lambda}_{i} \right)^{-1}(z_1)$ and $z_2 = \left(  \psi^{\mu}_{j} \right)^{-1}(z_1)$ have order of contact at $(1,1)$ at least $\K^1_{(1,1)}$. Then Theorem \ref{thm:CO} implies that $\K^{1}_{(1,1)} \le \K^{2}_{(1,1)}$. A symmetric argument gives the other inequality, so we have $\K^{2}_{(1,1)} = \K^1_{(1,1)}$, as needed.
 \end{proof}
 
As the local and global $z_1$- and $z_2$-contact orders are always equal, we can refine our previous definitions of contact order:

\begin{definition} \label{def:CO1v2}  Let $\phi = \frac{\tilde{p}}{p}$ be an RIF on $\mathbb{D}^2$ with a singularity at $(\tau_1, \tau_2)$  on $\mathbb{T}^2$. Define the \emph{contact order of $\phi$ at $(\tau_1,\tau_2)$} to be
\[ \K_{(\tau_1, \tau_2)} := \K^1_{(\tau_1, \tau_2)} = \K^2_{(\tau_1, \tau_2)},\]
where $\K^1_{(\tau_1, \tau_2)} $ and $\K^2_{(\tau_1, \tau_2)} $  are defined in Definition \ref{def:CO1} and shown to be equal in Theorem \ref{thm:ECO}.  Similarly, we can define the \emph{global contact order of $\phi$} to be:
\[K : = K_1 =K_2\]
where $K_1$ and $K_2$ are the global $z_1$- and $z_1$-contact orders from Definition \ref{def:CO1}, which are equal by Theorem \ref{thm:ECO}.  
\end{definition}

One surprising corollary of Theorem \ref{thm:ECO} is that the two partial derivatives of an RIF always possess the same integrability near a singular point:
 
 \begin{corollary} \label{cor:integral} Let $\phi$ satisfy (A1). Then there is an open set $E_0 \subseteq \mathbb{T}^2$ containing $(1,1)$ so that for $1 \le \p < \infty$, and all open 
 sets $E\subseteq E_0$ containing $(1,1)$, we have
\[ \iint_{E} \left | \tfrac{\partial \phi}{\partial z_1}(\zeta_1, \zeta_2) \right|^{\mathfrak{p}} |d\zeta_1| |d\zeta_2| < \infty \  \text{ if and only if } \ \iint_{E} \left | \tfrac{\partial \phi}{\partial z_2}(\zeta_1, \zeta_2) \right|^{\mathfrak{p}} |d\zeta_1| |d\zeta_2| < \infty. \]
\end{corollary}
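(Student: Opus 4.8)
The plan is to deduce this corollary directly from the combination of Theorem \ref{thm:integral} and Theorem \ref{thm:ECO}, since all of the substantive work has already been done. First I would invoke Theorem \ref{thm:integral} twice: once for $i=1$ to obtain an open set $E_0^{(1)} \subseteq \mathbb{T}^2$ containing $(1,1)$ such that for every open $E \subseteq E_0^{(1)}$ containing $(1,1)$ and every $1 \le \p < \infty$,
\[ \iint_{E} \left | \tfrac{\partial \phi}{\partial z_1}(\zeta_1, \zeta_2) \right|^{\mathfrak{p}} |d\zeta_1| |d\zeta_2| < \infty \iff \K^1_{(1,1)} < \tfrac{1}{\p-1}, \]
and once for $i=2$ to obtain a corresponding set $E_0^{(2)}$ governing $\partial \phi/\partial z_2$ via the condition $\K^2_{(1,1)} < \tfrac{1}{\p-1}$. (Here, as usual, when $\p=1$ the right-hand condition is interpreted as always holding, since $\tfrac{1}{\p-1} = +\infty$.) Then I would set $E_0 := E_0^{(1)} \cap E_0^{(2)}$, which is again an open set containing $(1,1)$, and observe that both equivalences hold simultaneously for every open $E \subseteq E_0$ containing $(1,1)$.

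The key step is then to apply Theorem \ref{thm:ECO}, which asserts that $\K^1_{(1,1)} = \K^2_{(1,1)}$. Writing this common value as $\K_{(1,1)}$ (as in Definition \ref{def:CO1v2}), the two conditions $\K^1_{(1,1)} < \tfrac{1}{\p-1}$ and $\K^2_{(1,1)} < \tfrac{1}{\p-1}$ coincide for every $\p$. Chaining the two equivalences through this shared condition yields, for every $1 \le \p < \infty$ and every open $E \subseteq E_0$ containing $(1,1)$,
\[ \iint_{E} \left | \tfrac{\partial \phi}{\partial z_1} \right|^{\mathfrak{p}} |d\zeta_1| |d\zeta_2| < \infty \iff \K_{(1,1)} < \tfrac{1}{\p-1} \iff \iint_{E} \left | \tfrac{\partial \phi}{\partial z_2} \right|^{\mathfrak{p}} |d\zeta_1| |d\zeta_2| < \infty, \]
which is exactly the claimed statement.

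There is no real obstacle here beyond bookkeeping: the only point requiring a little care is that Theorem \ref{thm:integral} produces the neighborhood $E_0$ only for a \emph{fixed} singularity and a \emph{fixed} coordinate direction, so one must intersect the two neighborhoods to get a single $E_0$ that works for both partials, and one must note that shrinking $E$ inside $E_0$ preserves the equivalences. All the analytic difficulty — the Blaschke-product movement estimates, the Horn Lemma, the variational/Euler--Lagrange lower bounds, and especially the symmetry argument via Lagrange inversion that underlies Theorem \ref{thm:ECO} — is already in hand, so this corollary is essentially immediate once those are granted. I would keep the written proof to a few lines.
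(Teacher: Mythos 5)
Your proposal is correct and follows essentially the same route as the paper, which proves the corollary simply by pairing Theorem \ref{thm:integral} with Theorem \ref{thm:ECO}; your extra care in intersecting the two neighborhoods $E_0^{(1)} \cap E_0^{(2)}$ is a reasonable bookkeeping precaution that the paper leaves implicit.
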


\begin{proof} The proof follows immediately from Theorem \ref{thm:ECO} paired with Theorem \ref{thm:integral}. \end{proof}

We now have several natural numbers associated with a common zero of $p$ and $\tilde{p}$, namely contact orders of branches and intersection multiplicity. As we already observed by example in the Introduction, the contact order $\K_\mathcal{\tau}$ is in general different from intersection multiplicity $N_{\tau}(p,\tilde{p})$ at a singularity $\tau \in \T^2$. 

We proceed to give a more precise description of the relationship between these quantities, as well as the order of vanishing associated with branches of unimodular level curves.
\begin{lemma}\label{lem:combolemmaOC}
Let $\nu, \mu\in \T$ be distinct, assume $\phi$ satisfies (A1), and suppose $\mathcal{L}_{\mu}(\phi)$ is parametrized by $\psi^{\mu}_1, \ldots, \psi^{\mu}_{L_{\mu}}$, while $\mathcal{L}_{\nu}(\phi)$ is parametrized by 
$\psi^{\nu}_1, \ldots, \psi^{\nu}_{L_{\nu}}$ as in \eqref{eqn:LL}. Then 
\[N_{(1,1)}(p, \tilde{p})=\sum_{i=1}^{L_{\mu}}\sum_{j=1}^{L_{\nu}}\kappa^{\mu, \nu}_{i,j}\]
where $\kappa^{\mu, \nu}_{i,j}$ denotes the order of contact of $\psi^{\mu}_i$ and $\psi^{\nu}_j$ at $(1,1)$ in the sense of Definition \ref{def:branchOC}.
 \end{lemma}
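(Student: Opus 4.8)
The plan is to compute $N_{(1,1)}(p,\tilde p)$ by identifying it with an intersection multiplicity of two \emph{level set} polynomials rather than of $p$ and $\tilde p$ themselves, and then to express that intersection multiplicity as a double sum of orders of contact using the Weierstrass/resultant machinery recalled in Section~\ref{subsec:IM}. Concretely, write $p_\mu = \tilde p - \mu p$ and $p_\nu = \tilde p - \nu p$. Since $\mu \ne \nu$, the pair $\{p_\mu, p_\nu\}$ spans the same pencil as $\{p,\tilde p\}$: indeed $p = \tfrac{1}{\nu-\mu}(p_\mu - p_\nu)$ and $\tilde p = \tfrac{1}{\nu-\mu}(\nu p_\mu - \mu p_\nu)$. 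Intersection multiplicity at a point is invariant under such invertible linear changes of the generators (it depends only on the ideal, or equivalently $\dim_{\mathbb C}\mathcal O_{(1,1)}/(p,\tilde p) = \dim_{\mathbb C}\mathcal O_{(1,1)}/(p_\mu,p_\nu)$), so
\[
N_{(1,1)}(p,\tilde p) = N_{(1,1)}(p_\mu, p_\nu).
\]
One should note that near $(1,1)$ neither $p_\mu$ nor $p_\nu$ vanishes identically to infinite order and, for all but at most one value in $\mathbb T$, the relevant polynomials are not divisible by the linear factor producing the spurious vertical line; if $\mu$ or $\nu$ is that exceptional value one simply records the straight-line component $\{z_2=1\}$ among the $\psi^\mu_i$ (with the convention that its order of contact with a genuine analytic branch is $1$), and the bookkeeping goes through unchanged.

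Next I would transfer to $\Pi^2$ exactly as in the proof of Theorem~\ref{thm:smooth}: set $q_\mu(w) = (1-iw_1)^{m}(1-iw_2)^{n} p_\mu(\beta(w_1),\beta(w_2))$ and similarly $q_\nu$, so that $N_{(1,1)}(p_\mu,p_\nu) = N_{(0,0)}(q_\mu,q_\nu)$ (the unit factors $(1-iw_j)^{*}$ are nonvanishing at the origin and hence do not affect multiplicity). By Lemma~\ref{lem:weier}, the Weierstrass factorization of $q_\mu$ in $w_1$ consists of \emph{linear} factors, one for each branch $w_1 = \Psi^\mu_\ell(w_2)$; write $q_\mu = \alpha_\mu \prod_{i=1}^{L_\mu}\big(w_1 - \Psi^\mu_i(w_2)\big)$ up to a unit, and likewise $q_\nu = \alpha_\nu \prod_{j=1}^{L_\nu}\big(w_1 - \Psi^\nu_j(w_2)\big)$. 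Then the resultant computation from Section~\ref{subsec:IM} (now with all Puiseux exponents equal to $1$, since the factors are linear) gives
\[
N_{(0,0)}(q_\mu,q_\nu) = \sum_{i=1}^{L_\mu}\sum_{j=1}^{L_\nu} \operatorname{ord}_{w_2=0}\big(\Psi^\mu_i(w_2) - \Psi^\nu_j(w_2)\big).
\]
Finally, the order of vanishing of $\Psi^\mu_i - \Psi^\nu_j$ at $w_2=0$ is precisely the order of contact of the curves $w_1=\Psi^\mu_i(w_2)$ and $w_1=\Psi^\nu_j(w_2)$ at $(0,0)$ in the sense of Definition~\ref{def:branchOC}, and by the change-of-variables invariance of order of contact noted at the end of Section~\ref{sec:prelim} this equals $\kappa^{\mu,\nu}_{i,j}$, the order of contact of $\psi^\mu_i$ and $\psi^\nu_j$ at $(1,1)$. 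Assembling the three displayed equalities yields the claim.

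The main obstacle I expect is the \emph{justification that $N_{(1,1)}(p,\tilde p)=N_{(1,1)}(p_\mu,p_\nu)$}, i.e.\ that replacing a pair of generators of the pencil by another pair does not change the local intersection number. This is standard commutative algebra ($I_{(1,1)}(p,\tilde p)$ defined via $\dim_{\mathbb C}$ of the quotient of the local ring is manifestly unchanged when the ideal is unchanged, and $(p,\tilde p)=(p_\mu,p_\nu)$ as ideals because the coefficient matrix $\big(\begin{smallmatrix}1 & -\mu\\ 1 & -\nu\end{smallmatrix}\big)$ is invertible for $\mu\ne\nu$), but one has to be a little careful that the objects as \emph{defined} in this paper — via the Weierstrass/resultant recipe of Section~\ref{subsec:IM} rather than abstractly — agree, or else quote the equivalence from \cite{FulBook,Coxetal}. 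A secondary technical point is the exceptional-value bookkeeping: one must check that for the single omitted value $\mu_0$ the vertical line $\{z_2=1\}$ contributes correctly, so that the identity remains valid whenever $\mu$ or $\nu$ happens to equal $\mu_0$; treating $\{z_2=1\}$ as an honest linear Weierstrass factor $w_1$-free (i.e.\ a factor of $q_{\mu_0}$ that is a unit times $w_2$, contributing its vanishing order in $w_2$ against each $\Psi^\nu_j$) handles this uniformly. Everything else — the transfer to $\Pi^2$, the linearity of the Weierstrass factors, and the resultant formula — is already in place from the earlier sections.
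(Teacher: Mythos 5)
Your proposal is correct and follows essentially the same route as the paper: replace $(p,\tilde p)$ by the pencil generators $p_\mu,p_\nu$ using the standard invariance of intersection multiplicity, transfer to $\Pi^2$, invoke Lemma \ref{lem:weier} for a linear Weierstrass factorization, and read off the double sum of vanishing orders of $\Psi^\mu_i-\Psi^\nu_j$ via the resultant, finishing with the change-of-variables invariance of order of contact. The only difference is your aside on the exceptional vertical-line value, which the paper (and the lemma's hypothesis that both level sets are parametrized as in \eqref{eqn:LL}) simply excludes, so that bookkeeping is unnecessary.
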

\begin{proof}
As in the proof of Theorem \ref{thm:smooth}, we consider
\[p_{\mu}=\tilde{p}-\mu p \quad \textrm{and}\quad p_{\nu}=\tilde{p}-\nu p\] 
and $q_{\mu}$ and $q_{\nu}$ from \eqref{eqn:Q2}. As we require that $\mathcal{L}_{\mu}(\phi)$ and $\mathcal{L}_{\nu}(\phi)$ be parametrized as in \eqref{eqn:LL}, $q_{\mu}(w_1,0)\neq 0$ and $q_{\nu}(w_1,0)\neq 0$, and so satisfy the conditions of Lemma \ref{lem:weier}. Then they each have a complete Weierstrass factorization, so that
\[q_{\mu}(w_1,w_2)=\prod_{i=1}^{L_{\mu}}(w_1-\Psi^{\mu}_i(w_2)) \quad \textrm{and}\quad q_{\nu}(w_1,w_2)=\prod_{j=1}^{L_{\nu}}(w_1-\Psi^{\nu}_j(w_2))\]
for some convergent power series $\Psi^{\mu}_i$ and $\Psi^{\nu}_j$, as in \eqref{eqn:LL2}. 

Note that $N_{\tau}(r,s)=N_{\tau}(r,s+tr)$ for $r,s, t \in \C[z_1,z_2]$. Using this, and computing intersection multiplicity by switching to the upper half-plane, we obtain
\begin{align*}
N_{(1,1)}(p, \tilde{p})&=N_{(1,1)}(\tilde{p}-{\mu}p, \tilde{p}-{\nu}p)\\
&=N_{(0,0)}(q_{\mu}, q_{\nu})\\
&=\sum_{i=1}^{L_{\mu}}\sum_{j=1}^{L_{\nu}}N_{(0,0)}(w_1-\Psi^{\mu}_i, w_1-\Psi^{\nu}_j).
\end{align*}
Each $N_{(0,0)}(w_1-\Psi^{\mu}_i, w_1-\Psi^{\nu}_j)$ is given by the order of vanishing of the resultant, or in other words, by the order of vanishing of $\Psi^{\mu}_i-\Psi^{\nu}_j$. Since order of contact is invariant under our typical change of variables, the needed statement follows.
\end{proof}
Here is our main result concerning intersection multiplicity and contact order.
\begin{proposition}\label{prop:COvsIM}
Let $\phi$ satisfy (A1) and suppose $\mathcal{V}$ is an open set containing $(1,1)$ such that $\mathcal{Z}_{\tilde{p}}\cap \mathcal{V}$ is described by $z_1=\psi^0_1(z_2)$, $\ldots$, $z_1=\psi^0_{L_0}(z_2)$, as in Theorem \ref{thm:zero}. Then
\[N_{(1,1)}(p,\tilde{p})\leq \sum_{i=1}^{L_0}\sum_{j=1}^{L_0}\min\{\mathcal{K}^1_i, \mathcal{K}^1_j\},\]
where the $\mathcal{K}_i^1$'s are the local contact orders of the branches $\psi^0_i$, $i=1,\ldots, L_0$, at $(1,1)$.
\end{proposition}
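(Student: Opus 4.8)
The plan is to reduce the inequality to the intersection-multiplicity computation recorded in Section~\ref{subsec:IM} and then estimate, one factor at a time, the orders of vanishing of the resultants that appear there, using the refined Puiseux expansions from the proof of Lemma~\ref{lem:LCO}. First I would pass to $\Pi^2$ via $\beta$ and form $q(w_1,w_2)=(1-iw_1)^m(1-iw_2)^n\tilde p(\beta(w_1),\beta(w_2))$. The hypothesis that $\mathcal{Z}_{\tilde p}$ is locally described by graphs $z_1=\psi^0_\ell(z_2)$ as in Theorem~\ref{thm:zero} is precisely the statement that $q(w_1,0)\not\equiv 0$, so $q$ admits a Weierstrass factorization $q=\alpha\,q_1\cdots q_L$ in $w_1$ with each $q_j$ irreducible of degree $N_j$ and Puiseux parametrization $\Psi^0_j$, and $L_0=N_1+\cdots+N_L$. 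By Section~\ref{subsec:IM},
\[
N_{(1,1)}(p,\tilde p)=N_{(0,0)}(q,\bar q)=\sum_{j=1}^{L}\sum_{k=1}^{L}N_{(0,0)}(q_j,\bar q_k),\qquad N_{(0,0)}(q_j,\bar q_k)=\operatorname{ord}_t\prod_{i=1}^{N_j}\prod_{\ell=1}^{N_k}D_{j,k}^{i,\ell}(t),
\]
where $D_{j,k}^{i,\ell}(t):=\Psi^0_j(\zeta^i t^{1/N_j})-\bar\Psi^0_k(\eta^\ell t^{1/N_k})$ and $\zeta,\eta$ are primitive $N_j$th, $N_k$th roots of unity. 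Since $\operatorname{ord}_t$ is additive over nonzero factors, it suffices to prove $\operatorname{ord}_t D_{j,k}^{i,\ell}\le\min\{\mathcal{K}^1_{(j)},\mathcal{K}^1_{(k)}\}$, where $\mathcal{K}^1_{(j)}$ denotes the $z_1$-contact order common to the $N_j$ branches $\psi^0_\ell$ arising from $\Psi^0_j$ (they are equal by Lemma~\ref{lem:LCO}). This is the exact analogue, for the zero set, of the level-curve identity in Lemma~\ref{lem:combolemmaOC}, with $q_j,\bar q_k$ in the roles of $q_\mu,q_\nu$.

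The heart of the proof is this factorwise bound. Recall from the proof of Lemma~\ref{lem:LCO} (via Lemma~$18.3$ of \cite{Kne15}) that $\mathcal{K}^1_{(j)}=2M_j$ for an integer $M_j$ and that, in normal form,
\[
\Psi^0_j(w)=\sum_{a=1}^{2M_j-1}b^{(j)}_a w^{aN_j}+b^{(j)}_{2M_j}w^{2M_jN_j}+\sum_{c>2M_jN_j}a^{(j)}_c w^{c},
\]
with $b^{(j)}_1,\dots,b^{(j)}_{2M_j-1}\in\mathbb{R}$ and $\operatorname{Im} b^{(j)}_{2M_j}>0$. The reflected series $\bar\Psi^0_k(w)=\overline{\Psi^0_k(\bar w)}$ has the same real coefficients through order $(2M_k-1)N_k$, while its coefficient of $w^{2M_kN_k}$ is $\overline{b^{(k)}_{2M_k}}$, with $\operatorname{Im}\overline{b^{(k)}_{2M_k}}<0$. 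Evaluating $D_{j,k}^{i,\ell}$ along real $t>0$ and using $\zeta^{N_j}=\eta^{N_k}=1$, every term of $\Psi^0_j$ below $w^{2M_jN_j}$ (resp. of $\bar\Psi^0_k$ below $w^{2M_kN_k}$) is a real multiple of a power of $w^{N_j}$ (resp. $w^{N_k}$) and so contributes a real number, whence
\[
\operatorname{Im} D_{j,k}^{i,\ell}(t)=\operatorname{Im}(b^{(j)}_{2M_j})\,t^{2M_j}+\operatorname{Im}(b^{(k)}_{2M_k})\,t^{2M_k}+\big(\text{terms of order}>\min\{2M_j,2M_k\}\big).
\]
Both leading coefficients are strictly positive, so no cancellation occurs and $\operatorname{Im} D_{j,k}^{i,\ell}(t)\asymp t^{\min\{2M_j,2M_k\}}$ as $t\to0^+$. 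A nonzero Puiseux series whose restriction to $t>0$ is $\asymp t^{\kappa}$ has order of vanishing $\kappa$, which forces $\operatorname{ord}_t D_{j,k}^{i,\ell}\le\min\{2M_j,2M_k\}=\min\{\mathcal{K}^1_{(j)},\mathcal{K}^1_{(k)}\}$; in particular $D_{j,k}^{i,\ell}\not\equiv 0$, so additivity of $\operatorname{ord}_t$ applies.

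Assembling the estimates gives
\[
N_{(1,1)}(p,\tilde p)=\sum_{j,k}\sum_{i=1}^{N_j}\sum_{\ell=1}^{N_k}\operatorname{ord}_t D_{j,k}^{i,\ell}\le\sum_{j,k}N_jN_k\min\{\mathcal{K}^1_{(j)},\mathcal{K}^1_{(k)}\},
\]
and partitioning $\{1,\dots,L_0\}$ into the $L$ families of sizes $N_1,\dots,N_L$ indexing the branches from $\Psi^0_1,\dots,\Psi^0_L$ — on the $j$th of which $\mathcal{K}^1_i$ is constant and equal to $\mathcal{K}^1_{(j)}$ by Lemma~\ref{lem:LCO} — identifies the right-hand side with $\sum_{i,i'=1}^{L_0}\min\{\mathcal{K}^1_i,\mathcal{K}^1_{i'}\}$, which is the asserted bound. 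The step I expect to be the main obstacle is the sign analysis in the second paragraph: one must check carefully that the first genuinely non-real Puiseux coefficient of $\Psi^0_j$ and that of the reflected series $\bar\Psi^0_k$ have imaginary parts of opposite sign (so that the difference does not vanish to anomalously high order), that this fact really follows from the normal form behind Lemma~\ref{lem:LCO}, and that all remaining terms are of strictly higher order; the reduction in the first paragraph and the combinatorial regrouping here are routine.
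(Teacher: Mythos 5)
Your proposal is correct and follows essentially the same route as the paper: reduce via the Weierstrass/Puiseux resultant formula for $N_{(0,0)}(q,\bar q)$, bound each factor $\Psi^0_j(\zeta^i t^{1/N_j})-\bar\Psi^0_k(\eta^\ell t^{1/N_k})$ by $\min\{\mathcal{K}^1_{(j)},\mathcal{K}^1_{(k)}\}$ using the normal form in which the first non-real coefficient sits at the contact-order position with strictly positive imaginary part (flipped in sign by the bar), and then regroup using Lemma \ref{lem:LCO} and $L_0=N_1+\cdots+N_L$. The only cosmetic difference is that you track the imaginary part of the difference uniformly, whereas the paper splits into the case where some real coefficient differs (giving strictly smaller order) and the case where they all cancel; both arguments rest on exactly the same sign observation.
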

\begin{proof}
As in the proof of Theorem \ref{thm:zero} and the beginning of Subsection \ref{subsec:IM}, we switch to the bi-upper half-plane to obtain polynomials $q$ and $\overline{q}$, and functions $\Psi^0_1, \ldots, \Psi^0_{L}$ that generate $\psi^0_1, \ldots, \psi^0_{L_0}$. As was explained in Section \ref{sec:prelim} and \cite[Appendix C]{Kne15}, the desired intersection multiplicity can be computed from $N_{(0,0)}(q, \overline{q})=\sum_{I,J}N_{(0,0)}(q_I, \overline{q}_J)$, where each $q_I$ is an irreducible Weierstrass polynomial of degree $N_I$, and each $N_{(0,0)}(q_I,\overline{q}_J)$ is given by the order of vanishing of 
\[\prod_{i=1}^{N_I} \prod_{\ell=1}^{N_J} \left(  \Psi^0_I\left ( \zeta^i t^{\frac{1}{N_I}}\right) -  \bar{\Psi}_J^0\left( \eta^\ell  t^{\frac{1}{N_J}} \right) \right),\]
where $\zeta$ and $\eta$ are primitive roots of unity. Moreover, recall that $L_0 = N_1 + \dots + N_{L}$. Hence it suffices to establish that, for any fixed pair of indices $(I,J)$ 
and choice of $i$ and $\ell$, the vanishing order of $\Psi^0_I\left ( \zeta^i t^{\frac{1}{N_I}}\right) -  \bar{\Psi}_J^0\left( \eta^\ell  t^{\frac{1}{N_J}} \right)$ is at most $\min\{\mathcal{K}^1_I, \mathcal{K}^1_J\}$.

Without loss of generality, suppose $\mathcal{K}^1_I\leq \mathcal{K}^1_J$. As in Section \ref{sec:prelim}, we have
\[\Psi^0_I(t)=\sum_{k=1}^{2M-1}b_k^It^{kN_I}+b_{2M}^It^{2MN_I}+\sum_{k=2MN_I+1}^{\infty}a_{k}^It^k,\]
where $N_I$ is a positive integer, $b_1^I, \ldots, b_{2M-1}^I$ are real, and $\mathcal{K}^1_I=2M$. From \cite[Appendix C]{Kne15} we moreover know that $\Im(b_{2M})>0$. A similar expansion, with coefficients denoted by $b_k^J$, holds for $\Psi^0_J$. 

If, for some $k\leq 2M-1$, we have $b_{k}^I-b_k^J\neq 0$, it follows that the order of vanishing of $\Psi^0_I\left ( \zeta^i t^{\frac{1}{N_I}}\right) -  \bar{\Psi}_J^0\left( \eta^\ell  t^{\frac{1}{N_J}} \right)$ is strictly smaller than $\mathcal{K}^1_I$, so that the desired inequality holds. Suppose then that the real-coefficient terms in $\Psi^0_I\left ( \zeta^i t^{\frac{1}{N_I}}\right) -  \bar{\Psi}_J^0\left( \eta^\ell  t^{\frac{1}{N_J}} \right)$ cancel; we need to argue that we cannot have additional cancellation in front of $t^{\mathcal{K}^1_IN_I}$ and thus higher order of vanishing. But this now follows from the definition of $\bar{\Psi}^0_J$ and the fact that $\Im(b_{2M}^I)$ is positive and $\Im(b_{2M}^J)$ is non-negative: either $b_{2M}^J$ is real (if $\mathcal{K}^1_J>\mathcal{K}^1_I$), or else $\Im(b_{2M}^J)>0$ (if $\mathcal{K}^1_I=\mathcal{K}^1_J$). 

The proof is now complete.
\end{proof}


\section{Fine contact order vs fine order of contact}\label{sec:COfine}

In this section, we further examine the relationship between the contact order and order of contact of an RIF at a singular point. In Section \ref{sec:CO}, we examined these quantities at a fixed singularity. Now, we consider these quantities at the level of branches or curves. Specifically, 
we will connect the contact order associated with a specific branch of $\mathcal{Z}_{\tilde{p}}$ with the order of contact between two particular branches of the unimodular curves $\mathcal{L}_{\mu}(\phi)$ and $\mathcal{L}_{\lambda}(\phi).$ 

Assume $\phi$ satisfies (A1). To make sense of the main result, recall that near $(1,1)$, the zero set $\mathcal{Z}_{\tilde{p}}$ has $L_0$ branches 
\[  z_1 = \psi_1^0(z_2), \ \dots, \ z_1 = \psi_{L_0}^0(z_2),\]
as given in \eqref{eqn:zero1}. Similarly, for $\mu \in \mathbb{T}$, the unimodular level curve $\mathcal{L}_{\mu}(\phi)$ is comprised of $L_{\mu}$ smooth curves
\[ z_1 = \psi^{\mu}_{1}(z_2), \ \dots, z_1 =  \psi^{\mu}_{L_\mu}(z_2),\]
as given by \eqref{eqn:LL}, and possibly a vertical component. Then here is the precise result:

\begin{theorem} \label{prop:bijection1} Let $\phi$ satisfy (A1).  Then for almost every pair $\lambda, \mu \in \mathbb{T}$, we have $L_{\lambda}, L_{\mu} \ge L_0$. Furthermore, after a reordering of the components of $\mathcal{L}_{\lambda}(\phi)$ and $\mathcal{L}_{\mu}(\phi)$ near $(1,1)$, the $z_1$-contact order of $z_1 = \psi^0_\ell(z_2)$ at $(1,1)$ is at most the order of contact between $z_1 = \psi^{\mu}_{\ell}(z_2)$ and  $z_1 = \psi^{\lambda}_{\ell}(z_2)$ at $(1,1)$ for $1 \le \ell \le L_0.$
\end{theorem}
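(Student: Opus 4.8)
The plan is to transfer the problem to finite Blaschke products by freezing the second variable and to exploit the movement calculus of Section~\ref{sec:CO}. For $\zeta_2\in\mathbb{T}$ close to $1$ and outside a finite set, $\phi_{\zeta_2}(z_1):=\phi(z_1,\zeta_2)$ is a finite Blaschke product; by Theorem~\ref{thm:zero}, for $\zeta_2$ close enough to $1$ its zeros lying near $z_1=1$ are precisely the simple points $\alpha_\ell(\zeta_2):=\psi^0_\ell(\zeta_2)$, $\ell=1,\dots,L_0$, while by Theorem~\ref{thm:smooth} its $\mu$-points near $z_1=1$ are precisely $\psi^\mu_1(\zeta_2),\dots,\psi^\mu_{L_\mu}(\zeta_2)$ (since $\zeta_2\ne1$, the possible vertical component is irrelevant). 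By Lemma~\ref{lem:LCO}, $1-|\alpha_\ell(\zeta_2)|\approx|1-\zeta_2|^{\mathcal{K}^1_\ell}$, and since $\psi^0_i-\psi^0_j$ ($i\ne j$) is a nonzero algebraic function vanishing at $z_2=1$, there is a positive rational $\delta_{ij}$ with $|\psi^0_i(\zeta_2)-\psi^0_j(\zeta_2)|\approx|1-\zeta_2|^{\delta_{ij}}$; the normal form $\Psi^0_k(t)=\sum_{i=1}^{2M-1}b_it^{iN_k}+b_{2M}t^{2MN_k}+\cdots$ from Lemma~\ref{lem:LCO} (first coefficients real, $\Im b_{2M}>0$) forces $\delta_{ij}\le\min\{\mathcal{K}^1_i,\mathcal{K}^1_j\}$.

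Next I would, for each $\ell$, invoke part~C of Lemma~\ref{lem:move} to obtain an arc $A^{(\ell)}_{\zeta_2}$ centered at $\alpha_\ell(\zeta_2)/|\alpha_\ell(\zeta_2)|$ with $|A^{(\ell)}_{\zeta_2}|\lesssim|1-\zeta_2|^{\mathcal{K}^1_\ell}$ and $\mu_{\phi_{\zeta_2}}(A^{(\ell)}_{\zeta_2})\ge\mu_{b_{\alpha_\ell(\zeta_2)}}(A^{(\ell)}_{\zeta_2})>2\pi-\epsilon$, using additivity (part~A of Lemma~\ref{lem:move}). Because $\phi_{\zeta_2}$ is a Blaschke product, $\arg\phi_{\zeta_2}$ is strictly increasing along $\mathbb{T}$, so on an arc $A$ with $\mu_{\phi_{\zeta_2}}(A)>g(2\pi-\epsilon)$ the value $\mu$ has at least $g$ preimages in $A$ for every $\mu$ outside an exceptional arc of length $O(g\epsilon)$. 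Now organize the branches into a hierarchical (ultrametric) clustering: the triangle inequality makes ``$\delta_{ij}\ge s$ or $i=j$'' an equivalence relation for each threshold $s$, a class $G$ at scale $s$ with $|G|=g$ satisfies $\mathcal{K}^1_\ell\ge s$ for all $\ell\in G$, and so for $\zeta_2$ near $1$ there is an arc $U_G$ of length $\asymp|1-\zeta_2|^s$ containing $A^{(\ell)}_{\zeta_2}$ for every $\ell\in G$ and disjoint from $A^{(\ell')}_{\zeta_2}$ for $\ell'\notin G$; hence $\mu_{\phi_{\zeta_2}}(U_G)>g(2\pi-\epsilon)$ and, off an exceptional arc, $\phi_{\zeta_2}=\mu$ has at least $g$ solutions in $U_G$, each equal to $\psi^\mu_i(\zeta_2)$ for a distinct index $i$.

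With this, I would descend through the hierarchy, assigning at each scale the newly available solutions to those zero-branches first resolved at that scale, producing — for each such $\zeta_2$ and each $\mu$ off a finite union of exceptional arcs — an injection $\sigma_{\zeta_2}\colon\{1,\dots,L_0\}\to\{1,\dots,L_\mu\}$ with $\psi^\mu_{\sigma_{\zeta_2}(\ell)}(\zeta_2)$ lying in the smallest cluster arc containing $\alpha_\ell(\zeta_2)$, so that $|\psi^\mu_{\sigma_{\zeta_2}(\ell)}(\zeta_2)-\psi^0_\ell(\zeta_2)|\lesssim|1-\zeta_2|^{\mathcal{K}^1_\ell}$; injectivity gives $L_\mu\ge L_0$. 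Taking a sequence $\zeta_2=\zeta_2^{(n)}\to1$ and a subsequence, the shrinking exceptional arcs converge to a point and the finitely many values of $\sigma_{\zeta_2^{(n)}}$ stabilize, so for all $\mu$ outside one point of $\mathbb{T}$ we get a single injection $\sigma$ with companions valid along $\zeta_2^{(n)}\to1$; doing the same for $\lambda$ and reordering the branches of $\mathcal{L}_\mu(\phi)$ and $\mathcal{L}_\lambda(\phi)$ so that both injections become the identity on $\{1,\dots,L_0\}$, we obtain $|\psi^\mu_\ell(\zeta_2^{(n)})-\psi^\lambda_\ell(\zeta_2^{(n)})|\lesssim|1-\zeta_2^{(n)}|^{\mathcal{K}^1_\ell}$. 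Since $\psi^\mu_\ell$ and $\psi^\lambda_\ell$ are analytic near $1$ with value $1$ there and are distinct (as $\lambda\ne\mu$), comparing Taylor coefficients at $z_2=1$ along the sequence forces the order of vanishing of $\psi^\mu_\ell-\psi^\lambda_\ell$, i.e.\ their order of contact at $(1,1)$, to be at least $\mathcal{K}^1_\ell$; intersecting the co-finite sets of admissible $\mu$ and $\lambda$ gives the ``almost every pair'' conclusion.

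The crux, and main obstacle, is the middle step: extracting at \emph{every} scale of the clustering exactly enough distinct $\mu$-solutions so that each zero-branch is tracked at its own scale $\mathcal{K}^1_\ell$ rather than merely at the coarser scale of its cluster. This needs (i) the ultrametric inequality for the separation exponents together with $\delta_{ij}\le\min\{\mathcal{K}^1_i,\mathcal{K}^1_j\}$, which comes from the normal form above; (ii) a careful covering argument showing the arcs $U_G$ at a fixed scale are essentially pairwise disjoint and each captures movement at least $|G|(2\pi-\epsilon)$ while excluding the movement of zeros outside $G$ — the borderline case $\mathcal{K}^1_\ell=\delta_{\ell\ell'}$ requires taking the scale thresholds among the finitely many achieved exponents and exploiting the resulting gaps; and (iii) the integrality of preimage counts which, together with the freedom to let $\epsilon\to0$ along the sequence, upgrades ``movement $>g(2\pi-\epsilon)$'' to ``$\ge g$ preimages for a.e.\ $\mu$.'' A cruder, single-level version of the clustering already yields the weaker statement that the order of contact of $\psi^\mu_\ell$ and $\psi^\lambda_\ell$ is at least the minimum of $\mathcal{K}^1_{\ell'}$ over the cluster of $\ell$, and the work lies in sharpening this to $\mathcal{K}^1_\ell$ itself.
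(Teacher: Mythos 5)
Your skeleton --- freezing $z_2$ so that $\phi_{\zeta_2}$ is a finite Blaschke product, using the movement measure of Lemma \ref{lem:move} to force many distinct $\mu$-preimages near the frozen zeros $\psi^0_\ell(\zeta_2)$, and then letting $\zeta_2\to 1$, stabilizing indices along a subsequence, and invoking analyticity of the level branches --- is the same as the paper's, and your final limiting step and the treatment of ``almost every pair'' are essentially sound. The genuine gap is exactly where you flag it, and it is worse than an unfinished computation: the inequality $\delta_{ij}\le\min\{\mathcal{K}^1_i,\mathcal{K}^1_j\}$ on which your ultrametric clustering rests is not a consequence of the normal form in Lemma \ref{lem:LCO}, and in general it is false. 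The normal form says that \emph{all} terms of $\Psi^0_k(t)$ through degree $2MN_k$ have exponents divisible by $N_k$; hence if $\psi^0_i$ and $\psi^0_j$ are two branches coming from the same irreducible factor with $N_k\ge 2$ (a case Lemma \ref{lem:LCO} explicitly allows), they agree through order $2M=\mathcal{K}^1_i=\mathcal{K}^1_j$ in $z_2$, i.e.\ $\delta_{ij}>\min\{\mathcal{K}^1_i,\mathcal{K}^1_j\}$ --- the opposite of what you need; and even for branches from different factors with equal contact order the coefficients $b_{2M}$ may coincide, again giving $\delta_{ij}>\min$. (The inequality the normal form does give is the conjugated one, for $\Psi^0_I-\bar{\Psi}^0_J$, where $\Im(b_{2M})>0$ versus $\Im(\bar{b}_{2M})<0$ blocks cancellation; that is what Proposition \ref{prop:COvsIM} uses, and it says nothing about $\Psi^0_I-\Psi^0_J$.) Once $\delta_{ij}$ can exceed the contact orders, a scale-$s$ cluster $G$ with $s>\mathcal{K}^1_\ell$ for some $\ell\in G$ cannot carry an arc $U_G$ of length $\asymp|1-\zeta_2|^{s}$ containing $A^{(\ell)}_{\zeta_2}$, whose length is $\asymp_{\epsilon}|1-\zeta_2|^{\mathcal{K}^1_\ell}$, and the claimed disjointness from the other clusters' arcs also fails; so the hierarchy collapses at precisely the ``crux'' step, which in your write-up is in any case only a plan (items (i)--(iii)), with the fallback yielding only the weaker ``cluster-minimum'' bound rather than the theorem.

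The paper avoids the tangency exponents $\delta_{ij}$ altogether. For each $n$ it sorts the arcs $A^n_{\ell,\epsilon}=A_{\epsilon,\psi^0_\ell(\zeta_n)}$ by length and, for each $\ell$, works with the connected component $\mathcal{D}^n_{\ell,\epsilon}$ of $A^n_{1,\epsilon}\cup\dots\cup A^n_{\ell,\epsilon}$ containing $A^n_{\ell,\epsilon}$. Since this component is a union of at most $\ell$ arcs, none longer than $A^n_{\ell,\epsilon}$, its length is automatically at most $\ell\,|A^n_{\ell,\epsilon}|\lesssim_{\epsilon}|1-\zeta_n|^{\mathcal{K}^1_\ell}$ --- the correct scale for branch $\ell$ is built in, with no comparison between mutual tangencies and contact orders required --- while its movement is at least $N_{\ell,\epsilon}(2\pi-\epsilon)$, where $N_{\ell,\epsilon}$ counts the arcs it contains; an inductive selection then produces, for every $\lambda$ outside a union of at most $L_0$ short arcs, distinct indices $j_\ell$ with $\psi^{\lambda}_{j_\ell}(\zeta_n)\in\mathcal{D}^n_{\ell,\epsilon}$. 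If you replace your $\delta$-based hierarchy by this arc-based clustering (performed afresh at each $\zeta_n$, with subsequences to stabilize the combinatorics), the remainder of your argument goes through and yields the theorem.
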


\begin{proof} The proof is a more technical version of the proof of Theorem \ref{thm:CO1}.  As in that proof, fix $\zeta \in \mathbb{T}\setminus \{1\}$ near $1$. Then $\phi_{\zeta}(z_1):= \phi(z_1, \zeta)$ is a finite Blaschke product and $\phi(  \psi^0_{\ell}(\zeta), \zeta)=0$ for $1\le \ell \le L_{0}$. For $\zeta$ close enough to $1$, the $\psi^0_{\ell}(\zeta)$ are distinct, and so, the product $\prod_{\ell=1}^{L_0} b_{\psi^0_{\ell}(\zeta)}$ divides $\phi_{\zeta}.$

Fix $\epsilon>0$ and let $(\zeta_n) \subseteq \mathbb{T}$ be a sequence converging to $1$ with each $\zeta_n\neq 1$. For each $n \in \mathbb{N}$ and $\ell$ with $1 \le \ell \le L_0,$ let $A^n_{\ell,\epsilon}: = A_{\epsilon, \psi^0_{\ell}(\zeta_n)}$ denote the arc from Lemma \ref{lem:move}. Note that the sets $A^n_{\ell,\epsilon}$, $\ell=1,\ldots, L_0$ need not be disjoint. By initially reordering the components of $\mathcal{Z}_{\tilde{p}}$ near $(1,1)$ and then passing to a subsequence, we can assume 
\begin{equation}
 \left | A^n_{1,\epsilon} \right | \le \dots \le \left | A^n_{L_0,\epsilon}\right|,
\label{eq:sortbymeas}
\end{equation}
for all $n \in \mathbb{N}.$  Define $\mathcal{C}^n_{\ell,\epsilon} = \cup_{i=1}^{\ell} A^n_{i,\epsilon}$ and let $\mathcal{D}^n_{\ell,\epsilon}$ denote the connected component of $\mathcal{C}^n_{\ell,\epsilon}$ that contains $A^n_{\ell,\epsilon}.$ Moreover, let $N_{\ell,\epsilon}$ denote the number of $A^n_{i,\epsilon}$ contained in $\mathcal{D}^n_{\ell,\epsilon}.$ While technically, $N_{\ell,\epsilon}$ depends on $n$, by passing to another subsequence, we can assume each $N_{\ell,\epsilon}$ is independent of $n$. Moreover, $\left | \mathcal{D}^n_{\ell,\epsilon} \right| \le \ell \cdot \left| A^n_{\ell,\epsilon}\right|$ in view of \eqref{eq:sortbymeas}. Now define the image set
\[ I^n_{\ell,\epsilon}:=\left \{  \phi(\tau, \zeta_n) : \tau \in  \mathcal{D}^n_{\ell,\epsilon} \right \}.\]
As $\phi_{\zeta_n}$ is continuous on $\mathbb{T}$, we know $I^n_{\ell,\epsilon}$ is an arc that winds around $\mathbb{T}$ and by Lemma \ref{lem:move},
\[ \left | I^n_{\ell,\epsilon} \right|_{\mathcal{W}} =
 \mu_{\phi_{\zeta_n}}\left( \mathcal{D}_{\ell,\epsilon} \right)\ge 
 \sum_{i=1}^{\ell}\mu_{b_{\psi^0_i(\zeta_n)}}(D_{\ell, \epsilon})
   \ge N_{\ell,\epsilon}\cdot\left( 2\pi  -\epsilon \right).\]
Then each $I^n_{\ell,\epsilon}$ yields an arc $T_{\ell,\epsilon}^n \subseteq \mathbb{T}$ of distinct points with $|T_{\ell,\epsilon}^n| \ge 2\pi  - N_{\ell, \epsilon} \cdot \epsilon$ so that for each $\lambda \in T_{\ell, \epsilon}^n$, there are $N_{\ell,\epsilon}$ occurrences of $\lambda$ in $I^n_{\ell,\epsilon}.$ Using the same arguments as in the proof of Theorem \ref{thm:CO1}, we can pass to a subsequence and obtain for each $\ell$ an arc $B_{\ell, \epsilon}$ so that the length $\left | B_{\ell, \epsilon}\right | =  2 \pi - 2 N_{\ell,\epsilon}\cdot \epsilon$ 
and for all $n$ sufficiently large, $B_{\ell, \epsilon} \subseteq T_{\ell,\epsilon}^n$.
Let $B_{\epsilon} = \cap B_{\ell, \epsilon}.$ Then $B_{\epsilon}$ is a union of arcs in $\mathbb{T}$ with 
\[ \left | B_{\epsilon}\right | \ge  2 \pi - 2 (N_{1,\epsilon} + \dots +N_{L_0, \epsilon}) \epsilon.\] 
Indeed, $B_{\epsilon}$ can be obtained from $\mathbb{T}$ by omitting at most $L_0$ intervals of length at most $2 (N_{1, \epsilon} + \dots +N_{L_0, \epsilon}) \epsilon.$

Then for each $\lambda \in B_{\epsilon}$, $n$ sufficiently large, and $\ell$ with $1 \le \ell \le L_0,$ this construction gives $N_{\ell, \epsilon}$ distinct elements from $\mathcal{L}_{\lambda}(\phi)$ in each $\mathcal{D}^n_{\ell}$. To be specific, the process is as follows:
\begin{itemize}
\item[1.]  As $\lambda \in T_{1,\epsilon}^n,$ there is a $\tau_1^1 \in \mathcal{D}^n_{1, \epsilon}$ with $\phi(\tau_1^1,\zeta_n) =\lambda$. As long as $n$ is sufficiently large, we can choose $\tau_1^1 = \psi^{\lambda}_{j_1}(\zeta_n)$ for some $j_1$ with $1 \le j_1 \le L_{\lambda}.$ 
\item[2.] As $\lambda \in T_{2, \epsilon}^n,$ there is a $\tau_1^2 \in \mathcal{D}^n_{2, \epsilon}$ with $\phi(\tau_1^2,\zeta_n) = \lambda$. We can further choose $\tau_1^2 \ne \tau_1^1$. Indeed, if $\tau_1^1 \in \mathcal{D}^n_{2, \epsilon}$, then $A^n_{1,\epsilon} \subseteq \mathcal{D}^n_{2,\epsilon}$ and so by construction, there are two occurrences of $\lambda \in I_{2,\epsilon}^n.$ Thus, we can choose $\tau_2^1 \ne \tau_1^1.$ Then as long as $n$ is sufficiently large, we can choose $\tau_1^2 = \psi^{\lambda}_{j_2}(\zeta_n)$ for some $j_2$ with $1 \le j_2 \le L_{\lambda}$ and $j_1 \ne j_2.$ 
\item[3.] We can continue in this matter. For each $\ell$ with $1\le \ell \le L_0$, we can identify a point $\psi^{\lambda}_{j_\ell}(\zeta_n) \in \mathcal{D}^n_{\ell,\epsilon}$, where $j_{\ell} \ne j_1, \dots, j_{\ell-1}.$
\end{itemize}
 Now assume  $\lambda, \mu \in B_{\epsilon}$. By reordering the components of $\mathcal{L}_{\lambda}(\phi)$ and $\mathcal{L}_{\mu}(\phi)$ and passing to a subsequence, we can further assume that our arguments give $ \psi^{\lambda}_{\ell}(\zeta_n), \psi^{\mu}_{\ell}(\zeta) \in \mathcal{D}^n_{\ell, \epsilon}$ for each $\ell$ with $1 \le \ell \le L_0$ and all $n$ sufficiently large.  This immediately implies that $L_{\lambda}, \mathcal{L}_{\mu} \ge L_0.$
  
Then we have $ \psi^{\lambda}_{\ell}(\zeta_n), \psi^{\mu}_{\ell}(\zeta_n) \in \mathcal{D}^n_{\ell,\epsilon}$, for $n$ sufficiently large.   Fix $\ell$ with $1 \le \ell \le L_0$ and let $\K^1_{\ell}$ denote the $z_1$-contact order of $z_1 = \psi_{\ell}^{0}(z_2)$ at $(1,1)$.  Then
\[
\left |  \psi^{\lambda}_{\ell}(\zeta_n)- \psi^{\mu}_{\ell}(\zeta_n) \right | \le  \left| \mathcal{D}^n_{\ell, \epsilon} \right| \\
 \le \ell \cdot \left | A^n_{\ell, \epsilon} \right|
 \lesssim c_{\epsilon} \left( 1 - | \psi_{\ell}^{0}(\zeta_n) |\right) \approx c_{\epsilon} \left | 1 - \zeta_n \right |^{\K^1_{\ell}},
\]
for large enough $n$. By the smoothness of the branches, we know that the $z_1$-order of contact between $z_1 =  \psi^{\lambda}_{\ell}(\zeta_n)$ and $z_1= \psi^{\mu}_{\ell}(\zeta_n)$ at $(1,1)$ is at least
$\K^1_{\ell}.$ 

Finally, we claim that for almost every pair $\mu, \lambda \in \mathbb{T}$, there is an $\epsilon>0$ so that $\mu, \lambda \in B_{\epsilon}.$ 
In particular, proceeding towards a contradiction, let $L=L_0+1$ and assume there are pairs $\lambda_1, \mu_1, \dots, \lambda_L, \mu_L$, such that 
each pair $\lambda_i, \mu_i$ is not in a common $B_{\epsilon}$ and every $\lambda_i \ne \lambda_j$ and $\mu_i \ne \mu_j$. Fix a sequence $(\epsilon_m)$ of positive numbers converging to $0$. By passing to a subsequence and switching any $\lambda_i$ with $\mu_i$  if necessary, we can assume that each
$B_{\epsilon_m}$ omits every $\mu_1, \dots, \mu_L.$ Recall that each $B_{\epsilon_m}$ can be obtained from $\mathbb{T}$ by omitting at most $L_0$ intervals of length at most $2 (N_{1, \epsilon_m} + \dots +N_{L_0, \epsilon_m}) \epsilon_m.$ Thus, as every $\mu_i \ne \mu_j$,  if we choose $\epsilon_m>0$ sufficiently small, $B_{\epsilon_m}$ can omit at most $L_0$ of $\mu_1, \dots, \mu_L$, a contradiction.

Thus, for almost every pair $\mu, \lambda \in \mathbb{T}$, there is an $\epsilon>0$ so that $\mu, \lambda \in B_{\epsilon}.$ Then our previous arguments imply that, up to reordering, the $z_1$-order of contact between $z_1 =  \psi^{\lambda}_{\ell}(z_2)$ and $z_1= \psi^{\mu}_{\ell}(z_2)$ at $(1,1)$ is at least
the $z_1$-contact order of $z_1 = \psi^0_\ell(z_2)$ at $(1,1)$ for $1 \le \ell \le L_0.$ 
\end{proof}

We conjecture that the following refined result is also true:

\begin{conjecture}  Let $\phi$ satisfy (A1). Then for almost every pair $\lambda, \mu \in \mathbb{T}$, we have $L_{\lambda} = L_0=L_{\mu}$. Furthermore, after a reordering of the components of $\mathcal{L}_{\lambda}(\phi)$ and $\mathcal{L}_{\mu}(\phi)$ near $(1,1)$, the contact order of $z_1 = \psi^0_\ell(z_2)$ at $(1,1)$ will equal the order of contact between $z_1 = \psi^{\mu}_{\ell}(z_2)$ and  $z_1 = \psi^{\lambda}_{\ell}(z_2)$ at $(1,1)$ for $1 \le \ell \le L_0.$
\end{conjecture}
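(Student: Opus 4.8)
I would prove the two assertions by quite different means. The equality of counts, $L_\lambda = L_0 = L_\mu$, I would establish first and by a direct algebraic argument; in fact it holds for all $\mu \in \T$ outside a set of at most two points. Recall from the proof of Theorem \ref{thm:smooth} that, whenever $q_\mu(w_1, 0) \not\equiv 0$ (which fails for at most one $\mu$), the number $L_\mu$ equals the order of vanishing at $w_1 = 0$ of $q_\mu(w_1, 0)$, where $q_\mu = q - \mu r$ with $q$ the transfer of $\tilde p$ and $r$ the analogously defined transfer of $p$. Since $|\tilde p| = |p|$ on $\T^2$ and the anti-holomorphic involution $z \mapsto 1/\bar z$ preserves orders of vanishing at points of $\T$, both $q(w_1, 0)$ and $r(w_1, 0)$ vanish at $w_1 = 0$ to order exactly $L_0$; therefore $q_\mu(w_1,0)$ also vanishes to order exactly $L_0$ unless the coefficients of $w_1^{L_0}$ cancel, i.e.\ unless $\mu = \mu_1$, where $\mu_1 = \lim_{z_1 \to 1}\phi(z_1, 1)$ and $|\mu_1| = 1$ (again because $|\tilde p| = |p|$ on $\T^2$). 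Thus $L_\mu = L_0$ for all $\mu \ne \mu_1$ with $q_\mu(w_1, 0) \not\equiv 0$, which is all but at most two values; in particular $L_\lambda = L_0 = L_\mu$ for almost every pair.

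For the contact-order equality I would build on Theorem \ref{prop:bijection1}: for almost every pair $\lambda, \mu$ that theorem already supplies a matching $\psi^0_\ell \leftrightarrow \psi^\mu_\ell \leftrightarrow \psi^\lambda_\ell$ under which the order of contact $\kappa^{\mu, \lambda}_{\ell\ell}$ of $\psi^\mu_\ell$ and $\psi^\lambda_\ell$ at $(1,1)$ is at least the contact order $\K^1_\ell$ of $\psi^0_\ell$, so it suffices to prove the reverse inequality $\kappa^{\mu, \lambda}_{\ell\ell} \le \K^1_\ell$. The strategy is to upgrade the one-sided estimate in the proof of Theorem \ref{prop:bijection1} to a two-sided one. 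Fixing $\zeta \in \T$ near $1$, the slice $\phi_\zeta := \phi(\cdot, \zeta)$ is a finite Blaschke product whose zeros near $z_1 = 1$ are exactly $\psi^0_1(\zeta), \dots, \psi^0_{L_0}(\zeta)$; on the arc $A_{\psi^0_\ell(\zeta)}$ of Lemma \ref{lem:move}, whose length is $\asymp 1 - |\psi^0_\ell(\zeta)| \asymp |1-\zeta|^{\K^1_\ell}$, the single factor $b_{\psi^0_\ell(\zeta)}$ already forces $|\phi_\zeta'| \gtrsim |1-\zeta|^{-\K^1_\ell}$. I would then show that all other factors contribute $\lesssim |1-\zeta|^{-\K^1_\ell}$ on that arc, so that $|\phi_\zeta'| \asymp |1-\zeta|^{-\K^1_\ell}$ there; the inputs are the location estimates $1 - |\psi^0_j(\zeta)| \asymp |1-\zeta|^{\K^1_j}$, the identity $|b_\alpha'(\zeta_1)| = (1-|\alpha|^2)/|1 - \bar\alpha \zeta_1|^2$, and the observation that two zero branches with different contact orders can osculate to order at most the smaller of the two. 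A bi-Lipschitz estimate with the exact power then gives $|\psi^\mu_\ell(\zeta) - \psi^\lambda_\ell(\zeta)| \asymp |1-\zeta|^{\K^1_\ell}|\mu - \lambda|$, and evaluating along the subsequence produced by Theorem \ref{prop:bijection1} yields $\kappa^{\mu, \lambda}_{\ell\ell} = \K^1_\ell$. (The same estimate gives $\kappa^{\mu,\lambda}_{ij} \le \min\{\K^1_i, \K^1_j\}$ for every matched pair, which combined with Lemma \ref{lem:combolemmaOC} and Proposition \ref{prop:COvsIM} determines the entire array of orders of contact.)

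The hard part will be the derivative estimate on $A_{\psi^0_\ell(\zeta)}$ in the degenerate regime where several zero branches share leading asymptotics at $(1,1)$. If a branch $j$ with $\K^1_j > \K^1_\ell$ osculates $\psi^0_\ell$ to order exactly $\K^1_\ell$, then its Blaschke bump — far narrower, of amplitude $|1-\zeta|^{-\K^1_j}$ — lies at angular distance only $\asymp |1-\zeta|^{\K^1_\ell}$ from $A_{\psi^0_\ell(\zeta)}$, so whether $A_{\psi^0_\ell(\zeta)}$ captures it depends on the implied constants, and if it does the pointwise bound $|\phi_\zeta'| \lesssim |1-\zeta|^{-\K^1_\ell}$ breaks. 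One would then have to argue at the level of the matching itself, choosing the preimage $\psi^\mu_\ell(\zeta)$ in the portion of the arc governed by $\psi^0_\ell$'s own bump and away from the intruding bump, and checking that this choice is consistent for $\mu$ and $\lambda$ simultaneously; this appears to require a more precise description of how nested clusters of zero branches separate than the Horn Lemma or the Puiseux data directly provide. By comparison, the accounting for the at-most-two exceptional values of $\mu$ and for the contributions of zeros clustering at other singularities of $\phi$ should be routine.
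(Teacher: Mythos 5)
First, a point of status: the statement you are attacking is not proved in the paper at all --- the authors establish only the one-sided inequality (Theorem \ref{prop:bijection1}) and explicitly leave this refinement as a conjecture --- so there is no proof of record to compare against, and your text must stand on its own as a proof. Its first half essentially does: identifying $L_\mu$ with the order of vanishing of $q_\mu(w_1,0)$ at $w_1=0$ (via Weierstrass preparation and Lemma \ref{lem:weier}), and then using $|q(x_1,0)|=|r(x_1,0)|$ for real $x_1$ to force both restrictions to vanish to the same order with leading coefficients of equal modulus, so that cancellation at order $L_0$ occurs for exactly one unimodular $\mu$ (namely $\mu_1=\lim_{z_1\to 1}\phi(z_1,1)$), is a clean argument for $L_\lambda=L_0=L_\mu$ off at most two values of $\mu$. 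The one caveat is multiplicity: if $\tilde p$, or $\tilde p-\mu p$, has a repeated irreducible factor through $(1,1)$, the number of \emph{distinct} parametrizing functions is smaller than the vanishing order, so you should either count branches with multiplicity throughout or dispose of repeated factors separately (a discriminant argument confines the latter to finitely many $\mu$, which is enough for ``almost every'').

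The second half is where the content of the conjecture lies, and there your plan has a genuine gap --- one you name yourself and do not close, so what you have is a strategy, not a proof. Everything funnels through the two-sided bound $|\phi_\zeta'|\asymp|1-\zeta|^{-\K^1_\ell}$ on the arc $A_{\epsilon,\psi^0_\ell(\zeta)}$: the lower bound is the easy half already implicit in Theorem \ref{prop:bijection1}, while the upper bound fails pointwise in exactly the clustered regime you describe, since a branch $\psi^0_j$ with $\K^1_j>\K^1_\ell$ may separate from $\psi^0_\ell$ only at order $\K^1_\ell$ (your own observation permits this), and then its Blaschke bump of height $\asymp|1-\zeta|^{-\K^1_j}$ can sit inside the arc. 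At that point the inequality $|\psi^\mu_\ell(\zeta_n)-\psi^\lambda_\ell(\zeta_n)|\gtrsim|\mu-\lambda|\,|1-\zeta_n|^{\K^1_\ell}$ --- which is precisely what would cap the order of contact at $\K^1_\ell$ --- no longer follows from anything you have established: the preimages produced by the matching in Theorem \ref{prop:bijection1} are only known to lie in $\mathcal{D}^n_{\ell,\epsilon}$, and both could be absorbed into the narrow cluster around $\psi^0_j$, where their separation is $\asymp|\mu-\lambda|\,|1-\zeta_n|^{\K^1_j}$ instead. Repairing this requires constructing a matching in which each level-curve component is paired with the zero-set branch it shadows, and showing that this pairing is simultaneously consistent for $\mu$ and $\lambda$ and stable as $\zeta_n\to 1$; neither the Horn Lemma nor Lemma \ref{lem:move} supplies that, and this is presumably the very obstruction that led the authors to state the refinement only as a conjecture. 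Your closing parenthetical also overreaches: the bound $\kappa^{\mu,\lambda}_{i,j}\le\min\{\K^1_i,\K^1_j\}$ for \emph{all} pairs does not follow from the matched-pair estimate without an additional argument of the same kind, so it cannot yet be combined with Lemma \ref{lem:combolemmaOC} and Proposition \ref{prop:COvsIM} as claimed.
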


\section{Constructions of rational inner functions}\label{sec:construct}
We now present several methods of constructing RIFs with desired level set behavior.

\subsection{One Prescribed Level Set}

For our initial construction, we consider functions similar to those studied in \cite{Kne10TAMS} and use them to construct RIFs with one prescribed unimodular level set.
A polynomial $r\in \C[z_2,z_2]$ is called $\T^2$-symmetric if $\tilde{r}=\lambda r$ for some unimodular constant $\lambda$ (cf. \cite[p.5638]{Kne10TAMS}).
\begin{theorem} \label{thm:LS} Let $r\in \mathbb{C}[z_1, z_2]$ be non-constant and essentially $\T^2$-symmetric, say $\tilde{r}=\lambda r$, with no zeros on $\mathbb{D}^2$. Then there is an RIF $\phi$ on $\mathbb{D}^2$ such that $\mathcal{L}_{\lambda}(\phi) = \mathcal{Z}_r.$
\end{theorem}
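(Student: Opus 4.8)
The plan is to reverse-engineer the construction so that $r$ becomes the numerator-minus-$\lambda$-denominator of the RIF we build. Concretely, since $r$ has no zeros in $\mathbb{D}^2$ and is essentially $\mathbb{T}^2$-symmetric with $\tilde r=\lambda r$, I want to produce a semi-stable polynomial $p$ together with its reflection $\tilde p$ so that $\tilde p - \lambda p = c\, r$ for some nonzero constant $c$; then $\phi=\tilde p/p$ is an RIF with $\mathcal L_\lambda(\phi) = \mathcal Z_{\tilde p - \lambda p} = \mathcal Z_r$, as desired. The natural way to manufacture such a $p$ is to perturb $r$ by a strictly stable polynomial: set
\[
p := r + t\, s,
\]
where $s$ is a cleverly chosen polynomial and $t>0$ is a small parameter, arranged so that (i) $p$ has no zeros on $\overline{\mathbb{D}^2}$ (or at least no zeros in $\mathbb{D}^2$, which is all that is needed for $\tilde p/p$ to be an RIF once $\deg$ is controlled), and (ii) the reflection operation interacts with the decomposition $p = r + ts$ so that $\tilde p - \lambda p$ is a constant multiple of $r$.

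For step (ii), the key computation is how reflection distributes over the sum. Writing $\deg p = \deg r = (m,n)$ (this will need to be arranged; see below), we have $\tilde p = \tilde r + t\,\tilde s = \lambda r + t\,\tilde s$. Hence
\[
\tilde p - \lambda p = \lambda r + t\,\tilde s - \lambda r - \lambda t\, s = t\big(\tilde s - \lambda s\big).
\]
So I actually want $\tilde s - \lambda s$ to be a nonzero constant multiple of $r$ — which looks circular — so the cleaner route is instead to choose $s$ to itself be essentially $\mathbb{T}^2$-symmetric, say $\tilde s = \lambda s$ with the \emph{same} $\lambda$; then $\tilde p - \lambda p = 0$ identically, which is wrong (it would make $\phi$ constant). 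The resolution: one should instead require $s$ with $\tilde s = \lambda' s$ for a \emph{different} unimodular constant, or more simply, take $p := \mu r + t s$ where $s$ is strictly stable and $\tilde s$ is computed with respect to the common bidegree; then $\tilde p - \lambda p = (\overline{\mu}\lambda - \lambda\mu) r + t(\tilde s - \lambda s)$, and by choosing $\mu$ and $s$ appropriately one forces the $r$-coefficient to be the only surviving term. I expect the correct bookkeeping to be: pick any strictly stable $s$ of bidegree dominating that of $r$, pad $r$ with monomial factors $z_1^{a}z_2^{b}$ if needed so all bidegrees match, and then use a continuity/Hurwitz argument to see that for small $t$ the polynomial $p$ inherits semi-stability from $r$ (no zeros in the open bidisk) while the reflection identity produces $\tilde p - \lambda p$ proportional to $r$ up to lower-order error that can be absorbed.

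The main obstacle will be simultaneously controlling the \textbf{bidegree} and the \textbf{zero-free condition}. Reflection $p\mapsto\tilde p$ depends on the declared bidegree $(m,n)$, so one must ensure $\deg p = \deg r$ exactly (padding $r$ by monomials as allowed by the Rudin--Stout form, which the paper explicitly says does not affect conclusions), and one must ensure the perturbation $ts$ does not drop or raise the degree. Semi-stability of $p = (\text{something})\,r + ts$ for small $t$ should follow from a normal-families / Hurwitz-type argument: $r$ has no zeros in the open bidisk, $r + ts \to r$ locally uniformly, and any limit of zeros $(z_1,z_2)\in\mathbb{D}^2$ would be a zero of $r$, a contradiction — but one must be slightly careful near $\mathbb{T}^2$ and at infinity, which is exactly where Knese's analysis of essentially $\mathbb{T}^2$-symmetric polynomials (codistinguished varieties hitting $\mathbb{T}^2$ in an infinite set) is relevant. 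I would lean on \cite{Kne10TAMS} for the statement that such an $r$ can be completed to $\tilde p/p$ in this fashion, and present the above perturbation as the concrete mechanism, verifying at the end that $\mathcal{L}_\lambda(\phi) = \mathcal{Z}_{\tilde p - \lambda p} = \mathcal{Z}_r$ and that $\phi$ is genuinely an RIF (bounded by $1$ on $\mathbb{D}^2$, unimodular a.e.\ on $\mathbb{T}^2$) rather than a constant.
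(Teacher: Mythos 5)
Your scheme never closes the algebraic loop that it correctly identifies as the crux. With $p=\mu r+ts$ and matched bidegrees, $\tilde p-\lambda p=(\bar\mu-\mu)\lambda r+t(\tilde s-\lambda s)$, so the level set $\{\tilde p=\lambda p\}$ equals $\mathcal Z_r$ \emph{exactly} only if $\tilde s-\lambda s$ vanishes identically or is itself a multiple of $r$. The second option is the original problem (circular), and the first is unattainable: a non-constant polynomial with $\tilde s=\lambda s$ is essentially $\T^2$-symmetric and hence (as recalled in the paper, and in \cite{Kne10TAMS}) has infinitely many zeros on $\T^2$, so it cannot be strictly stable; a constant $s$ does not satisfy $\tilde s=\lambda s$ once it is padded to the common bidegree. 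There is no ``absorbing a lower-order error'' here, because the conclusion demands equality of sets, not approximation. Separately, the Hurwitz/normal-families step also fails as stated: since $r$ vanishes on an infinite subset of $\T^2$, $|r|$ has no positive lower bound on $\mathbb D^2$, so smallness of $t$ cannot be chosen uniformly, and zeros can indeed enter the bidisk through the boundary. Concretely, for $r=(1-z_1)(1-z_2)$ (which satisfies $\tilde r=r$ and is zero-free in $\mathbb D^2$) and the strictly stable perturbation $s=-1$, the polynomial $p=r+ts$ has the zero $z_1=z_2=1-\sqrt t\in\mathbb D$ for every small $t>0$. So semi-stability of the perturbed $p$ requires a structural choice of the perturbation direction, not just smallness; and finally, leaning on \cite{Kne10TAMS} ``for the statement that such an $r$ can be completed to $\tilde p/p$ in this fashion'' is essentially citing the result you are asked to prove.

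For contrast, the paper's proof is constructive and avoids perturbation entirely: it sets $\tilde p:=z_1\frac{\partial r}{\partial z_1}+z_2\frac{\partial r}{\partial z_2}$, defines $p$ as its reflection, and checks the exact identity $\lambda p+\tilde p=(m+n)\,r$ (using $\tilde r=\lambda r$ coefficientwise), which gives $\mathcal L_\lambda(\phi)=\mathcal Z_r$ for $\phi=-\tilde p/p$ on the nose. The zero-freeness of $p$ in $\mathbb D^2$ is then obtained not by Hurwitz but by writing $\phi$ as a pointwise limit, via L'Hopital, of genuine RIFs $\phi_t$ built from the dilations $r_t(z)=r(tz_1,tz_2)$ (which are zero-free on $\overline{\mathbb D^2}$) after a M\"obius normalization. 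If you want to salvage a perturbative argument, you would need to build this kind of ``outward-pushing'' structure into $s$; as written, both the level-set identity and the stability step have genuine gaps.
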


\begin{proof} Fix such an $r$ and define the polynomial
\[ \tilde{p}(z_1,z_2) = z_1 \tfrac{\partial r}{\partial z_1}(z_1,z_2)  + z_2 \tfrac{ \partial r}{\partial z_2}(z_1,z_2) .\]
Define $p = \tilde{\tilde{p}}$. Then we claim $\phi:= -\frac{\tilde{p}}{p}$
 is an RIF on $\mathbb{D}^2$ and $\mathcal{L}_{\lambda}(\phi) = \mathcal{Z}_r.$ By construction, it is immediate that $|\phi|=1$ on $\mathbb{T}^2$ and $\phi$ is rational.  To see that $\phi$ has no singularities in $\mathbb{D}^2$, fix $t$ with $0<t<1$ and set $r_t(z_1,z_2) :=r(z_1t, z_2t)$. Then $r$ does not vanish on $\overline{\mathbb{D}^2}$ and so each $f_t : = \frac{\tilde{r}_t}{r_t}$ is a non-constant RIF continuous on $\overline{\mathbb{D}^2}$. This means that for each $t$, we can also define another RIF on $\mathbb{D}^2$ by
\[\phi_t(z_1,z_2) := -\frac{ f_t(z_1,z_2)  - f_t(0,0)}{1-\overline{f_t(0,0)} f_t(z_1,z_2)}.\]
 A simple application of L'Hopital's Rule implies that for each $(z_1,z_2) \in \mathbb{D}^2$, 
\[ \phi(z_1,z_2)  = \lim_{t\nearrow 1} \phi_t(z_1,z_2).\]
This implies $\phi$ cannot have any singularities in $\mathbb{D}^2$. Thus, $\phi$ is an RIF.  Lastly if $\deg p = (m,n)$, then a simple computation gives  $\lambda p + \tilde{p} = r (m+n)$. Thus $\mathcal{L}_{\lambda}(\phi) = \mathcal{Z}_r,$ as needed. 
\end{proof}

We call a zero variety $\mathcal{Z}_{r}$ associated to an essentially $\T^2$-symmetric polynomial $r\in \mathbb{C}[z_1,z_2]$ that does not vanish in the bidisk 
a \emph{codistinguished variety}. Theorem \ref{thm:smooth} now immediately yields:
\begin{corollary}\label{cor:codist}
Codistinguished varieties intersect $\T^2$ along smooth curves.
\end{corollary}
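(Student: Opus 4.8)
The plan is to realize $\mathcal{Z}_r\cap\T^2$ as a single unimodular level curve of a concretely built rational inner function, and then to invoke the smoothness of such level curves. Let $r$ be a non-constant, essentially $\T^2$-symmetric polynomial that is non-vanishing in $\D^2$, and write $\tilde r=\lambda r$ with $\lambda\in\T$; the case of constant $r$ is vacuous since then $\mathcal{Z}_r=\emptyset$. Theorem \ref{thm:LS} applies to $r$ and produces an RIF $\phi$ on $\D^2$ with $\mathcal{L}_\lambda(\phi)=\mathcal{Z}_r$, whence
\[
\mathcal{Z}_r\cap\T^2=\mathcal{L}_\lambda(\phi)\cap\T^2=\mathcal{C}_\lambda ,
\]
the unimodular level curve of $\phi$ at the value $\lambda$. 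So the corollary reduces to the statement that every unimodular level curve of an RIF is a finite union of smooth real-analytic arcs.

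I would prove this last statement by localizing. Near each of the finitely many singularities $\tau$ of $\phi$ on $\T^2$, the rotation $(z_1,z_2)\mapsto(\tau_1z_1,\tau_2z_2)$ replaces $\phi$ by a unimodular constant times an RIF of the form $\tilde P/P$ having its singularity at $(1,1)$, and carries $\mathcal{C}_\lambda$ to a unimodular level curve of that function at some value $\lambda'\in\T$. This is the situation of Theorem \ref{thm:smooth} --- only the location of the singularity, and not the normalization $\lambda_0=1$ built into (A1), enters its smoothness conclusion --- and that theorem presents the components of $\mathcal{C}_\lambda$ near $\tau$ as graphs $z_1=\psi^{\lambda'}_j(z_2)$ of convergent power series together with at most one coordinate circle, all of which are smooth. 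At a point $\zeta_0\in\mathcal{C}_\lambda$ that is not a singularity of $\phi=\tilde p/p$ we have $p(\zeta_0)\neq 0$, $\phi$ is holomorphic near $\zeta_0$, and $\mathcal{C}_\lambda$ locally coincides with $\{\zeta\in\T^2:\phi(\zeta)=\lambda\}$. If $\phi$ depends on only one variable this set is plainly a union of coordinate circles; otherwise each coordinate slice of $\phi$ is a non-constant finite Blaschke product except for finitely many slices in each direction, and since the horizontal and vertical slices through a common point are simultaneously constant for only finitely many points, at all but finitely many points of $\mathcal{C}_\lambda$ one coordinate partial of $\phi$ is nonzero (Lemma \ref{lem:blaschke}), so the implicit function theorem yields a real-analytic arc there. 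At the finitely many remaining non-singular points, both coordinate slices through $\zeta_0$ are constant equal to $\lambda$, which forces a factorization $\tilde p-\lambda p=(z_1-\zeta_1^0)(z_2-\zeta_2^0)h$; this reduces the local picture to two transverse coordinate circles together with a lower-degree level set, and one finishes by an induction on degree. Compactness of $\T^2$ then glues these local descriptions into a global finite union of smooth components.

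The only genuinely nontrivial input here is Theorem \ref{thm:smooth} itself --- equivalently, the Horn Lemma behind it --- which resolves the level curve into smooth branches at a true singularity of $\phi$, precisely where a naive implicit function theorem argument fails; that is where the work of the paper goes. Everything else is bookkeeping. The one mild subtlety is the handful of non-singular points at which both coordinate slices of $\phi$ collapse to constants, and that is dispatched by the degree induction noted above, so I do not expect it to present a real obstacle.
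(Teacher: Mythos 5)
Your proposal is correct and follows the paper's own route: realize $\mathcal{Z}_r\cap\T^2$ as a unimodular level curve via Theorem \ref{thm:LS} and then invoke the smoothness of level curves from Theorem \ref{thm:smooth}, which is exactly how the paper deduces the corollary. The extra bookkeeping you supply at non-singular points (implicit function theorem, constant slices) is detail the paper treats as immediate — and could in any case be absorbed by noting that the proof of Theorem \ref{thm:smooth} only uses that $\tilde p-\lambda p$ vanishes at the point and is zero-free on $\D^2\cup\mathbb{E}^2$, not that the point is a singularity of $\phi$ — so it does not constitute a genuinely different argument.
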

This observation can be used to simplify the proof of \cite[Theorem 5.2]{BKKLSS15}.

\subsection{Gluing two level sets}
Given an RIF, we can also construct a new RIF with a unimodular level set obtained by ``gluing'' together two unimodular level sets from the original RIF. Specifically:
\begin{corollary}\label{cor:glue} 
Let $\phi = \frac{\tilde{p}}{p}$ be a non-constant RIF on $\mathbb{D}^2$. Then there is an RIF $\Phi$ on $\mathbb{D}^2$ such that $\mathcal{L}_1(\Phi) = \mathcal{L}_i(\phi) \cup \mathcal{L}_{-i}(\phi).
$
\end{corollary}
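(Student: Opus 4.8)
The plan is to reduce Corollary \ref{cor:glue} to Theorem \ref{thm:LS}, which produces an RIF with one prescribed unimodular level set out of any essentially $\T^2$-symmetric polynomial with no zeros on $\D^2$. Given $\phi=\tilde p/p$, consider the two level-set polynomials
\[
p_i(z):=\tilde p(z)-ip(z),\qquad p_{-i}(z):=\tilde p(z)+ip(z),
\]
whose zero sets are $\mathcal{L}_i(\phi)$ and $\mathcal{L}_{-i}(\phi)$ respectively. Set $r:=p_i\,p_{-i}=\tilde p^2+p^2$. Then $\mathcal{Z}_r=\mathcal{L}_i(\phi)\cup\mathcal{L}_{-i}(\phi)$, and the task is to check the two hypotheses of Theorem \ref{thm:LS} for $r$: that $r$ is essentially $\T^2$-symmetric (with unimodular constant $1$, so that the level value produced is $\lambda=1$), and that $r$ has no zeros on $\D^2$.

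For the symmetry, I would compute the reflection of $r$. Using that reflection is multiplicative on polynomials of the appropriate bidegree and that $\widetilde{\tilde p}=p$ (up to the usual monomial/degree bookkeeping, which I would normalize away as the paper does), one gets $\widetilde{p_i}=\widetilde{\tilde p}-\bar i\,\tilde p = p+i\tilde p = i\,p_{-i}$ and similarly $\widetilde{p_{-i}}=-i\,p_i$; multiplying, $\tilde r = \widetilde{p_i}\,\widetilde{p_{-i}} = (i\,p_{-i})(-i\,p_i)=p_i p_{-i}=r$. So $r$ is literally $\T^2$-symmetric with $\lambda=1$. (One must be slightly careful about which bidegree is used to form the reflections $\widetilde{p_i}$, $\widetilde{p_{-i}}$, and $\tilde r$; since the claimed identity is invariant under adding monomial factors, I would fix bidegrees so that $\deg p_i=\deg p_{-i}=\deg p$ and $\deg r = 2\deg p$, which is the generic situation, and note that lower-order degenerate cases only introduce harmless monomial factors that do not affect the level set.)

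For the no-zeros condition, suppose $r(z)=0$ at some $z\in\D^2$. Then $\tilde p(z)^2=-p(z)^2$, i.e. $\tilde p(z)=\pm i\,p(z)$, which forces $|\phi(z)|=|\tilde p(z)/p(z)|=1$ at an interior point (note $p(z)\neq 0$ on $\D^2$ since $p$ is semi-stable, and if $p(z)=0$ then also $\tilde p(z)=0$, contradicting that $\phi$ is non-constant and $p,\tilde p$ share no factor). But $\phi$ is analytic and $|\phi|\le 1$ on $\D^2$, so $|\phi(z)|=1$ at an interior point forces $\phi$ to be a unimodular constant by the maximum modulus principle, contradicting non-constancy. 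Hence $r$ has no zeros in $\D^2$.

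With both hypotheses verified, Theorem \ref{thm:LS} yields an RIF $\Phi$ on $\D^2$ with $\mathcal{L}_1(\Phi)=\mathcal{Z}_r=\mathcal{L}_i(\phi)\cup\mathcal{L}_{-i}(\phi)$, which is exactly the claim. I expect the only genuinely delicate point to be the bidegree bookkeeping in the reflection identities $\widetilde{p_i}=i\,p_{-i}$ and $\tilde r=r$ — the choice of $\pm i$ is tuned precisely so that the product of the two reflections returns $r$ itself rather than a nontrivial unimodular multiple of it (had we glued $\mathcal{L}_\lambda$ and $\mathcal{L}_\mu$ for general $\lambda,\mu\in\T$, the product would be $\bar\lambda\bar\mu$-symmetric rather than $1$-symmetric, and one would get $\mathcal{L}_{\bar\lambda\bar\mu}$ of the new RIF); everything else is a direct application of results already established in the excerpt.
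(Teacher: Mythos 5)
Your proof is correct and follows essentially the same route as the paper: the paper also sets $r=p^2+\tilde p^2$, asserts (without spelling out the details you supply) that $r$ satisfies the hypotheses of Theorem \ref{thm:LS}, and uses the factorization $(\tilde p+ip)(\tilde p-ip)=p^2+\tilde p^2$ to identify $\mathcal{Z}_r$ with $\mathcal{L}_i(\phi)\cup\mathcal{L}_{-i}(\phi)$. One tiny slip that affects nothing: with your normalization the reflections are $\widetilde{p_i}=i\,p_i$ and $\widetilde{p_{-i}}=-i\,p_{-i}$ (each factor is itself essentially $\T^2$-symmetric), rather than $i\,p_{-i}$ and $-i\,p_i$ as written, but the product identity $\tilde r = r$ and hence the whole argument go through unchanged.
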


\begin{proof}
 Let $\phi = \frac{\tilde{p}}{p}$ be an RIF on $\mathbb{D}^2$ and define
\begin{equation} \label{eqn:r} r(z_1, z_2):=(p(z_1,z_2))^2+(\tilde{p}(z_1,z_2))^2.\end{equation}
Then $r$ satisfies the conditions of Theorem \ref{thm:LS}. Thus, if we set
\[\tilde{P}(z_1,z_2)=z_1\tfrac{\partial r}{\partial z_1}(z_1,z_2)+z_2\tfrac{\partial r}{\partial z_2}(z_1,z_2),\]
and reflect to obtain $P$, the RIF $\Phi  = \frac{\tilde{P}}{P}$ satisfies $\mathcal{L}_1(\Phi) = \mathcal{Z}_r$. Finally, the identity $(\tilde{p}+ip)(\tilde{p}-ip)=p^2+\tilde{p}^2$ shows that $\mathcal{Z}_r= \mathcal{L}_i(\phi) \cup \mathcal{L}_{-i}(\phi)$, as needed.
\end{proof}

\subsection{Interlacing Constructions}

 In Theorem \ref{thm:LS}, we showed that if $r = \tilde{r}$ is non-constant and $\mathcal{Z}_r \cap \mathbb{D}^2 = \emptyset,$ then there is an RIF $\phi$ with $\mathcal{L}_1(\phi) = \mathcal{Z}_r.$ In this section, we obtain necessary and sufficient conditions to specify two unimodular level curves of an RIF. In particular, we will answer the following question:
\[ \text{Given $r, q \in \mathbb{C}[z_1, z_2]$, when is there an RIF $\phi$ with $\mathcal{L}_1(\phi) = \mathcal{Z}_{r}$ and $\mathcal{L}_{-1}(\phi) = \mathcal{Z}_{q}$?}\]

To simplify the problem, we will switch to the bi-upper half plane $\Pi^2$. In particular, recall the conformal map $\beta: \Pi \rightarrow \mathbb{D}$ from \eqref{eqn:beta} that satisfies $\beta(0)=1$ and $\beta(\infty)=-1.$ The needed formulas are $\beta(w)= \frac{1+iw}{1-iw}$ and $\beta^{-1}(z) = i \frac{1-z}{1+z}.$  Further recall that $\Phi$ is a rational inner Pick function (RIPF) on $\Pi^2$ if $\Phi: \Pi^2 \rightarrow \Pi$ is a rational function with no poles on $\Pi^2$ satisfying $\text{Im}(\Phi(x))=0$ for a.e.~$x=(x_1, x_2) \in \mathbb{R}^2$. 

Given $r, q\in \mathbb{C}[z_1, z_2]$ with $\deg r = (m,n) = \deg q$, define the following polynomials:
\begin{equation} \label{eqn:RQ} R(w) := (1-iw_1)^m(1-iw_2)^n r( \beta(w)) \ \text{ and } \
Q(w) := (1-iw_1)^m(1-iw_2)^n q( \beta(w)). 
\end{equation}
Here, $r( \beta(w))$ is shorthand for $r(\beta(w_1), \beta(w_2))$, and this notation will be used throughout the following proof.  
Then we have the following lemma:

\begin{lemma} \label{lem:poly} Let $r, q\in \mathbb{C}[z_1, z_2]$ with no common factors and $\deg r = (m,n) = \deg q$. Then there is an RIF $\phi$  on $\mathbb{D}^2$ with $\deg \phi = (m,n)$ so that $\mathcal{L}_1(\phi)=\mathcal{Z}_r$ and $\mathcal{L}_{-1}(\phi) = \mathcal{Z}_q$ if and only if there is a nonzero constant $c$ such that $\Phi:=c \frac{R}{Q}$ is a RIPF on $\Pi^2$.
\end{lemma}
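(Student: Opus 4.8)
The plan is to transport everything to the bi-upper half-plane $\Pi^2$ via the Cayley transform $\beta$ of \eqref{eqn:beta}, turning rational inner functions on $\mathbb{D}^2$ into rational inner Pick functions on $\Pi^2$ and turning the level sets $\mathcal{L}_1(\phi)$, $\mathcal{L}_{-1}(\phi)$ into the zero set and the pole set of the associated Pick function. The dictionary is the expected one: given an RIF $\phi$, the function $\Phi:=\beta^{-1}\circ\phi\circ(\beta\times\beta)$ maps $\Pi^2$ into $\Pi$ with no poles in $\Pi^2$ (since $\phi$ omits $-1$ on $\mathbb{D}^2$) and has $\mathrm{Im}\,\Phi=0$ a.e.\ on $\mathbb{R}^2$, hence is a RIPF; conversely $\phi:=\beta\circ\Phi\circ(\beta^{-1}\times\beta^{-1})$ recovers an RIF from any RIPF. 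Because $\beta^{-1}(1)=0$ and $\beta^{-1}(-1)=\infty$, the substitution $z_j=\beta(w_j)$ carries $\mathcal{L}_1(\phi)$ onto the zero locus of $\Phi$ and $\mathcal{L}_{-1}(\phi)$ onto its pole locus. On the polynomial level, $s\mapsto S:=(1-iw_1)^m(1-iw_2)^n\,s(\beta(w))$, as in \eqref{eqn:RQ}, is a bijective, bidegree-$(m,n)$-preserving correspondence that sends $\mathcal{Z}_s$ to $\mathcal{Z}_S$ and preserves coprimality; in particular $R$ and $Q$ are coprime when $r$ and $q$ are.

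For the direction $(\Leftarrow)$: given $c\neq 0$ with $\Phi:=cR/Q$ a RIPF, set $\phi:=\beta\circ\Phi\circ(\beta^{-1}\times\beta^{-1})$, an RIF by the dictionary. Using $1-i\beta^{-1}(z_j)=2/(1+z_j)$, one finds that $R(\beta^{-1}(z))$ and $Q(\beta^{-1}(z))$ carry the same rational prefactor, so $\Phi(\beta^{-1}(z))=c\,r(z)/q(z)$ and hence $\phi=(q+icr)/(q-icr)$. Then $\mathcal{L}_1(\phi)=\{q+icr=q-icr\}=\mathcal{Z}_r$ and $\mathcal{L}_{-1}(\phi)=\{q+icr=-(q-icr)\}=\mathcal{Z}_q$, using $c\neq 0$; the numerator and denominator are coprime since any common factor divides $\gcd(2q,2icr)$, and restricting to a generic slice $z_2=\zeta\in\mathbb{T}$ shows $\phi(\cdot,\zeta)$ is a Blaschke product of degree $m$ (and symmetrically in $z_1$), so $\deg\phi=(m,n)$.

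For the direction $(\Rightarrow)$: write $\phi=e^{i\alpha}z_1^Mz_2^N\,\tilde p/p$ by Rudin--Stout, with $\deg p=(m',n')$, $M+m'=m$, $N+n'=n$. Put $u:=p-e^{i\alpha}z_1^Mz_2^N\tilde p$ and $v:=p+e^{i\alpha}z_1^Mz_2^N\tilde p$, so $\mathcal{L}_1(\phi)=\mathcal{Z}_u$ and $\mathcal{L}_{-1}(\phi)=\mathcal{Z}_v$. A direct reflection computation gives $\tilde u=-e^{-i\alpha}u$ and $\tilde v=e^{-i\alpha}v$ with respect to bidegree $(m,n)$, so $u,v$ are essentially $\mathbb{T}^2$-symmetric of bidegree exactly $(m,n)$, and they are coprime because $p$, $\tilde p$, and $z_1^Mz_2^N$ are pairwise coprime. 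Moreover $u$ and $v$ are square-free: on a generic slice $z_2=\zeta$ their zero sets are the preimages $\phi(\cdot,\zeta)^{-1}(\pm1)$, which consist of simple points because a finite Blaschke product has non-vanishing derivative on $\mathbb{T}$ (Lemma \ref{lem:blaschke}). Comparing with $\mathcal{Z}_u=\mathcal{Z}_r$ and $\mathcal{Z}_v=\mathcal{Z}_q$, the matching of bidegrees then forces $r$ and $q$ to be square-free as well, and hence $u=c_1r$, $v=c_2q$ for nonzero constants. Therefore $\phi=(v-u)/(v+u)=(c_2q-c_1r)/(c_2q+c_1r)$, and running the computation of the previous paragraph in reverse yields $\Phi:=\beta^{-1}\circ\phi\circ(\beta\times\beta)=i\tfrac{c_1}{c_2}\tfrac{R}{Q}$, which is a RIPF because it comes from the RIF $\phi$; take $c=ic_1/c_2$.

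The main obstacle is the passage from equality of \emph{zero sets} to \emph{proportionality} of polynomials, that is, $\mathcal{Z}_u=\mathcal{Z}_r\Rightarrow u=c_1r$ (and similarly for $v,q$). This requires controlling the multiplicities of the irreducible factors on the two sides, and is exactly where the hypotheses are used: the exact-bidegree condition $\deg r=(m,n)=\deg q$, the coprimality $r\perp q$, and — crucially — the square-freeness that is automatic here, coming from the non-vanishing of Blaschke derivatives in the $(\Rightarrow)$ direction and from the fact that a one-variable Pick function has only simple zeros and poles on $\mathbb{R}$ in the $(\Leftarrow)$ direction, which already forces $R$ and $Q$ to have no repeated factors. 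Modulo this point, everything else is routine Cayley-transform bookkeeping: the explicit formula $\phi=(q+icr)/(q-icr)$, the slicewise degree count establishing $\deg\phi=(m,n)$, and the verification that $s\mapsto S$ preserves bidegree and coprimality.
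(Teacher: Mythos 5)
Your argument is correct and follows essentially the same route as the paper's: the $(\Leftarrow)$ direction is the identical Cayley-transform computation producing $\phi=(q+icr)/(q-icr)$, and the $(\Rightarrow)$ direction likewise comes down to identifying $p-b\tilde p$ and $p+b\tilde p$ with constant multiples of $r$ and $q$ and then computing $\Phi=\beta^{-1}\circ\phi\circ\beta$, exactly as in the paper. Where you add something is the step the paper dispatches in one sentence (``This implies that there are nonzero constants $c_1, c_2$ such that\dots''): you actually justify the passage from the set equality $\mathcal{Z}_{p-b\tilde p}=\mathcal{Z}_r$ to the polynomial identity $p-b\tilde p=c_1 r$, via square-freeness (simple preimages of unimodular values under Blaschke slices) plus a Nullstellensatz/bidegree comparison, and some such multiplicity control is indeed needed there. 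Two small repairs to that step: first, the reflection identity $\tilde u=-e^{-i\alpha}u$ at level $(m,n)$ does \emph{not} by itself give $\deg u=(m,n)$ exactly --- for instance $u=z_1(1-z_2)$ satisfies $\tilde u=-u$ with respect to bidegree $(2,1)$ --- so you must rule out a degree drop; this is easy, since a drop in the $z_1$-degree forces $z_1\mid u$, hence $\{z_1=0\}\subseteq\mathcal{L}_1(\phi)$ and $\phi\equiv 1$ by the maximum principle, or alternatively use your own slice count ($u(\cdot,\zeta)$ has $m$ distinct roots on $\mathbb{T}$ for a.e.\ $\zeta\in\mathbb{T}$ while $\deg_{z_1}u\le m$). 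Second, for square-freeness you should slice in both variables, since a repeated factor depending on $z_2$ alone is invisible on the slices $z_2=\zeta$. With these one-line patches your proof is complete; compared with the paper it buys an actual proof, rather than an assertion, of the zero-set-to-polynomial identification, at the cost of the extra Blaschke/Nullstellensatz bookkeeping.
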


\begin{proof} 

($\Rightarrow$) Assume there exists a rational inner $\phi$ with  $\deg \phi = (m,n)$ so that $\mathcal{L}_1(\phi)=\mathcal{Z}_r$ and $\mathcal{L}_{-1}(\phi) = \mathcal{Z}_q$. We can write $\phi(z) = b(z) \frac{\tilde{p}}{p}(z)$ for $b(z)$ a monomial and $p, \tilde{p}$ with no common factors. This implies  that there are nonzero constants $c_1, c_2$ such that 
\[ r(z) = c_1\left(p(z)-b(z)\tilde{p}(z) \right) \ \text{ and } \ q(z) = c_2\left(p(z)+b(z)\tilde{p}(z) \right).\]
Define the rational inner Pick function $\Phi :=\beta^{-1} \circ \phi \circ \beta$. Then 
\[ \Phi(w) = i \left( \frac{1-\phi}{1+\phi}\right)\big( \beta(w)\big) = i \frac{c_2}{c_1} \frac{r}{q}\big( \beta(w)\big) = i \frac{c_2}{c_1} \frac{R(w)}{Q(w)},\]
as needed.\\

($\Leftarrow$) Similarly, assume that there is a nonzero constant $c$ so that $\Phi := c \frac{R}{Q}$ is a RIPF on $\Pi^2.$ Setting $\phi = \beta \circ \Phi \circ \beta^{-1}$ and working through the definitions gives
\[ \phi(z)  = \left( \frac{Q + i cR}{Q-icR} \right) \big( \beta^{-1}(z) \big) = \frac{q(z) +icr(z)}{ q(z) - icr(z)}.\]
Then  $\mathcal{L}_1(\phi)=\mathcal{Z}_r$ and $\mathcal{L}_{-1}(\phi) = \mathcal{Z}_q.$ Since $q$ and $r$ have no common factors and satisfy $\deg r = (m,n) = \deg q$, we can conclude $\deg \phi =(m,n)$ as well.
\end{proof}

By Lemma \ref{lem:poly}, we only need to characterize when a rational function $\Phi := c \frac{R}{Q}$ is a RIPF on $\Pi^2$. First, we consider the one-variable situation. The following result is likely well known but we include its proof for completeness.

\begin{lemma} \label{lem:RIFP1} Let $R, Q \in \mathbb{C}[z]$ be nontrivial with no common zeros and  let $C$ be the ratio of their leading coefficients. Then $\Phi := \frac{R}{Q}$ 
is a rational inner Pick function on $\Pi$ if and only if $R$ and $Q$ have only real zeros, say $a_1$,\dots, $a_m$ and $b_1, \dots, b_n$ respectively satisfying
$n-1 \le m \le n+1$ so that if the zeros were listed in increasing order, then:
\begin{itemize}
\item[(i)] If $m=n-1$, then $C<0$ and $b_1 < a_1 < b_2 < \dots <a_{n-1} <b_n.$
\item[(ii)] If $m=n$, then either
\begin{itemize}
\item[(a)] $C<0$ and $a_1 < b_1 < \dots <a_n < b_n$, or 
\item[(b)] $C>0$ and $b_1 < a_1 < \dots <b_n < a_n$. 
\end{itemize}
\item[(iii)] If $m=n+1$, then $C>0$ and $a_1 <b_1 <a_2<\dots < b_n <a_{n+1}.$
\end{itemize}
\end{lemma}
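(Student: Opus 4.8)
The plan is to realize $\Phi=R/Q$ as a rational Pick function and to translate its Herglotz--Nevanlinna partial-fraction data into the interlacing picture. Since a rational inner Pick function $\Phi$ on $\Pi$ is real on $\mathbb{R}$, after dividing $R$ and $Q$ by a common unimodular constant we may assume both have real coefficients; coprimality then makes $C$, the ratio of leading coefficients (equivalently $C=\lim_{z\to\infty}z^{\deg Q-\deg R}\Phi(z)$), well-defined. The key classical input, which can also be reproved in a few lines for completeness, is that a non-constant rational function is a RIPF on $\Pi$ exactly when it has the form
\[
\Phi(z)=\alpha z+\beta+\sum_{j=1}^{n}\frac{\gamma_j}{b_j-z},\qquad \alpha\ge 0,\ \beta\in\mathbb{R},\ b_j\in\mathbb{R}\ \text{distinct},\ \gamma_j>0,\ (\alpha,\gamma_1,\dots,\gamma_n)\neq 0 .
\]
For the ``only if'' direction of this input: a real pole of order $k$ forces $\arg\Phi$ to sweep an interval of length $k\pi$ over a small half-circle in $\Pi$, so $k=1$, and the residue must be negative to keep $\arg\Phi\in(0,\pi)$; subtracting all the simple parts leaves a real polynomial, which must have degree $\le 1$ and non-negative leading coefficient, since otherwise $\Phi$ would map part of $\Pi$ into $-\Pi$. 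For the ``if'' direction one simply computes $\Im\big(\gamma_j/(b_j-z)\big)=\gamma_j\,\Im z/|b_j-z|^2>0$ on $\Pi$.

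For the forward implication I would start from this representation. The poles of $\Phi$ are exactly $b_1,\dots,b_n$, all real and simple, so $\deg Q=n$. Evaluating at infinity: $\deg R=n+1$ and $C=\alpha>0$ if $\alpha>0$; $\deg R=n$ and $C=\beta\neq 0$ if $\alpha=0\neq\beta$; and $\deg R=n-1$ and $C=-\sum_j\gamma_j<0$ if $\alpha=\beta=0$ (the degree is exactly $n-1$ because the top coefficient of $\sum_j\gamma_j\prod_{k\neq j}(b_k-z)$ is $\pm\sum_j\gamma_j\neq 0$). Hence $n-1\le m\le n+1$ with the claimed sign of $C$ in each regime. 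The interlacing then follows from monotonicity: $\Phi'(x)=\alpha+\sum_j\gamma_j/(b_j-x)^2>0$ off the poles, and $\Phi\to+\infty$ as $x\uparrow b_j$, $\Phi\to-\infty$ as $x\downarrow b_j$. Thus $\Phi$ increases strictly from $-\infty$ to $+\infty$ across each bounded gap $(b_j,b_{j+1})$, yielding exactly one zero of $R$ there, and on the two unbounded intervals one compares with the limits of $\Phi$ at $\pm\infty$ (namely $\beta$ if $\alpha=0$, and $\pm\infty$ if $\alpha>0$) to decide whether an extra zero of $R$ lies to the far left, to the far right, or neither. This reproduces cases (i)--(iii) precisely.

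For the converse I would reverse the bookkeeping. Given coprime $R,Q$ with real roots interlaced as stated and the stated sign of $C$, write the partial fraction decomposition $\Phi(z)=\ell(z)+\sum_{j=1}^{n}c_j/(z-b_j)$; since $m\le n+1$, $\deg\ell\le 1$, and the leading coefficient of $\ell$ equals $C>0$ if $m=n+1$ and is $0$ otherwise, so it is $\ge 0$. For the residues, $c_j=R(b_j)/Q'(b_j)$, and with $R(z)=C_R\prod_i(z-a_i)$, $Q'(b_j)=C_Q\prod_{k\neq j}(b_j-b_k)$, a sign count gives $\operatorname{sgn}c_j=\operatorname{sgn}(C)\cdot(-1)^{\delta_j}$ where $\delta_j=\#\{i:a_i>b_j\}-\#\{k:b_k>b_j\}$. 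The strict interlacing makes $\delta_j$ independent of $j$: it is $0$ in cases (i) and (ii)(a) and $1$ in cases (ii)(b) and (iii), which are exactly the parities for which the prescribed sign of $C$ forces every $c_j<0$. Writing $c_j=-\gamma_j$, $\gamma_j>0$, then exhibits $\Phi=\ell(z)+\sum_j\gamma_j/(b_j-z)$ in Herglotz--Nevanlinna form, hence a RIPF on $\Pi$ (non-constant since $n\ge 1$ or $m=n+1$).

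The genuinely delicate part is none of the analysis but the bookkeeping: keeping the meaning of ``ratio of leading coefficients'' straight across the three degree regimes and the two subcases of $m=n$, and making sure the sign of $C$ is pinned to ``$\Phi$ maps $\Pi$ into $\Pi$'' rather than into $-\Pi$ — a global sign flip of $\Phi$ turns all residues positive and swaps the role of $C$, which is precisely why each case carries its own sign constraint. I would also dispose of the trivial small-$n$ edge cases and remark that non-constancy of $\Phi$ is automatic from the hypotheses.
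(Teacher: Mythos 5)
Your proposal is correct, and its skeleton matches the paper's: both directions pivot on the Herglotz--Nevanlinna representation $\Phi(w)=\delta w+\gamma+\sum_i r_i/(w-b_i)$ with $\delta\ge 0$, $\gamma\in\mathbb{R}$, $r_i\le 0$ (the paper simply cites Donoghue for this, whereas you sketch a short proof of it), and both converses run through partial fractions plus a sign count of the residues $R(b_j)/Q'(b_j)$ --- your $\operatorname{sgn}(C)(-1)^{\delta_j}$ computation is the same bookkeeping as the paper's displayed formula for $\operatorname{sgn}(r_k)$. Where you genuinely diverge is the forward implication: the paper extracts the interlacing combinatorially, arguing from the residue-sign formula that an odd number of zeros must sit between consecutive poles and then running a case analysis on $m=n-1,n,n+1$ to pin down $\operatorname{sgn}(C)$; you instead use strict monotonicity of $\Phi$ on $\mathbb{R}$ ($\Phi'=\alpha+\sum_j\gamma_j/(b_j-x)^2>0$), the jump from $+\infty$ to $-\infty$ across each pole, and the behavior at $\pm\infty$ to locate exactly one zero per bounded gap and to decide the unbounded intervals, with the degree/leading-coefficient bookkeeping at infinity giving $n-1\le m\le n+1$ and the sign of $C$ in each regime. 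Your route is arguably cleaner and makes the zero count visibly exhaust $\deg R$ (so all zeros are automatically real), while the paper's residue-parity argument keeps forward and backward directions symmetric around the single formula for $\operatorname{sgn}(r_k)$. One small point you gloss over in the converse, which the paper checks explicitly: the constant term $\gamma$ of the polynomial part must be real; this does follow from your hypotheses (real zeros and real $C$ force $\Phi$ to be real on $\mathbb{R}$ off the poles), but say so, since your opening reality normalization was justified only for the forward direction.
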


\begin{proof} Recall \cite[p.19]{DonBook} that $\Phi= \frac{R}{Q}$ is a rational inner Pick function if and only if 
\begin{equation} \label{eqn:RIFP} \Phi(w) = \delta w + \gamma + \sum_{i=1}^n \frac{r_i}{w-b_i},\end{equation}
for some $\delta \ge 0$, $\gamma \in \mathbb{R}$ and each $r_i \le 0$. As part of this formula, the poles $b_1, \dots, b_n$
are real and distinct. Observe that if $\Phi$ satisfies \eqref{eqn:RIFP}, then the number of zeros $m \le n+1.$ We will find necessary and sufficient conditions for $\Phi$ to satisfy \eqref{eqn:RIFP}. 

($\Rightarrow$) Assume $\Phi$ satisfies \eqref{eqn:RIFP}. By assumption we can write 
\begin{equation} \label{eqn:RIFP2} \Phi(w) = C \frac{ \prod_{j=1}^m (w - a_j)}{ \prod_{i=1}^n (w - b_i)}.\end{equation}
Looking at \eqref{eqn:RIFP}, we can conclude that the coefficients of the numerator  $C\prod_{j=1}^m (w - a_j)$ must be real. This means that its zeros must be real or occur in complex conjugate pairs. Since none of the zeros can occur in $\Pi$, all of the zeros must be real.

Now observe that each $r_k = \left( \Phi(w)(w-b_k)\right)_{w=b_k}$ and so 
\begin{equation} \label{eqn:sgn} \text{sgn}(r_k) = \text{sgn}(C) \prod_{j=1}^m \text{sgn}(b_k-a_j) \prod_{i\ne k}(b_k-b_i).\end{equation}
To ensure the $r_k$ all have the same sign (negative), we need an odd number of zeros 
between each two consecutive poles. This implies that $\Phi$ has at least $n-1$ zeros. Thus, 
we can conclude that $n-1 \le m \le n+1.$ Consider each case:\\

\noindent \textbf{Case 1:} Assume $m=n-1$. Then by our previous observation, there is one zero between each
pair of consecutive poles. This implies that $b_1 < a_1 < b_2 < \dots <a_{n-1} <b_n.$ Then \eqref{eqn:sgn} becomes
\[  \text{sgn}(r_k) = \text{sgn}(C) (-1)^{n-k} (-1)^{n-k} =  \text{sgn}(C) \le 0,\]
 and so $C<0$.\\

\noindent \textbf{Case 2:} Assume $m=n$.  If $m=n$ it is not possible to have three zeros between any two consecutive poles. Thus, there must be exactly one zero
between each pair of consecutive poles, which implies the zero and pole configuration must be either
\[ a_1 < b_1 <a_2 < \dots <a_n <b_n \  \text{ or }  \ b_1 < a_1 <b_2 < \dots < b_n < a_n.\]
If the first configuration occurs, then each $r_k =\text{sgn(C)} (-1)^{n-k}(-1)^{n-k}=\text{sgn(C)}$ and so, we must have $C<0.$ Similarly, if the second configuration occurs, then each $r_k =\text{sgn(C)} (-1)^{n-k}(-1)^{n-k+1}= -\text{sgn(C)},$ and so we must have $C>0$.\\

\noindent \textbf{Case 3:} Assume $m=n+1$. Observe that in this case, $\delta = C$. Since $\delta \ge 0$, we automatically get $C>0$. 
Let $M$ denote the number of zeros larger than $b_1$. Because we need an odd number of zeros between consecutive poles, we know $ n-1 \le M \le n+1.$
 Then
\[ \text{sgn}(r_1) = \prod_{j=1}^m \text{sgn}(b_1-a_j) \prod_{i\ne 1}(b_1-b_i) =  (-1)^M (-1)^{n-1}.\]
As $r_1 \le 0$, we must have $M=n$. This immediately implies that the zeros must satisfy
\[ a_1 < b_1 < \dots < a_n < b_n < a_{n+1}.\]
Thus, a rational inner Pick function must satisfy the given conditions.\\

($\Leftarrow$) Assume $\Phi = \frac{R}{Q}$, where $R$ and $Q$ have only real zeros, say $a_1$,\dots, $a_m$ and $b_1, \dots, b_n$ respectively, satisfying
$n-1 \le m \le n+1$ and either $(i)$, $(ii)$, or $(iii)$. We must show that $\Phi$ satisfies \eqref{eqn:RIFP}. Using its partial fraction decomposition, we can write $\Phi$ 
in the form  \eqref{eqn:RIFP}; thus, we just need to verify that $\delta \ge 0$, $\gamma \in \mathbb{R}$, and each $r_i \le 0$.  First, observe that in cases $(i)$ and $(ii)$, $\delta$ is automatically zero, since $\deg R \le \deg Q.$ Similarly, in case $(iii)$, $\delta = C >0$. Similarly, in each case, \eqref{eqn:sgn} paired with the appropriate zero configuration and $\text{sgn}(C)$ implies that each $r_i \le 0.$ Finally, given the other coefficients, if $\gamma \not \in \mathbb{R}$, then for $x\in \R$ with $x \ne b_j$, we have $\Phi(x) \not \in \mathbb{R}.$ But formula \eqref{eqn:RIFP2} implies that such $\Phi(x) \in \mathbb{R}$, a contradiction. 
\end{proof}

We can use this result to classify when a ratio of polynomials yields a rational inner Pick function in two variables. First for any two-variable $R \in \mathbb{C}[w_1, w_2]$, we will define one variable polynomials as follows. Fix $x = (x_1, x_2) \in \mathbb{R}^2$ and $y = (y_1, y_2) \in \mathbb{R}^2_+$. Then $R_{x,y}$ denotes the one-variable polynomial
\begin{equation}
R_{x,y}(w) : = P(x_1 + y_1 w, x_2 + y_2 w).
\label{eqn:sliceR}
\end{equation}
Given these slices, we have the following result:

\begin{theorem} \label{thm:interlacing} Let $R, Q \in \mathbb{C}[w_1, w_2]$ be nontrivial with no common factors. For each $x \in \mathbb{R}^2$ and $y \in \mathbb{R}^2_+$, let $R_{x,y}$ and $Q_{x,y}$ denote the associated one-variable polynomials as in \eqref{eqn:sliceR}, and let $C_{x,y}$ denote the ratio of their leading coefficients. Then $\Phi = \frac{R}{Q}$ is a two-variable rational inner Pick function if and only if, after canceling common factors, $R_{x,y}$, $Q_{x,y}$, and $C_{x,y}$ satisfy one of $(i)$, $(ii)$, or $(iii)$ from Lemma \ref{lem:RIFP1} for all $x \in \mathbb{R}^2$ and $y \in \mathbb{R}^2_+$. 
\end{theorem}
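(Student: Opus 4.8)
The plan is to deduce the two-variable characterization from the one-variable Lemma \ref{lem:RIFP1} by a slicing argument. The underlying geometric fact is that for $x \in \mathbb{R}^2$ and $y \in \mathbb{R}^2_+$ the complex line $L_{x,y} := \{(x_1 + y_1 w, x_2 + y_2 w) : w \in \C\}$ meets $\Pi^2$ in exactly $\{w \in \C : \mathrm{Im}\, w > 0\} \cong \Pi$, because $\mathrm{Im}(x_j + y_j w) = y_j\,\mathrm{Im}\, w$ with $y_j > 0$, while $L_{x,y} \cap \mathbb{R}^2$ is the real line $\{w \in \mathbb{R}\}$. Restricting $\Phi = R/Q$ to $L_{x,y}$ produces the one-variable rational function $R_{x,y}/Q_{x,y}$, so by Lemma \ref{lem:RIFP1} the theorem is equivalent to the statement: \emph{$\Phi$ is a RIPF on $\Pi^2$ if and only if, for all admissible $x \in \mathbb{R}^2$, $y \in \mathbb{R}^2_+$ (those for which $R_{x,y}$ and $Q_{x,y}$ are not both identically zero), the reduced slice $R_{x,y}/Q_{x,y}$ is a one-variable RIPF on $\Pi$}. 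I would prove this equivalence, noting along the way that cancelling a common factor of $R_{x,y}$ and $Q_{x,y}$ does not alter the ratio $C_{x,y}$ of their leading coefficients, so the translation through Lemma \ref{lem:RIFP1} is unambiguous.

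For the ``only if'' direction, suppose $\Phi$ is a RIPF. Then $\Phi(\Pi^2) \subseteq \Pi$, so every slice is a rational holomorphic map $\Pi \to \Pi$, hence a Pick function; the only point requiring work is that each slice is \emph{inner}. Fixing a direction $y$, the real lines $L_{x,y} \cap \mathbb{R}^2$ foliate $\mathbb{R}^2$ as $x$ ranges over a transversal, so since $\{\mathrm{Im}\,\Phi \neq 0\}$ has planar measure zero, Fubini gives that for a.e.\ $x$ the function $t \mapsto \mathrm{Im}\,\Phi(x_1 + y_1 t, x_2 + y_2 t)$ vanishes a.e.\ on $\mathbb{R}$; being real-analytic away from the finitely many real poles, it then vanishes there identically. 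Thus for a.e.\ $x$ the reduced slice is real on $\mathbb{R}$ off its poles and hence a one-variable RIPF; since slices vary continuously and their poles cannot migrate into $\Pi$, a limiting argument upgrades this to \emph{every} admissible $x$, and Lemma \ref{lem:RIFP1} then supplies $(i)$, $(ii)$, or $(iii)$.

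For the ``if'' direction, assume every reduced slice satisfies $(i)$--$(iii)$, so by Lemma \ref{lem:RIFP1} each is a one-variable RIPF. First, $\Phi$ has no poles in $\Pi^2$: a point $w^0 = (a_1 + ib_1, a_2 + ib_2) \in \Pi^2$ with $Q(w^0) = 0 \neq R(w^0)$ lies on $L_{x,y}$ with $x = (a_1,a_2)$, $y = (b_1,b_2) \in \mathbb{R}^2_+$ at the interior parameter $w = i$ (and on infinitely many such lines, only finitely many of which can be components of $\mathcal{Z}_Q$ or $\mathcal{Z}_R$, so an admissible one exists), and the corresponding reduced slice would have a genuine pole at $w = i \in \Pi$, which no one-variable RIPF has. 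Next, evaluating reduced slices at $w = i$ shows $\mathrm{Im}\,\Phi \geq 0$ throughout $\Pi^2$; as $\mathrm{Im}\,\Phi$ is pluriharmonic and, for nonconstant $\Phi$, not identically $0$, the minimum principle forces $\mathrm{Im}\,\Phi > 0$ on $\Pi^2$, i.e.\ $\Phi(\Pi^2) \subseteq \Pi$. Finally, for a.e.\ $x \in \mathbb{R}^2$ we have $Q(x) \neq 0$; choosing an admissible $y$, the reduced slice is a one-variable RIPF, hence real on $\mathbb{R}$ away from its poles, so at $w = 0$ it equals $R(x)/Q(x) = \Phi(x) \in \mathbb{R}$. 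Therefore $\mathrm{Im}\,\Phi = 0$ a.e.\ on $\mathbb{R}^2$, and $\Phi$ is a RIPF.

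The hard part will be the ``only if'' direction: transferring the two-dimensional a.e.\ boundary condition to a statement about \emph{every} one-dimensional slice, and handling the finitely many degenerate directions (those $y$ for which $L_{x,y}$ is a component of $\mathcal{Z}_Q$ or $\mathcal{Z}_R$, and those $x$ for which $R_{x,y}$ and $Q_{x,y}$ acquire common roots). The Fubini-plus-real-analyticity step handles the measure-theoretic passage, and the continuity/limiting argument together with a dimension count of bad directions handles the degeneracies, but both need to be set up carefully.
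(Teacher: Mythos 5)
Your proposal is correct and takes essentially the same route as the paper: both directions restrict $\Phi$ to the lines $w \mapsto (x_1+y_1w,\,x_2+y_2w)$ and invoke Lemma \ref{lem:RIFP1}, with your Fubini/real-analyticity/limiting argument doing (more laboriously) what the paper gets from the fact that a rational function real a.e.\ on $\mathbb{R}^2$ is real everywhere off $\mathcal{Z}_Q$, and with the pluriharmonic minimum principle step superfluous since the reduced slices already map $\Pi$ into the open half-plane $\Pi$. One small correction: through a fixed $w^0\in\Pi^2$ there is exactly one admissible line, not infinitely many (the relations $y_j\,\mathrm{Im}(w)=\mathrm{Im}(w^0_j)$ force $y$ to be a positive multiple of $(\mathrm{Im}(w^0_1),\mathrm{Im}(w^0_2))$), but this is harmless because a slice with $Q_{x,y}\equiv 0$ cannot satisfy $(i)$--$(iii)$, so the hypothesis itself excludes that degeneracy and the unique line suffices for your pole argument.
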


\begin{proof} ($\Rightarrow$) Assume $\Phi$ is a two-variable rational inner Pick function. Fix any $x \in \mathbb{R}^2$ and $y \in \mathbb{R}^2_+$. Then after canceling common real zeros, $\Phi_{x,y} :=\frac{R_{x,y}}{Q_{x,y}}$ is rational, maps $\Pi \rightarrow \Pi$ and except at the zeros of $Q_{x,y}$, maps $\mathbb{R}$ to $\mathbb{R}$. Thus, $\Phi_{x,y}$ is a one variable rational inner function and so after canceling common real zeros,  Lemma \ref{lem:RIFP1} implies that $R_{x,y}$, $Q_{x,y}$, and $C_{x,y}$ satisfy one of $(i)$, $(ii)$, or $(iii)$.\\

\noindent ($\Leftarrow$) Clearly $\Phi = \frac{R}{Q}$ is rational. Fix any $(w_1, w_2) \in \Pi^2$. Then there is some $x \in \mathbb{R}^2$, $y\in \mathbb{R}^2_+$, and $w \in \Pi$ so that 
\[ (w_1, w_2) = (x_1 + y_1 w, x_2 + y_2 w).\]
By assumption, after canceling common real factors, $  \frac{P_{x,y}}{Q_{x,y}}$ is a one-variable rational inner function. This implies that $Q(w_1,w_2)$ is non-zero and moreover, 
\[ \Phi(w_1, w_2)  = \frac{R(w_1, w_2)}{Q(w_1,w_2)} = \frac{R_{x,y}(w)}{Q_{x,y}(w)} \in \Pi,\]
as needed. Thus, $\Phi$ is analytic and maps $\Pi^2$ into $\Pi$. Now fix any $x \in \mathbb{R}^2$ that is not a zero of $Q$. Then, for any $y\in \mathbb{R}_+^2$, 
\[ \Phi(x_1, x_2) = \frac{ R_{x,y}(0)}{Q_{x,y}(0)} \in \mathbb{R},\]
by assumption. This implies that $\Phi$ sends $\mathbb{R}^2$ to $\mathbb{R}$ almost everywhere and so, is a rational inner Pick function of two variables.
\end{proof}

Returning to the original question, let $r, q\in \mathbb{C}[z_1, z_2]$ with no common factors and $\deg r = (m,n) = \deg q$ and define $R$ and $Q$ as in \eqref{eqn:RQ}. Then by Lemma \ref{lem:poly}, there is an RIF $\phi$  on $\mathbb{D}^2$ with $\deg \phi = (m,n)$ so that $\mathcal{L}_1(\phi)=\mathcal{Z}_r$ and $\mathcal{L}_{-1}(\phi) = \mathcal{Z}_q$ if and only if there is a nonzero constant $c$ such that $\Phi:=c \frac{R}{Q}$ satisfies the conditions of Theorem \ref{thm:interlacing}.

Intuitively speaking, Theorem \ref{thm:interlacing} asserts that if along every conformal line we have an interlacing of zeros, then there is an RIF having the desired level curves.

\section{A zoo of rational inner functions}\label{sec:zoo}
We further illustrate the findings in this paper by examining several examples in detail.

\subsection{Contact order and intersection multiplicity are different}
We give a minimal example showing that contact order and intersection multiplicity are different in general. This example is obtained by applying the embedding construction described in Theorem \ref{thm:LS}.

Consider the polynomial $r(z_1,z_2)=(1-z_1)(1-z_2)(1-z_1z_2)$ and set 
\[\tilde{p}(z_1,z_2)=z_1\frac{\partial r}{\partial z_1}(z_1,z_2)+z_2\frac{\partial r}{\partial z_2}(z_1,z_2).\]
Forming $p$ from $\tilde{p}$ by reflecting, and setting $\phi=-\tilde{p}/p$, we obtain the RIF 
\begin{equation}
\phi(z_1,z_2)=-\frac{z_2 + z_1 - 3 z_2^2 z_1 - 3 z_2 z_1^2 + 4 z_1^2 z_2^2}{4 - 3 z_1 -3 z_2 +z_2^2 z_1+
 z_2 z_1^2}
 \label{minimalCOex}
 \end{equation}
which has bidegree $(2,2)$, and a singularity at $(1,1)$ with non-tangential value $\phi(1,1)=-1$. 

A careful analysis shows that $p$ and $\tilde{p}$ have a common zero at $(1,1)$ and additional common zeros at $(0, \infty)$ and $(\infty, 0)$. We now compute intersection multiplicities as in B\'ezout's theorem, using that we only have one singularity on $\T^2$. This yields
\[8=N(p,\tilde{p})=N_{\T^2}(p, \tilde{p})+N_{(0,\infty)}(p, \tilde{p})+N_{(\infty,0)}(p,\tilde{p})=N_{(1,1)}(p, \tilde{p})+1+1.\]
and we therefore have intersection multiplicity $N_{(1,1)}(p, \tilde{p})=6$.

\begin{figure}[h!]
    \subfigure[Level curves (black) with value curve (red).]
      {\includegraphics[width=0.4 \textwidth]{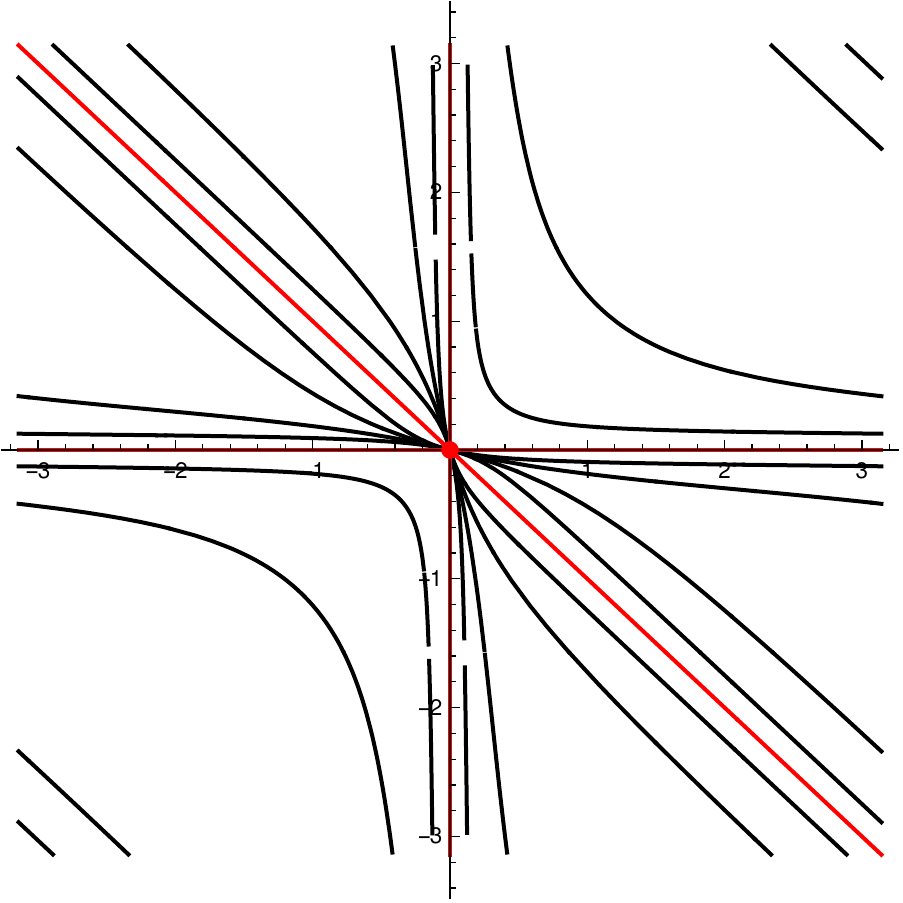}}
    \hfill
    \subfigure[Value curve bisecting second and fourth quadrants.]
      {\includegraphics[width=0.4 \textwidth]{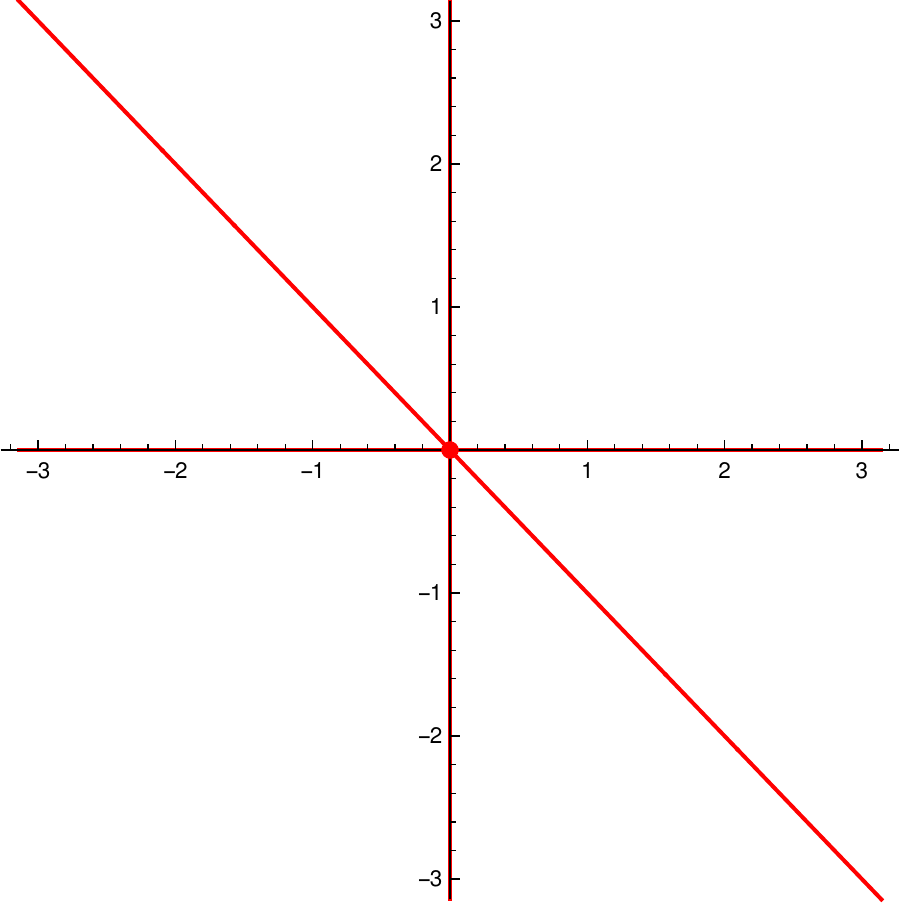}}
  \caption{\textsl{Level curves for \eqref{minimalCOex}, an RIF witnessing that contact order and intersection multiplicity are different.}
  \label{copplots}}
\end{figure}

Level lines of $\phi$ are displayed in Figure \ref{copplots}. By Theorem \ref{thm:LS}, the fact that the function in \eqref{minimalCOex} was obtained from the embedding construction implies that its value curve is given by
\[\mathcal{C}^*_{-1}(1,1)=\{(e^{t_1},1)\}\cup \{(1,e^{it_2})\} \cup \{(e^{it_1}, e^{-it_1})\}.\]
We now have two branches of the zero set of $\tilde{p}$ coming together at $(1,1)$, each with contact order equal to $2$ as can be verified directly by parametrizing the zero set $\tilde{p}(z)=0$ in terms of
\[z_2=\psi^0_{1}(z_1)=\frac{1-3z_1^2-(z-1)\sqrt{1+2z_1+9z_1^2}}{2(3z_1-4z_1^2)}\]
and
\[z_2=\psi^0_{2}(z_1)=\frac{1-3z_1^2+(z-1)\sqrt{1+2z_1+9z_1^2}}{2(3z_1-4z_1^2)}\]
and examining these functions as $\T \ni z_1\to 1$, see Figure \ref{coprootplot}. 
 Thus
\[6=N_{(1,1)}(p, \tilde{p})\neq \CO_{(1,1)}(\phi)=2,\]
as claimed, and $N_{(1,1)}(p,\tilde{p})\leq 2+2\cdot 2+2=8$, as guaranteed by Proposition \ref{prop:COvsIM}. 

Since we must have intersection multiplicity at least $6$ in order for $N_{\tau}(p, \tilde{p})$ and $\CO_{\tau}(\phi)$ to differ at a point $\tau \in \T^2$, this example is minimal in the sense of having lowest degree possible. 
\begin{figure}[h!]
    \includegraphics[width=0.4 \textwidth]{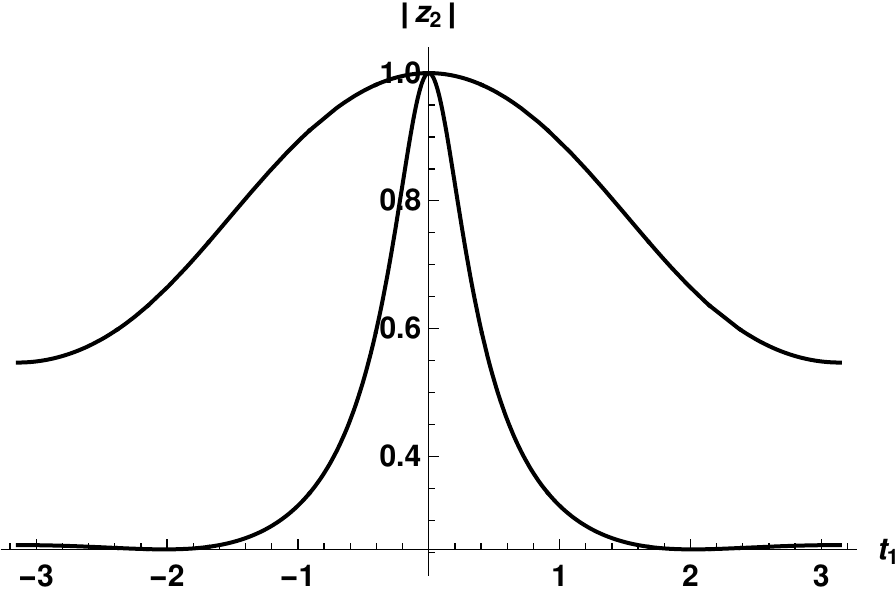}
  \caption{\textsl{Moduli of roots of $\tilde{p}(z)=0$, where $\tilde{p}$ is the numerator in \eqref{minimalCOex}, as functions of $z_1=e^{it_1} \in \T$.}}
  \label{coprootplot}
\end{figure}

\subsection{Value curves with tangential contact} 
The next example shows that value curves need not meet transversally at a singularity; we obtain it using the gluing construction in Section \ref{sec:construct}, starting  with the function $(2z_1z_2-z_1-z_2)/(2-z_1-z_2)$.
To this end, set $p(z_1,z_2)=2-z_1-z_2$, consider $r(z_1, z_2)=(p(z_1,z_2))^2+(\tilde{p}(z_1,z_2))^2$, and let
\[\tilde{P}(z_1,z_2)=z_1\frac{\partial r}{\partial z_1}(z_1,z_2)+z_2\frac{\partial r}{\partial z_2}(z_1,z_2).\]
Reflecting to obtain $P$, we arrive at the RIF
\begin{equation}
\phi(z_1,z_2)=-\frac{4z_1^2z_2^2-3z_1z_2^2-3z_1^2z_2+2z_1z_2+z_1^2+z_2^2-z_1-z_2}{4 - 
 3 z_1+z_1^2 -3 z_2 + 2 z_1z_2 -z_1^2 z_2 +z_2^2 -z_1 z_2^2}.
 \label{gluedfave}
 \end{equation}
 This RIF has a singularity at $(1,1)$ and we compute that $\phi(1,1)=1$.

This is illustrated in Figure \ref{doublefaveplots}.
\begin{figure}[h!]
    \subfigure[Level curves (black) with value curve (red).]
      {\includegraphics[width=0.4 \textwidth]{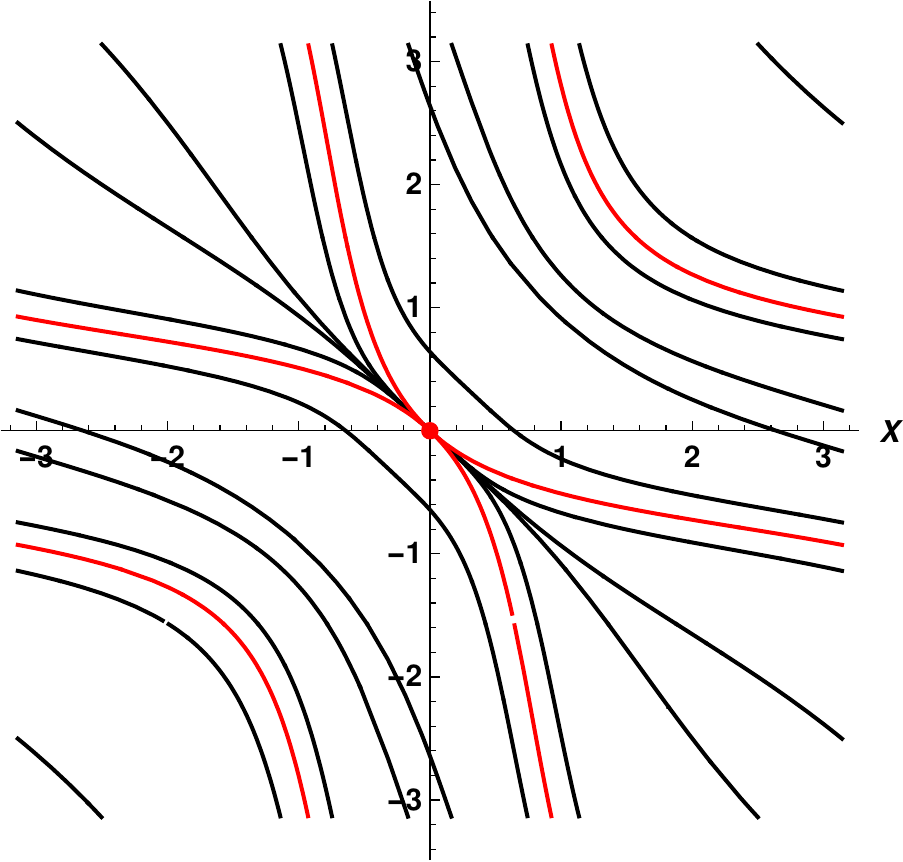}}
    \hfill
    \subfigure[Value curve having two components making contact at the origin.]
      {\includegraphics[width=0.4 \textwidth]{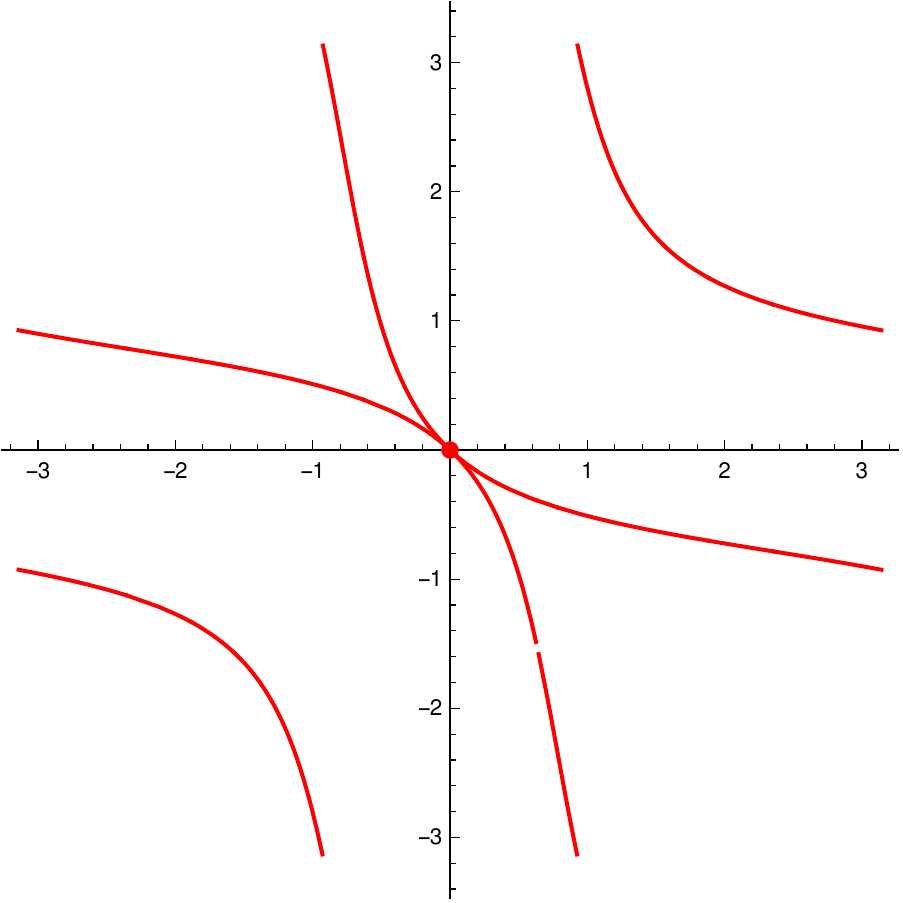}}
  \caption{\textsl{Level curves for \eqref{gluedfave}, an RIF whose value curve exhibit tangential touching}.}
  \label{doublefaveplots}
\end{figure}
By construction, the value curve of $\phi$ has two components, parametrized by reciprocals of two M\"obius transformations, namely we have
\[z_2=\psi^{i}(z_1)=i\frac{1-\frac{1+i}{2}z_1}{z_1-\frac{1-i}{2}}\quad \textrm{and}\quad z_2=\psi^{-i}(z_1)=-i\frac{1-\frac{1-i}{2}z_1}{z_1-\frac{1+i}{2}}.\]
These two curves now exhibit order $2$ tangential contact at $(1,1)$, as is guaranteed by Corollary \ref{cor:glue} and the discussion of level curves of $(2z_1z_2-z_1-z_2)/(2-z_1-z_2)$ in Section \ref{sec:intro}.

A computation using computer algebra reveals that the intersection multiplicity of $p$ and $\tilde{p}$ at $(1,1)$ is equal to $4$, and hence the contact order is equal to $4$ also. We note that $p$ and $\tilde{p}$ have four further common zeros off $\T^2$, as has to be the case in view of B\'ezout's theorem. 

Another fact illustrated by this example is that while every unimodular level curve $\mathcal{C}_{\lambda}$ passes through every singularity of $\phi$ on $\T^2$, it is not necessarily the case that every component of a level curve does: there is a pair of components in Figure \ref{doublefaveplots} (marked with ``x") that do not.
\subsection{Exceptional curves that are not value curves}\label{nonvalueexcept}
We now exhibit an RIF whose exceptional level curve $\mathcal{C}^{**}_{\mu_0}$ does not coincide with a value curve.

Consider the RIF $\phi=\tilde{p}/p$ with
\begin{equation}
p(z_1,z_2)=8 - 10 z_2 + 5 z_2^2 - z_2^3 - 10 z_1 + 10 z_1z_2 - 5 z_1z_2^2  + z_1z_2^3  + 5 z_1^2 - 
 5 z_1^2z_2 + 2 z_1^2 z_2^2 - z_1^3 + z_1^3z_2
 \label{exceptdenom}
 \end{equation}
and
\begin{equation}
\tilde{p}(z_1,z_2)=z_2^2 - z_2^3 + 2 z_1z_2 - 5 z_1z_2^2 + 5 z_1z_2^3z + z_1^2 - 5 z_1^2z_2 + 10 z_1^2 z_2^2 - 
 10 z_1^2z_2^3 - z_1^3 + 5 z_1^3z_2 - 10 z_1^3z_2^2 + 8 z_1^3z_ 2^3.
 \label{exceptnum}
 \end{equation}
This RIF is obtained by multiplying the function in \eqref{gluedfave} by $(2z_1z_2-z_1-z_2)/(2-z_1-z_2)$. 
 
One can check that $\tilde{p}$ and $p$ have a common zero at $(1,1)\in \T^2$, and eight further common zeros off $\T^2$. Moreover, $N_{(1,1)}(p, \tilde{p})=10$, as can be verified using computer algebra or by observing that all the common zeros off the two-torus have multiplicity  $1$ and using B\'ezout's theorem. We note that there are two branches of the zero set of $\tilde{p}$ coming together at $(1,1)$.

\begin{figure}[h!]
    \subfigure[Level curves (black) with value curve (red) and exceptional curve (blue).]
      {\includegraphics[width=0.4 \textwidth]{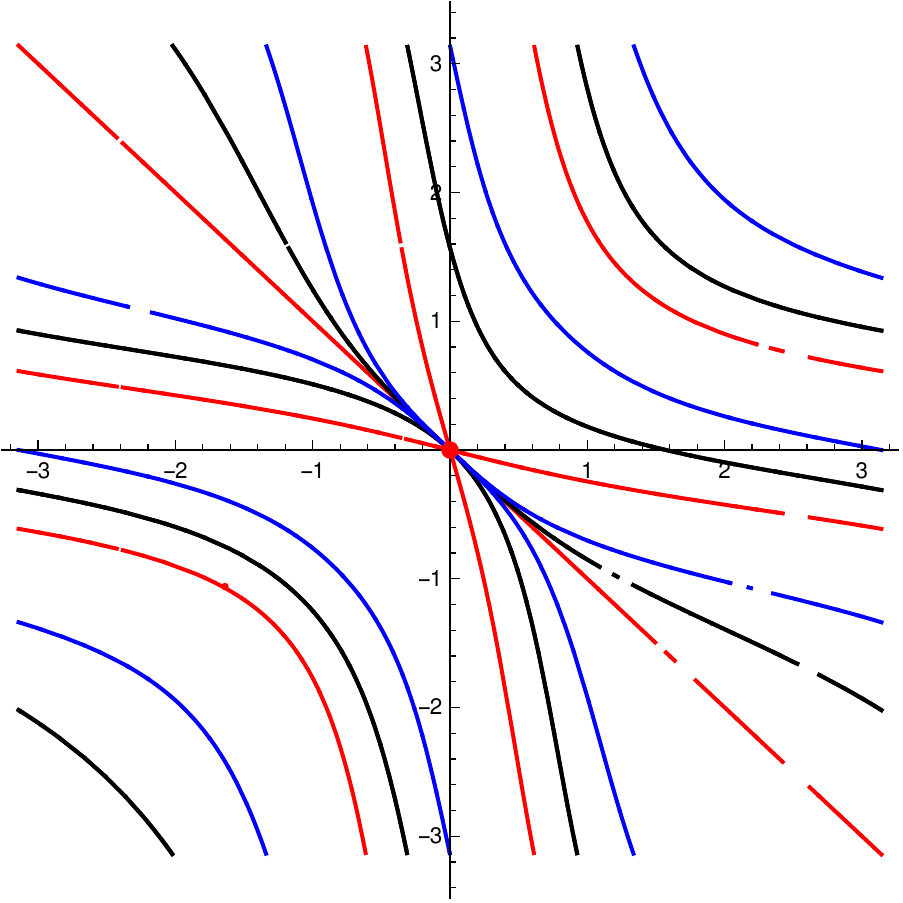}}
    \hfill
    \subfigure[Value curve (red) and exceptional curve (blue).]
      {\includegraphics[width=0.4 \textwidth]{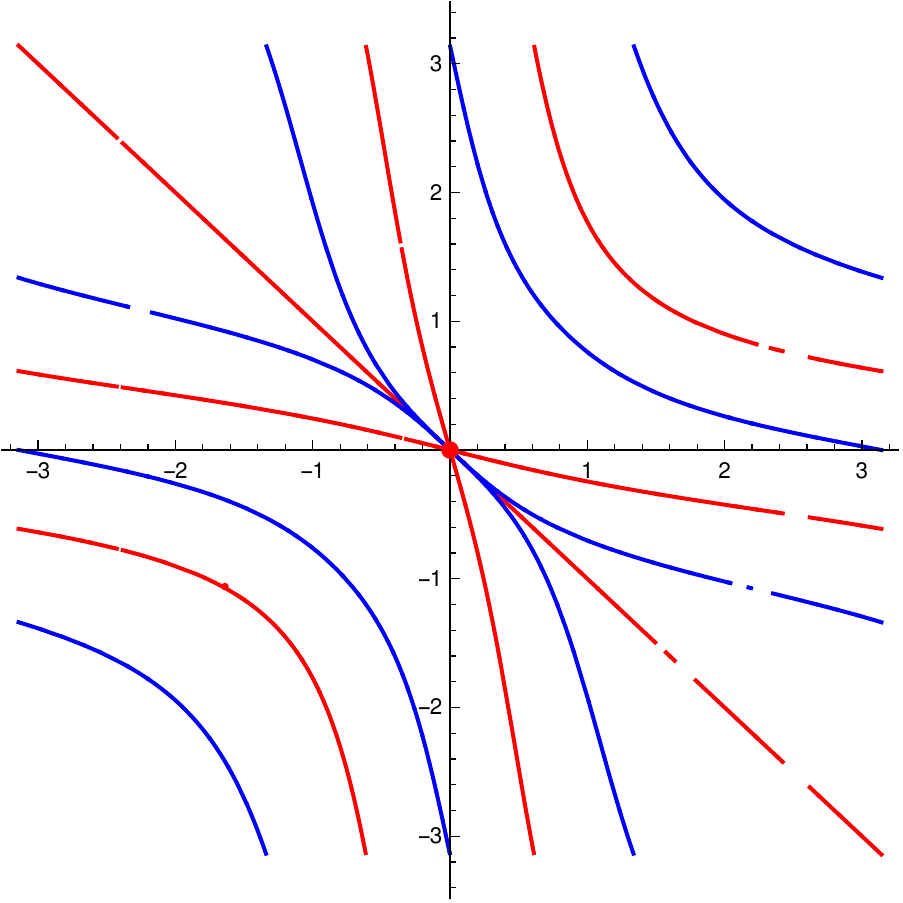}}
  \caption{\textsl{Level curves for $\phi=\tilde{p}/p$ with numerator \eqref{exceptnum} and denominator \eqref{exceptdenom}, an RIF whose exceptional curve differs from the value curve}.}
  \label{bucplots}
\end{figure}
Level lines of $\phi$ are displayed in Figure \ref{bucplots}. For this example, we have $\phi(1,1)=1$, and the
value curve contains the component $\{(e^{it_1}, e^{-it_1})\colon t_1\in (-\pi, \pi]\}$, the antidiagonal in the torus. There are two further components, which we assign indices $2$ and $3$, that are symmetric with respect to the antidiagonal, and all three components meet at $(1,1)$. 

The exceptional curve in this example is $\mathcal{C}^{**}_{-1}=\{z \in  \T^2\colon \phi(z)=-1\}$. As can be seen in Figure \ref{bucplots}, the level set $\mathcal{C}^{**}_{-1}$ has three components; note that the interlacing condition of Section \ref{sec:construct}  is satisfied by $\mathcal{C}^*_1$ and $\mathcal{C}^{**}_{-1}$. One component of $\mathcal{C}^{**}_{-1}$ omits $(1,1)$ altogether, and the two remaining components make symmetric contact with the antidiagonal. Using Lemma \eqref{lem:combolemmaOC}, we deduce that the order of contact arising from $\mathcal{C}_{-1}^{**}$ and $\mathcal{C}_1^*$ is equal to $3$. Indeed, exploiting the symmetry of $-1$-level curves  
along with the fact that they intersect two of the components of the $1$-level curve transversally, we obtain $\kappa_{1,1}^{-1,1}=3$ from
\[10=N_{(1,1)}(p,\tilde{p})=2\kappa^{-1,1}_{1,1}+2(\kappa_{1,2}^{-1,1}+\kappa^{-1,1}_{1,3})=2\kappa^{-1,1}_{1,1}+2\cdot(1+1)=2\kappa^{-1,1}_{1,1}+4.\] 

The true contact orders of individual branches of $\mathcal{Z}_{\tilde{p}}$ at $(1,1)$ are actually equal to $2$ and  $4$, respectively. This can be seen as follows. Consider the level curve $\mathcal{C}_i=\{\zeta \in \T^2\colon \phi(\zeta)=i\}$: one of the components of this level line is parametrized by
\[\psi^i(z_1)=\frac{z_1-(1+i)}{(1-i)z_1-1},\]
as can be checked by direct substitution into $\phi$, and this component (visible in black in the lower horn at  $(1,1)$ in Figure \ref{bucplots}) makes contact to order $2$ with the antidiagonal. Finally, the combinatorial formula in Lemma \eqref{lem:combolemmaOC} yields
\[10=N_{(1,1)}(p, \tilde{p})=\kappa^{i,1}_{1,1}+\kappa^{i,1}_{1,2}+\kappa^{i,1}_{1,3}+\kappa^{i,1}_{2,1}+\kappa^{i,1}_{2,2}+\kappa^{i,1}_{2,3}=2+\kappa^{i,1}_{1,2}+1+1+1+1=\kappa^{i,1}_{1,2}+6,\]
and hence $\kappa^{i,1}_{1,2}=4$.

\subsection{Multiple singularities}
The next example is constructed using the method described by the second author in \cite[Section 4]{Pas}. It shows that functions arising from that construction may have multiple singularities with different contact orders.

In the notation of \cite{Pas}, set $\mathcal{H}=\ell^2(\mathbb{Z}_4)$ and define $\pi\colon \mathbb{Z}_4\to B(\ell^2(\mathbb{Z}_4))$ by taking $\pi(j)[e_k]=e_{j+k}$ for $e_j=\delta_{j+1} \in \mathbb{Z}_4$. Set
\[A=\pi(1)+\pi(-1)=\left(\begin{array}{cccc}0 & 0 & 0 &1\\1 &0 &0&0\\0& 1& 0 & 0\\0 & 0 &0 & 1\end{array}\right)
+\left(\begin{array}{cccc}0 & 1 & 0 &0\\0 &0 &1&0\\0& 0& 0 & 1\\1 & 0 &0 & 0\end{array}\right)=
\left(\begin{array}{cccc}0 & 1 & 0 &1\\1 &0 &1&0\\0& 1& 0 & 1\\1 & 0 &1 & 0\end{array}\right)\]
and consider the diagonal matrices
\[Y=\left(\begin{array}{cccc}1 & 0 & 0 &0\\0 &1 &0&0\\0& 0& 1 & 0\\0 & 0 &0 & 0\end{array}\right)\quad 
\textrm{and} \quad z_Y=Yz_1+(1-Y)z_2=\left(\begin{array}{cccc}z_1 & 0 & 0 &0\\0 &z_1 &0&0\\0& 0& z_1 & 0\\0 & 0 &0 & z_2\end{array}\right).\]
We now obtain a Pick function via
\[f(z_1,z_2)=\langle (A-z_Y)^{-1}e_0,e_0\rangle=\frac{z_1+z_2-z_1^2z_2}{z_1(z_1^2z_2-2z_1-2z_2)}.\]
After composing with our usual M\"obius transformations $\beta$ and $\beta^{-1}$, we obtain the RIF
\begin{equation}
\phi(z_1,z_2)=-\frac{4z_1^3z_2+z_1^3-z_1^2+3z_1+1}{4+z_2-z_1z_2+3z_1^2z_2+z_1^3z_2}.
\label{bickelpascoeex}
\end{equation}

\begin{figure}[h!]
    \subfigure[Level curves, showing higher contact order at $(-1,-1)$.]
      {\includegraphics[width=0.4 \textwidth]{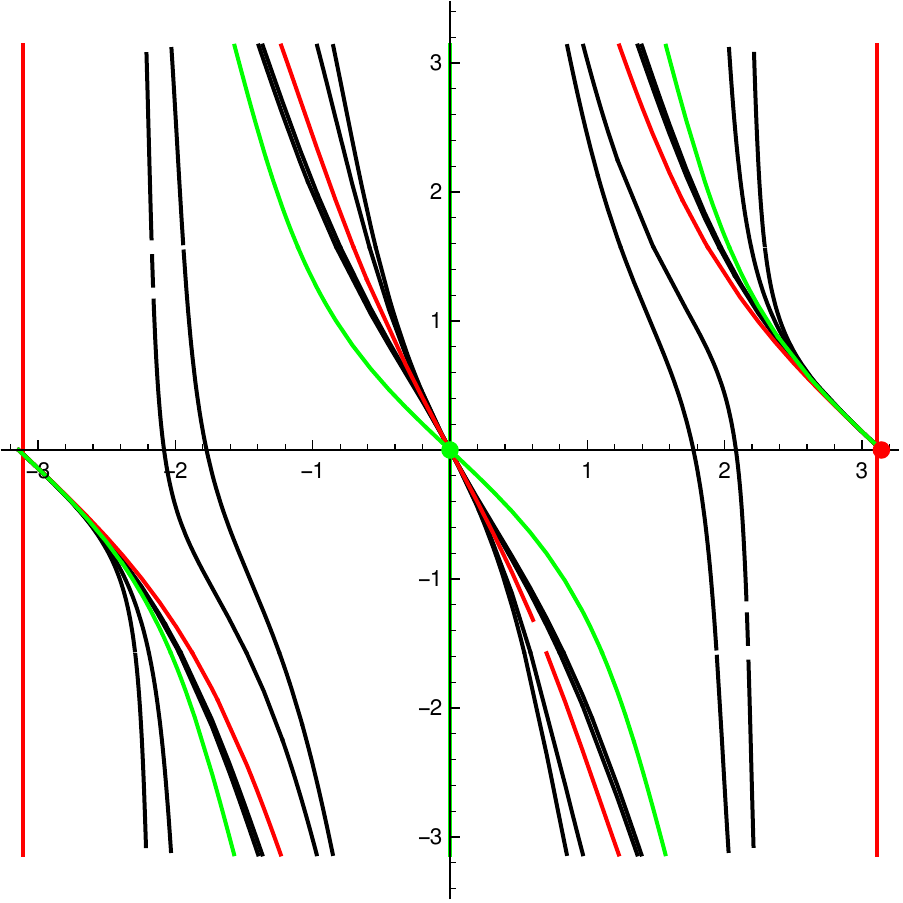}}
    \hfill
    \subfigure[Value curves $\mathcal{C}^*_{-1}(1,-1)$ (green) and $\mathcal{C}^*_1(-1,-1)$ (red).]
      {\includegraphics[width=0.4 \textwidth]{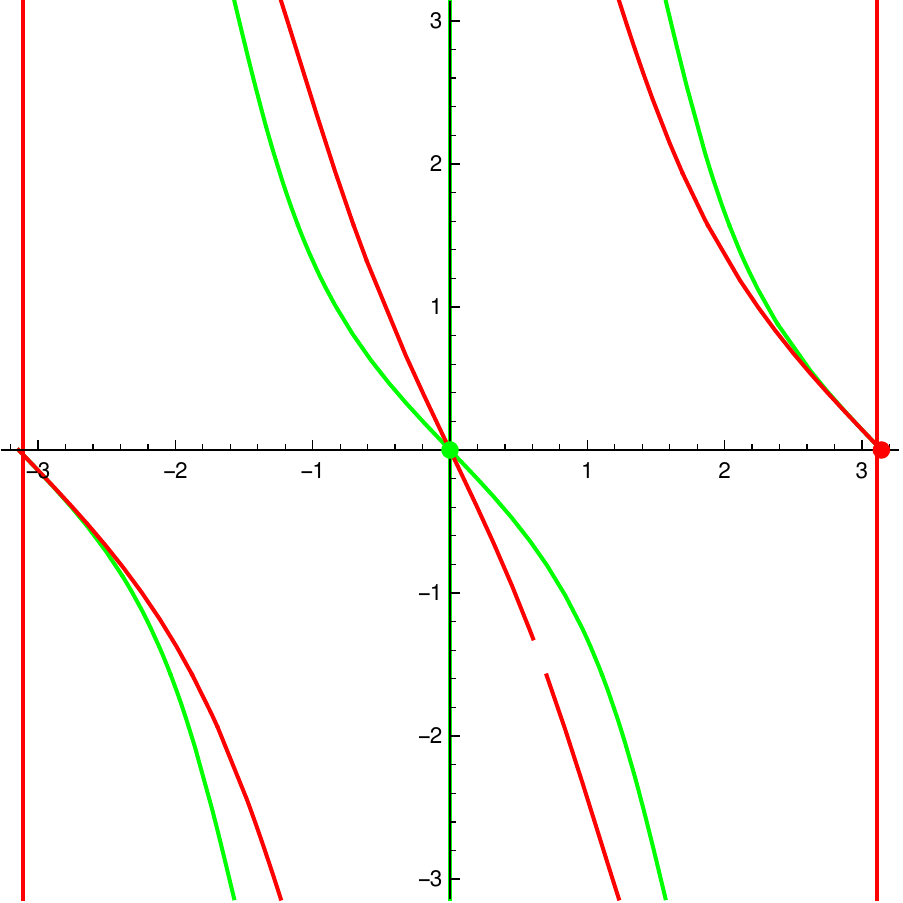}}
  \caption{\textsl{Level curves for \eqref{bickelpascoeex}, an RIF with two singularities, one with contact order $2$ and one with contact order $4$}.}
  \label{BPplots}
\end{figure}

This function has singularities at $(1,-1)$ and $(-1,-1)$ with non-tangential values $\phi(1,-1)=-1$ and  $\phi(-1,-1)=1$. Moreover, one verifies that $\CO_{(1,-1)}(\phi)=2$ and
$\CO_{(-1,-1)}(\phi)=4$. The latter contact order is essentially guaranteed by \cite[Theorem 7.1]{BPS17} and the construction, which places the Pick function $f$ in the intermediate L\"owner class $\mathcal{L}^{2-}$, but the singularity at $(1,-1)$ is in some sense extraneous.

The level curves of the function in \eqref{bickelpascoeex} are parametrized by
\[\psi^{\lambda}(z_1)=\frac{4-\lambda-3\lambda z_1+\lambda z_1^2-\lambda z_1^3}{4\lambda z_1^3-z_1^3-3z_1^2+z_1-1}\]
and are displayed in Figure \ref{BPplots} (shifted down by $\pi$ for better visibility).
Note that the value curves at $(1,-1)$ and $(-1,-1)$ contain vertical lines; the other components can be obtained by picking $\lambda$ appropriately in the parametrization $\mathcal{C}_{\lambda}$. 

In fact, value curves of degree $(n,1)$ rational inner functions with real coefficients always contain vertical lines. Assuming (A1) is satisfied, we note that $p(1,\cdot)$ and $\tilde{p}(1,\cdot)$ are linear polynomials, and then $p(1,z_2)-\tilde{p}(1,z_2)$ vanishes identically for $z_2\in \T$. Hence $p-\tilde{p}$ is divisible by $z_1-1$, and the claim follows. 

\subsection*{Acknowledgments}
Part of this work was carried out while the first and third authors were visiting Washington University in St.\ Louis, KB for the 2016-2017 academic year and AS for March-April 2017. They both thank the Wash U mathematics department, and especially John M\McC Carthy and Brett Wick, for their warm hospitality.
 
\end{document}